\theoremstyle{plain}
\newtheorem{thm}{Theorem}[section]
\newtheorem{lem}[thm]{Lemma}
\newtheorem{cor}[thm]{Corollary}
\newtheorem{prop}[thm]{Proposition}
\newtheorem{defn}[thm]{Definition}
\newtheorem{fact}[thm]{Fact}
\newtheorem{example}[thm]{Example}
\newtheorem{setting}[thm]{Setting}
\newtheorem{prob}[thm]{Problem}
\newtheorem{rem}[thm]{Remark}
\newcommand{\simrightarrow}{\mathrel{\stackrel{\sim}{\longrightarrow}}}
\newcommand{\bijarrow}{\mathrel{\stackrel{1:1}{\longleftrightarrow}}}
\newcommand{\ad}{\mathop{\mathrm{ad}}\nolimits}
\newcommand{\Ad}{\mathop{\mathrm{Ad}}\nolimits}
\newcommand{\End}{\mathop{\mathrm{End}}\nolimits}
\newcommand{\Map}{\mathop{\mathrm{Map}}\nolimits}
\newcommand{\Ker}{\mathop{\mathrm{Ker}}\nolimits}
\newcommand{\rank}{\mathop{\mathrm{rank}}\nolimits}
\newcommand{\hyp}{\text{hyp}}
\newcommand{\nilp}{\text{nilp}}
\begin{document}

\title[Semisimple symmetric spaces with proper $SL(2,\mathbb{R})$-actions]{Classification of semisimple symmetric spaces with proper $SL(2,\mathbb{R})$-actions}
\author{Takayuki Okuda}
\subjclass{Primary 57S30; Secondary 22F30, 22E40, 53C30, 53C35}
\keywords{proper action; symmetric space; surface group; nilpotent orbit; weighted Dynkin diagram; Satake diagram}
\address{Graduate School of Mathematical Science, The University of Tokyo,\\ 3-8-1 Komaba, Meguro-ku, Tokyo 153-8914, Japan}
\email{okuda@ms.u-tokyo.ac.jp}
\thanks{This work is supported by Grant-in-Aid for JSPS Fellows}
\date{}

\maketitle

\begin{abstract}
We give a complete classification of irreducible symmetric spaces for which there exist proper $SL(2,\mathbb{R})$-actions as isometries, using the criterion for proper actions by T.~Kobayashi [Math.\ Ann.\ '89] and combinatorial techniques of nilpotent orbits.
In particular, we classify irreducible symmetric spaces that admit surface groups as discontinuous groups, combining this with Benoist's theorem [Ann.\ Math.\ '96].
\end{abstract}

\section{Introduction}

The aim of this paper is to classify semisimple symmetric spaces $G/H$ that admit isometric proper actions of non-compact simple Lie group $SL(2,\mathbb{R})$, 
and also those of surface groups $\pi_1(\Sigma_g)$.
Here, isometries are considered with respect to the natural pseudo-Riemannian structure on $G/H$.

We motivate our work in one of the fundamental problems on locally symmetric spaces, stated below:

\begin{prob}[See \cite{Kobayashi01}]\label{prob:intro_symm}
Fix a simply connected symmetric space $\widetilde{M}$ as a model space.
What discrete groups can arise as the fundamental groups of complete affine manifolds $M$ which are locally isomorphic to the space $\widetilde{M}$? 
\end{prob}

By a theorem of \'E.\,Cartan, such $M$ is represented as the double coset space $\Gamma \backslash G/H$. 
Here $\widetilde{M} = G/H$ is a simply connected symmetric space and $\Gamma \simeq \pi_1(M)$ a discrete subgroup of $G$ acting properly discontinuously and freely on $\widetilde{M}$.

Conversely, for a given symmetric pair $(G,H)$ and an abstract group $\Gamma$ with discrete topology, if there exists a group homomorphism $\rho: \Gamma \rightarrow G$ for which $\Gamma$ acts on $G/H$ properly discontinuously and freely via $\rho$, 
then the double coset space $\rho(\Gamma) \backslash G/H$ becomes a $C^\infty$-manifold such that the natural quotient map 
\[
G/H \rightarrow \rho(\Gamma) \backslash G/H
\]
is a $C^\infty$-covering.
The double coset manifold $\rho(\Gamma) \backslash G/H$ is called a Clifford--Klein form of $G/H$, which is endowed with a locally symmetric structure through the covering.
We say that \textit{$G/H$ admits $\Gamma$ as a discontinuous group} if there exists such $\rho$.

Then Problem \ref{prob:intro_symm} may be reformalized as:
\begin{prob}\label{prob:discontinuous_group}
Fix a symmetric pair $(G,H)$.
What discrete groups does $G/H$ admit as discontinuous groups?
\end{prob}

For a compact subgroup $H$ of $G$, 
the action of any discrete subgroup of $G$ on $G/H$ is automatically properly discontinuous. 
Thus our interest is in non-compact $H$, for which not all discrete subgroups $\Gamma$ of $G$ act properly discontinuously on $G/H$.
Problem \ref{prob:discontinuous_group} is non-trivial, 
even when $\widetilde{M} = \mathbb{R}^n$ regarded as an affine symmetric space, i.e.~$(G,H) = (GL(n,\mathbb{R}) \ltimes \mathbb{R}^n, GL(n,\mathbb{R}))$.
In this case, the long-standing conjecture (Auslander's conjecture) states that such discrete group $\Gamma$ will be virtually polycyclic if the Clifford--Klein form $M$ is compact (see \cite{Abels-Margulis-Soifer11,Auslander64,Goldman-Kamishima84,Tomanov90}). 
On the other hand, as was shown by E.~Calabi and L.~Markus \cite{Calabi-Murkus62} in 1962, 
no infinite discrete subgroup of $SO_0(n+1,1)$ acts properly discontinuously on the de Sitter space $SO_0(n+1,1)/SO_0(n,1)$.
More generally, 
if $G/H$ does not admit any infinite discontinuous group, 
we say that a Calabi--Markus phenomenon occurs for $G/H$. 

For the rest of this paper, we consider the case that $G$ is a linear semisimple Lie group.
In this setting, a systematic study of Problem \ref{prob:discontinuous_group} for the general homogeneous space $G/H$ was initiated in the late 1980s by T.~Kobayashi \cite{Kobayashi89,Kobayashi92b,Kobayashi92a}.
One of the fundamental results of Kobayashi in \cite{Kobayashi89} is a criterion 
for proper actions, including a criterion for the Calabi--Markus phenomenon on homogeneous spaces $G/H$. 
More precisely, he showed that the following four conditions on $G/H$ are equivalent: the space $G/H$ admits an infinite discontinuous group; the space $G/H$ admits a proper $\mathbb{R}$-action; the space $G/H$ admits the abelian group $\mathbb{Z}$ as a discontinuous group; and $\rank_\mathbb{R} \mathfrak{g} > \rank_\mathbb{R} \mathfrak{h}$.
Furthermore, Y.~ Benoist \cite{Benoist96} obtained a criterion for the existence of infinite non-virtually abelian discontinuous groups for $G/H$.

Obviously, such discontinuous groups exist if there exists a Lie group homomorphism  $\Phi : SL(2,\mathbb{R}) \rightarrow G$ such that $SL(2,\mathbb{R})$ acts on $G/H$ properly via $\Phi$. 
We prove that the converse statement also holds when $G/H$ is a semisimple symmetric space.
More strongly, our first main theorem gives a characterization of symmetric spaces $G/H$ that admit proper $SL(2,\mathbb{R})$-actions:

\begin{thm}[see Theorem \ref{thm:main}]\label{thm:intro}
Suppose that $G$ is a connected linear semisimple Lie group.
Then the following five conditions on a symmetric pair $(G,H)$ are equivalent$:$
\begin{enumerate}
\item\label{item:intro_SL2_proper} There exists a Lie group homomorphism $\Phi: SL(2,\mathbb{R}) \rightarrow G$ such that $SL(2,\mathbb{R})$ acts on $G/H$ properly via $\Phi$.
\item \label{item:intro_surface_some} For some $g \geq 2$, the symmetric space $G/H$ admits the surface group $\pi_1(\Sigma_g)$ as a discontinuous group, where $\Sigma_g$ is a closed Riemann surface of genus $g$.
\item \label{item:intro_non-virtually} $G/H$ admits an infinite discontinuous group $\Gamma$ which is not virtually abelian $($i.e. $\Gamma$ has no abelian subgroup of finite index$)$.
\item \label{item:intro_nilp} There exists a complex nilpotent orbit $\mathcal{O}^{G_\mathbb{C}}_\nilp$ in $\mathfrak{g}_\mathbb{C}$ such that $\mathcal{O}^{G_\mathbb{C}}_\nilp \cap \mathfrak{g} \neq \emptyset$ and $\mathcal{O}^{G_\mathbb{C}}_\nilp \cap \mathfrak{g}^c = \emptyset$, 
where $\mathfrak{g}^c$ is the $c$-dual of the symmetric pair 
$(\mathfrak{g},\mathfrak{h})$ 
$($see $\eqref{eq:c-dual}$ for definition$)$.
\item \label{item:intro_anti_hyp} There exists a complex antipodal hyperbolic orbit $\mathcal{O}^{G_\mathbb{C}}_\hyp$ in $\mathfrak{g}_\mathbb{C}$ $($see Definition $\ref{defn:hyp_anti}$$)$ such that $\mathcal{O}^{G_\mathbb{C}}_\hyp \cap \mathfrak{g} \neq \emptyset$ and $\mathcal{O}^{G_\mathbb{C}}_\hyp \cap \mathfrak{g}^c = \emptyset$.
\end{enumerate}
\end{thm}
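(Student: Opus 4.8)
The plan is to prove the five conditions equivalent by the cycle $(\ref{item:intro_SL2_proper})\Rightarrow(\ref{item:intro_surface_some})\Rightarrow(\ref{item:intro_non-virtually})\Rightarrow(\ref{item:intro_anti_hyp})\Rightarrow(\ref{item:intro_nilp})\Rightarrow(\ref{item:intro_SL2_proper})$, with two external inputs — Kobayashi's properness criterion \cite{Kobayashi89} and Benoist's theorem \cite{Benoist96} — and two internal tools: the (real) Jacobson--Morozov theorem and the structure theory of the $c$-dual $\mathfrak{g}^c$. Fix a Cartan involution $\theta$ of $\mathfrak{g}$ commuting with the involution defining $\mathfrak{h}$, let $\mathfrak{a}$ be a maximal split abelian subspace of $\mathfrak{g}$ containing a maximal split abelian subspace $\mathfrak{a}_{\mathfrak{h}}\subset\mathfrak{a}\cap\mathfrak{h}$ of $\mathfrak{h}$, fix a closed chamber $\overline{\mathfrak{a}_+}$, and write $\mathfrak{a}(H)\subset\overline{\mathfrak{a}_+}$ for the Cartan projection of $H$ (a closed cone). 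The conceptual heart is a single \emph{translation lemma}: a dominant hyperbolic element $Y\in\overline{\mathfrak{a}_+}$ satisfies $\Ad(G_\mathbb{C})Y\cap\mathfrak{g}^c\neq\emptyset$ if and only if $Y\in\mathfrak{a}(H)$. Granting this, Kobayashi's criterion — for a reductive subgroup $L$, the $L$-action on $G/H$ is proper iff the cones $\mathfrak{a}(L)$ and $\mathfrak{a}(H)$ meet only at the origin — applied to $L=\Phi(SL(2,\mathbb{R}))$, whose Cartan projection is the single ray $\mathbb{R}_{\geq 0}H_0$ spanned by the dominant characteristic element $H_0$ of the $\mathfrak{sl}_2$-triple $(E,H_0,F)$ attached to $\Phi$, identifies $(\ref{item:intro_SL2_proper})$ with ``$H_0\notin\mathfrak{a}(H)$'', hence by the translation lemma with ``the hyperbolic orbit $\Ad(G_\mathbb{C})H_0$, equivalently the complex nilpotent orbit $\Ad(G_\mathbb{C})E$, avoids $\mathfrak{g}^c$''.

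\emph{The easy steps.} For $(\ref{item:intro_SL2_proper})\Rightarrow(\ref{item:intro_surface_some})$, a torsion-free cocompact lattice $\Gamma_0<SL(2,\mathbb{R})$ is isomorphic to some $\pi_1(\Sigma_g)$, $g\geq 2$; it acts on $G/H$ through $\Phi|_{\Gamma_0}$ properly discontinuously because the ambient $SL(2,\mathbb{R})$-action is proper, and freely because a proper action has compact isotropy groups, whose intersection with the torsion-free $\Gamma_0$ is trivial. For $(\ref{item:intro_surface_some})\Rightarrow(\ref{item:intro_non-virtually})$, surface groups of genus at least $2$ contain nonabelian free subgroups, hence are not virtually abelian. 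For $(\ref{item:intro_nilp})\Rightarrow(\ref{item:intro_anti_hyp})$ (needed only as a consistency check, as $(\ref{item:intro_anti_hyp})\Rightarrow(\ref{item:intro_nilp})$ is in the cycle), the Jacobson--Morozov theorem attaches to a complex nilpotent orbit the complex hyperbolic orbit of its characteristic element, which is antipodal because the order-two element of the complexified image $\Phi(SL(2,\mathbb{C}))$ normalizing the $\mathfrak{sl}_2$ sends $H_0$ to $-H_0$; the clauses ``$\cap\,\mathfrak{g}\neq\emptyset$'' match by the real Jacobson--Morozov theorem and the clauses ``$\cap\,\mathfrak{g}^c=\emptyset$'' match by the translation lemma.

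\emph{The two substantial steps.} For $(\ref{item:intro_nilp})\Rightarrow(\ref{item:intro_SL2_proper})$: from a real nilpotent orbit whose complexification avoids $\mathfrak{g}^c$ the real Jacobson--Morozov theorem produces an $\mathfrak{sl}_2$-triple in $\mathfrak{g}$, hence a homomorphism $\Phi:SL(2,\mathbb{R})\to G$ (here one uses that $G$ is connected linear semisimple); since the Cartan projection of $\Phi(SL(2,\mathbb{R}))$ is $\mathbb{R}_{\geq 0}H_0$, Kobayashi's criterion together with the translation lemma gives properness. For $(\ref{item:intro_non-virtually})\Rightarrow(\ref{item:intro_anti_hyp})$: by Benoist's theorem the existence of an infinite non-virtually-abelian discontinuous group for the reductive space $G/H$ reduces to a condition on $\mathfrak{a}(H)$, which in the present setting says that some nonempty open cone in $\mathfrak{a}$ meets $\mathfrak{a}(H)$ only at the origin; intersecting this cone with $\overline{\mathfrak{a}_+}$, using that $\mathfrak{a}(H)$ is invariant under $-w_0$ (because $H=H^{-1}$ forces $\mathfrak{a}(H)=\mathfrak{a}(H^{-1})=-w_0\mathfrak{a}(H)$, $w_0$ the longest Weyl element), and invoking the translation lemma, one extracts a dominant hyperbolic element fixed by $-w_0$ whose $G_\mathbb{C}$-orbit is antipodal, meets $\mathfrak{g}$, and avoids $\mathfrak{g}^c$. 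It remains to close the cycle with $(\ref{item:intro_anti_hyp})\Rightarrow(\ref{item:intro_nilp})$: by the translation lemma the set of dominant hyperbolic elements whose $G_\mathbb{C}$-orbit avoids $\mathfrak{g}^c$ is the relatively open cone $\overline{\mathfrak{a}_+}\setminus\mathfrak{a}(H)$, and one must show that when it is nonempty it contains the characteristic element of a \emph{real} nilpotent orbit; I expect to establish this using the explicit polyhedral description of $\mathfrak{a}(H)$ afforded by the Satake diagram of $(\mathfrak{g},\mathfrak{h})$, reducing to the classification of weighted Dynkin diagrams and, if necessary, to a case check along the simple symmetric pairs (handling the ``group manifold'' factors $G'\times G'/\Delta G'$ separately, where a proper $SL(2,\mathbb{R})$-action comes from left translation via any nontrivial homomorphism into the first factor).

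\emph{Main obstacle.} The crux is the translation lemma. One inclusion is soft: $\mathfrak{a}_{\mathfrak{h}}$ lies in both $\mathfrak{g}$ and $\mathfrak{g}^c$, and the $KAK$-decomposition of $G$ adapted to the involution shows $\mathfrak{a}(H)$ is exactly the image in $\overline{\mathfrak{a}_+}$ of the dominant part of $\mathfrak{a}_{\mathfrak{h}}$, so any $Y\in\mathfrak{a}(H)$ has its $G_\mathbb{C}$-orbit through $\mathfrak{a}\cap\mathfrak{h}\subset\mathfrak{g}^c$. The reverse inclusion is delicate: a maximal split abelian subspace of $\mathfrak{g}^c$ can be strictly larger than $\mathfrak{a}_{\mathfrak{h}}$ (its extra directions coming from $i(\mathfrak{q}\cap\mathfrak{k})$), and one must show these additional hyperbolic $c$-dual directions produce no $G_\mathbb{C}$-orbits beyond those already meeting $\mathfrak{a}(H)$ — and it is precisely here that the hypothesis that $(G,H)$ is \emph{symmetric} enters, through the compatibility of the restricted root systems of $\mathfrak{g}$, $\mathfrak{h}$, and $\mathfrak{g}^c$ dictated by the involution (for merely reductive $H$ this interplay, and with it the theorem, breaks down). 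A secondary difficulty, also special to the symmetric case, is the $(\ref{item:intro_anti_hyp})\Rightarrow(\ref{item:intro_nilp})$ reduction to characteristic elements of real nilpotent orbits described above; both of these are where I expect the bulk of the work to lie, whereas the Kobayashi and Benoist inputs and the Jacobson--Morozov bookkeeping are essentially formal once the lemma is in hand.
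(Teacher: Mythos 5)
Your overall architecture is the paper's: your ``translation lemma'' (for dominant $Y\in\mathfrak{a}_+$, the orbit $\Ad(G_\mathbb{C})Y$ meets $\mathfrak{g}^c$ iff $Y\in W(\mathfrak{g},\mathfrak{a})\cdot\mathfrak{a}_\mathfrak{h}$) is exactly Proposition \ref{prop:keyprop} combined with Lemma \ref{lem:hyperbolic_in_reductive} and Lemma \ref{lem:comp_hyp_meets_g_and_A_in_a}, and the paper proves it just as you anticipate, via a Cartan subalgebra simultaneously adapted to $\mathfrak{g}$, $\mathfrak{g}^c$ and $\mathfrak{h}$ with compatible positive systems (Oshima--Sekiguchi); the uses of Kobayashi's and Benoist's criteria are the same. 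One secondary remark: the equivalence ``$\Ad(G_\mathbb{C})H_0$ avoids $\mathfrak{g}^c$ $\Leftrightarrow$ $\Ad(G_\mathbb{C})E$ avoids $\mathfrak{g}^c$'' that you invoke in \eqref{item:intro_nilp} $\Rightarrow$ \eqref{item:intro_SL2_proper} is not Jacobson--Morozov bookkeeping in the direction you need (from the semisimple element back to the nilpotent one); it is Sekiguchi's result, Lemma \ref{lem:A_meets_X_meets}, and has to be quoted or proved.

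The genuine gap is that in both places where antipodality should do the work you have dropped it, and what remains is false. First, Benoist's criterion is not ``some nonempty open cone in $\mathfrak{a}$ meets $\mathfrak{a}(H)$ only at the origin'' --- that condition is equivalent to $\rank_\mathbb{R}\mathfrak{g}>\rank_\mathbb{R}\mathfrak{h}$, i.e.\ to the Calabi--Markus criterion; the correct statement (Fact \ref{fact:Benoist's_result}) is $\mathfrak{b}_+\not\subset W(\mathfrak{g},\mathfrak{a})\cdot\mathfrak{a}_\mathfrak{h}$, a condition on the $-w_0$-fixed part only, and an open cone in $\mathfrak{a}$ avoiding $\mathfrak{a}(H)$ need not contain any $-w_0$-fixed point. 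Second, and more seriously, your closing claim for \eqref{item:intro_anti_hyp} $\Rightarrow$ \eqref{item:intro_nilp} --- ``when $\overline{\mathfrak{a}_+}\setminus\mathfrak{a}(H)$ is nonempty it contains the characteristic element of a real nilpotent orbit'' --- has as its hypothesis only $\rank_\mathbb{R}\mathfrak{g}>\rank_\mathbb{R}\mathfrak{h}$ and is refuted by every entry of Table \ref{table:non_sl2}: for $(\mathfrak{sl}(2k,\mathbb{R}),\mathfrak{so}(k,k))$ one has $\rank_\mathbb{R}\mathfrak{g}=2k-1>k=\rank_\mathbb{R}\mathfrak{h}$, yet every complex nilpotent orbit meets the split form $\mathfrak{g}$ and also meets $\mathfrak{g}^c=\mathfrak{su}(k,k)$, so no characteristic element lies outside $\mathfrak{a}(H)$. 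A case check against your statement would simply terminate in counterexamples. The paper's replacement consists of the two facts your plan is missing: Lemma \ref{lem:a_h}, which shows $\overline{\mathfrak{a}_+}\cap\mathfrak{a}(H)\subset\mathfrak{a}_\mathfrak{h}$, a single linear subspace rather than a union of Weyl translates (this is a second place where the symmetric hypothesis enters), and the span identity $\mathfrak{b}=\mathbb{R}\text{-}\mathrm{span}(\mathfrak{a}_+\cap\mathcal{H}^n(\mathfrak{g}))$ of Proposition \ref{prop:neutral_control} \eqref{item:neut_key}, verified case-by-case over Satake diagrams of simple real forms (not over symmetric pairs). Together they give the contrapositive: if every characteristic element lies in $\mathfrak{a}(H)$, then all lie in the subspace $\mathfrak{a}_\mathfrak{h}$, hence $\mathfrak{b}\subset\mathfrak{a}_\mathfrak{h}$ and $\mathfrak{b}_+\subset\mathfrak{a}(H)$, so no antipodal hyperbolic orbit avoids $\mathfrak{g}^c$. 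You need an antipodality-aware statement of this kind before any case analysis can succeed.
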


The implication \eqref{item:intro_SL2_proper} $\Rightarrow$ \eqref{item:intro_surface_some} $\Rightarrow$ \eqref{item:intro_non-virtually} is straightforward and easy.
The non-trivial part of Theorem \ref{thm:intro} is the implication \eqref{item:intro_non-virtually} $\Rightarrow$ \eqref{item:intro_SL2_proper}.

By using Theorem \ref{thm:intro}, we give a complete classification of semisimple symmetric spaces $G/H$ that admit a proper $SL(2,\mathbb{R})$-action.
As is clear for \eqref{item:intro_nilp} or \eqref{item:intro_anti_hyp} in Theorem \ref{thm:intro}, it is sufficient to work at the Lie algebra level.
Recall that the classification of semisimple symmetric pairs $(\mathfrak{g},\mathfrak{h})$ was accomplished by M.~Berger \cite{Berger57}.
Our second main theorem is to single out which symmetric pairs among his list satisfy the equivalent conditions in Theorem \ref{thm:intro}:

\begin{thm}\label{thm:cls_intro}
Suppose $G$ is a simple Lie group.
Then, the two conditions below on a symmetric pair $(G,H)$ are equivalent$:$
\begin{enumerate}
\item $(G,H)$ satisfies one of $($therefore, all of$)$ the equivalent conditions in Theorem $\ref{thm:intro}$.
\item The pair $(\mathfrak{g}, \mathfrak{h})$ belongs to Table $\ref{table:sl2-proper}$ in Appendix \ref{section:appendix}.
\end{enumerate}
\end{thm}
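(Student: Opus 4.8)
The strategy is to push everything through Theorem~\ref{thm:intro} and then run down Berger's list \cite{Berger57}. By the equivalence of \eqref{item:intro_SL2_proper} and \eqref{item:intro_nilp}, a symmetric pair $(G,H)$ satisfies the conditions of Theorem~\ref{thm:intro} if and only if there is a nonzero complex nilpotent orbit $\mathcal{O}^{G_\mathbb{C}}_\nilp$ in $\mathfrak{g}_\mathbb{C}$ meeting $\mathfrak{g}$ but disjoint from the $c$-dual $\mathfrak{g}^c$. This condition is determined by the pair of real forms $(\mathfrak{g},\mathfrak{g}^c)$ of $\mathfrak{g}_\mathbb{C}$, hence by $(\mathfrak g,\mathfrak h)$ alone; and by Berger's classification there are only finitely many simple symmetric pairs to examine. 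So the task is reduced to deciding, entry by entry, whether the collection of complex nilpotent orbits meeting $\mathfrak g$ is contained in the collection meeting $\mathfrak g^c$: if not, the pair belongs to Table~\ref{table:sl2-proper}; if so, it does not.

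To carry out this comparison I would use two ingredients. The first is the Dynkin--Kostant correspondence between complex nilpotent orbits and weighted Dynkin diagrams, together with the $\mathfrak{sl}_2$-triple picture from Jacobson--Morozov. The second is the criterion --- recalled in the earlier part of the paper --- for a complex nilpotent orbit to intersect a prescribed real form, phrased through that form's Satake diagram: the orbits meeting $\mathfrak g$ are encoded by those weighted Dynkin diagrams that descend to admissible labellings of the Satake diagram of $\mathfrak g$ (labels vanishing on the black nodes, invariant under the Satake involution, and satisfying the integrality conditions forced by an $\mathfrak{sl}_2$-triple whose neutral element is taken in a maximally split Cartan subspace $\mathfrak a$). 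Feeding both $\mathfrak g$ and $\mathfrak g^c$ into this criterion turns the problem into a direct comparison of the two Satake diagrams. I would also keep the reformulation via condition \eqref{item:intro_anti_hyp} at hand: antipodal hyperbolic orbits attached to nilpotent elements are parametrized by points of the closed dominant Weyl chamber of $\mathfrak a$, and for some Berger pairs it is cleaner to ask which such hyperbolic elements survive when one passes from $\mathfrak g$ to $\mathfrak g^c$.

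The verification itself I would organize in three layers. First, a reduction keeping the case-work finite: one shows it is enough to test a small, canonical family of candidate orbits --- for instance the minimal nonzero complex nilpotent orbit realizable in $\mathfrak g$ (the ``smallest real orbit''), and, when that one happens to meet $\mathfrak g^c$ as well, the next orbits up --- rather than enumerating the whole nilpotent poset for every pair. Second, the classical series: here $\mathfrak g$, $\mathfrak h$, and hence $\mathfrak g^c$ are concrete matrix algebras, complex nilpotent orbits are partitions and real orbits are partitions decorated with signature data, so the comparison reduces to a signature computation that can be done uniformly within each of the families $A$, $B$/$D$, $C$; a frequent shortcut is simply the inequality between $\rank_\mathbb{R}\mathfrak g$ and $\rank_\mathbb{R}\mathfrak g^c$, which already settles many rows --- in particular every row where $\mathfrak g^c$ is compact or of real rank zero satisfies the condition automatically. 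Third, the exceptional pairs: finitely many, treated by inspection against the standard tables of weighted Dynkin diagrams (Dynkin, Bala--Carter) and of Satake diagrams. Collecting the answers yields precisely Table~\ref{table:sl2-proper}.

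The step I expect to be the genuine obstacle is making the case analysis simultaneously complete and tractable: proving the reduction lemmas that confine attention to a handful of test orbits, identifying and correctly describing the real form $\mathfrak g^c$ for each Berger pair (it can be an unfamiliar real form, and the low-rank isomorphisms demand care), and pushing the Satake-diagram comparison through the exceptional cases. The Riemannian cases (where $\mathfrak h$ is a maximal compact subalgebra and $\mathfrak g^c$ is the compact or noncompact dual, according to the signature) and the group-manifold case $(\mathfrak g\oplus\mathfrak g,\,\Delta\mathfrak g)$ should fall out quickly and serve as checks on the bookkeeping.
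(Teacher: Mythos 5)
Your strategy is sound and shares the paper's basic toolkit (weighted Dynkin diagrams, the Satake-diagram ``matching'' criterion for meeting a real form, and a run through Berger's list), but you route the classification through the nilpotent-orbit condition \eqref{item:intro_nilp}, whereas the paper runs it through the antipodal hyperbolic condition \eqref{item:intro_anti_hyp} (condition \eqref{item:main_hyperbolic} of Theorem \ref{thm:main}); the paper explicitly notes your route as a viable alternative at the end of Section \ref{section:classifications} and uses it in Section \ref{section:refine_sl2proper_nilp}. The paper's choice buys two concrete simplifications that your version lacks. First, the antipodal hyperbolic orbits meeting a real form correspond to a \emph{convex cone} ($\mathfrak{b}_+$, resp.\ its analogue for $\mathfrak{g}^c$) cut out linearly by the involution $\iota$ and the Satake diagram, so the comparison between $\mathfrak{g}$ and $\mathfrak{g}^c$ is an inclusion of linear cones (as in Example \ref{ex:su42_sp21}) rather than an enumeration of Dynkin's finite but long nilpotent tables. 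Second, and more importantly, Corollary \ref{cor:Bhyp_in_without_ADE} shows that outside types $A_n$, $D_{2k+1}$, $E_6$ every hyperbolic orbit is antipodal, so there the condition collapses to $\rank_\mathbb{R}\mathfrak{g}>\rank_\mathbb{R}\mathfrak{h}$ by Fact \ref{fact:C-M}, and only three families of complexifications need genuine case analysis. Your proposed reduction to ``a handful of test orbits'' starting from the minimal real orbit is exactly the step you would have to prove and is the genuine labor in your approach; as written it is only a heuristic, and without it your procedure, while finite and correct, is much heavier. One small inaccuracy: the rank shortcut you invoke should compare $\rank_\mathbb{R}\mathfrak{g}$ with $\rank_\mathbb{R}\mathfrak{h}$ (the Calabi--Markus criterion), not with $\rank_\mathbb{R}\mathfrak{g}^c$; the latter comparison is not a criterion for any of the equivalent conditions. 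Your observation about compact $\mathfrak{g}^c$ is fine, since no nonzero nilpotent orbit meets a compact real form.
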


The existence problem for compact Clifford--Klein forms has been actively studied in the last two decades since 
Kobayashi's paper \cite{Kobayashi89}.
The properness 
criteria of Kobayashi and Benoist yield necessary conditions on $(G,H)$ for 
the existence  \cite{Benoist96,Kobayashi89}.
See also \cite{Kobayashi-Yoshino05,Labourie-Zimmer95,Margulis97,Oh-Witte02,Zimmer94} for some other methods for 
the existence problem of compact Clifford--Klein forms.
The recent developments on this topic can be found in \cite{Kobayashi02,Kobayashi09,Labourie96,Margulis00}.

We go back to semisimple symmetric pair $(G,H)$.
By Kobayashi's criterion \cite[Corollary 4.4]{Kobayashi89},
the Calabi--Markus phenomenon occurs for $G/H$ 
if and only if 
$\rank_\mathbb{R} \mathfrak{g} = \rank_\mathbb{R} \mathfrak{h}$ holds.
(see Fact \ref{fact:C-M} for more details).
In particular, $G/H$ does not admit compact Clifford--Klein forms 
in this case unless $G/H$ itself is compact.
In Section \ref{section:main_result}, 
we give the list, as Table \ref{table:non_sl2},
 of symmetric pair $(\mathfrak{g},\mathfrak{h})$ with simple $\mathfrak{g}$ 
which does not appear in Table \ref{table:sl2-proper} and $\rank_\mathbb{R} \mathfrak{g} > \rank_\mathbb{R} \mathfrak{h}$,
i.e.~$(\mathfrak{g},\mathfrak{h})$ does not satisfy 
the equivalent conditions in Theorem \ref{thm:intro} 
with $\rank_\mathbb{R} \mathfrak{g} > \rank_\mathbb{R} \mathfrak{h}$.
Apply a theorem of  Benoist \cite[Corollary 1]{Benoist96},
we see $G/H$ does not admit compact Clifford--Klein forms 
if $(\mathfrak{g},\mathfrak{h})$ is in Table \ref{table:non_sl2}
(see Corollary \ref{cor:no_cpt_CK_form}).
In this table,
we find some ``new'' examples of semisimple symmetric spaces $G/H$
that do not admit compact Clifford--Klein forms,
for which we can not find in the existing literature as follows:

\newpage
\begin{center} 
\begin{longtable}{ll} \toprule
$\mathfrak{g}$ & $\mathfrak{h}$ \\ \midrule
$\mathfrak{su}^*(4m+2)$ & $\mathfrak{sp}(m+1,m)$ \\ \hline
$\mathfrak{su}^*(4m)$ & $\mathfrak{sp}(m,m)$ \\ \hline
$\mathfrak{e}_{6(6)}$ & $\mathfrak{f}_{4(4)}$ \\ \hline 
$\mathfrak{e}_{6(-26)}$ & $\mathfrak{sp}(3,1)$ \\ \hline
$\mathfrak{e}_{6(-26)}$ & $\mathfrak{f}_{4(-20)}$ \\ \hline
$\mathfrak{so}(4m+2,\mathbb{C})$ & $\mathfrak{so}(2m+2,2m)$ \\ \hline
$\mathfrak{e}_{6,\mathbb{C}}$ & $\mathfrak{e}_{6(2)}$ \\
\bottomrule
\caption{Examples of $G/H$ without compact Clifford--Klein forms}
\label{table:new_example_for_non_cocompact}
\end{longtable}
\end{center}

We remark that Table $\ref{table:new_example_for_non_cocompact}$ is the list of symmetric pairs  $(\mathfrak{g},\mathfrak{h})$ in Table $\ref{table:non_sl2}$ which are neither in Benoist's examples \cite[Example~1]{Benoist96} nor in Kobayashi's examples \cite[Example 1.7, Table 4.4]{Kobayashi92a}, \cite[Table 5.18]{Kobayashi97disc}. 

The proof of the non-trivial implication 
\eqref{item:intro_non-virtually} $\Rightarrow$ \eqref{item:intro_SL2_proper} in Theorem \ref{thm:intro} is given by reducing it to an equivalent assertion on complex adjoint orbits, namely, \eqref{item:intro_anti_hyp} $\Rightarrow$ \eqref{item:intro_nilp}.
The last implication is proved by using the Dynkin--Kostant classification of $\mathfrak{sl}_2$-triples (equivalently, complex nilpotent orbits) in $\mathfrak{g}_\mathbb{C}$. 
We note that the proof does not need Berger's classification of semisimple symmetric pairs.

The reduction from 
\eqref{item:intro_non-virtually} $\Rightarrow$ \eqref{item:intro_SL2_proper} to \eqref{item:intro_anti_hyp} $\Rightarrow$ \eqref{item:intro_nilp} in Theorem \ref{thm:intro} is given by proving \eqref{item:intro_SL2_proper} $\Leftrightarrow$ \eqref{item:intro_nilp} and \eqref{item:intro_non-virtually} $\Leftrightarrow$ \eqref{item:intro_anti_hyp} as follows.
We show the equivalence \eqref{item:intro_SL2_proper} $\Leftrightarrow$ \eqref{item:intro_nilp} by combining Kobayashi's properness criterion \cite{Kobayashi89} and a result of J.~Sekiguchi for real nilpotent orbits in \cite{Sekiguchi87} with some observations on complexifications of real hyperbolic orbits. 
The equivalence \eqref{item:intro_non-virtually} $\Leftrightarrow$ \eqref{item:intro_anti_hyp} is obtained from Benoist's criterion \cite{Benoist96}.

As a refinement of the equivalence \eqref{item:intro_SL2_proper} $\Leftrightarrow$ \eqref{item:intro_nilp} 
in Theorem \ref{thm:intro},
we give a bijection between real nilpotent orbits $\mathcal{O}^{G}_\nilp$ in $\mathfrak{g}$ such that the complexifications of $\mathcal{O}^G_\nilp$ do not intersect the another real form $\mathfrak{g}^c$
and Lie group homomorphisms $\Phi : SL(2,\mathbb{R}) \rightarrow G$ for which the $SL(2,\mathbb{R})$-actions on $G/H$ via $\Phi$ are proper, up to inner automorphisms of $G$ (Theorem \ref{thm:sl2_and_real_nilpotent}).

Concerning the proof of Theorem \ref{thm:cls_intro}, 
for a given semisimple symmetric pair $(\mathfrak{g},\mathfrak{h})$,
we give an algorithm to check whether or not the condition \eqref{item:intro_anti_hyp} in Theorem \ref{thm:intro} holds, by using Satake diagrams of $\mathfrak{g}$ and $\mathfrak{g}^c$. 

The paper is organized as follows.
In Section \ref{section:main_result}, we set up notation and state our main theorems.
The next section contains a brief summary of Kobayashi's properness criterion \cite{Kobayashi89} and Benoist's criterion \cite{Benoist96} as preliminary results.
We prove Theorem \ref{thm:intro} in Section \ref{section:outline_of_proof}. 
The proof is based on some theorems, propositions and lemmas which are proved in Section \ref{section:hyp-orbit_and_proper_actions} to Section \ref{section:Symmetric_pair} (see Section \ref{section:outline_of_proof} for more details).
Section \ref{section:classifications} is about the algorithm for our classification.
The last section establishes the relation between proper $SL(2,\mathbb{R})$-actions on $G/H$ and real nilpotent orbits in $\mathfrak{g}$.

The main results of this paper were announced in \cite{Okuda11_proc} with a sketch of the proofs.

\section*{Acknowledgements.}
The author would like to give heartfelt thanks to Prof. Toshiyuki Kobayashi, whose suggestions were of inestimable value for this paper.

\section{Main results}\label{section:main_result}

Throughout this paper, we shall work in the following:
\begin{setting}\label{setting:main}
$G$ is a connected linear semisimple Lie group, 
$\sigma$ is an involutive automorphism on $G$,
and $H$ is an open subgroup of $G^\sigma:=\{\, g \in G \mid \sigma g=g \,\}$.
\end{setting}

This setting implies that $G/H$ carries a pseudo-Riemannian structure $g$ for which $G$ acts as isometries and $G/H$ becomes a symmetric space with respect to the Levi-Civita connection.
We call $(G,H)$ a semisimple symmetric pair.
Note that $g$ is positive definite, namely $(G/H,g)$ is Riemannian, if and only if $H$ is compact. 

Since $G$ is a connected linear Lie group, we can take a connected complexification, denoted by $G_\mathbb{C}$, of $G$.
We write $\mathfrak{g}_\mathbb{C}$, $\mathfrak{g}$ and $\mathfrak{h}$ for Lie algebras of $G_\mathbb{C}$, $G$ and $H$, respectively.
The differential action of $\sigma$ on $\mathfrak{g}$ will be denoted by the same letter $\sigma$.
Then $\mathfrak{h} = \{\, X \in \mathfrak{g} \mid \sigma X = X \,\}$, 
and we also call $(\mathfrak{g},\mathfrak{h})$ a semisimple symmetric pair.
Let us denote by $\mathfrak{q} := \{\, X \in \mathfrak{g} \mid \sigma X =-X \,\}$, 
and write the $c$-dual of $(\mathfrak{g},\mathfrak{h})$ for 
\begin{align}
\mathfrak{g}^c := \mathfrak{h} + \sqrt{-1}\mathfrak{q}. \label{eq:c-dual}
\end{align}
Then both $\mathfrak{g}$ and $\mathfrak{g}^c$ are real forms of $\mathfrak{g}_\mathbb{C}$.
We note that the complex conjugation corresponding to $\mathfrak{g}^c$ on $\mathfrak{g}_\mathbb{C}$ is the anti $\mathbb{C}$-linear extension of $\sigma$ on $\mathfrak{g}_\mathbb{C}$, and the semisimple symmetric pair $(\mathfrak{g}^c,\mathfrak{h})$ is the same as $(\mathfrak{g},\mathfrak{h})^{ada}$ (which coincides with $(\mathfrak{g},\mathfrak{h})^{dad}$; see \cite[Section 1]{Oshima-Sekiguchi84} for the notation).

For an abstract group $\Gamma$ with discrete topology,
we say that \textit{$G/H$ admits $\Gamma$ as a discontinuous group} 
if there exists a group homomorphism $\rho:\Gamma \rightarrow G$ such that $\Gamma$ acts properly discontinuously and freely on $G/H$ via $\rho$ 
(then $\rho$ is injective and $\rho(\Gamma)$ is discrete in $G$, automatically).
For such $\Gamma$-action on $G/H$, 
the double coset space $\Gamma \backslash G/H$, 
which is called a \textit{Clifford--Klein form of $G/H$}, becomes a $C^\infty$-manifold such that 
the quotient map
\[
G/H \rightarrow \rho(\Gamma) \backslash G/H
\]
is a $C^\infty$-covering.
In our context, the freeness of the action is less important than the properness of it (see \cite[Section 5]{Kobayashi89} for more details).

Here is the first main result:

\begin{thm}\label{thm:main}
In Setting $\ref{setting:main}$, the following ten conditions on a semisimple symmetric pair $(G,H)$ are equivalent$:$
\begin{enumerate}
\item \label{item:main_sl2-proper} There exists a Lie group homeomorphism $\Phi : SL(2,\mathbb{R}) \rightarrow G$ such that $SL(2,\mathbb{R})$ acts properly on $G/H$ via $\Phi$.
\item \label{item:main_any_surface_group} For any $g \geq 2$, the symmetric space $G/H$ admits the surface group $\pi_1(\Sigma_g)$ as a discontinuous group, where $\Sigma_g$ is a closed Riemann surface of genus $g$.
\item \label{item:main_some_surface_group} For some $g \geq 2$, the symmetric space $G/H$ admits the surface group $\pi_1(\Sigma_g)$ as a discontinuous group.
\item \label{item:main_non-virtually_abelian} $G/H$ admits an infinite discontinuous group $\Gamma$ which is not virtually abelian $($i.e. $\Gamma$ has no abelian subgroup of finite index$)$.
\item \label{item:main_unipotent_discontinuous} $G/H$ admits a discontinuous group which is a free group generated by a unipotent element in $G$. 
\item\label{item:main_nilpotent} There exists a complex nilpotent adjoint orbit $\mathcal{O}^{G_\mathbb{C}}_\nilp$ of $G_\mathbb{C}$ in $\mathfrak{g}_\mathbb{C}$ such that $\mathcal{O}^{G_\mathbb{C}}_\nilp \cap \mathfrak{g} \neq \emptyset$ and $\mathcal{O}^{G_\mathbb{C}}_\nilp \cap \mathfrak{g}^c = \emptyset$.
\item \label{item:main_realhyperbolic} There exists a real antipodal hyperbolic adjoint orbit $\mathcal{O}^{G}_\hyp$ of $G$ in $\mathfrak{g}$ $($defined below$)$ such that $\mathcal{O}^{G}_\hyp \cap \mathfrak{h} = \emptyset$. 
\item \label{item:main_hyperbolic} There exists a complex antipodal hyperbolic adjoint orbit $\mathcal{O}^{G_\mathbb{C}}_\hyp$ of $G_\mathbb{C}$ in $\mathfrak{g}_\mathbb{C}$ such that $\mathcal{O}^{G_\mathbb{C}}_\hyp \cap \mathfrak{g} \neq \emptyset$ and $\mathcal{O}^{G_\mathbb{C}}_\hyp \cap \mathfrak{g}^c = \emptyset$. 
\item \label{item:main_sl2-triple_R} There exists an $\mathfrak{sl}_2$-triple $(A,X,Y)$ in $\mathfrak{g}$ $($i.e.\ $A,X,Y \in \mathfrak{g}$ with $[A,X] =2X$, $[A,Y] = -2Y$ and $[X,Y] = A$$)$ such that $\mathcal{O}^{G}_A \cap \mathfrak{h} = \emptyset$, where $\mathcal{O}^{G}_A$ is the real adjoint orbit through $A$ of $G$ in $\mathfrak{g}$.
\item \label{item:main_sl2-triple} There exists an $\mathfrak{sl}_2$-triple $(A,X,Y)$ in $\mathfrak{g}_\mathbb{C}$ such that $\mathcal{O}^{G_\mathbb{C}}_A \cap \mathfrak{g} \neq \emptyset$ and $\mathcal{O}^{G_\mathbb{C}}_A \cap \mathfrak{g}^c = \emptyset$, where $\mathcal{O}^{G_\mathbb{C}}_A$ is the complex adjoint orbit through $A$ of $G_\mathbb{C}$ in $\mathfrak{g}_\mathbb{C}$.

\end{enumerate}
\end{thm}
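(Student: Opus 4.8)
The plan is to divide the ten conditions into two families that each collapse by relatively soft arguments, to link the families by one easy chain, and then to reduce everything to a single hard implication. The first family consists of \eqref{item:main_sl2-proper}, \eqref{item:main_unipotent_discontinuous}, \eqref{item:main_nilpotent}, \eqref{item:main_sl2-triple_R}, \eqref{item:main_sl2-triple}, the second of \eqref{item:main_non-virtually_abelian}, \eqref{item:main_realhyperbolic}, \eqref{item:main_hyperbolic}, and the two surface-group statements are inserted through the easy chain \eqref{item:main_sl2-proper} $\Rightarrow$ \eqref{item:main_any_surface_group} $\Rightarrow$ \eqref{item:main_some_surface_group} $\Rightarrow$ \eqref{item:main_non-virtually_abelian}. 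Here the first arrow holds because every $\pi_1(\Sigma_g)$ with $g \ge 2$ is isomorphic to a torsion-free cocompact lattice of $SL(2,\mathbb{R})$ (lift a Fuchsian group from $PSL(2,\mathbb{R})$; the lifting obstruction vanishes because the relevant Euler number is even), properness of the $SL(2,\mathbb{R})$-action restricts to proper discontinuity of the lattice, and compactness of the point stabilizers of a proper action together with torsion-freeness makes the action free; the middle arrow is trivial; and the last holds because a surface group of genus $\ge 2$ is not virtually abelian. The same restriction argument, applied to the cyclic group generated by a nontrivial unipotent element of $SL(2,\mathbb{R})$, gives \eqref{item:main_sl2-proper} $\Rightarrow$ \eqref{item:main_unipotent_discontinuous}.

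For the first family I would invoke Kobayashi's properness criterion, recalled in the next section: the $SL(2,\mathbb{R})$-action on $G/H$ via $\Phi$ is proper if and only if the adjoint orbit $\mathcal{O}^{G}_{A}$ through the characteristic element $A = d\Phi(\mathrm{diag}(1,-1)) \in \mathfrak{g}$ misses $\mathfrak{h}$. Since $\mathfrak{sl}_2$-triples in $\mathfrak{g}$ correspond to such homomorphisms (up to a central quotient irrelevant for properness), this already gives \eqref{item:main_sl2-proper} $\Leftrightarrow$ \eqref{item:main_sl2-triple_R}. By the Jacobson--Morozov theorem and Kostant's uniqueness, complex nilpotent orbits are in bijection with $G_{\mathbb{C}}$-conjugacy classes of $\mathfrak{sl}_2$-triples in $\mathfrak{g}_{\mathbb{C}}$, the nilpositive element and the characteristic element $A$ determining one another; $A$ is hyperbolic, and the Weyl element of the $\mathfrak{sl}_2$ conjugates $A$ to $-A$, so $\mathcal{O}^{G_{\mathbb{C}}}_{A}$ is an antipodal hyperbolic orbit. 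Sekiguchi's results supply the real counterpart of this bijection, and, combined with the elementary fact that a hyperbolic $G$-orbit in $\mathfrak{g}$ meets $\mathfrak{h}$ exactly when its complexification meets $\mathfrak{g}^{c}$ (compared by choosing maximal $\mathbb{R}$-split abelian subspaces adapted to $\sigma$), this yields \eqref{item:main_sl2-triple_R} $\Leftrightarrow$ \eqref{item:main_sl2-triple} $\Leftrightarrow$ \eqref{item:main_nilpotent}. Finally, the Cartan projection of the powers of a unipotent element $u = \exp X$ stays within bounded distance of a ray in the direction of the characteristic element of $X$, so Kobayashi's criterion shows that $\langle u \rangle$ acts properly discontinuously on $G/H$ precisely when $\exp(\mathbb{R}A)$, hence all of $SL(2,\mathbb{R})$, acts properly; this closes \eqref{item:main_unipotent_discontinuous} $\Leftrightarrow$ \eqref{item:main_sl2-proper}.

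For the second family, \eqref{item:main_non-virtually_abelian} $\Leftrightarrow$ \eqref{item:main_hyperbolic} comes from Benoist's criterion for the existence of infinite non-virtually-abelian discontinuous groups (also recalled in the next section): unwinding it in terms of the $c$-dual $\mathfrak{g}^{c} = \mathfrak{h} + \sqrt{-1}\,\mathfrak{q}$ produces exactly an antipodal hyperbolic orbit in $\mathfrak{g}_{\mathbb{C}}$ meeting $\mathfrak{g}$ but not $\mathfrak{g}^{c}$, and \eqref{item:main_realhyperbolic} $\Leftrightarrow$ \eqref{item:main_hyperbolic} is again the real-versus-complex comparison of hyperbolic orbits. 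At this stage \eqref{item:main_nilpotent} $\Rightarrow$ \eqref{item:main_hyperbolic} is immediate: pass from a nilpotent element to its characteristic element and transfer the two intersection conditions, using that all characteristic elements of a fixed complex nilpotent orbit are conjugate and that Jacobson--Morozov applies inside any real form containing one of them. Hence every condition in the first family implies every one in the second, the two families are internally equivalent, and they are joined by the surface-group chain.

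It remains to prove \eqref{item:main_hyperbolic} $\Rightarrow$ \eqref{item:main_nilpotent}, and I expect this to be the main obstacle. Through the equivalences above it says that if the open $\Ad(G)$-invariant cone of hyperbolic directions $A \in \mathfrak{g}$ for which $\exp(\mathbb{R}A)$ acts properly on $G/H$ (equivalently, $\mathcal{O}^{G}_{A} \cap \mathfrak{h} = \emptyset$) is non-empty, then it must contain the characteristic element of some $\mathfrak{sl}_2$-triple whose complex orbit still avoids $\mathfrak{g}^{c}$. Since the characteristic elements realize only a discrete set of directions (classified by weighted Dynkin diagrams), this cannot be obtained by perturbing a general hyperbolic element, nor, as the example of the Richardson orbit attached to the parabolic defined by $A$ shows, by any single universal replacement; one must descend to the Dynkin--Kostant classification of $\mathfrak{sl}_2$-triples in $\mathfrak{g}_{\mathbb{C}}$ and locate a suitable triple inside the cone explicitly. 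The delicate part, and the reason the argument cannot remain at the level of soft properness statements, is tracking the condition $\mathcal{O}^{G_{\mathbb{C}}} \cap \mathfrak{g}^{c} = \emptyset$ through this replacement, which forces a comparison of the $\sigma$-invariant restricted root data of $\mathfrak{g}$ and $\mathfrak{g}^{c}$. Once this implication is in place the cycle closes and all ten conditions are equivalent.
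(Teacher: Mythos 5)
Your organization of the ten conditions matches the paper's: the chain \eqref{item:main_sl2-proper} $\Rightarrow$ \eqref{item:main_any_surface_group} $\Rightarrow$ \eqref{item:main_some_surface_group} $\Rightarrow$ \eqref{item:main_non-virtually_abelian}, the equivalences inside the ``$\mathfrak{sl}_2$/nilpotent'' family via Kobayashi's properness criterion, Jacobson--Morozov/Kostant, Sekiguchi, and the real-versus-complex comparison of hyperbolic orbits, and the equivalences inside the ``antipodal hyperbolic'' family via Benoist's criterion are all handled essentially as in the paper, and those steps are sound. The genuine gap is the one implication you yourself flag as the main obstacle, \eqref{item:main_hyperbolic} $\Rightarrow$ \eqref{item:main_sl2-triple} (equivalently \eqref{item:main_realhyperbolic} $\Rightarrow$ \eqref{item:main_sl2-triple_R}): you describe what would have to be done but do not do it, and your restatement of what remains is actually false. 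You phrase it as: if the cone of hyperbolic $A \in \mathfrak{g}$ with $\mathcal{O}^G_A \cap \mathfrak{h} = \emptyset$ is non-empty, then it contains the characteristic element of an $\mathfrak{sl}_2$-triple. Non-emptiness of that cone is just the rank condition $\rank_\mathbb{R}\mathfrak{g} > \rank_\mathbb{R}\mathfrak{h}$ of Fact \ref{fact:C-M}, and the conclusion then fails: every pair in Table \ref{table:non_sl2}, e.g.\ $(\mathfrak{su}^*(6),\mathfrak{sp}(2,1))$, is a counterexample. The hypothesis that must actually be used is that some \emph{antipodal} hyperbolic orbit misses $\mathfrak{h}$, and your sketch never engages with where the antipodality enters.

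The paper's mechanism, absent from your proposal, is a contrapositive span argument rather than an explicit search for a triple ``inside the cone''. One proves (Proposition \ref{prop:neutral_control}) that $\mathfrak{b} = \mathbb{R}\text{-}\mathrm{span}(\mathfrak{a}_+ \cap \mathcal{H}^n(\mathfrak{g}))$ --- the $-w_0$-fixed subspace of $\mathfrak{a}$ is spanned by the characteristic elements lying in $\mathfrak{a}_+$, a fact verified case-by-case from the Dynkin--Kostant tables against the Satake diagram of $\mathfrak{g}$ --- together with the symmetric-pair statement (Lemma \ref{lem:a_h}) that any hyperbolic element of $\mathfrak{a}_+$ whose orbit meets $\mathfrak{h}$ already lies in $\mathfrak{a}_\mathfrak{h}$. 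If every characteristic element's orbit met $\mathfrak{h}$, these two facts would force $\mathfrak{b} \subset \mathfrak{a}_\mathfrak{h}$, hence $\mathcal{H}^a(\mathfrak{g}) = \Ad(G)\cdot\mathfrak{b}_+ \subset \mathcal{H}_\mathfrak{h}(\mathfrak{g})$, contradicting \eqref{item:main_realhyperbolic}. Without a statement of this kind (or an equally concrete substitute that tracks the antipodality), the cycle of implications does not close and the theorem is not proved.
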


Theorem \ref{thm:intro} is a part of this theorem.

The definitions of \textit{hyperbolic} orbits and \textit{antipodal} orbits are given here:

\begin{defn}\label{defn:hyp_anti}
Let $\mathfrak{g}$ be a complex or real semisimple Lie algebra.
An element $X$ of $\mathfrak{g}$ is said to be hyperbolic if the endomorphism $\ad_\mathfrak{g}(X) \in \End(\mathfrak{g})$ is diagonalizable with only real eigenvalues.
We say that an adjoint orbit $\mathcal{O}$ in $\mathfrak{g}$ is hyperbolic if any $($or some$)$ element in $\mathcal{O}$ is hyperbolic.
Moreover, an adjoint orbit $\mathcal{O}$ in $\mathfrak{g}$ is said to be antipodal if for any $($or some$)$ element $X$ in $\mathcal{O}$, the element $-X$ is also in $\mathcal{O}$.
\end{defn}

A proof of Theorem \ref{thm:main} will be given in Section \ref{section:outline_of_proof}.
Here is a short remark on it. 
In \eqref{item:main_sl2-proper} $\Rightarrow$ \eqref{item:main_sl2-triple_R}, the homomorphism $\Phi$ associates an $\mathfrak{sl}_2$-triples $(A,X,Y)$ by the differential of $\Phi$ (see Section \ref{subsection:proof_of_sl2-proper_triple}).
The complex adjoint orbits in \eqref{item:main_hyperbolic} and \eqref{item:main_sl2-triple} are obtained by the complexification of the real adjoint orbits in \eqref{item:main_realhyperbolic} and \eqref{item:main_sl2-triple_R}, respectively (see Section \ref{subsection:proof_main_complexification}).
In \eqref{item:main_sl2-triple} $\Rightarrow$ \eqref{item:main_nilpotent}, 
the $\mathfrak{sl}_2$-triple $(A,X,Y)$ in \eqref{item:main_sl2-triple} associates a complex nilpotent orbit in \eqref{item:main_nilpotent} by $\mathcal{O}^{G_\mathbb{C}}_X := \Ad(G_\mathbb{C}) \cdot X$ (see Section \ref{subsection:proof_sl2_nilp}).
The implication \eqref{item:main_sl2-proper} $\Rightarrow$ \eqref{item:main_any_surface_group} is obvious if we take $\pi_1(\Sigma_g)$ inside $SL(2,\mathbb{R})$.
The equivalence \eqref{item:main_non-virtually_abelian} $\Leftrightarrow$ \eqref{item:main_realhyperbolic} is a kind of paraphrase of Benoist's criterion \cite[Theorem 1.1]{Benoist96} on symmetric spaces (see Section \ref{subsection:proof_main_benoist}).
The key ingredient of Theorem $\ref{thm:main}$ is the implication \eqref{item:main_some_surface_group} $\Rightarrow$ \eqref{item:main_sl2-proper}.
We will reduce it to the implication \eqref{item:main_hyperbolic} $\Rightarrow$ \eqref{item:main_sl2-triple}.
The condition \eqref{item:main_hyperbolic} will be used for a classification of $(G,H)$ satisfying the equivalence conditions in Theorem \ref{thm:main} $($see Section $\ref{section:classifications}$$)$.

\begin{rem}
\begin{description} 
\item[(1)] K.~Teduka \cite{Teduka08cpx_symm} gave a list of $(G,H)$ satisfying the condition $\eqref{item:main_sl2-proper}$ in Theorem $\ref{thm:main}$ in the special cases 
where $(\mathfrak{g},\mathfrak{h})$ is a complex symmetric pair.
He also studied proper $SL(2,\mathbb{R})$-actions on some non-symmetric spaces in \cite{Teduka08sln}.
\item[(2)] Y.~Benoist \cite[Theorem 1.1]{Benoist96} proved a criterion for the condition \eqref{item:main_non-virtually_abelian} in a more general setting, than we treat here.
\item[(3)] The following condition on a semisimple symmetric pair $(G,H)$ is weaker than the equivalent conditions in Theorem $\ref{thm:main}$$:$
\begin{itemize}
\item There exists a real nilpotent adjoint orbit $\mathcal{O}^{G}_\nilp$ of $G$ in $\mathfrak{g}$ such that $\mathcal{O}^{G}_\nilp \cap \mathfrak{h} = \emptyset$.
\end{itemize}
\end{description}
\end{rem}

For a discrete subgroup $\Gamma$ of $G$, 
we say that a Clifford--Klein form $\Gamma \backslash G/H$ is \textit{standard} if $\Gamma$ is contained in closed reductive subgroup $L$ of $G$ (see Definition \ref{defn:reductive_in}) acting properly on $G/H$ (see \cite{Kassel-Kobayashi11}), and is \textit{nonstandard} if not.
See \cite{Kassel09} for an example of a Zariski-dense discontinuous group $\Gamma$ for $G/H$, which gives a nonstandard Clifford--Klein form.
We obtain the following corollary to the equivalence \eqref{item:main_sl2-proper} $\Leftrightarrow$ \eqref{item:main_some_surface_group} in Theorem \ref{thm:main}.

\begin{cor}\label{cor:standard}
Let $g \geq 2$.
Then, in Setting $\ref{setting:main}$, 
the symmetric space $G/H$ admits the surface group $\pi_1(\Sigma_g)$ as a discontinuous group if and only if there exists a discrete subgroup $\Gamma$ of $G$ such that $\Gamma \simeq \pi_1(\Sigma_g)$ and $\Gamma \backslash G/H$ is standard.
\end{cor}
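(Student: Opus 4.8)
The plan is to read off the corollary from the equivalence \eqref{item:main_sl2-proper} $\Leftrightarrow$ \eqref{item:main_some_surface_group} of Theorem \ref{thm:main}; the only additional point is to connect the homomorphism $\Phi$ appearing in \eqref{item:main_sl2-proper} with the notion of a standard Clifford--Klein form.

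The ``if'' implication is immediate: if $\Gamma \backslash G/H$ is standard with $\Gamma \simeq \pi_1(\Sigma_g)$, then $\Gamma$ is a discrete subgroup of a closed reductive subgroup $L \subseteq G$ acting properly on $G/H$; since $L$ is closed in $G$, the group $\Gamma$ is discrete in $L$, so it acts properly discontinuously on $G/H$, and as $\pi_1(\Sigma_g)$ is torsion-free every stabilizer (which is finite by proper discontinuity) is trivial, so the action is free. Hence $G/H$ admits $\pi_1(\Sigma_g)$ as a discontinuous group.

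For the converse I would argue as follows. If $G/H$ admits $\pi_1(\Sigma_g)$ as a discontinuous group, then condition \eqref{item:main_some_surface_group} of Theorem \ref{thm:main} holds, so by that theorem there is a Lie group homomorphism $\Phi \colon SL(2,\mathbb{R}) \to G$ through which $SL(2,\mathbb{R})$ acts properly on $G/H$. Fix a discrete subgroup $\Gamma_0 \subseteq SL(2,\mathbb{R})$ with $\Gamma_0 \simeq \pi_1(\Sigma_g)$ (available for $g \geq 2$, by lifting a cocompact Fuchsian surface group from $PSL(2,\mathbb{R})$, the lifting obstruction vanishing since the Euler number $2-2g$ is even), and set $L := \Phi(SL(2,\mathbb{R}))$ and $\Gamma := \Phi(\Gamma_0)$. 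The kernel of $\Phi$ is a proper, hence central, normal subgroup of $SL(2,\mathbb{R})$, so $\ker\Phi \subseteq \{\pm I\}$; since $\Gamma_0$ is torsion-free it meets $\ker\Phi$ trivially, so $\Phi|_{\Gamma_0}$ is injective and $\Gamma \simeq \pi_1(\Sigma_g)$. It then remains to check that $L$ is a closed reductive subgroup of $G$ (in the sense of Definition \ref{defn:reductive_in}) acting properly on $G/H$, and that $\Gamma$ is discrete in $G$; granting this, $\Gamma$ acts properly discontinuously (being a discrete subgroup of the properly acting $L$) and freely (being torsion-free), so $\Gamma \backslash G/H$ is a standard Clifford--Klein form with $\Gamma \simeq \pi_1(\Sigma_g)$, as required.

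The step I expect to be the main obstacle is precisely the closedness and reductivity of $L$. For closedness I would use the Cartan decomposition $SL(2,\mathbb{R}) = KAK$, which gives $L = \Phi(K)\Phi(A)\Phi(K)$: here $\Phi(K)$ is compact, while $\Phi(A) = \exp_G\bigl(d\Phi(\mathfrak{a})\bigr)$, with $\mathfrak{a}$ the Lie algebra of $A$, is closed because $d\Phi(\mathfrak{a})$ is a line in $\mathfrak{g}$ spanned by a hyperbolic element — the image of $\mathrm{diag}(1,-1)$, which acts semisimply with integer eigenvalues on every $\mathfrak{sl}_2$-submodule of $\mathfrak{g}$ — hence lies in a maximal split abelian subspace of $\mathfrak{g}$, on which $\exp_G$ restricts to a closed embedding; a product of a compact set and a closed set is closed. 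For reductivity, $\mathfrak{l} := d\Phi(\mathfrak{sl}(2,\mathbb{R})) \cong \mathfrak{sl}(2,\mathbb{R})$ is semisimple and, being an $\mathfrak{sl}_2$-subalgebra, reductive in $\mathfrak{g}$; after conjugating $\Phi$ one may assume $\mathfrak{l}$ is stable under a Cartan involution of $\mathfrak{g}$, so that $L$ is reductive in $G$ in the required sense. Properness of the $L$-action on $G/H$ follows from properness of the $SL(2,\mathbb{R})$-action together with compactness of $\ker\Phi$, and $\Gamma = \Phi(\Gamma_0)$ is discrete because $\Phi$ is a proper map (its kernel being compact), so $\Phi(\Gamma_0)$ is a closed and countable — hence discrete — subgroup of $L$, which is closed in $G$. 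Assembling these observations completes the proof.
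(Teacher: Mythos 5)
Your proposal is correct and follows essentially the route the paper intends: the corollary is read off from the equivalence \eqref{item:main_sl2-proper} $\Leftrightarrow$ \eqref{item:main_some_surface_group} in Theorem \ref{thm:main}, taking $\Gamma = \Phi(\Gamma_0)$ for a lifted cocompact Fuchsian group $\Gamma_0 \subset SL(2,\mathbb{R})$, with $L = \Phi(SL(2,\mathbb{R}))$ closed and reductive in $G$ exactly as in Example \ref{ex:ss_is_red} (via Mostow and Yosida, so your hands-on $KAK$ argument for closedness is not needed) and the finiteness of $\Ker\Phi$ transferring properness and discreteness as in the proof of Corollary \ref{cor:proper_condition_for_alg}.
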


Theorem \ref{thm:main} may be compared with the fact below for proper actions by the abelian group $\mathbb{R}$ consisting of hyperbolic elements:

\begin{fact}[Criterion for the Calabi--Markus phenomenon]\label{fact:C-M}
In Setting $\ref{setting:main}$, the following seven conditions on a semisimple symmetric pair $(G,H)$ are equivalent$:$
\begin{enumerate}
\item \label{item:C-M_proper_of_R} There exists a Lie group homomorphism $\Phi:\mathbb{R} \rightarrow G$ such that $\mathbb{R}$ acts properly on $G/H$ via $\Phi$.
\item \label{item:C-M_Z} $G/H$ admits the abelian group $\mathbb{Z}$ as a discontinuous group.
\item \label{item:C-M_inf_disconti} $G/H$ admits an infinite discontinuous group.
\item \label{item:C-M_hyp_free_gp} $G/H$ admits a discontinuous group which is a free group generated by a hyperbolic element in $G$. 
\item \label{item:C-M_split_rank} $\rank_\mathbb{R} \mathfrak{g} > \rank_\mathbb{R} \mathfrak{h}$.
\item \label{item:C-M_real_hyp_orbits} There exists a real hyperbolic adjoint orbit $\mathcal{O}^{G}_{\hyp}$ of $G$ in $\mathfrak{g}$ such that $\mathcal{O}^{G}_\hyp \cap \mathfrak{h} = \emptyset$.
\item \label{item:C-M_cpx_hyp_orbits} There exists a complex hyperbolic adjoint orbit $\mathcal{O}^{G_\mathbb{C}}_{\hyp}$ of $G_\mathbb{C}$ in $\mathfrak{g}_\mathbb{C}$ such that $\mathcal{O}^{G_\mathbb{C}}_\hyp \cap \mathfrak{g} \neq \emptyset$ and $\mathcal{O}^{G_\mathbb{C}}_\hyp \cap \mathfrak{g}^c = \emptyset$.
\end{enumerate}
\end{fact}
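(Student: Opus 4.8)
The plan is to organise the seven conditions around \eqref{item:C-M_split_rank}, the rank inequality $\rank_\mathbb{R}\mathfrak{g} > \rank_\mathbb{R}\mathfrak{h}$: the block \eqref{item:C-M_proper_of_R}--\eqref{item:C-M_hyp_free_gp}, \eqref{item:C-M_split_rank} is the classical Calabi--Markus criterion of Kobayashi \cite[Corollary~4.4]{Kobayashi89}, and the two orbit-theoretic reformulations \eqref{item:C-M_real_hyp_orbits}, \eqref{item:C-M_cpx_hyp_orbits} are then grafted on. Throughout I fix a Cartan involution $\theta$ of $\mathfrak{g}$ commuting with $\sigma$, write $\mathfrak{g} = \mathfrak{k}\oplus\mathfrak{p}$, and choose a maximal abelian subspace $\mathfrak{a}_\mathfrak{h}$ of $\mathfrak{p}\cap\mathfrak{h}$ extended to a maximal abelian subspace $\mathfrak{a}$ of $\mathfrak{p}$; since $\theta$ preserves $\mathfrak{h}$, one has $\dim\mathfrak{a} = \rank_\mathbb{R}\mathfrak{g}$ and $\dim\mathfrak{a}_\mathfrak{h} = \rank_\mathbb{R}\mathfrak{h}$. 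Let $W = W(\mathfrak{g},\mathfrak{a})$ and let $\mathfrak{a}_+$ be a closed Weyl chamber.

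For the classical block I would close the cycle \eqref{item:C-M_split_rank} $\Rightarrow$ \eqref{item:C-M_hyp_free_gp} $\Rightarrow$ \eqref{item:C-M_inf_disconti} $\Rightarrow$ \eqref{item:C-M_split_rank}, together with \eqref{item:C-M_split_rank} $\Rightarrow$ \eqref{item:C-M_proper_of_R} $\Rightarrow$ \eqref{item:C-M_Z} $\Rightarrow$ \eqref{item:C-M_inf_disconti}. When $\dim\mathfrak{a} > \dim\mathfrak{a}_\mathfrak{h}$ the set $W\cdot\mathfrak{a}_\mathfrak{h}$ is a finite union of proper linear subspaces of $\mathfrak{a}$, so some $A\in\mathfrak{a}$ satisfies $\mathbb{R}A\cap W\mathfrak{a}_\mathfrak{h} = \{0\}$; Kobayashi's properness criterion applied to the split torus $\exp(\mathbb{R}A)$ is exactly this transversality, so $\mathbb{R}$ acts properly via $t\mapsto\exp(tA)$, proving \eqref{item:C-M_proper_of_R}, and $\langle\exp A\rangle\cong\mathbb{Z}$ is a free discontinuous group generated by the hyperbolic element $\exp A$, proving \eqref{item:C-M_hyp_free_gp} (freeness is arranged by a further generic perturbation of $A$ in $\mathfrak{a}$). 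The implications \eqref{item:C-M_hyp_free_gp}, \eqref{item:C-M_Z} $\Rightarrow$ \eqref{item:C-M_inf_disconti} are trivial, and \eqref{item:C-M_proper_of_R} $\Rightarrow$ \eqref{item:C-M_Z} holds because properness of the $\mathbb{R}$-action forces $\Phi$ to be injective with closed image $\cong\mathbb{R}$, so $\Phi(\mathbb{Z})$ is discrete and acts properly discontinuously. The substantive implication \eqref{item:C-M_inf_disconti} $\Rightarrow$ \eqref{item:C-M_split_rank} I would simply quote from \cite[Corollary~4.4]{Kobayashi89}: if $\rank_\mathbb{R}\mathfrak{g} = \rank_\mathbb{R}\mathfrak{h}$ then $\mathfrak{a}_\mathfrak{h} = \mathfrak{a}\subseteq\mathfrak{h}$, so the Cartan projection $\mu(H)$ fills $\mathfrak{a}_+$, and then the Cartan projection of any infinite discrete subgroup escapes to infinity inside $\mathfrak{a}_+$, violating transversality with $\mu(H)$; hence no infinite discontinuous group exists.

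Next, \eqref{item:C-M_split_rank} $\Leftrightarrow$ \eqref{item:C-M_real_hyp_orbits}. For hyperbolic $A\in\mathfrak{g}$, any element of $\mathcal{O}^{G}_A$ contained in $\mathfrak{h}$ is hyperbolic as an element of $\mathfrak{h}$, hence $H$-conjugate into $\mathfrak{a}_\mathfrak{h}$; thus $\mathcal{O}^{G}_A\cap\mathfrak{h} = \emptyset$ exactly when the dominant representative of $A$ avoids $W\cdot\mathfrak{a}_\mathfrak{h}$. If the two ranks coincide then $\mathfrak{a}\subseteq\mathfrak{h}$ and every hyperbolic orbit meets $\mathfrak{h}$, so \eqref{item:C-M_real_hyp_orbits} fails; if $\rank_\mathbb{R}\mathfrak{g} > \rank_\mathbb{R}\mathfrak{h}$ then, as above, a generic $A\in\mathfrak{a}$ avoids $W\cdot\mathfrak{a}_\mathfrak{h}$ and $\mathcal{O}^{G}_A$ witnesses \eqref{item:C-M_real_hyp_orbits}.

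The remaining equivalence \eqref{item:C-M_real_hyp_orbits} $\Leftrightarrow$ \eqref{item:C-M_cpx_hyp_orbits} is where I expect the real work to lie. One direction is immediate: if $A\in\mathfrak{g}$ is hyperbolic with $\mathcal{O}^{G}_A\cap\mathfrak{h} = \emptyset$, then $A\in\mathcal{O}^{G_\mathbb{C}}_A\cap\mathfrak{g}$ and it only remains to see $\mathcal{O}^{G_\mathbb{C}}_A\cap\mathfrak{g}^c = \emptyset$; conversely every complex hyperbolic orbit meeting $\mathfrak{g}$ has the form $\mathcal{O}^{G_\mathbb{C}}_A$ for hyperbolic $A\in\mathfrak{g}$, and since $\mathfrak{g}\cap\mathfrak{g}^c = \mathfrak{h}$ such an orbit can meet $\mathfrak{g}^c$ only in hyperbolic elements. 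So everything reduces to the claim: \emph{for hyperbolic $A\in\mathfrak{g}$, one has $\mathcal{O}^{G_\mathbb{C}}_A\cap\mathfrak{g}^c\neq\emptyset$ if and only if $\mathcal{O}^{G}_A\cap\mathfrak{h}\neq\emptyset$}, the ``only if'' being the point. This is the non-antipodal version of the equivalence \eqref{item:main_realhyperbolic} $\Leftrightarrow$ \eqref{item:main_hyperbolic} in Theorem~\ref{thm:main}, and I would establish it by the same structure-theoretic mechanism: work with maximal split data for $\mathfrak{g}$, $\mathfrak{g}^c$ and $\mathfrak{h}$ that are simultaneously compatible with $\sigma$ and with the Cartan involutions of $\mathfrak{g}$ and of $\mathfrak{g}^c$ inside $\mathfrak{g}_\mathbb{C}$, and verify that a hyperbolic element of $\mathfrak{g}_\mathbb{C}$ is $G_\mathbb{C}$-conjugate into $\mathfrak{g}^c$ precisely when its dominant parameter already lies in $W\cdot\mathfrak{a}_\mathfrak{h}$ --- equivalently, precisely when it is conjugate into $\mathfrak{h}$. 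The key inputs are that hyperbolic elements are $G_\mathbb{C}$-conjugate if and only if they are conjugate by the relevant real group, together with a comparison of the restricted-root data of $\mathfrak{g}$ and its $c$-dual; the subtle point is that the maximal split subalgebra of $\mathfrak{g}^c$ need not lie in $\mathfrak{h}$, and one has to see that only the $\mathfrak{h}$-directions survive after restricting to the hyperbolic locus of $\mathfrak{g}^c$.
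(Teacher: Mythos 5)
Your proposal is correct and follows essentially the same route as the paper: the block \eqref{item:C-M_proper_of_R}--\eqref{item:C-M_split_rank} is quoted from Kobayashi, \eqref{item:C-M_split_rank} $\Leftrightarrow$ \eqref{item:C-M_real_hyp_orbits} is the same Weyl-orbit correspondence (the paper's Fact \ref{fact:hyp_inter_a} and Lemma \ref{lem:hyperbolic_in_reductive}), and the key claim you isolate for \eqref{item:C-M_real_hyp_orbits} $\Leftrightarrow$ \eqref{item:C-M_cpx_hyp_orbits} is exactly the paper's Proposition \ref{prop:Hyp_in_gC_in_g} \eqref{item:Hyp_c_r:hyp} together with Proposition \ref{prop:keyprop}, which the paper also proves by choosing a Cartan subalgebra simultaneously adapted to $\mathfrak{g}$, $\mathfrak{g}^c$ and $\mathfrak{h}$ (Proposition \ref{prop:OS-Cartan} and Lemma \ref{lem:hyp_element_in_Cartan_for_symm}, so that $\mathfrak{a}_+\cap\mathfrak{a}^c_+\subset\mathfrak{a}_\mathfrak{h}$).
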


The equivalence among \eqref{item:C-M_proper_of_R}, \eqref{item:C-M_Z}, \eqref{item:C-M_inf_disconti}, \eqref{item:C-M_hyp_free_gp} and \eqref{item:C-M_split_rank} in Fact \ref{fact:C-M} was proved in a more general setting in T.~Kobayashi \cite[Corollary 4.4]{Kobayashi89}.
The real rank condition \eqref{item:C-M_split_rank} serves as a criterion for the Calabi--Markus phenomenon \eqref{item:C-M_inf_disconti} in Fact \ref{fact:C-M} (cf. \cite{Calabi-Murkus62}, \cite{Kobayashi89}). 
We will give a proof of the equivalence among \eqref{item:C-M_split_rank}, \eqref{item:C-M_real_hyp_orbits} and \eqref{item:C-M_cpx_hyp_orbits} in Appendix \ref{subsection:proof_of_fact}.

The second main result is a classification of semisimple symmetric pairs $(G,H)$ satisfying one of (therefore, all of) the equivalent conditions in Theorem \ref{thm:main}.

If a semisimple symmetric pair $(G,H)$ is irreducible, but $G$ is not simple, 
then $G/H$ admits a proper $SL(2,\mathbb{R})$-action, since the symmetric space $G/H$ can be regarded as a complex simple Lie group.
Therefore, the crucial case is on symmetric pairs $(\mathfrak{g},\mathfrak{h})$ with simple Lie algebra $\mathfrak{g}$.

To describe our classification, 
we denote by
\begin{multline*}
S := \{\, (\mathfrak{g},\mathfrak{h}) \mid (\mathfrak{g},\mathfrak{h}) \text{ is a semisimple symmetric pair} \\ \text{with a simple Lie algebra } \mathfrak{g} \,\}
\end{multline*}

The set $S$ was classified by M.~Berger \cite{Berger57} up to isomorphisms.
We also put 
\begin{align*}
A &:= \{\, (\mathfrak{g},\mathfrak{h}) \in S \mid \text{$(\mathfrak{g},\mathfrak{h})$ satisfies one of the conditions in Theorem \ref{thm:main}} \,\}, \\
B &:= \{\, (\mathfrak{g},\mathfrak{h}) \in S \mid \rank_\mathbb{R} \mathfrak{g} > \rank_\mathbb{R} \mathfrak{h} \,\} \setminus A, \\
C &:= \{\, (\mathfrak{g},\mathfrak{h}) \in S \mid \rank_\mathbb{R} \mathfrak{g} = \rank_\mathbb{R} \mathfrak{h} \,\}.
\end{align*}
Then $A \cap C = \emptyset$ by Fact \ref{fact:C-M},
and we have 
\[
S = A \sqcup B \sqcup C.
\]
One can easily determine the set $C$ in $S$.
Thus, to describe the classification of $A$, we only need to give the classification of $B$.

Here is our classification of the set $B$, 
namely, a complete list of $(\mathfrak{g},\mathfrak{h})$ satisfying the following: 
\begin{multline}
\text{$\mathfrak{g}$ is simple, $(\mathfrak{g},\mathfrak{h})$ is a symmetric pair with $\rank_\mathbb{R} \mathfrak{g} > \rank_\mathbb{R} \mathfrak{h}$} \\
\text{but does not satisfies the equivalent conditions in Theorem \ref{thm:main}.} \label{condition:classification}
\end{multline}

\begin{center}
\begin{longtable}{ll} \toprule
$\mathfrak{g}$ & $\mathfrak{h}$ \\ \midrule
$\mathfrak{sl}(2k,\mathbb{R})$ & $\mathfrak{sp}(k,\mathbb{R})$ \\ \hline
$\mathfrak{sl}(2k,\mathbb{R})$ & $\mathfrak{so}(k,k)$ \\ \hline
$\mathfrak{sl}(2k-1,\mathbb{R})$ & $\mathfrak{so}(k,k-1)$ \\ \hline
$\mathfrak{su}^*(4m+2)$ & $\mathfrak{sp}(m+1,m)$ \\ \hline
$\mathfrak{su}^*(4m)$ & $\mathfrak{sp}(m,m)$ \\ \hline
$\mathfrak{su}^*(2k)$ & $\mathfrak{so}^*(2k)$ \\ \hline 
$\mathfrak{so}(2k-1,2k-1)$ & $\mathfrak{so}(i+1,i) \oplus \mathfrak{so}(j,j+1)$ \\ 
 & $( i+j = 2k-2 )$ \\ \hline
$\mathfrak{e}_{6(6)}$ & $\mathfrak{f}_{4(4)}$ \\ \hline 
$\mathfrak{e}_{6(6)}$ & $\mathfrak{sp}(4,\mathbb{R})$ \\ \hline 
$\mathfrak{e}_{6(-26)}$ & $\mathfrak{sp}(3,1)$ \\ \hline
$\mathfrak{e}_{6(-26)}$ & $\mathfrak{f}_{4(-20)}$ \\ \hline
$\mathfrak{sl}(n,\mathbb{C})$ & $\mathfrak{so}(n,\mathbb{C})$ \\ \hline
$\mathfrak{sl}(2k,\mathbb{C})$ & $\mathfrak{sp}(k,\mathbb{C})$ \\ \hline
$\mathfrak{sl}(2k,\mathbb{C})$ & $\mathfrak{su}(k,k)$ \\ \hline

$\mathfrak{so}(4m+2,\mathbb{C})$ & $\mathfrak{so}(i,\mathbb{C}) \oplus \mathfrak{so}(j,\mathbb{C})$ \\
& $(i+j = 4m+2, \text{ $i,j$ are odd})$ \\ \hline
$\mathfrak{so}(4m+2,\mathbb{C})$ & $\mathfrak{so}(2m+2,2m)$ \\ \hline

$\mathfrak{e}_{6,\mathbb{C}}$ & $\mathfrak{sp}(4,\mathbb{C})$ \\ \hline
$\mathfrak{e}_{6,\mathbb{C}}$ & $\mathfrak{f}_{4,\mathbb{C}}$ \\ \hline
$\mathfrak{e}_{6,\mathbb{C}}$ & $\mathfrak{e}_{6(2)}$ \\
\bottomrule
\caption{Classification of $(\mathfrak{g},\mathfrak{h})$ satisfying \eqref{condition:classification}}
\label{table:non_sl2}
\end{longtable}
\end{center}

Here, $k \geq 2$, $m \geq 1$ and $n \geq 2$.

Theorem \ref{thm:cls_intro}, which gives a classification of the set $A$, is obtained by Table \ref{table:non_sl2}.

Concerning our classification, we will give an algorithm to check whether or not a given symmetric pair $(\mathfrak{g},\mathfrak{h})$ satisfies the condition \eqref{item:main_hyperbolic} in Theorem \ref{thm:main}.
More precisely, we will determine the set of complex antipodal hyperbolic orbits in a complex simple Lie algebra $\mathfrak{g}_\mathbb{C}$ (see Section \ref{subsection:complex_Bhyp}) and introduce an algorithm to check whether or not a given such orbit meets a real form $\mathfrak{g}$ [resp. $\mathfrak{g}^c$] (see Section \ref{section:Real_forms}).
Table \ref{table:non_sl2} is obtained by using this algorithm (see Section \ref{section:classifications}).

\begin{rem}
\begin{description}
\item[(1)] Using \cite[Theorem 1.1]{Benoist96}, Benoist gave a number of examples of symmetric pairs $(G,H)$ which do not satisfy the condition $\eqref{item:main_non-virtually_abelian}$ in Theorem $\ref{thm:main}$ with $\rank_\mathbb{R} \mathfrak{g} > \rank_\mathbb{R} \mathfrak{h}$ $($see \cite[Example~1]{Benoist96}$)$. Table $\ref{table:non_sl2}$ gives its complete list.
\item[(2)] We take this opportunity to correct \cite[Table~2.6]{Okuda11_proc}, where the pair $(\mathfrak{sl}(2k-1,\mathbb{R}), \mathfrak{so}(k,k-1))$ was missing.
\end{description}
\end{rem}

We discuss an application of the main result (Theorem \ref{thm:main}) to the existence problem of compact Clifford--Klein forms.
As we explained in Introduction, a Clifford--Klein form of $G/H$ is the double coset space $\Gamma \backslash G/H$ when $\Gamma$ is a discrete subgroup of $G$ acting on $G/H$ properly discontinuously and freely.
Recall that 
we say that a homogeneous space $G/H$ \textit{admits compact Clifford--Klein forms},
if there exists such $\Gamma$ where $\Gamma \backslash G/H$ is compact. 
See also \cite{Benoist96,Kobayashi89,Kobayashi92b,Kobayashi92a,Kobayashi97disc,Kobayashi-Ono90Hirzebruch,Kobayashi-Yoshino05,Labourie-Zimmer95,Margulis97,Oh-Witte02,Shalom00rigidity,Zimmer94} for preceding results for the existence problem for compact Clifford--Klein forms. 
Among them, there are three methods that can be applied to semisimple symmetric spaces to show the non-existence of compact Clifford--Klein forms:
\begin{itemize}
\item 
Using the Hirzebruch--Kobayashi--Ono proportionality principle \cite[Proposition 4.10]{Kobayashi89}, \cite{Kobayashi-Ono90Hirzebruch}.
\item Using a comparison theorem of cohomological dimension \cite[Theorem 1.5]{Kobayashi92a}. (A generalization of the criterion in \cite{Kobayashi89} of the Calabi--Markus phenomenon.)
\item Using a criterion for the non-existence of properly discontinuous actions of non-virtually abelian groups \cite[Corollary 1]{Benoist96}.
\end{itemize}

As an immediate corollary of the third method
and the description of the set $B$ by Table \ref{table:non_sl2}, 
one concludes:

\begin{cor}\label{cor:no_cpt_CK_form}
The simple symmetric space $G/H$ does not admit compact Clifford--Klein forms if $(\mathfrak{g},\mathfrak{h})$ is in Table \ref{table:non_sl2}.
\end{cor}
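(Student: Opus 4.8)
The plan is to deduce this from Theorem~\ref{thm:main} together with Benoist's obstruction to compact Clifford--Klein forms; no new geometric input is needed once Table~\ref{table:non_sl2} is available. In outline: membership in Table~\ref{table:non_sl2} is exactly condition~\eqref{condition:classification}, hence by Theorem~\ref{thm:main} the space $G/H$ admits no non-virtually-abelian discontinuous group, while $\rank_\mathbb{R}\mathfrak{g}>\rank_\mathbb{R}\mathfrak{h}$; and Benoist's theorem says a compact Clifford--Klein form in this rank situation would force such a group to exist.

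First I would record what the table gives. By construction it lists precisely the pairs $(\mathfrak{g},\mathfrak{h})$ satisfying~\eqref{condition:classification}: $\mathfrak{g}$ is simple, $\rank_\mathbb{R}\mathfrak{g}>\rank_\mathbb{R}\mathfrak{h}$, and $(\mathfrak{g},\mathfrak{h})$ fails every equivalent condition of Theorem~\ref{thm:main}. Failing condition~\eqref{item:main_non-virtually_abelian} means that every discontinuous group for $G/H$ is virtually abelian (finite groups being trivially so). I would also note two structural points that let Benoist's result apply: $H$ is reductive in $G$, since $H$ is open in $G^\sigma$ and the fixed-point group of an involution of a reductive group is reductive; and $G/H$ is noncompact, since $\rank_\mathbb{R}\mathfrak{g}>\rank_\mathbb{R}\mathfrak{h}$ forces the split part of $\mathfrak{q}$ to be nonzero (were $G/H$ compact one would have $\rank_\mathbb{R}\mathfrak{h}=\rank_\mathbb{R}\mathfrak{g}$, cf.\ Fact~\ref{fact:C-M}).

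Next, suppose toward a contradiction that $G/H$ admits a compact Clifford--Klein form $\Gamma\backslash G/H$. Then $\Gamma$ is a discrete torsion-free subgroup of $G$ acting properly discontinuously and cocompactly on the reductive homogeneous space $G/H$, and since $G/H$ is noncompact and the covering $G/H\to\Gamma\backslash G/H$ is proper, $\Gamma$ must be infinite. By \cite[Corollary~1]{Benoist96}, applied with the strict inequality $\rank_\mathbb{R}\mathfrak{g}>\rank_\mathbb{R}\mathfrak{h}$, the group $\Gamma$ cannot be virtually abelian. This contradicts the conclusion of the previous paragraph. Since the argument is uniform over every line of Table~\ref{table:non_sl2}, the corollary follows.

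There is essentially no obstacle internal to this corollary: the substance lies upstream, in Theorem~\ref{thm:main} and in the classification that produces Table~\ref{table:non_sl2} (the algorithm of Section~\ref{section:classifications} and Section~\ref{section:Real_forms}). The only verifications needed here are that the hypotheses of \cite[Corollary~1]{Benoist96} hold --- reductivity of $H$ in $G$ and strictness of the real-rank inequality --- and both are immediate from Setting~\ref{setting:main} and from condition~\eqref{condition:classification}.
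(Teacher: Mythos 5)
Your argument is correct and is essentially the paper's own: the corollary is deduced immediately from the fact that Table \ref{table:non_sl2} is exactly the set $B$ (pairs with $\rank_\mathbb{R}\mathfrak{g}>\rank_\mathbb{R}\mathfrak{h}$ failing condition \eqref{item:main_non-virtually_abelian} of Theorem \ref{thm:main}) combined with \cite[Corollary~1]{Benoist96}. The only difference is cosmetic --- you unfold the contradiction that Benoist's corollary packages, whereas the paper records the conclusion as immediate.
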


\section{Preliminary results for proper actions}\label{section:pre:KB}

In this section, we recall results of T.~Kobayashi \cite{Kobayashi89} and Y.~Benoist \cite{Benoist96} in a form that we shall need.
Our proofs of the equivalences \eqref{item:main_sl2-proper} $\Leftrightarrow$ \eqref{item:main_sl2-triple} and \eqref{item:main_non-virtually_abelian} $\Leftrightarrow$ \eqref{item:main_hyperbolic} in Theorem \ref{thm:main} will be based on these results (see Section \ref{subsection:proof_of_sl2-proper_triple} and Section \ref{subsection:proof_main_benoist}).

\subsection{Kobayashi's properness criterion}\label{subsection:Kobayashi's_criterion}

Let $G$ be a connected linear semisimple Lie group and write $\mathfrak{g}$ for the Lie algebra of $G$.
First, we fix a terminology as follows:

\begin{defn}\label{defn:reductive_in}
We say that a subalgebra $\mathfrak{h}$ of $\mathfrak{g}$ is \textit{reductive in $\mathfrak{g}$} if there exists a Cartan involution $\theta$ of $\mathfrak{g}$ such that $\mathfrak{h}$ is $\theta$-stable.
Furthermore, we say that a closed subgroup $H$ of $G$ is \textit{reductive in $G$} if $H$ has only finitely many connected components and the Lie algebra $\mathfrak{h}$ of $H$ is reductive in $\mathfrak{g}$.
\end{defn}

For simplicity, we call $\mathfrak{h}$ [resp. $H$] a reductive subalgebra of $\mathfrak{g}$ [resp. a reductive subgroup of $G$] if $\mathfrak{h}$ is reductive in $\mathfrak{g}$ [resp. $H$ is reductive in $G$]. 
We call such $(G,H)$ a reductive pair.
Note that a reductive subalgebra $\mathfrak{h}$ of $\mathfrak{g}$ is a reductive Lie algebra.

We give two examples relating to Theorem \ref{thm:main}:

\begin{example}\label{ex:ss_pair_is_red}
In Setting $\ref{setting:main}$, the subgroup $H$ is reductive in $G$ since there exists a Cartan involution $\theta$ on $\mathfrak{g}$, which is commutative with $\sigma$ $($cf. \cite{Berger57}$)$.
\end{example}

\begin{example}\label{ex:ss_is_red}
Let $\mathfrak{l}$ be a semisimple subalgebra of $\mathfrak{g}$.
Then any Cartan involution on $\mathfrak{l}$ can be extended to a Cartan involution on $\mathfrak{g}$ $($cf. G.~D.~Mostow \cite{Mostow55}$)$ 
and the analytic subgroup $L$ corresponding to $\mathfrak{l}$ is closed in $G$ $($cf. K.~Yosida \cite{yosida38}$)$.
Therefore, $\mathfrak{l}$ $[$resp. $L$$]$ is reductive in $\mathfrak{g}$ $[$resp. $G$$]$.
\end{example}

In the rest of this subsection, we follow the setting below:
\begin{setting}\label{setting:reductive}
$G$ is a connected linear semisimple Lie group, $H$ and $L$ are reductive subgroups of $G$. 
\end{setting}

We denote by $\mathfrak{g}$, $\mathfrak{h}$ and $\mathfrak{l}$ the Lie algebras of $G$, $H$ and $L$, respectively.
Take a Cartan involution $\theta$ of $\mathfrak{g}$ which preserves $\mathfrak{h}$.
We write $\mathfrak{g}= \mathfrak{k} + \mathfrak{p}$, $\mathfrak{h} = \mathfrak{k}(\mathfrak{h}) + \mathfrak{p}(\mathfrak{h})$ for the Cartan decomposition of $\mathfrak{g}$, $\mathfrak{h}$ corresponding to $\theta$, $\theta|_\mathfrak{h}$, respectively.
We fix a maximal abelian subspace $\mathfrak{a}_\mathfrak{h}$ of $\mathfrak{p}(\mathfrak{h})$ (i.e. $\mathfrak{a}_\mathfrak{h}$ is a maximally split abelian subspace of $\mathfrak{h}$), and extend it to a maximal abelian subspace $\mathfrak{a}$ in $\mathfrak{p}$ (i.e. $\mathfrak{a}$ is a maximally split abelian subspace of $\mathfrak{g}$). 
We write $K$ for the maximal compact subgroup of $G$ with its Lie algebra $\mathfrak{k}$, 
and denote the Weyl group acting on $\mathfrak{a}$ by $W(\mathfrak{g},\mathfrak{a}) := N_K(\mathfrak{a})/Z_K(\mathfrak{a})$.
Since $\mathfrak{l}$ is also reductive in $\mathfrak{g}$, 
we can take a Cartan involution $\theta'$ of $\mathfrak{g}$ preserving $\mathfrak{l}$.
We write $\mathfrak{l} = \mathfrak{k}'(\mathfrak{l}) + \mathfrak{p}'(\mathfrak{l})$ for the Cartan decomposition of $\mathfrak{l}$ corresponding to $\theta'|_{\mathfrak{l}}$, and fix a maximal abelian subspace $\mathfrak{a}'_\mathfrak{l}$ of $\mathfrak{p}'(\mathfrak{l})$.
Then there exists $g \in G$ such that $\Ad(g) \cdot \mathfrak{a}'_\mathfrak{l}$ is contained in $\mathfrak{a}$, and we put $\mathfrak{a}_\mathfrak{l} := \Ad(g) \cdot \mathfrak{a}'_\mathfrak{l}$. 
The subset $W(\mathfrak{g},\mathfrak{a}) \cdot \mathfrak{a}_\mathfrak{l}$ of $\mathfrak{a}$ does not depend on a choice of such $g \in G$.

The following fact holds:
\begin{fact}[T.~Kobayashi {\cite[Theorem 4.1]{Kobayashi89}}]\label{fact:Kobayashi's_criterion}
In Setting $\ref{setting:reductive}$, 
$L$ acts on $G/H$ properly 
if and only if 
\[
\mathfrak{a}_\mathfrak{h} \cap W(\mathfrak{g},\mathfrak{a}) \cdot \mathfrak{a}_\mathfrak{l} = \{0\}.
\]
\end{fact}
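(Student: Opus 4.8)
The plan is to reconstruct Kobayashi's argument, which proceeds in three moves: reduce properness to a coarse-geometric condition on Cartan projections, compute the Cartan projection of a reductive subgroup, and finish with a short piece of convex geometry. First I would recall that, for the $(L\times H)$-action on $G$ defined by $(l,h)\cdot g = lgh^{-1}$, the following are equivalent: $L$ acts properly on $G/H$; this $(L\times H)$-action on $G$ is proper; and for every compact $S\subseteq G$ the set $(SLS)\cap H$ is relatively compact in $G$. This is a formal unwinding of the definition of a proper action, using that every compact subset of $G/H$ is the image under $\pi\colon G\to G/H$ of a compact subset of $G$ (true because $\pi$ is a fiber bundle). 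Write ``$L\pitchfork H$ in $G$'' for the last condition.

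The main work is to convert ``$L\pitchfork H$ in $G$'' into a statement about the Cartan projection. Let $G=K\exp(\overline{\mathfrak{a}^+})K$ be the Cartan decomposition, with $\overline{\mathfrak{a}^+}$ the closed Weyl chamber in $\mathfrak{a}$, and let $\mu\colon G\to\overline{\mathfrak{a}^+}$ be the associated (continuous, well-defined) Cartan projection; then $\mu^{-1}(C)\subseteq K\exp(C)K$ is compact for compact $C$, so $\mu$ is proper. I would use: $\mu(k_1gk_2)=\mu(g)$ for $k_1,k_2\in K$; for each compact $S\subseteq G$ there is $R_S>0$ with $\|\mu(s_1gs_2)-\mu(g)\|\le R_S$ for all $g$ and $s_1,s_2\in S$ (prove by embedding $G$ in a $GL(n,\mathbb R)$ and using submultiplicativity of singular values); and, conversely, if $\|\mu(g)-\mu(g')\|\le R$ then $g'\in K\exp(B_R)K\cdot g\cdot K$, because $\exp(\mu(g'))\exp(-\mu(g))=\exp(\mu(g')-\mu(g))$ lies in the compact set $\exp(B_R)$, where $B_R\subseteq\mathfrak{a}$ is the ball of radius $R$ about $0$. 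Combining these with properness of $\mu$ yields: $L\pitchfork H$ in $G$ if and only if for every $R>0$ the set $\{(X,Y)\in\mu(L)\times\mu(H)\mid\|X-Y\|\le R\}$ is bounded; and this condition on the pair of subsets $\mu(L),\mu(H)\subseteq\mathfrak{a}$ is insensitive to replacing either set by one at finite Hausdorff distance from it.

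Next I would compute these projections. Since $\theta$ preserves $\mathfrak{h}$, the group $H$ has a Cartan decomposition $H=K_H\exp(\mathfrak{p}(\mathfrak{h}))$ with $K_H=H\cap K\subseteq K$ and $\mathfrak{p}(\mathfrak{h})=\Ad(K_H)\mathfrak{a}_\mathfrak{h}$, so $H\subseteq K_H\exp(\mathfrak{a}_\mathfrak{h})K_H$; since $\mu$ kills left and right $K$-factors and $\mu(\exp X)$ is the unique dominant $W(\mathfrak{g},\mathfrak{a})$-conjugate of $X\in\mathfrak{a}$, this gives $\mu(H)=\overline{\mathfrak{a}^+}\cap W(\mathfrak{g},\mathfrak{a})\cdot\mathfrak{a}_\mathfrak{h}=:\mathfrak{b}(\mathfrak{h})$. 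For $L$, pick a Cartan involution $\theta'$ with $\mathfrak{l}$ $\theta'$-stable and $g_0\in G$ with $\Ad(g_0)\theta'\Ad(g_0)^{-1}=\theta$; then $\Ad(g_0)\mathfrak{l}$ is $\theta$-stable and (after a further $K$-conjugation) has a maximal split abelian subspace in $\mathfrak{a}$ whose $W(\mathfrak{g},\mathfrak{a})$-orbit is $W(\mathfrak{g},\mathfrak{a})\cdot\mathfrak{a}_\mathfrak{l}$, so the same computation gives $\mu(g_0Lg_0^{-1})=\overline{\mathfrak{a}^+}\cap W(\mathfrak{g},\mathfrak{a})\cdot\mathfrak{a}_\mathfrak{l}=:\mathfrak{b}(\mathfrak{l})$, while $\mu(L)$ is at finite Hausdorff distance from $\mathfrak{b}(\mathfrak{l})$ (conjugation by the fixed element $g_0$ changes $\mu$ boundedly). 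Hence $L$ acts properly on $G/H$ iff, for all $R$, $\{(X,Y)\in\mathfrak{b}(\mathfrak{l})\times\mathfrak{b}(\mathfrak{h})\mid\|X-Y\|\le R\}$ is bounded. Now $\mathfrak{b}(\mathfrak{h})$ and $\mathfrak{b}(\mathfrak{l})$ are closed cones (finite unions of polyhedral cones), and for closed cones $C_1,C_2$ this boundedness condition holds iff $C_1\cap C_2=\{0\}$ (normalize a hypothetical unbounded sequence and pass to a limit). Finally, $\mathfrak{b}(\mathfrak{h})\cap\mathfrak{b}(\mathfrak{l})=\{0\}$ iff $W(\mathfrak{g},\mathfrak{a})\cdot\mathfrak{a}_\mathfrak{h}\cap W(\mathfrak{g},\mathfrak{a})\cdot\mathfrak{a}_\mathfrak{l}=\{0\}$ (any nonzero common point can be moved into $\overline{\mathfrak{a}^+}$ by the Weyl group), and this in turn is equivalent to $\mathfrak{a}_\mathfrak{h}\cap W(\mathfrak{g},\mathfrak{a})\cdot\mathfrak{a}_\mathfrak{l}=\{0\}$, which is the asserted criterion.

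The main obstacle is the quantitative control of the Cartan projection in the second step: the bound $\|\mu(s_1gs_2)-\mu(g)\|\le R_S$ under multiplication by a compact set and its near-converse, together with checking that these estimates really do translate the subgroup-level relation ``$L\pitchfork H$ in $G$'' into the coarse disjointness of the cones $\mathfrak{b}(\mathfrak{l}),\mathfrak{b}(\mathfrak{h})$ in $\mathfrak{a}$. By comparison, the first step is bookkeeping with the definition of properness, the computation of $\mu(H)$ is standard structure theory of reductive subgroups, and the final passage to the intersection condition is elementary convex geometry.
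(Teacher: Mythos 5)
The paper does not actually prove this statement: it is imported as a Fact from Kobayashi's paper \cite[Theorem 4.1]{Kobayashi89}, so there is no internal proof to compare against. Your reconstruction --- reducing properness to relative compactness of $(SLS)\cap H$ for compact $S$, translating that into coarse disjointness of the Cartan projections $\mu(L)$ and $\mu(H)$ via the bi-$K$-invariance and bounded-perturbation properties of $\mu$, computing $\mu(H)=\overline{\mathfrak{a}^+}\cap W(\mathfrak{g},\mathfrak{a})\cdot\mathfrak{a}_\mathfrak{h}$ (and likewise for $L$ after conjugating its Cartan involution to $\theta$, up to finite Hausdorff distance), and concluding with the closed-cone limit argument and the $W(\mathfrak{g},\mathfrak{a})$-saturation step --- is precisely Kobayashi's original line of argument and is correct.
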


The proof of Fact \ref{fact:C-M} is reduced to Fact \ref{fact:Kobayashi's_criterion} (see \cite{Kobayashi89}).
However, to prove the equivalences between \eqref{item:C-M_split_rank}, \eqref{item:C-M_real_hyp_orbits} and \eqref{item:C-M_cpx_hyp_orbits} in Fact \ref{fact:C-M} we need an additional argument which will be described in Appendix \ref{subsection:proof_of_fact}.

\subsection{Benoist's criterion}\label{subsection:Benoist_result}

Let $(G,H)$ be a reductive pair (see Definition \ref{defn:reductive_in}).
In this subsection, we use the notation
$\mathfrak{g}$, $\mathfrak{h}$, $\theta$, $\mathfrak{a}_\mathfrak{h}$, $\mathfrak{a}$ and $W(\mathfrak{g},\mathfrak{a})$
as in the previous subsection.

Let us denote the restricted root system of $(\mathfrak{g},\mathfrak{a})$ by
$\Sigma(\mathfrak{g}, \mathfrak{a})$.
We fix a positive system $\Sigma^+(\mathfrak{g},\mathfrak{a})$ of $\Sigma(\mathfrak{g},\mathfrak{a})$, and put 
\[
\mathfrak{a}_+ := \{\, A \in \mathfrak{a} \mid \xi(X) \geq 0 \ \text{for any } \xi \in \Sigma^+(\mathfrak{g},\mathfrak{a}) \,\}.
\]
Then $\mathfrak{a}_+$ is a fundamental domain for the action of the Weyl group $W(\mathfrak{g},\mathfrak{a})$.
We write $w_0$ for the longest element in $W(\mathfrak{g},\mathfrak{a})$ with respect to the positive system $\Sigma^+(\mathfrak{g},\mathfrak{a})$.
Then, by the action of $w_0$, every element in $\mathfrak{a}_+$ moves to $-\mathfrak{a}_+ := \{ -A \mid A \in \mathfrak{a}_+ \}$.
In particular,
\[
-w_0 : \mathfrak{a} \rightarrow \mathfrak{a},\quad A \mapsto -(w_0 \cdot A)
\] 
is an involutive automorphism on $\mathfrak{a}$ preserving $\mathfrak{a}_+$.
We put 
\begin{align*}
\mathfrak{b} := \{\, A \in \mathfrak{a} \mid -w_0 \cdot A = A \,\}, \quad
\mathfrak{b}_+ := \mathfrak{b} \cap \mathfrak{a}_+.
\end{align*}

Then the next fact holds:
\begin{fact}[Y.~Benoist {\cite[Theorem in Section 1.1]{Benoist96}}]\label{fact:Benoist's_result}
The following conditions on a reductive pair $(G,H)$ are equivalent$:$
\begin{enumerate}
\item \label{item:Benoist_virtually_abelian} $G/H$ admits an infinite discontinuous group which is not virtually abelian.
\item \label{item:Benoist_b_+_for_a_w} $\mathfrak{b}_+ \not \subset w \cdot \mathfrak{a}_\mathfrak{h}$ for any $w \in W(\mathfrak{g},\mathfrak{a})$.
\item \label{item:b_+} $\mathfrak{b}_+ \not \subset W(\mathfrak{g},\mathfrak{a}) \cdot \mathfrak{a}_\mathfrak{h}$.
\end{enumerate}
\end{fact}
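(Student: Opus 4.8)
The statement to prove is Fact~\ref{fact:Benoist's_result}, i.e.\ the equivalence of (i), (ii) and (iii). The plan is to note first that (ii) $\Leftrightarrow$ (iii) is essentially formal: since $W(\mathfrak{g},\mathfrak{a})$ acts on $\mathfrak{a}$, the containment $\mathfrak{b}_+\subset W(\mathfrak{g},\mathfrak{a})\cdot\mathfrak{a}_\mathfrak{h}$ does \emph{not} immediately factor through a single $w$, so the content is that if $\mathfrak{b}_+$ is covered by the finite union $\bigcup_{w} (w\cdot\mathfrak{a}_\mathfrak{h})$ then it already lies in one of them. This follows because each $w\cdot\mathfrak{a}_\mathfrak{h}$ is a linear subspace, a vector space cannot be a finite union of proper subspaces, and $\mathfrak{b}_+$ spans $\mathfrak{b}$; so if $\mathfrak{b}_+\subset\bigcup_w(w\cdot\mathfrak{a}_\mathfrak{h})$ then $\mathfrak{b}=\operatorname{span}\mathfrak{b}_+\subset w\cdot\mathfrak{a}_\mathfrak{h}$ for some $w$, hence $\mathfrak{b}_+\subset w\cdot\mathfrak{a}_\mathfrak{h}$. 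I would spell this out as a short lemma on unions of subspaces.

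The real work is (i) $\Leftrightarrow$ (ii), and here the strategy is to invoke Benoist's theorem \cite[Theorem~1.1]{Benoist96} directly rather than reprove it. Benoist's criterion, in the reductive setting, says that $G/H$ fails to admit a non-virtually-abelian discontinuous group precisely when $\mathfrak{h}$ ``$(\rho,\mathfrak{a})$-dominates'' $G$ in his terminology, a condition expressed via the Cartan projection $\mu:G\to\mathfrak{a}_+$ and the set $\mathfrak{b}_+$ attached to $G$ (the fixed-point set of $-w_0$ intersected with the dominant chamber). I would recall the precise form: a closed subgroup acting on $G/H$ yields such a discontinuous group iff its limit cone, or rather the relevant ``growth indicator''/Cartan-projection data, escapes every $W$-translate of $\mathfrak{a}_\mathfrak{h}$; and that for a reductive $H$ the Cartan projection of $H$ is exactly $\mathfrak{a}_\mathfrak{h}$ up to $W$-action (this is where Fact~\ref{fact:Kobayashi's_criterion}-type bookkeeping enters, identifying $\mu(H)$ with $\mathfrak{a}_\mathfrak{h}\cap\mathfrak{a}_+$ up to the Weyl group). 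Matching Benoist's hypothesis to condition (ii) is then a translation exercise: the existence of an infinite non-virtually-abelian discontinuous group is equivalent to the negation of Benoist's domination condition, which unwinds to $\mathfrak{b}_+\not\subset w\cdot\mathfrak{a}_\mathfrak{h}$ for all $w\in W(\mathfrak{g},\mathfrak{a})$.

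Concretely the steps are: (1) state the lemma that a real vector space is not a finite union of proper subspaces, and deduce (ii) $\Leftrightarrow$ (iii); (2) recall that $\mathfrak{h}$ being reductive in $\mathfrak{g}$ forces the Cartan projection $\mu(H)\subset\mathfrak{a}_+$ to coincide with $W(\mathfrak{g},\mathfrak{a})\cdot\mathfrak{a}_\mathfrak{h}\cap\mathfrak{a}_+$, so that ``$\mathfrak{b}_+\subset W\cdot\mathfrak{a}_\mathfrak{h}$'' is the same as ``$\mathfrak{b}_+\subset\mu(H)$''; (3) quote \cite[Theorem~1.1]{Benoist96} in the form: $G/H$ admits an infinite non-virtually-abelian discontinuous group iff $\mathfrak{b}_+\not\subset\mu(H)$ (equivalently the set $\mathfrak{b}_+$, which controls the existence of ``large'' Schottky-type subgroups via Benoist's construction of free groups with prescribed limit behaviour, is not absorbed by the obstruction set coming from $H$); (4) combine (2) and (3) to get (i) $\Leftrightarrow$ (iii), and chain with (1). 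The main obstacle is step (3)/(2): faithfully importing Benoist's statement and checking that his general ``(something)-dominates'' condition specializes, in the symmetric/reductive case with the $-w_0$-fixed subspace $\mathfrak{b}$, to exactly the clean combinatorial condition (ii) — this requires care with conventions (Cartan projection vs.\ limit cone, the role of $w_0$ and the opposition involution $-w_0$, and whether one works with $H$ or with a general reductive $L$). Everything else is routine linear algebra and citation.
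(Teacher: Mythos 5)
Your overall route coincides with the paper's: the equivalence (i) $\Leftrightarrow$ (ii) is not reproved but imported directly from Benoist, and (ii) $\Leftrightarrow$ (iii) is reduced to an elementary statement about a set covered by finitely many linear subspaces. The one genuine problem is in your justification of (ii) $\Leftrightarrow$ (iii). You argue: each $w\cdot\mathfrak{a}_\mathfrak{h}$ is a subspace, a vector space is not a finite union of proper subspaces, and $\mathfrak{b}_+$ spans $\mathfrak{b}$, hence $\mathfrak{b}_+\subset\bigcup_w(w\cdot\mathfrak{a}_\mathfrak{h})$ forces $\mathfrak{b}=\operatorname{span}\mathfrak{b}_+\subset w\cdot\mathfrak{a}_\mathfrak{h}$ for a single $w$. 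That inference is false as stated: a set can span $V$, lie in a finite union of proper subspaces, and be contained in none of them (e.g.\ $\{(1,0),(0,1)\}\subset\mathbb{R}^2$ inside the union of the two coordinate axes). A covering of $\mathfrak{b}_+$ by the $w\cdot\mathfrak{a}_\mathfrak{h}$ does not yield a covering of its span, so the ``vector space is not a finite union of proper subspaces'' lemma never gets to apply. The property you actually need is that $\mathfrak{b}_+$ is \emph{convex}: the correct lemma --- and the one the paper records --- is that a convex subset of a finite-dimensional real vector space contained in a finite union of linear subspaces must be contained in one of them. Since $\mathfrak{b}_+=\mathfrak{b}\cap\mathfrak{a}_+$ is convex, this gives (iii) $\Rightarrow$ (ii) at once, and the converse is trivial. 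The repair is easy, but the lemma has to be stated for convex sets rather than deduced from a spanning condition.

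For (i) $\Leftrightarrow$ (ii) you and the paper do exactly the same thing --- quote Benoist's theorem --- so your surrounding discussion of Cartan projections and the domination condition is harmless bookkeeping; just note that once you phrase Benoist's criterion via $\mu(H)=W(\mathfrak{g},\mathfrak{a})\cdot\mathfrak{a}_\mathfrak{h}\cap\mathfrak{a}_+$ you are really landing on condition (iii) rather than (ii), which is fine only after the convexity lemma has been established.
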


\begin{rem}
Benoist 
showed $\eqref{item:Benoist_virtually_abelian}$ $\Leftrightarrow$ $\eqref{item:Benoist_b_+_for_a_w}$ in Fact $\ref{fact:Benoist's_result}$.
The equivalence $\eqref{item:Benoist_b_+_for_a_w}$ $\Leftrightarrow$ $\eqref{item:b_+}$ follows from the fact below $($since $\mathfrak{b}_+$ is a convex set of $\mathfrak{a}$ and $w \cdot \mathfrak{a}_\mathfrak{h}$ is a linear subspace of $\mathfrak{a}$ for any $w \in W(\mathfrak{g},\mathfrak{a})$$)$$.$
\begin{fact}
Let $U_1$, $U_2$, \dots, $U_n$ be subspaces of a finite dimensional real vector space $V$ and $\Omega$ a convex set of $V$.
Then $\Omega$ is contained in $\bigcup_{i=1}^{n} U_i$ if and only if $\Omega$ is contained in $U_k$ for some $k \in \{1,\dots,n\}$.
\end{fact}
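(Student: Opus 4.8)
The plan is to establish the nontrivial direction; the converse holds trivially because $U_k\subseteq\bigcup_{i=1}^n U_i$. The statement is a convex-geometric cousin of the prime-avoidance lemma from commutative algebra, and I would prove it by a minimal-counterexample argument. Suppose the conclusion fails, and among all counterexamples choose one for which the number $n$ of subspaces is smallest: thus $\Omega\subseteq\bigcup_{i=1}^n U_i$ while $\Omega\not\subseteq U_i$ for every $i$. Since $\Omega\not\subseteq U_1$ we have $n\geq 2$.

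Next I would exploit minimality: for each index $i$, the set $\Omega$ cannot be contained in $\bigcup_{j\neq i}U_j$, for otherwise $\Omega$ together with the $n-1$ subspaces $\{U_j\}_{j\neq i}$ would be a smaller counterexample (we still have $\Omega\not\subseteq U_j$ for each $j\neq i$). Hence for every $i$ there is a point $x_i\in\Omega$ with $x_i\notin U_j$ for all $j\neq i$; since $x_i\in\Omega\subseteq\bigcup_j U_j$, this forces $x_i\in U_i$. Now consider the segment $[x_1,x_2]=\{(1-t)x_1+tx_2\mid t\in[0,1]\}$, which lies in $\Omega$ by convexity and is therefore covered by $U_1,\dots,U_n$. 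Being an infinite set, it has two distinct points lying in a common $U_k$; as $U_k$ is a linear subspace it then contains every affine combination of those two points, i.e.\ the whole line through $x_1$ and $x_2$, so in particular $x_1,x_2\in U_k$. But $x_1\in U_k$ forces $k=1$ and $x_2\in U_k$ forces $k=2$, contradicting $n\geq 2$. This contradiction proves the Fact.

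The argument is elementary, and there is no deep obstacle; the only step deserving a second's attention is the implication ``two distinct points of an affine line lie in $U_k$ $\Rightarrow$ the whole line lies in $U_k$,'' which is precisely where the hypothesis that the $U_i$ are (affine or linear) subspaces, and not merely arbitrary subsets, is used — together with the fact that a nondegenerate real segment contains infinitely many points, so that the pigeonhole step applies. Finite-dimensionality of $V$ is not actually needed anywhere.
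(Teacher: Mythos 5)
Your proof is correct. The paper states this Fact without proof (it is invoked inside a remark to justify the equivalence of two conditions in Benoist's criterion), so there is no argument in the paper to compare against; your minimal-counterexample argument — producing for each $i$ a point $x_i\in U_i\setminus\bigcup_{j\neq i}U_j$, then using convexity, the pigeonhole principle on the segment $[x_1,x_2]$, and the fact that a linear subspace containing two distinct points contains the whole affine line through them — is a complete and standard way to fill this gap, and all steps (including the observation that $x_1\neq x_2$ and that finite-dimensionality is never used) check out.
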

\end{rem}

\section{Proof of Theorem \ref{thm:main}}\label{section:outline_of_proof}

We give a proof of Theorem \ref{thm:main} by proving the implications in the figure below:

\[\xymatrix{
& \eqref{item:main_sl2-proper} \ar@{=>}[r] \ar@{<=>}[d] \ar@{<=>}[ld] & \eqref{item:main_any_surface_group} \ar@{=>}[r] & \eqref{item:main_some_surface_group} \ar@{=>}[d] \\
\eqref{item:main_unipotent_discontinuous} & \eqref{item:main_sl2-triple_R} \ar@{<=>}[d] \ar@{<=}[r] & \eqref{item:main_realhyperbolic} \ar@{<=>}[r] \ar@{<=>}[d] & \eqref{item:main_non-virtually_abelian} \\
\eqref{item:main_nilpotent} \ar@{<=>}[r] & \eqref{item:main_sl2-triple} \ar@{=>}[r] & \eqref{item:main_hyperbolic}& 
}\]

In this section, to show the implications, we use some theorems, propositions and lemmas, which will be proved later in this paper.

{\bf Notation:} Throughout this paper, for a complex semisimple Lie algebra $\mathfrak{g}_\mathbb{C}$ and its real form $\mathfrak{g}$,
we denote a complex [resp. real] nilpotent, hyperbolic, antipodal hyperbolic adjoint orbit in $\mathfrak{g}_\mathbb{C}$ [resp. $\mathfrak{g}$] simply by a complex [resp. real] nilpotent, hyperbolic, antipodal hyperbolic orbit in $\mathfrak{g}_\mathbb{C}$ [resp. $\mathfrak{g}$].

\subsection{Proof of \eqref{item:main_sl2-proper} $\Leftrightarrow$ \eqref{item:main_sl2-triple_R} in Theorem \ref{thm:main}}\label{subsection:proof_of_sl2-proper_triple}

Our proof of the equivalence \eqref{item:main_sl2-proper} $\Leftrightarrow$ \eqref{item:main_sl2-triple_R} in Theorem \ref{thm:main} starts with the next theorem, which will be proved in Section \ref{section:hyp-orbit_and_proper_actions}:

\begin{thm}[Corollary to Fact \ref{fact:Kobayashi's_criterion}]\label{thm:cor_to_Kobayashi}
In Setting $\ref{setting:reductive}$, the following conditions on $(G,H,L)$ are equivalent$:$
\begin{enumerate}
\item $L$ acts on $G/H$ properly,
\item \label{item:orbit_criterion_of_Kobayashi} There do not exist real hyperbolic orbits in $\mathfrak{g}$ $($see Definition $\ref{defn:hyp_anti}$$)$ meeting both $\mathfrak{l}$ and $\mathfrak{h}$ other than the zero-orbit, 
\end{enumerate}
where $\mathfrak{g}$, $\mathfrak{h}$ and $\mathfrak{l}$ are Lie algebras of $G$, $H$ and $L$, respectively.
\end{thm}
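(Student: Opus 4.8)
The plan is to deduce Theorem~\ref{thm:cor_to_Kobayashi} from Kobayashi's criterion (Fact~\ref{fact:Kobayashi's_criterion}) by translating the condition $\mathfrak{a}_\mathfrak{h} \cap W(\mathfrak{g},\mathfrak{a}) \cdot \mathfrak{a}_\mathfrak{l} = \{0\}$ into the language of real hyperbolic adjoint orbits. The bridge is the standard structural fact that every real hyperbolic element of $\mathfrak{g}$ is $\Ad(G)$-conjugate to a unique point of the closed Weyl chamber $\mathfrak{a}_+$ (equivalently, the hyperbolic orbits in $\mathfrak{g}$ are parametrized by $\mathfrak{a}/W(\mathfrak{g},\mathfrak{a})$), together with the observation that for a reductive subalgebra $\mathfrak{h}'$ with maximally split abelian subspace $\mathfrak{a}_{\mathfrak{h}'}$, every hyperbolic element of $\mathfrak{h}'$ is $\Ad(H'^{\circ})$-conjugate into $\mathfrak{a}_{\mathfrak{h}'}$. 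This last point uses Example~\ref{ex:ss_pair_is_red}/\ref{ex:ss_is_red}-type reasoning: a reductive subalgebra has its own restricted-root theory, and an element of $\mathfrak{h}'$ that is hyperbolic \emph{in $\mathfrak{g}$} is in particular hyperbolic \emph{in $\mathfrak{h}'$} (its $\ad_{\mathfrak{h}'}$ is the restriction of $\ad_\mathfrak{g}$, hence still real-diagonalizable).

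The argument then runs as follows. For the direction $\neg\eqref{item:orbit_criterion_of_Kobayashi} \Rightarrow \neg(\mathrm{i})$: suppose there is a nonzero real hyperbolic orbit $\mathcal{O}$ meeting both $\mathfrak{l}$ and $\mathfrak{h}$. Pick $X_\mathfrak{l} \in \mathcal{O} \cap \mathfrak{l}$ and $X_\mathfrak{h} \in \mathcal{O} \cap \mathfrak{h}$. Conjugating within $L$ resp.\ $H$ (and then by the element $g \in G$ used to define $\mathfrak{a}_\mathfrak{l}$), we may assume $X_\mathfrak{l} \in \mathfrak{a}_\mathfrak{l}$ and $X_\mathfrak{h} \in \mathfrak{a}_\mathfrak{h} \subset \mathfrak{a}$; since $X_\mathfrak{l}$ is then a hyperbolic element of $\mathfrak{a}$, after a further element of $W(\mathfrak{g},\mathfrak{a})$ we may take both into $\mathfrak{a}_+$. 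Because $X_\mathfrak{l}$ and $X_\mathfrak{h}$ lie in the same $\Ad(G)$-orbit and both lie in $\mathfrak{a}_+$, which is a fundamental domain for $W(\mathfrak{g},\mathfrak{a})$ on $\mathfrak{a}$, the uniqueness statement forces $X_\mathfrak{l} = X_\mathfrak{h} =: X \neq 0$. Tracing back, $X$ (or its image under the relevant Weyl-group/conjugation moves) is a nonzero vector in $\mathfrak{a}_\mathfrak{h} \cap W(\mathfrak{g},\mathfrak{a})\cdot \mathfrak{a}_\mathfrak{l}$, so the Kobayashi condition fails and $L$ does not act properly. Conversely, for $\neg(\mathrm{i}) \Rightarrow \neg\eqref{item:orbit_criterion_of_Kobayashi}$: if $0 \neq A \in \mathfrak{a}_\mathfrak{h} \cap w \cdot \mathfrak{a}_\mathfrak{l}$ for some $w \in W(\mathfrak{g},\mathfrak{a})$, then $A$ is a hyperbolic element of $\mathfrak{g}$ lying in $\mathfrak{h}$ (since $\mathfrak{a}_\mathfrak{h} \subset \mathfrak{h}$), while $w^{-1}A \in \mathfrak{a}_\mathfrak{l} \subset \Ad(g)\mathfrak{l}$, i.e.\ $\Ad(g)^{-1}w^{-1}A \in \mathfrak{l}$; since $w^{-1}$ is realized by an element of $K \subset G$, the orbit $\mathcal{O}^G_A = \Ad(G)\cdot A$ is a nonzero real hyperbolic orbit meeting both $\mathfrak{l}$ and $\mathfrak{h}$.

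I would present the ``hyperbolic element conjugate into the chamber, uniquely'' fact as a cited or quickly-justified lemma (it is classical; see e.g.\ the structure theory underlying the Jordan decomposition of Borel--Tits / Kostant--Sekiguchi), rather than reprove it. The one place requiring a little care — and the step I expect to be the main obstacle — is the claim that an element of $\mathfrak{h}$ (or $\mathfrak{l}$) which is hyperbolic \emph{in the ambient $\mathfrak{g}$} can be conjugated by an element of $H$ (resp.\ $L$) into $\mathfrak{a}_\mathfrak{h}$ (resp.\ $\mathfrak{a}_\mathfrak{l}$): one must check that ambient-hyperbolicity implies hyperbolicity within the reductive subalgebra, that a Cartan involution of $\mathfrak{g}$ can be chosen compatibly (Example~\ref{ex:ss_pair_is_red}, \ref{ex:ss_is_red}), and that $\mathfrak{a}_\mathfrak{h}$ really is a maximally split abelian subspace of $\mathfrak{h}$ so that every hyperbolic element of $\mathfrak{h}$ meets it — all of which is set up in Setting~\ref{setting:reductive}, but needs to be invoked explicitly. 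Once that bookkeeping is in place, the equivalence is a direct unwinding of Fact~\ref{fact:Kobayashi's_criterion}.
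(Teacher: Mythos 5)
Your proposal is correct and follows essentially the same route as the paper: parametrize real hyperbolic orbits by $W(\mathfrak{g},\mathfrak{a})$-orbits in $\mathfrak{a}$, show that a hyperbolic orbit meets the reductive subalgebra $\mathfrak{h}$ (resp.\ $\mathfrak{l}$) if and only if it meets $\mathfrak{a}_\mathfrak{h}$ (resp.\ $\mathfrak{a}_\mathfrak{l}$), and then read off the equivalence from Fact~\ref{fact:Kobayashi's_criterion}. The ``bookkeeping'' step you flag is exactly the paper's Lemma~\ref{lem:hyperbolic_in_reductive}, which the paper likewise only sketches (handling the center of a non-semisimple reductive $\mathfrak{h}$ by splitting off $Z(\mathfrak{h})\cap\mathfrak{k}$ and $Z(\mathfrak{h})\cap\mathfrak{p}$), so no genuine gap remains.
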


By using Theorem \ref{thm:cor_to_Kobayashi}, we will prove the next proposition in Section \ref{section:hyp-orbit_and_proper_actions}:

\begin{prop}\label{prop:Sl_2_proper_and_sl2_triple}
Let $(G,H)$ be a reductive pair $($see Definition $\ref{defn:reductive_in}$$)$.
Then there exists a bijection between the following two sets$:$
\begin{itemize}
\item The set of Lie group homomorphisms $\Phi : SL(2,\mathbb{R}) \rightarrow G$ such that $SL(2,\mathbb{R})$ acts on $G/H$ properly via $\Phi$,
\item The set of $\mathfrak{sl}_2$-triples $(A,X,Y)$ in $\mathfrak{g}$ such that the real adjoint orbit through $A$ does not meet $\mathfrak{h}$.
\end{itemize}
\end{prop}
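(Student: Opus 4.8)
The plan is to construct the bijection explicitly via the differential $d\Phi$ and use the Jacobson--Morozov machinery, then match up the properness conditions using Theorem \ref{thm:cor_to_Kobayashi}. Given a Lie group homomorphism $\Phi : SL(2,\mathbb{R}) \rightarrow G$, set $L := \Phi(SL(2,\mathbb{R}))$, which is a semisimple (possibly with finite kernel) analytic subgroup and hence reductive in $G$ by Example \ref{ex:ss_is_red}. Write $e := \begin{pmatrix} 0 & 1 \\ 0 & 0 \end{pmatrix}$, $h := \begin{pmatrix} 1 & 0 \\ 0 & -1 \end{pmatrix}$, $f := \begin{pmatrix} 0 & 0 \\ 1 & 0 \end{pmatrix}$ in $\mathfrak{sl}(2,\mathbb{R})$, and associate to $\Phi$ the $\mathfrak{sl}_2$-triple $(A,X,Y) := (d\Phi(h), d\Phi(e), d\Phi(f))$ in $\mathfrak{g}$. (If $\Phi$ is trivial we get the zero triple; I should note at the outset whether the statement intends to exclude this, but in any case the zero triple corresponds to the condition vacuously since $0 \in \mathfrak{h}$, so both sets handle the trivial case consistently, or one restricts to nontrivial $\Phi$ and nonzero $A$.) Conversely, given an $\mathfrak{sl}_2$-triple $(A,X,Y)$ in $\mathfrak{g}$, the Lie algebra homomorphism $\mathfrak{sl}(2,\mathbb{R}) \rightarrow \mathfrak{g}$ sending $(h,e,f) \mapsto (A,X,Y)$ integrates to a Lie group homomorphism $\Phi : SL(2,\mathbb{R}) \rightarrow G$ because $SL(2,\mathbb{R})$ is connected with fundamental group $\mathbb{Z}$ and... actually the cleanest argument is that the homomorphism lifts to the universal cover $\widetilde{SL(2,\mathbb{R})}$ and then descends to $SL(2,\mathbb{R})$ since $G$ is linear (so $d\Phi(h)$ is a hyperbolic, hence semisimple with real spectrum, element forcing the $\mathbb{Z}$-center to die); I would cite the standard fact that any $\mathfrak{sl}_2$-triple in a linear semisimple Lie algebra exponentiates to a homomorphism from $SL(2,\mathbb{R})$ (not merely its cover). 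These two constructions are mutually inverse up to the standard fact that $\Phi$ is determined by $d\Phi$ on the connected group $SL(2,\mathbb{R})$.

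The second and essential step is to show that under this correspondence, $SL(2,\mathbb{R})$ acts properly on $G/H$ via $\Phi$ if and only if the real adjoint orbit $\mathcal{O}^G_A = \Ad(G)\cdot A$ does not meet $\mathfrak{h}$. Here I would invoke Theorem \ref{thm:cor_to_Kobayashi} with $L = \Phi(SL(2,\mathbb{R}))$: the $SL(2,\mathbb{R})$-action via $\Phi$ is proper if and only if the $L$-action on $G/H$ is proper (since $\ker \Phi$ is finite, properness of the $SL(2,\mathbb{R})$-action is equivalent to properness of the $L$-action — one should spell this out, as a continuous action of a group with compact kernel quotient is proper iff the quotient action is), which by Theorem \ref{thm:cor_to_Kobayashi} holds iff there is no nonzero real hyperbolic orbit in $\mathfrak{g}$ meeting both $\mathfrak{l} := d\Phi(\mathfrak{sl}(2,\mathbb{R}))$ and $\mathfrak{h}$. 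So it remains to prove: \emph{there exists a nonzero real hyperbolic orbit meeting both $\mathfrak{l}$ and $\mathfrak{h}$ if and only if $\mathcal{O}^G_A$ meets $\mathfrak{h}$.} The ``if'' direction is immediate since $A \in \mathfrak{l}$ is hyperbolic (its $\ad$-action is that of $h$ under an $\mathfrak{sl}_2$-representation, hence diagonalizable with integer eigenvalues) and $A \neq 0$, so $\mathcal{O}^G_A$ is a nonzero real hyperbolic orbit meeting $\mathfrak{l}$, and by hypothesis it meets $\mathfrak{h}$.

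The ``only if'' direction is the main obstacle and the real content of the proposition: I must show that every nonzero hyperbolic element of $\mathfrak{l} = \mathfrak{sl}_2$-subalgebra is $\Ad(G)$-conjugate to a scalar multiple of $A$, or more precisely that its $G$-orbit is $\mathcal{O}^G_{cA}$ for some $c > 0$, and that $\mathcal{O}^G_{cA} \cap \mathfrak{h} \neq \emptyset$ forces $\mathcal{O}^G_A \cap \mathfrak{h} \neq \emptyset$. For the first part: the hyperbolic (i.e. $\mathbb{R}$-diagonalizable) elements of a three-dimensional split simple algebra $\mathfrak{l} \cong \mathfrak{sl}(2,\mathbb{R})$ form the set of elements $\Ad(L)$-conjugate to $tA/2$, $t \in \mathbb{R}$, i.e. the ``hyperbolic cone'' through $A$; here I use that in $\mathfrak{sl}(2,\mathbb{R})$ the $\ad$-diagonalizable-with-real-eigenvalues elements are exactly the nonzero multiples of conjugates of $h$, together with $0$. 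So a nonzero hyperbolic orbit meeting $\mathfrak{l}$ is $\mathcal{O}^G_{tA}$ for some $t \in \mathbb{R} \setminus \{0\}$; replacing $A$ by $Y,-A,-X$ via the Weyl element of $\mathfrak{l}$ shows $\mathcal{O}^G_{tA} = \mathcal{O}^G_{-tA}$... wait, that is false in general — the inner automorphism group of $\mathfrak{sl}(2,\mathbb{R})$ does send $h \mapsto -h$, so indeed $\mathcal{O}^G_A = \mathcal{O}^G_{-A}$, so we may take $t > 0$. Then I need: $\mathcal{O}^G_{tA} \cap \mathfrak{h} \neq \emptyset \Rightarrow \mathcal{O}^G_A \cap \mathfrak{h} \neq \emptyset$. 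This should follow because $\mathfrak{h}$ is reductive in $\mathfrak{g}$ and hyperbolic orbits in $\mathfrak{g}$ are distinguished within $W(\mathfrak{g},\mathfrak{a})$-orbits in $\mathfrak{a}_+$; scaling $A \mapsto tA$ scales the point in $\mathfrak{a}_+$, and I would argue using Fact \ref{fact:Kobayashi's_criterion}-type reasoning (or directly) that $\mathcal{O}^G_{tA} \cap \mathfrak{h} \neq \emptyset$ for one $t > 0$ iff for all $t > 0$, since $\mathfrak{a}_\mathfrak{h}$ and the relevant Weyl-orbit data are scale-invariant cones. Concretely: $\mathcal{O}^G_A$ meets $\mathfrak{h}$ iff the unique point of $\mathcal{O}^G_A \cap \mathfrak{a}_+$ lies in $W(\mathfrak{g},\mathfrak{a})\cdot \mathfrak{a}_\mathfrak{h}$ (this uses that $\mathfrak{h}$ reductive means $\mathfrak{h}$-hyperbolic elements are $\mathfrak{a}_\mathfrak{h}$ up to $\mathrm{Int}(\mathfrak{h})$, plus a standard orbit-intersection lemma that I expect is proved in Section \ref{section:hyp-orbit_and_proper_actions}), and this condition is manifestly invariant under $A \mapsto tA$ for $t>0$. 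Assembling these implications closes the equivalence and hence establishes the bijection; the delicate points to get right are the exponentiation of the triple to $SL(2,\mathbb{R})$ rather than its cover, the handling of $\ker\Phi$ in the properness reduction, and the scale-invariance of the orbit-meets-$\mathfrak{h}$ condition.
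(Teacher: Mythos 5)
Your proposal is correct and follows essentially the same route as the paper, which splits the argument into Lemma \ref{lem:alg_hom_to_group_hom} (the bijection between homomorphisms $SL(2,\mathbb{R})\to G$ and $\mathfrak{sl}_2$-triples) and Corollary \ref{cor:proper_condition_for_alg} (properness $\Leftrightarrow$ $\mathcal{O}^G_{A}\cap\mathfrak{h}=\emptyset$, via Theorem \ref{thm:cor_to_Kobayashi} with $\mathfrak{a}_\mathfrak{l}=\mathbb{R}A$ and the scale-invariance of $W(\mathfrak{g},\mathfrak{a})\cdot\mathfrak{a}_\mathfrak{h}$). The only cosmetic difference is in the integration step: the paper avoids your universal-cover descent by complexifying $\phi$, lifting to the simply connected group $SL(2,\mathbb{C})$, and restricting to $SL(2,\mathbb{R})$.
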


In Setting \ref{setting:main}, 
the subgroup $H$ of $G$ is reductive in $G$ (see Example \ref{ex:ss_pair_is_red}).
Hence, we obtain the equivalence \eqref{item:main_sl2-proper} $\Leftrightarrow$ \eqref{item:main_sl2-triple_R} in Theorem \ref{thm:main}.

\subsection{Proof of \eqref{item:main_non-virtually_abelian} $\Leftrightarrow$ \eqref{item:main_realhyperbolic} in Theorem \ref{thm:main}}\label{subsection:proof_main_benoist}

We will prove the next theorem in Section \ref{section:hyp-orbit_and_proper_actions}:
\begin{thm}[Corollary to Fact \ref{fact:Benoist's_result}]\label{thm:cor_to_Benoist}
The following conditions on a reductive pair $(G,H)$ $($see Definition $\ref{defn:reductive_in}$$)$ are equivalent$:$
\begin{enumerate}
\item $G/H$ admits an infinite discontinuous group that is not virtually abelian.
\item \label{item:Benoist_orbit} There exists a real antipodal hyperbolic orbit in $\mathfrak{g}$ that does not meet $\mathfrak{h}$.
\end{enumerate}
\end{thm}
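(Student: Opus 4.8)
The strategy is to deduce this theorem from Benoist's criterion (Fact~\ref{fact:Benoist's_result}), by matching condition~\eqref{item:Benoist_b_+_for_a_w} of that fact --- phrased in terms of the involution $-w_0$ on the split Cartan subspace $\mathfrak{a}$ --- with the orbit-theoretic condition~\eqref{item:Benoist_orbit} here. The key observation is that real hyperbolic orbits in $\mathfrak{g}$ are parametrized by the fundamental domain $\mathfrak{a}_+$ for the Weyl group action on $\mathfrak{a}$: every hyperbolic element is $G$-conjugate to a unique point of $\mathfrak{a}_+$, so $\mathcal{O} \mapsto \mathcal{O} \cap \mathfrak{a}_+$ (a singleton) is a bijection from real hyperbolic orbits onto $\mathfrak{a}_+$. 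Under this correspondence, I would first show that a hyperbolic orbit $\mathcal{O}_A$ (with $A \in \mathfrak{a}_+$) is antipodal precisely when $-A \in W(\mathfrak{g},\mathfrak{a}) \cdot A$, i.e. when $-w_0 \cdot A = A$, i.e. when $A \in \mathfrak{b}_+$. This identifies the set of real antipodal hyperbolic orbits with $\mathfrak{b}_+$.

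Next I would translate the condition ``$\mathcal{O}_A$ does not meet $\mathfrak{h}$''. Since $\mathfrak{h}$ is reductive in $\mathfrak{g}$ and $\theta$ preserves $\mathfrak{h}$, every hyperbolic element of $\mathfrak{h}$ is $H$-conjugate (hence $G$-conjugate) into $\mathfrak{a}_\mathfrak{h}$, and $\mathfrak{a}_\mathfrak{h} \subset \mathfrak{a}$ consists of hyperbolic elements of $\mathfrak{g}$. Therefore $\mathcal{O}_A \cap \mathfrak{h} \neq \emptyset$ if and only if $A$, viewed up to $W(\mathfrak{g},\mathfrak{a})$, lies in the $W(\mathfrak{g},\mathfrak{a})$-orbit of $\mathfrak{a}_\mathfrak{h}$; equivalently $\mathcal{O}_A \cap \mathfrak{a} \subset \mathfrak{a}$ meets $W(\mathfrak{g},\mathfrak{a})\cdot\mathfrak{a}_\mathfrak{h}$. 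I need here the standard fact that two elements of $\mathfrak{a}$ are $G$-conjugate iff they are $W(\mathfrak{g},\mathfrak{a})$-conjugate, together with the fact that a hyperbolic element of $\mathfrak{h}$ is contained in some maximally split abelian subspace of $\mathfrak{h}$, all of which are $H$-conjugate to $\mathfrak{a}_\mathfrak{h}$. Putting these together: there exists a real antipodal hyperbolic orbit avoiding $\mathfrak{h}$ if and only if $\mathfrak{b}_+ \not\subset W(\mathfrak{g},\mathfrak{a})\cdot\mathfrak{a}_\mathfrak{h}$, which by the remark following Fact~\ref{fact:Benoist's_result} is equivalent to condition~\eqref{item:Benoist_b_+_for_a_w}, hence to \eqref{item:Benoist_virtually_abelian}. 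This chain gives both implications of the theorem.

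More precisely, for the forward direction I would argue contrapositively: if every real antipodal hyperbolic orbit meets $\mathfrak{h}$, then for each $A \in \mathfrak{b}_+$ the orbit $\mathcal{O}_A$ meets $\mathfrak{h}$, hence meets $\mathfrak{a}_\mathfrak{h}$ up to $W(\mathfrak{g},\mathfrak{a})$, so $A \in W(\mathfrak{g},\mathfrak{a})\cdot\mathfrak{a}_\mathfrak{h}$; thus $\mathfrak{b}_+ \subset W(\mathfrak{g},\mathfrak{a})\cdot\mathfrak{a}_\mathfrak{h}$, and Fact~\ref{fact:Benoist's_result} shows $G/H$ admits no infinite non-virtually-abelian discontinuous group. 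The reverse direction is the same computation run backwards: from $\mathfrak{b}_+ \not\subset W(\mathfrak{g},\mathfrak{a})\cdot\mathfrak{a}_\mathfrak{h}$ pick $A \in \mathfrak{b}_+$ outside that set; then $\mathcal{O}_A$ is antipodal hyperbolic and avoids $\mathfrak{h}$.

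**Main obstacle.** The routine but essential technical point is the precise bookkeeping relating ``$\mathcal{O}_A$ meets $\mathfrak{h}$'' to ``$A \in W(\mathfrak{g},\mathfrak{a})\cdot\mathfrak{a}_\mathfrak{h}$'': one must verify that an element of $\mathfrak{h}$ which happens to be hyperbolic \emph{in $\mathfrak{g}$} is automatically hyperbolic in $\mathfrak{h}$ and therefore lies in a maximally split abelian subspace of $\mathfrak{h}$ (using $\theta$-stability of $\mathfrak{h}$), and conversely that $\mathfrak{a}_\mathfrak{h}\subset\mathfrak{a}$ really does consist of $\mathfrak{g}$-hyperbolic elements. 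This is where the hypothesis that $(G,H)$ is a reductive pair is used, and it is the only place the argument could slip. Everything else is a direct translation through the $\mathfrak{a}_+ \leftrightarrow \{\text{hyperbolic orbits}\}$ dictionary and the $-w_0$ description of $\mathfrak{b}$.
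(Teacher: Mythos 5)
Your proposal is correct and follows essentially the same route as the paper: parametrize real hyperbolic orbits by $\mathfrak{a}_+$ via Fact~\ref{fact:hyp_inter_a}, show antipodality corresponds to lying in $\mathfrak{b}_+$ (Lemma~\ref{lem:b_+_and_B-orbit}) and meeting $\mathfrak{h}$ corresponds to meeting $W(\mathfrak{g},\mathfrak{a})\cdot\mathfrak{a}_\mathfrak{h}$ (Lemma~\ref{lem:hyperbolic_in_reductive}), then conclude by Fact~\ref{fact:Benoist's_result}. The technical point you flag --- that a $\mathfrak{g}$-hyperbolic element of the reductive subalgebra $\mathfrak{h}$ is conjugate into $\mathfrak{a}_\mathfrak{h}$, which requires killing the $\mathfrak{k}$-component of its central part --- is exactly the content of the paper's Lemma~\ref{lem:hyperbolic_in_reductive}.
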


In Setting \ref{setting:main},
the equivalence \eqref{item:main_non-virtually_abelian} $\Leftrightarrow$ \eqref{item:main_realhyperbolic} in Theorem \ref{thm:main} holds as a special case of Theorem \ref{thm:cor_to_Benoist}.

\subsection{Proofs of \eqref{item:main_sl2-triple} $\Leftrightarrow$ \eqref{item:main_sl2-triple_R}, \eqref{item:main_hyperbolic} $\Leftrightarrow$ \eqref{item:main_realhyperbolic} and \eqref{item:main_sl2-triple} $\Rightarrow$ \eqref{item:main_hyperbolic} in Theorem \ref{thm:main}}\label{subsection:proof_main_complexification}

Let $\mathfrak{g}_\mathbb{C}$ be a complex semisimple Lie algebra.
We use the following convention for hyperbolic elements (see Definition \ref{defn:hyp_anti}):
\begin{align*}
\mathcal{H} &:= \{\, A \in \mathfrak{g}_\mathbb{C} \mid \text{$A$ is a hyperbolic element in $\mathfrak{g}_\mathbb{C}$} \,\}, \\
\mathcal{H}^a &:= \{\, A \in \mathcal{H} \mid \text{The complex adjoint orbit through $A$ is antipodal} \,\},\\
\mathcal{H}^n &:= \{\, A \in \mathfrak{g}_\mathbb{C} \mid \text{There exist $X,Y \in \mathfrak{g}_\mathbb{C}$} \\
&\hspace{5cm}  \text{such that $(A,X,Y)$ is an $\mathfrak{sl}_2$-triple} \,\}.
\end{align*}
We also write $\mathcal{H}/{G_\mathbb{C}}$, $\mathcal{H}^a/{G_\mathbb{C}}$ for the sets of complex hyperbolic orbits and complex antipodal hyperbolic orbits in $\mathfrak{g}_\mathbb{C}$, respectively. 
Let us denote by $\mathcal{H}^n/{G_\mathbb{C}}$ the set of complex adjoint orbits contained in $\mathcal{H}^n$.

The next lemma will be proved in Section \ref{subsection:WDD_of_comp_nilp}:

\begin{lem}\label{lem:sl2-triple_to_Bhyp}
For any $\mathfrak{sl}_2$-triple $(A,X,Y)$ in $\mathfrak{g}_\mathbb{C}$, 
the element $A$ of $\mathfrak{g}_\mathbb{C}$ is hyperbolic and the complex adjoint orbit through $A$ in $\mathfrak{g}_\mathbb{C}$ is antipodal.
\end{lem}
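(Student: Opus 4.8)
The plan is to argue directly from the defining relations of an $\mathfrak{sl}_2$-triple together with the classical $\mathfrak{sl}_2$-representation theory applied to the adjoint representation $\ad: \mathfrak{g}_\mathbb{C} \to \End(\mathfrak{g}_\mathbb{C})$. First I would observe that the span $\mathfrak{s} := \mathbb{C}A + \mathbb{C}X + \mathbb{C}Y$ is a subalgebra of $\mathfrak{g}_\mathbb{C}$ isomorphic to $\mathfrak{sl}(2,\mathbb{C})$, with $A$ corresponding to the standard semisimple element $\mathrm{diag}(1,-1)$. Restricting the adjoint action of $\mathfrak{g}_\mathbb{C}$ to $\mathfrak{s}$ makes $\mathfrak{g}_\mathbb{C}$ a finite-dimensional $\mathfrak{sl}(2,\mathbb{C})$-module, hence a direct sum of irreducibles. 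On each irreducible summand the element $A$ acts diagonalizably with integer eigenvalues (the weights $n, n-2, \dots, -n$ for the $(n+1)$-dimensional irreducible). Summing over the summands, $\ad_{\mathfrak{g}_\mathbb{C}}(A)$ is diagonalizable with integer — in particular real — eigenvalues, so $A$ is hyperbolic in the sense of Definition \ref{defn:hyp_anti}.

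For the antipodal statement, I would exhibit an element $g \in G_\mathbb{C}$ with $\Ad(g)\cdot A = -A$. The natural candidate comes from the Weyl group element of $\mathfrak{s} \cong \mathfrak{sl}(2,\mathbb{C})$: set $g := \Exp(\tfrac{\pi}{2}(X - Y))$ inside the analytic subgroup of $G_\mathbb{C}$ with Lie algebra $\mathfrak{s}$. A direct computation inside $SL(2,\mathbb{C})$ (or inside $PSL(2,\mathbb{C})$, lifting if necessary) shows that conjugation by the corresponding element sends the diagonal Cartan generator to its negative; transporting this identity through the homomorphism $\mathfrak{sl}(2,\mathbb{C}) \to \mathfrak{g}_\mathbb{C}$ carrying the standard triple to $(A,X,Y)$ gives $\Ad(g)\cdot A = -A$. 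Hence $-A$ lies in the complex adjoint orbit through $A$, i.e. that orbit is antipodal.

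The only mild subtlety — and the step I would treat most carefully — is the passage from the abstract $\mathfrak{sl}(2,\mathbb{C})$-calculation to the statement inside $G_\mathbb{C}$: one needs that the homomorphism $\mathfrak{sl}(2,\mathbb{C}) \to \mathfrak{g}_\mathbb{C}$ sending the standard triple to $(A,X,Y)$ integrates to a Lie group homomorphism $SL(2,\mathbb{C}) \to G_\mathbb{C}$, which holds because $SL(2,\mathbb{C})$ is simply connected and $G_\mathbb{C}$ is connected. Everything else is routine: the hyperbolicity is immediate from weight considerations, and the antipodality reduces to the single identity $\Exp(\tfrac{\pi}{2}(X-Y))$ conjugating $A$ to $-A$, which can also be checked by noting that $\ad(X-Y)$ acts on the $\mathfrak{s}$-submodule $\mathbb{C}A + \mathbb{C}X + \mathbb{C}Y$ (the adjoint representation of $\mathfrak{s}$ on itself, the $3$-dimensional irreducible) with the requisite eigenvalues, so that $\exp(\tfrac{\pi}{2}\ad(X-Y))$ fixes $X-Y$ and negates a complementary vector, in particular negates $A$.
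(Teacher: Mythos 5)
Your proposal is correct and follows essentially the same route as the paper: hyperbolicity from the integrality of the $\ad(A)$-eigenvalues via $\mathfrak{sl}_2$-representation theory, and antipodality by conjugating $A$ to $-A$ inside the analytic subgroup corresponding to the image of $\mathfrak{sl}(2,\mathbb{C})$. The paper merely cites the conjugacy of $\mathrm{diag}(1,-1)$ and $\mathrm{diag}(-1,1)$ in $SL(2,\mathbb{C})$ rather than writing down the explicit Weyl element $\Exp\bigl(\tfrac{\pi}{2}(X-Y)\bigr)$, but the argument is the same.
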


By Lemma \ref{lem:sl2-triple_to_Bhyp}, we have 
\[
\mathcal{H}^n \subset \mathcal{H}^a \subset \mathcal{H}.
\]

Hence, the implication \eqref{item:main_sl2-triple} $\Rightarrow$ \eqref{item:main_hyperbolic} in Theorem \ref{thm:main} follows.

Further, for any subalgebra $\mathfrak{l}$ of $\mathfrak{g}_\mathbb{C}$, 
we also use the following convention: 
\begin{align*}
\mathcal{H}_\mathfrak{l} &:= \{\, A \in \mathcal{H} \mid \text{The complex adjoint orbit through $A$ meets $\mathfrak{l}$} \,\}, \\
\mathcal{H}^a_\mathfrak{l} &:= \mathcal{H}^a \cap \mathcal{H}_\mathfrak{l}, \\
\mathcal{H}^n_\mathfrak{l} &:= \mathcal{H}^n \cap \mathcal{H}_\mathfrak{l}.
\end{align*}
Let us write $\mathcal{H}_\mathfrak{l}/{G_\mathbb{C}}$, $\mathcal{H}^a_\mathfrak{l}/{G_\mathbb{C}}$, $\mathcal{H}^n_\mathfrak{l}/{G_\mathbb{C}}$ for the sets of complex adjoint orbits contained in $\mathcal{H}$, $\mathcal{H}^a$, $\mathcal{H}^n$ meeting $\mathfrak{l}$, respectively.

Here, we fix a real form $\mathfrak{g}$, and set
\begin{align*}
\mathcal{H}(\mathfrak{g}) &:= \{\, A \in \mathfrak{g} \mid \text{$A$ is a hyperbolic element in $\mathfrak{g}$} \,\}, \\
\mathcal{H}^a(\mathfrak{g}) &:= \{\, A \in \mathcal{H}(\mathfrak{g}) \mid \text{The real adjoint orbit through $A$ is antipodal} \,\}, \\
\mathcal{H}^n(\mathfrak{g}) &:= \{\, A \in \mathfrak{g} \mid \text{There exist $X,Y \in \mathfrak{g}$} \\
& \hspace{4.5cm} \text{such that $(A,X,Y)$ is an $\mathfrak{sl}_2$-triple} \,\}.
\end{align*}
We also write $\mathcal{H}(\mathfrak{g})/{G}$, $\mathcal{H}^a(\mathfrak{g})/{G}$, $\mathcal{H}^n(\mathfrak{g})/{G}$ for the sets of real adjoint orbits contained in $\mathcal{H}(\mathfrak{g})$, $\mathcal{H}^a(\mathfrak{g})$, $\mathcal{H}^n(\mathfrak{g})$, respectively. 

Then the following proposition gives a one-to-one correspondence between real hyperbolic orbits and complex hyperbolic orbits with real points:

\begin{prop}\label{prop:Hyp_in_gC_in_g}
\begin{enumerate}
\item \label{item:Hyp_c_r:hyp}
The following map gives a one-to-one correspondence between $\mathcal{H}(\mathfrak{g})/{G}$ and $\mathcal{H}_\mathfrak{g}/{G_\mathbb{C}}$$:$
\begin{align*}
\mathcal{H}(\mathfrak{g})/{G} \rightarrow \mathcal{H}_\mathfrak{g}/{G_\mathbb{C}}, 
&\quad 
\mathcal{O}^{G}_\hyp \mapsto \Ad(G_\mathbb{C}) \cdot \mathcal{O}^{G}_\hyp, \\
\mathcal{H}_\mathfrak{g}/{G_\mathbb{C}} \rightarrow \mathcal{H}(\mathfrak{g})/{G}, 
&\quad 
\mathcal{O}^{G_\mathbb{C}}_\hyp \mapsto \mathcal{O}^{G_\mathbb{C}}_\hyp \cap \mathfrak{g}.
\end{align*}
\item \label{item:Hyp_c_r:anti} The bijection in \eqref{item:Hyp_c_r:hyp} gives the one-to-one correspondence below:
\[
\mathcal{H}^a(\mathfrak{g})/G \bijarrow \mathcal{H}^a_\mathfrak{g}/{G_\mathbb{C}}.
\]
\item \label{item:Hyp_c_r:neut} The bijection in \eqref{item:Hyp_c_r:hyp} gives the one-to-one correspondence below:
\[
\mathcal{H}^n(\mathfrak{g})/G \bijarrow \mathcal{H}^n_\mathfrak{g}/{G_\mathbb{C}}.
\]
\end{enumerate}
\end{prop}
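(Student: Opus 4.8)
The plan is to deduce all three parts from the classical parametrization of real hyperbolic orbits by a closed Weyl chamber, and from the ``compatible ordering'' between $\mathfrak{g}$ and $\mathfrak{g}_\mathbb{C}$. First I would record the elementary observation that for $A\in\mathfrak{g}$ the operator $\ad_{\mathfrak{g}_\mathbb{C}}(A)$ is the complexification of $\ad_\mathfrak{g}(A)$, so $A$ is hyperbolic in $\mathfrak{g}$ if and only if it is hyperbolic in $\mathfrak{g}_\mathbb{C}$, with the same multiset of eigenvalues. Hence $\Ad(G_\mathbb{C})\cdot\mathcal{O}^G_\hyp$ is a complex hyperbolic orbit meeting $\mathfrak{g}$ for every real hyperbolic orbit $\mathcal{O}^G_\hyp$, and every point of $\mathcal{O}^{G_\mathbb{C}}_\hyp\cap\mathfrak{g}$ is hyperbolic in $\mathfrak{g}$; this makes both maps in \eqref{item:Hyp_c_r:hyp} land where claimed, the only thing still owed being that $\mathcal{O}^{G_\mathbb{C}}_\hyp\cap\mathfrak{g}$ is a single $\Ad(G)$-orbit. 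Next I would invoke the classical structure theory: with $\theta$, $\mathfrak{p}$, $\mathfrak{a}$ as above and $\mathfrak{a}_+$ a closed positive chamber, every hyperbolic element of $\mathfrak{g}$ is $\Ad(G)$-conjugate to a unique element of $\mathfrak{a}_+$, so $\mathcal{H}(\mathfrak{g})/G$ is parametrized by $\mathfrak{a}_+$.

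The heart of \eqref{item:Hyp_c_r:hyp} is that the composite $\mathfrak{a}_+\hookrightarrow\mathfrak{g}_\mathbb{C}\to\mathcal{H}/G_\mathbb{C}$ is injective. For this I would extend $\mathfrak{a}$ to a maximally split $\theta$-stable Cartan subalgebra $\mathfrak{j}=\mathfrak{t}\oplus\mathfrak{a}$ of $\mathfrak{g}$ with $\mathfrak{t}\subseteq\mathfrak{k}$, so that $\mathfrak{j}_\mathbb{C}$ is a Cartan subalgebra of $\mathfrak{g}_\mathbb{C}$ on whose real form $\mathfrak{j}_\mathbb{R}:=\{H\in\mathfrak{j}_\mathbb{C}\mid \alpha(H)\in\mathbb{R}\text{ for all roots }\alpha\}$ all roots are real; note $\mathfrak{a}\subseteq\mathfrak{j}_\mathbb{R}$. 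Choosing a positive system $\Delta^+(\mathfrak{g}_\mathbb{C},\mathfrak{j}_\mathbb{C})$ compatible with $\Sigma^+(\mathfrak{g},\mathfrak{a})$ in the usual (Satake) sense, one gets $\mathfrak{a}_+\subseteq\overline{C}$, the closed dominant chamber in $\mathfrak{j}_\mathbb{R}$, which is a fundamental domain for $W(\mathfrak{g}_\mathbb{C},\mathfrak{j}_\mathbb{C})$; since $\Ad(G_\mathbb{C})$-conjugacy of elements of $\mathfrak{j}_\mathbb{C}$ is exactly $W(\mathfrak{g}_\mathbb{C},\mathfrak{j}_\mathbb{C})$-conjugacy, two elements of $\mathfrak{a}_+$ that are $\Ad(G_\mathbb{C})$-conjugate coincide. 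From this injectivity: any two points of $\mathcal{O}^{G_\mathbb{C}}_\hyp\cap\mathfrak{g}$ are $\Ad(G)$-conjugate into $\mathfrak{a}_+$, necessarily to the same point, hence $\Ad(G)$-conjugate; so $\mathcal{O}^{G_\mathbb{C}}_\hyp\cap\mathfrak{g}$ is a single orbit. A routine check then gives that the two maps of \eqref{item:Hyp_c_r:hyp} are mutually inverse, and in particular $\Ad(G_\mathbb{C})\cdot(\mathcal{O}^{G_\mathbb{C}}_\hyp\cap\mathfrak{g})=\mathcal{O}^{G_\mathbb{C}}_\hyp$ and $\Ad(G_\mathbb{C})\cdot\mathcal{O}^G_\hyp\cap\mathfrak{g}=\mathcal{O}^G_\hyp$.

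Part \eqref{item:Hyp_c_r:anti} is then immediate: if $\mathcal{O}^G_\hyp$ is antipodal and $A\in\mathcal{O}^G_\hyp$ then $-A\in\mathcal{O}^G_\hyp\subseteq\Ad(G_\mathbb{C})\cdot\mathcal{O}^G_\hyp$; conversely if $\Ad(G_\mathbb{C})\cdot\mathcal{O}^G_\hyp$ is antipodal and $A\in\mathcal{O}^G_\hyp$ then $-A$ lies in it and in $\mathfrak{g}$, hence in $\Ad(G_\mathbb{C})\cdot\mathcal{O}^G_\hyp\cap\mathfrak{g}=\mathcal{O}^G_\hyp$. For part \eqref{item:Hyp_c_r:neut}, one inclusion is trivial since a real $\mathfrak{sl}_2$-triple is a complex one. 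For the other, I would start from a complex $\mathfrak{sl}_2$-triple $(A,X,Y)$ with $A\in\mathfrak{g}$; then $\ad_\mathfrak{g}(A)$ has integer eigenvalues, the grading $\mathfrak{g}=\bigoplus_{j}\mathfrak{g}_j(A)$ is defined over $\mathbb{R}$, and $\mathfrak{g}_{2,\mathbb{C}}(A)=\mathfrak{g}_2(A)\otimes_\mathbb{R}\mathbb{C}$. By the standard refinement of Jacobson--Morozov (Kostant's conjugacy theorem for $\mathfrak{sl}_2$-triples), the set $U$ of $X'\in\mathfrak{g}_{2,\mathbb{C}}(A)$ completing $A$ to a triple is a single $Z_{G_\mathbb{C}}(A)^\circ$-orbit and is Zariski-open in $\mathfrak{g}_{2,\mathbb{C}}(A)$ (openness because $\ad(X)\colon\mathfrak{g}_{0,\mathbb{C}}(A)\to\mathfrak{g}_{2,\mathbb{C}}(A)$ is surjective, by $\mathfrak{sl}_2$-representation theory for the module $\mathfrak{g}_\mathbb{C}$). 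The condition defining $U$ involves only $A$, which is fixed by the conjugation $\tau$ of $\mathfrak{g}_\mathbb{C}$ with respect to $\mathfrak{g}$, so $U$ is $\tau$-stable, hence defined over $\mathbb{R}$; being non-empty and Zariski-open it has a real point $X'\in\mathfrak{g}_2(A)$. Finally the affine space $\{Y'\in\mathfrak{g}_{-2,\mathbb{C}}(A)\mid [X',Y']=A\}$ is non-empty and defined over $\mathbb{R}$, hence meets $\mathfrak{g}_{-2}(A)$; any such $Y'$ gives a real $\mathfrak{sl}_2$-triple $(A,X',Y')$, which shows $\mathcal{O}^G_A\subseteq\mathcal{H}^n(\mathfrak{g})$ and finishes \eqref{item:Hyp_c_r:neut}.

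The step I expect to be the main obstacle is part \eqref{item:Hyp_c_r:neut}: descending from a complex $\mathfrak{sl}_2$-triple to a real one with the \emph{same} neutral element. This is exactly where the Zariski-openness of the set of nilpositive elements for a fixed completable neutral element is needed, combined with the real-form density argument; by contrast the analogue for the nilpotent elements $X$ alone fails, so keeping $A$ fixed is essential. A secondary delicate point is the compatible-ordering argument in part \eqref{item:Hyp_c_r:hyp} showing that complexification is injective on real hyperbolic orbits --- this is precisely the feature distinguishing hyperbolic orbits from nilpotent ones (for which $\mathcal{O}^{G_\mathbb{C}}\cap\mathfrak{g}$ is typically a union of several $G$-orbits), and it should be stated carefully.
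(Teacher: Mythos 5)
Your proposal is correct and follows essentially the same route as the paper: part \eqref{item:Hyp_c_r:hyp} via the inclusion $\mathfrak{a}_+\subset\mathfrak{j}_+$ for compatibly ordered chambers (so that the unique chamber representative of a real hyperbolic orbit is also the unique complex-chamber representative, forcing $\mathcal{O}^{G_\mathbb{C}}_\hyp\cap\mathfrak{g}$ to be a single $\Ad(G)$-orbit), part \eqref{item:Hyp_c_r:anti} formally from \eqref{item:Hyp_c_r:hyp}, and part \eqref{item:Hyp_c_r:neut} by fixing the neutral element $A\in\mathfrak{g}$ and descending the nilpositive element to the real form. The only difference is that where the paper delegates this last descent step to Sekiguchi's Proposition 1.11 (via its Lemma \ref{lem:sekiguchi_sub}), you reprove it directly with the Zariski-open $Z_{G_\mathbb{C}}(A)$-orbit and real-points argument, which is a sound, self-contained substitute.
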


The proof of Proposition \ref{prop:Hyp_in_gC_in_g} will be given in Section \ref{section:Real_forms}.

In Setting \ref{setting:main}, recall that both $\mathfrak{g}$ and $\mathfrak{g}^c$ are real forms of $\mathfrak{g}_\mathbb{C}$.
In Section \ref{section:Symmetric_pair}, we will prove the following proposition, which claims that a complex hyperbolic orbit meets $\mathfrak{h}$ if it meets both $\mathfrak{g}$ and $\mathfrak{g}^c$:

\begin{prop}\label{prop:keyprop}
In Setting $\ref{setting:main}$, 
$
\mathcal{H}_\mathfrak{g} \cap \mathcal{H}_{\mathfrak{g}^c} = \mathcal{H}_{\mathfrak{h}}$.
\end{prop}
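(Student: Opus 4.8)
The inclusion $\mathcal{H}_\mathfrak{h} \subset \mathcal{H}_\mathfrak{g} \cap \mathcal{H}_{\mathfrak{g}^c}$ is essentially immediate: if a complex hyperbolic orbit $\mathcal{O}$ meets $\mathfrak{h}$ at a point $A$, then since $\mathfrak{h} \subset \mathfrak{g}$ and $\mathfrak{h} \subset \mathfrak{g}^c$ (because $\mathfrak{g}^c = \mathfrak{h} + \sqrt{-1}\mathfrak{q}$), the orbit meets both real forms, and $A$ is automatically hyperbolic in each. So the whole content is the reverse inclusion $\mathcal{H}_\mathfrak{g} \cap \mathcal{H}_{\mathfrak{g}^c} \subset \mathcal{H}_\mathfrak{h}$. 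The plan is to start with a complex hyperbolic orbit $\mathcal{O}$ meeting both $\mathfrak{g}$ and $\mathfrak{g}^c$, and produce a point of $\mathcal{O}$ lying in $\mathfrak{h}$. The natural strategy is to pass to a maximally split abelian subspace: every hyperbolic element of a real form is conjugate into a fixed Cartan subspace $\mathfrak{a}$, and the $G_\mathbb{C}$-orbit is determined by the intersection with a complexified Cartan, modulo the complex Weyl group. So I want to choose the Cartan data compatibly with the symmetric structure.

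The key step is to choose a Cartan involution $\theta$ of $\mathfrak{g}$ commuting with $\sigma$ (this exists by Example \ref{ex:ss_pair_is_red}/Berger), and then let $\mathfrak{a}_\mathfrak{h}$ be a maximal abelian subspace of $\mathfrak{p} \cap \mathfrak{h}$, extended to a maximally split abelian $\mathfrak{a} \subset \mathfrak{p}$ of $\mathfrak{g}$. The same $\mathfrak{a}$ (or rather a suitably chosen maximally split abelian subspace) should work for $\mathfrak{g}^c$ as well, since the Cartan decomposition of $\mathfrak{g}^c$ induced by $\theta$ and $\sigma$ has $\mathfrak{k}^c$-part $= (\mathfrak{k}\cap\mathfrak{h}) + \sqrt{-1}(\mathfrak{p}\cap\mathfrak{q})$ and $\mathfrak{p}^c$-part $= (\mathfrak{p}\cap\mathfrak{h}) + \sqrt{-1}(\mathfrak{k}\cap\mathfrak{q})$, so $\mathfrak{a}_\mathfrak{h}$ sits inside the maximally split abelian subspace of $\mathfrak{g}^c$. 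The point: the complexification $\mathfrak{a}_\mathbb{C}$ is a common "hyperbolic Cartan" for all three of $\mathfrak{g}$, $\mathfrak{g}^c$, $\mathfrak{g}_\mathbb{C}$. Now the complex orbit $\mathcal{O}$, being hyperbolic with a point in $\mathfrak{g}$, meets $\mathfrak{a}$, hence meets $\mathfrak{a}_\mathbb{C}$; the same orbit meeting $\mathfrak{g}^c$ meets the maximally split abelian subspace of $\mathfrak{g}^c$. I then want to argue that the intersection $\mathcal{O} \cap \mathfrak{a}_\mathbb{C}$ is a single complex Weyl group orbit, and that the constraint of meeting both real forms forces a representative to actually lie in $\mathfrak{a}_\mathfrak{h}$ — essentially because the "real" part (with respect to $\mathfrak{g}$) and the part that must be real for $\mathfrak{g}^c$ overlap precisely in $\mathfrak{a}_\mathfrak{h} \subset \mathfrak{h}$. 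More carefully: a point of $\mathcal{O}\cap\mathfrak{a}_\mathbb{C}$ conjugate (under $W(\mathfrak{g}_\mathbb{C})$, i.e. $N_{G_\mathbb{C}}(\mathfrak{a}_\mathbb{C})$) into $\mathfrak{a} \subset \mathfrak{g}$, and also conjugate into the real Cartan subspace of $\mathfrak{g}^c$; decomposing $\mathfrak{a}_\mathbb{C} = \mathfrak{a}_\mathfrak{h}\oplus(\mathfrak{a}\cap\mathfrak{q})_\mathbb{C}$-ish and tracking which coordinates are forced real versus imaginary, one is pinned to $\mathfrak{a}_\mathfrak{h}$.

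The main obstacle I anticipate is the Weyl-group bookkeeping: the complex Weyl group element moving a $\mathfrak{g}$-representative to a $\mathfrak{g}^c$-representative need not preserve the decomposition of $\mathfrak{a}$ coming from $\sigma$, so a naive "compare coordinates" argument can fail. The fix should be to work with the real Weyl groups $W(\mathfrak{g},\mathfrak{a})$ and $W(\mathfrak{g}^c,\mathfrak{a}^c)$ and a description of $W(\mathfrak{g}_\mathbb{C},\mathfrak{a}_\mathbb{C})$-orbits meeting each real Cartan — invoking Proposition \ref{prop:Hyp_in_gC_in_g}, which already sets up the bijection between real and complex hyperbolic orbits via $\mathcal{O}^{G_\mathbb{C}} \leftrightarrow \mathcal{O}^{G_\mathbb{C}}\cap\mathfrak{g}$. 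Then I reduce to: a $W(\mathfrak{g},\mathfrak{a})$-orbit in $\mathfrak{a}$ whose complexification also meets $\sqrt{-1}(\mathfrak{k}\cap\mathfrak{q}) \oplus (\mathfrak{p}\cap\mathfrak{h})$ must actually meet $\mathfrak{p}\cap\mathfrak{h} = \mathfrak{a}_\mathfrak{h}$'s ambient space — and a hyperbolic element in the intersection has, after a further $W(\mathfrak{g}^c)$-adjustment, all "imaginary" components killed, landing in $\mathfrak{a}_\mathfrak{h} \subset \mathfrak{h}$. That last "killing the imaginary part" step, making precise that hyperbolicity in \emph{both} real forms forces the element into the common subspace $\mathfrak{a}_\mathfrak{h}$, is where the real work lies; I expect it to come down to the observation that for $A \in \mathfrak{a}_\mathbb{C}$, $A$ hyperbolic in $\mathfrak{g}$ means $A$ has real coordinates relative to $\mathfrak{a}$, $A$ hyperbolic in $\mathfrak{g}^c$ means $A$ has real coordinates relative to the $\mathfrak{g}^c$-Cartan subspace, and the only vectors that are simultaneously "real" for both split real structures on $\mathfrak{a}_\mathbb{C}$ are those in $\mathfrak{a}_\mathfrak{h}$, after using the Weyl groups to align things.
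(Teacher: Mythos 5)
Your overall strategy --- put everything into Cartan data adapted to a Cartan involution $\theta$ commuting with $\sigma$ and reduce to the split parts --- is the same as the paper's, and the easy inclusion $\mathcal{H}_\mathfrak{h}\subset\mathcal{H}_\mathfrak{g}\cap\mathcal{H}_{\mathfrak{g}^c}$ is handled correctly. But there are two concrete gaps in the hard inclusion. First, the complexification $\mathfrak{a}_\mathbb{C}$ of the maximally split abelian subspace $\mathfrak{a}\subset\mathfrak{p}$ is \emph{not} a ``common hyperbolic Cartan'' for $\mathfrak{g}$, $\mathfrak{g}^c$ and $\mathfrak{g}_\mathbb{C}$: a maximally split abelian subspace $\mathfrak{a}^c$ of $\mathfrak{g}^c$ containing $\mathfrak{a}_\mathfrak{h}$ lives in $\mathfrak{p}^c=\mathfrak{p}(\mathfrak{h})+\sqrt{-1}(\mathfrak{k}\cap\mathfrak{q})$, and its component in $\sqrt{-1}(\mathfrak{k}\cap\mathfrak{q})$ meets $\mathfrak{a}_\mathbb{C}=\mathfrak{a}+\sqrt{-1}\mathfrak{a}$ only in $0$, since $\mathfrak{a}\subset\mathfrak{p}$ and $\mathfrak{k}\cap\mathfrak{p}=\{0\}$. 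So your decomposition of $\mathfrak{a}_\mathbb{C}$ into $\mathfrak{a}_\mathfrak{h}$ plus an ``imaginary'' complement, and the coordinate-tracking built on it, do not apply. What is actually needed is that $\mathfrak{a}$ and $\mathfrak{a}^c$ commute; this is a genuine lemma (the paper proves $[\mathfrak{a},\mathfrak{a}^c]=\{0\}$, Lemma \ref{lem:commutative_extend}, by applying a result of {\=O}shima--Sekiguchi to the associated pair), after which $\mathfrak{a}+\mathfrak{a}^c$ extends to a single Cartan subalgebra $\mathfrak{j}_\mathbb{C}$ of $\mathfrak{g}_\mathbb{C}$ whose hyperbolic real form $\mathfrak{j}$ contains both $\mathfrak{a}$ and $\mathfrak{a}^c$, with $\mathfrak{a}\cap\mathfrak{a}^c=\mathfrak{a}_\mathfrak{h}$.

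Second, the ``alignment'' step that you yourself flag as the real work is left open, and your proposed fix (adjusting by Weyl group elements and tracking which coordinates are forced real) does not close it. The paper's mechanism avoids all Weyl-group bookkeeping: choose an ordering on $\mathfrak{a}_\mathfrak{h}$ and extend it stepwise to $\mathfrak{a}$, to $\mathfrak{a}+\mathfrak{a}^c$, and to $\mathfrak{j}$, so that the resulting positive system $\Delta^+(\mathfrak{g}_\mathbb{C},\mathfrak{j}_\mathbb{C})$ is simultaneously compatible with positive systems of both restricted root systems; the closed chambers then satisfy $\mathfrak{a}_+=\mathfrak{j}_+\cap\mathfrak{a}$ and $\mathfrak{a}^c_+=\mathfrak{j}_+\cap\mathfrak{a}^c$. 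Since a complex hyperbolic orbit has a \emph{unique} representative $A_\mathcal{O}$ in $\mathfrak{j}_+$ (Fact \ref{fact:cpx_hyp}), and it meets $\mathfrak{g}$ if and only if $A_\mathcal{O}\in\mathfrak{a}_+$ (Lemma \ref{lem:comp_hyp_meets_g_and_A_in_a}), and likewise meets $\mathfrak{g}^c$ if and only if $A_\mathcal{O}\in\mathfrak{a}^c_+$, meeting both forces this single element into $\mathfrak{a}\cap\mathfrak{a}^c=\mathfrak{a}_\mathfrak{h}\subset\mathfrak{h}$; no comparison of two different representatives is ever needed. Without this uniqueness-of-dominant-representative argument (or an equivalent substitute), your sketch does not yet constitute a proof.
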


The equivalences \eqref{item:main_sl2-triple} $\Leftrightarrow$ \eqref{item:main_sl2-triple_R} and \eqref{item:main_hyperbolic} $\Leftrightarrow$ \eqref{item:main_realhyperbolic} in Theorem \ref{thm:main} follows from Proposition \ref{prop:Hyp_in_gC_in_g} and Proposition \ref{prop:keyprop}.

\subsection{Proof of \eqref{item:main_nilpotent} $\Leftrightarrow$ \eqref{item:main_sl2-triple} in Theorem \ref{thm:main}}\label{subsection:proof_sl2_nilp}

The equivalence \eqref{item:main_nilpotent} $\Leftrightarrow$ \eqref{item:main_sl2-triple} in Theorem \ref{thm:main} can be obtained by the Jacobson--Morozov theorem and the lemma below (see Proposition \ref{prop:cor_to_Sekiguchi} for a proof):

\begin{lem}[Corollary to J.~Sekiguchi {\cite[Proposition 1.11]{Sekiguchi87}}]\label{lem:A_meets_X_meets}
Let $\mathfrak{g}_\mathbb{C}$ be a complex semisimple Lie algebra and $\mathfrak{g}$ a real form of $\mathfrak{g}_\mathbb{C}$. 
Then the following conditions on an $\mathfrak{sl}_2$-triple $(A,X,Y)$ in $\mathfrak{g}_\mathbb{C}$ are equivalent$:$
\begin{enumerate}
\item The complex adjoint orbit through $A$ in $\mathfrak{g}_\mathbb{C}$ meets $\mathfrak{g}$. 
\item The complex adjoint orbit through $X$ in $\mathfrak{g}_\mathbb{C}$ meets $\mathfrak{g}$. 
\end{enumerate}
\end{lem}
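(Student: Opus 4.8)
The plan is to deduce this from Sekiguchi's Proposition 1.11 in \cite{Sekiguchi87}, which (in the form relevant here) relates the Kostant--Sekiguchi-type correspondence between real nilpotent orbits and the neutral elements of their Jacobson--Morozov $\mathfrak{sl}_2$-triples. The key structural fact I would use is that, given an $\mathfrak{sl}_2$-triple $(A,X,Y)$ in $\mathfrak{g}_\mathbb{C}$, the element $A$ is hyperbolic (Lemma \ref{lem:sl2-triple_to_Bhyp}), so the complex orbit $\mathcal{O}^{G_\mathbb{C}}_A$ is determined by a dominant rational point of a Cartan subspace --- its weighted Dynkin diagram --- and the pair $(\mathcal{O}^{G_\mathbb{C}}_A, \mathcal{O}^{G_\mathbb{C}}_X)$ is a matched pair in the Dynkin--Kostant classification. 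The implication I want is a statement purely about which of these two complex orbits contains real points.

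First I would establish the easy direction. Suppose $\mathcal{O}^{G_\mathbb{C}}_X \cap \mathfrak{g} \neq \emptyset$, say $X' \in \mathcal{O}^{G_\mathbb{C}}_X \cap \mathfrak{g}$. Then $X'$ is a nilpotent element of the real form $\mathfrak{g}$, and by the Jacobson--Morozov theorem applied in $\mathfrak{g}$ there is a real $\mathfrak{sl}_2$-triple $(A',X',Y')$ with all three members in $\mathfrak{g}$. Since any two $\mathfrak{sl}_2$-triples in $\mathfrak{g}_\mathbb{C}$ with the same nilpositive element are conjugate under the centralizer of that element (Kostant), $(A',X',Y')$ is $G_\mathbb{C}$-conjugate to $(A,X,Y)$, hence $A' \in \mathcal{O}^{G_\mathbb{C}}_A$; and $A' \in \mathfrak{g}$. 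So $\mathcal{O}^{G_\mathbb{C}}_A \cap \mathfrak{g} \neq \emptyset$. This direction does not even need Sekiguchi.

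For the converse --- the substantive direction --- suppose $\mathcal{O}^{G_\mathbb{C}}_A \cap \mathfrak{g} \neq \emptyset$. Here is where I invoke Sekiguchi \cite[Proposition 1.11]{Sekiguchi87}: fix a Cartan involution $\theta$ of $\mathfrak{g}$ and complexify; by Kostant--Rallis/Sekiguchi theory one may normalize the $\mathfrak{sl}_2$-triple so that $A \in \mathfrak{g}$ with $\theta A = A$, i.e. $A$ lies in (a complexification-compatible) maximally split abelian subspace, while still $A \in \mathcal{O}^{G_\mathbb{C}}_A$. Sekiguchi's result then guarantees that a real nilpotent orbit in $\mathfrak{g}$ whose complexification is $\mathcal{O}^{G_\mathbb{C}}_X$ can be chosen inside the same $\theta$-stable normalization, so that $X$ itself (after the adjustment) lies in $\mathfrak{g}$. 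Concretely, the point is that the condition ``$\mathcal{O}^{G_\mathbb{C}}_A$ meets $\mathfrak{g}$'' is equivalent to ``the weighted Dynkin diagram of $\mathcal{O}^{G_\mathbb{C}}_X$ is realized by a real hyperbolic element of $\mathfrak{g}$,'' which by Sekiguchi's proposition is in turn equivalent to ``$\mathcal{O}^{G_\mathbb{C}}_X$ meets $\mathfrak{g}$.'' I would also record the observation that $\mathcal{O}^{G_\mathbb{C}}_A \cap \mathfrak{g}$, being a union of real hyperbolic orbits $\mathcal{O}^G_A$ with $(A,X,Y)$ completable to a real $\mathfrak{sl}_2$-triple, already forces the nilpositive member to be real after conjugation --- this is precisely the content one extracts from Sekiguchi.

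The main obstacle is making the second paragraph's normalization rigorous without reproducing Sekiguchi's argument: one must be careful that ``$\mathcal{O}^{G_\mathbb{C}}_A$ meets $\mathfrak{g}$'' is used as a statement about hyperbolic orbits (so that Proposition \ref{prop:Hyp_in_gC_in_g} applies and one gets a well-defined real hyperbolic orbit $\mathcal{O}^G_A \subset \mathfrak{g}$), and then that this real hyperbolic element can be taken to be the neutral element of a \emph{real} $\mathfrak{sl}_2$-triple. This last step --- upgrading ``$A$ has a real representative'' to ``$A$ has a real representative that is the semisimple part of a real $\mathfrak{sl}_2$-triple'' --- is exactly where Sekiguchi's Proposition 1.11 does the real work, via the interplay between the $K_\mathbb{C}$-orbit picture on $\mathfrak{p}_\mathbb{C}$ and the $G$-orbit picture on $\mathfrak{g}$. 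I expect the write-up to be short once that citation is pinned down precisely, with the bulk of the text being the easy direction plus the bookkeeping identifying the relevant orbits.
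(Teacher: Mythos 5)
Your argument is correct and is essentially the paper's own: the substantive implication (the orbit through $A$ meeting $\mathfrak{g}$ forces the orbit through $X$ to meet $\mathfrak{g}$) is delegated, exactly as in Proposition \ref{prop:cor_to_Sekiguchi} and Lemma \ref{lem:sekiguchi_sub}, to (the proof of) Sekiguchi's Proposition 1.11, while your elementary treatment of the other direction via the Jacobson--Morozov theorem applied inside $\mathfrak{g}$ together with Kostant's conjugacy theorem is a harmless variant of the paper's chain through $\mathfrak{p}_\mathbb{C}$. One cosmetic correction: once $A$ is normalized into a maximally split abelian subspace $\mathfrak{a} \subset \mathfrak{p}$ one has $\theta A = -A$, not $\theta A = A$.
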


\subsection{Proof of \eqref{item:main_realhyperbolic} $\Rightarrow$ \eqref{item:main_sl2-triple_R} in Theorem \ref{thm:main}}\label{subsection:proof_main}

Let $\mathfrak{g}$ be a semisimple Lie algebra.
In this subsection, we use $\mathcal{H}(\mathfrak{g})$, $\mathcal{H}^a(\mathfrak{g})$ and $\mathcal{H}^n(\mathfrak{g})$ as in Section \ref{subsection:proof_main_complexification}.

To prove the implication \eqref{item:main_realhyperbolic} $\Rightarrow$ \eqref{item:main_sl2-triple_R}, 
we use the next proposition and lemma:

\begin{prop}\label{prop:neutral_control}
We take \[
\mathfrak{b} := \{\, A \in \mathfrak{a} \mid -w_0 \cdot A = A \,\},\quad \mathfrak{b}_+ := \mathfrak{b} \cap \mathfrak{a}_+
\]
as in Section $\ref{subsection:Benoist_result}$.
Then the following 
holds$:$
\begin{enumerate}
\item \label{item:neut_key} $\mathfrak{b} = \mathbb{R}\text{-}\mathrm{span} (\mathfrak{a}_+ \cap \mathcal{H}^n(\mathfrak{g}))$.
\item \label{item:neut_b_anti} $\mathcal{H}^a(\mathfrak{g}) = \Ad(G) \cdot \mathfrak{b}_+$.
\end{enumerate}
\end{prop}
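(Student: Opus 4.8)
The plan is to reduce everything to the structure theory of the restricted root system $\Sigma(\mathfrak{g},\mathfrak{a})$ together with the Dynkin--Kostant theory of $\mathfrak{sl}_2$-triples, restricted to the dominant chamber $\mathfrak{a}_+$. First I would record the key fact that $\mathcal{H}^n(\mathfrak{g})$, being the set of neutral elements of $\mathfrak{sl}_2$-triples in $\mathfrak{g}$, is $\Ad(G)$-stable, and that every $\Ad(G)$-orbit in $\mathcal{H}^n(\mathfrak{g})$ meets $\mathfrak{a}_+$ in exactly one point (a hyperbolic element is conjugate to a unique dominant element of $\mathfrak{a}$). Thus $\mathcal{H}^n(\mathfrak{g}) = \Ad(G)\cdot(\mathfrak{a}_+ \cap \mathcal{H}^n(\mathfrak{g}))$, which already reduces \eqref{item:neut_b_anti} to an identity inside $\mathfrak{a}_+$ once one knows $\mathcal{H}^a(\mathfrak{g})$ is also $\Ad(G)$-stable and meets $\mathfrak{a}_+$ in $\mathfrak{b}_+$. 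For the latter: if $A \in \mathfrak{a}_+$ and its orbit is antipodal, then $-A \in \Ad(G)\cdot A$, and since $-A$ lies in $-\mathfrak{a}_+ = w_0 \cdot \mathfrak{a}_+$, applying $w_0$ shows $-w_0 \cdot A \in \mathfrak{a}_+$ is $\Ad(G)$-conjugate to $A$, hence equals $A$ by uniqueness; so $A \in \mathfrak{b}_+$. The converse inclusion $\mathfrak{b}_+ \subset \mathcal{H}^a(\mathfrak{g})$ is immediate from the definition of $\mathfrak{b}$. This gives $\mathcal{H}^a(\mathfrak{g}) \cap \mathfrak{a}_+ = \mathfrak{b}_+$ and hence \eqref{item:neut_b_anti}, \emph{granted} \eqref{item:neut_key}, because then $\mathfrak{b}_+ = \mathfrak{b}\cap\mathfrak{a}_+ = \mathbb{R}\text{-span}(\mathfrak{a}_+\cap\mathcal{H}^n(\mathfrak{g})) \cap \mathfrak{a}_+$; one checks this spanning set already lies in $\mathfrak{b}_+$ (each neutral element has antipodal complex orbit by Lemma \ref{lem:sl2-triple_to_Bhyp}, and by Proposition \ref{prop:Hyp_in_gC_in_g}\eqref{item:Hyp_c_r:anti} its real orbit is antipodal too, so it sits in $\mathfrak{b}_+$), and that $\mathfrak{b}_+$ spans $\mathfrak{b}$, so the real span of $\mathfrak{a}_+\cap\mathcal{H}^n(\mathfrak{g})$ is exactly $\mathfrak{b}$.

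The real content is therefore \eqref{item:neut_key}. The inclusion $\mathbb{R}\text{-span}(\mathfrak{a}_+\cap\mathcal{H}^n(\mathfrak{g})) \subseteq \mathfrak{b}$ follows from the paragraph above, since $\mathfrak{a}_+ \cap \mathcal{H}^n(\mathfrak{g}) \subseteq \mathfrak{b}_+ \subseteq \mathfrak{b}$. For the reverse inclusion $\mathfrak{b} \subseteq \mathbb{R}\text{-span}(\mathfrak{a}_+\cap\mathcal{H}^n(\mathfrak{g}))$, the plan is to exhibit enough neutral elements of $\mathfrak{sl}_2$-triples inside $\mathfrak{b}_+$ to span $\mathfrak{b}$. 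I would use the principal $\mathfrak{sl}_2$-triples attached to subsets of simple restricted roots: for a choice of simple system $\Pi = \{\alpha_1,\dots,\alpha_r\}$ of $\Sigma^+(\mathfrak{g},\mathfrak{a})$ and a subset $\Theta \subseteq \Pi$ that is invariant under the involution $-w_0$ of the Dynkin diagram, the subalgebra $\mathfrak{g}_\Theta$ generated by the restricted root spaces $\mathfrak{g}_{\pm\alpha}$, $\alpha\in\Theta$, is semisimple and split, its principal $\mathfrak{sl}_2$-triple has neutral element $A_\Theta \in \mathfrak{a}$ characterized by $\alpha_i(A_\Theta) = 2$ for $\alpha_i \in \Theta$ and a suitable value on the rest so as to stay in $\mathfrak{a}_+$; by the $-w_0$-invariance of $\Theta$ one arranges $-w_0 \cdot A_\Theta = A_\Theta$, i.e.\ $A_\Theta \in \mathfrak{b}_+$. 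Running over the one-element $-w_0$-orbits and two-element $-w_0$-orbits $\{\alpha_i,\alpha_j\}$ in $\Pi$ produces a collection of such $A_\Theta$ whose span is all of $\mathfrak{b}$: indeed $\mathfrak{b}$ is cut out inside $\mathfrak{a}$ by the equations $\alpha_i = \alpha_{-w_0(i)}$, so it is spanned by the dual basis vectors $\varpi_i^\vee + \varpi_{-w_0(i)}^\vee$ grouped along $-w_0$-orbits, and these are exactly realized (up to positive scaling and lower-order corrections that themselves lie in the span) by the neutral elements just constructed.

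The main obstacle I anticipate is the bookkeeping in the last step: verifying that the neutral elements of the principal $\mathfrak{sl}_2$-triples of the $\mathfrak{g}_\Theta$'s — which are not simply the fundamental coweights but are determined by the Dynkin labels of the principal nilpotent (all $2$'s on $\Theta$) together with dominance on $\Pi\setminus\Theta$ — genuinely span $\mathfrak{b}$ and not just a proper subspace, and that each indeed lies in $\mathfrak{b}_+$ rather than merely in $\mathfrak{b}$. One clean way to sidestep delicate computation is to argue by induction on $r = \dim\mathfrak{a}$ / the number of $-w_0$-orbits on $\Pi$: for a single $-w_0$-orbit one checks the rank-one or rank-two case by hand (using that $\mathfrak{sl}(2,\mathbb{R})$, $\mathfrak{sl}(3,\mathbb{R})$, $\mathfrak{sp}(2,\mathbb{R})$, $\mathfrak{g}_{2(2)}$ all contain principal $\mathfrak{sl}_2$'s with the required neutral element), and for the inductive step one peels off one orbit, applies the hypothesis to a Levi-type subalgebra, and adds back one more principal-type neutral element, checking its component transverse to the smaller span is nonzero. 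I would also need to confirm at the outset that $\mathfrak{g}$ may be assumed simple (the general semisimple case follows by taking direct sums, since all the objects $\mathfrak{b}$, $\mathfrak{b}_+$, $\mathcal{H}^n$, $\mathcal{H}^a$ are compatible with direct sum decompositions), which keeps the case analysis over $-w_0$ to the short list of simple diagrams with nontrivial diagram automorphism induced by $-w_0$, namely types $A_{r}$, $D_{2k+1}$ and $E_6$.
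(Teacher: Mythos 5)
Your argument for part \eqref{item:neut_b_anti} is sound and is essentially the paper's own (Lemma \ref{lem:b_+_and_B-orbit}): a real hyperbolic orbit meets $\mathfrak{a}_+$ in a unique point $A_0$, and $-A_0$ lies in the orbit iff $-w_0\cdot A_0=A_0$; note only that once you have $\mathcal{H}^a(\mathfrak{g})\cap\mathfrak{a}_+=\mathfrak{b}_+$ the identity $\mathcal{H}^a(\mathfrak{g})=\Ad(G)\cdot\mathfrak{b}_+$ follows immediately and does not need to be ``granted \eqref{item:neut_key}''. The inclusion $\mathbb{R}\text{-}\mathrm{span}(\mathfrak{a}_+\cap\mathcal{H}^n(\mathfrak{g}))\subseteq\mathfrak{b}$ is also correctly reduced to $\mathfrak{a}_+\cap\mathcal{H}^n(\mathfrak{g})\subseteq\mathfrak{b}_+$.

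The gap is in the reverse inclusion of \eqref{item:neut_key}, which is the actual content of the proposition. Your construction rests on the claim that the subalgebra $\mathfrak{g}_\Theta$ generated by the restricted root spaces $\mathfrak{g}_{\pm\alpha}$, $\alpha\in\Theta$, is split, so that its principal $\mathfrak{sl}_2$-triple supplies a neutral element with $\alpha(A_\Theta)=2$ on $\Theta$. This is false in general: for $\mathfrak{g}=\mathfrak{su}(3,1)$ and $\Theta=\overline{\Pi}$ the subalgebra generated by the restricted root spaces is not even quasi-split, and the principal nilpotent orbit $[4]$ of $\mathfrak{sl}(4,\mathbb{C})$ (weighted Dynkin diagram $(2,2,2)$) does not meet $\mathfrak{su}(3,1)$, whose matching diagrams have the form $(a,0,a)$; so ``the principal $\mathfrak{sl}_2$-triple of $\mathfrak{g}_\Theta$'' does not exist. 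More importantly, whether a prescribed element of $\mathfrak{a}_+$ is the neutral element of an $\mathfrak{sl}_2$-triple in $\mathfrak{g}$ is governed by Theorem \ref{thm:nilp_match}: its weighted Dynkin diagram must occur in the Dynkin--Kostant list \emph{and} match the Satake diagram, and neither condition is automatic for your $A_\Theta$. For instance, for $\mathfrak{g}=\mathfrak{sp}(3,2)$ (restricted root system $BC_2$, white nodes $\alpha_2,\alpha_4$ of $C_5$) and $\Theta$ the short simple restricted root, the element with weight $2$ on $\Theta$ and $0$ elsewhere has weighted Dynkin diagram $(0,0,0,2,0)$, which is not the diagram of any nilpotent orbit of $\mathfrak{sp}(10,\mathbb{C})$; the honest $A_\Theta$ (lying in the coroot span of $\Theta$) is then not dominant, its dominant $W(\mathfrak{g},\mathfrak{a})$-conjugate coincides with the one produced by the other singleton $\Theta$, and the spanning count ``one vector per $-w_0$-orbit'' breaks down. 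You correctly flag the spanning step as the main obstacle, but the proposed induction does not address the more basic existence problem, which is exactly where the difficulty lies. The paper does not attempt a uniform construction: it identifies $\mathfrak{a}_+\cap\mathcal{H}^n(\mathfrak{g})$ with $(\mathfrak{j}_+\cap\mathcal{H}^n)\cap\mathfrak{a}$ via Proposition \ref{prop:cor_to_Sekiguchi}, describes $\mathfrak{b}$ as the space of $\iota$-invariant weighted Dynkin diagrams matching the Satake diagram, and then verifies $\mathfrak{b}\subset\mathbb{R}\text{-}\mathrm{span}((\mathfrak{j}_+\cap\mathcal{H}^n)\cap\mathfrak{a})$ case by case against the Dynkin--Kostant tables for each simple real $\mathfrak{g}$ (Section \ref{subsection:proof_of_keyprop}). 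What survives of your idea is the special case of a single restricted root $\beta$, where $(\beta^\vee,X_\beta,c\,\theta X_\beta)$ is a genuine $\mathfrak{sl}_2$-triple in $\mathfrak{g}$; but dominant conjugates of single coroots are far too few to span $\mathfrak{b}$ in general, and extending to larger $\Theta$ is precisely where the case analysis cannot be bypassed by the argument you give.
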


\begin{lem}\label{lem:a_h}
Let $(\mathfrak{g},\mathfrak{h},\sigma)$ be a semisimple symmetric pair.
We fix a Cartan involution $\theta$ on $\mathfrak{g}$ 
such that $\theta \sigma = \sigma \theta$
and denote by $\mathfrak{g} = \mathfrak{k} + \mathfrak{p}$
the Cartan decomposition of $\mathfrak{g}$ with respect to $\theta$.
Let us take $\mathfrak{a}$ and $\mathfrak{a}_\mathfrak{h} = \mathfrak{a} \cap \mathfrak{h}$
as in Section $\ref{subsection:Kobayashi's_criterion}$.
We fix an ordering on $\mathfrak{a}_\mathfrak{h}$ and 
extend it to $\mathfrak{a}$,
and put $\mathfrak{a}_+$ to the closed Weyl chamber of $\mathfrak{a}$ 
with respect to the ordering.
Then
\[
\mathfrak{a}_+ \cap \mathcal{H}_\mathfrak{h}(\mathfrak{g}) \subset \mathfrak{a}_\mathfrak{h},
\]
where $\mathcal{H}_\mathfrak{h}(\mathfrak{g})$ is the set of hyperbolic elements in $\mathfrak{g}$ whose adjoint orbits meet $\mathfrak{h}$.
\end{lem}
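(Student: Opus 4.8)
The plan is to reduce the statement to a calculation inside a single $\sigma$-stable, $\theta$-stable reductive subalgebra, using that a hyperbolic element whose orbit meets $\mathfrak{h}$ can be conjugated into $\mathfrak{a}_\mathfrak{h}$ by an element of $H$. Concretely, fix $A \in \mathfrak{a}_+ \cap \mathcal{H}_\mathfrak{h}(\mathfrak{g})$. By definition there is $g \in G$ with $\Ad(g) A \in \mathfrak{h}$; since $\Ad(g)A$ is hyperbolic in $\mathfrak{h}$ and $\mathfrak{a}_\mathfrak{h}$ is a maximally split abelian subspace of $\mathfrak{h}$, after adjusting $g$ by an element of $H$ we may assume $\Ad(g)A \in \mathfrak{a}_\mathfrak{h} \subset \mathfrak{a}$. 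So $A$ and $\Ad(g)A$ are two elements of $\mathfrak{a}$ in the same $G$-adjoint orbit; by the standard fact that $\mathfrak{a}_+$ is a fundamental domain for $W(\mathfrak{g},\mathfrak{a})$ acting on $\mathfrak{a}$, there is $w \in W(\mathfrak{g},\mathfrak{a})$ with $w \cdot (\Ad(g)A) = A$. Thus it suffices to show: if $B \in \mathfrak{a}_\mathfrak{h}$ and $w \cdot B \in \mathfrak{a}_+$ for some $w \in W(\mathfrak{g},\mathfrak{a})$, then $w \cdot B \in \mathfrak{a}_\mathfrak{h}$.

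The second step is where the choice of ordering enters. Because the ordering on $\mathfrak{a}$ was chosen to extend an ordering on $\mathfrak{a}_\mathfrak{h}$, the positive system $\Sigma^+(\mathfrak{g},\mathfrak{a})$ is compatible with $\mathfrak{a}_\mathfrak{h}$ in the sense that a root is positive if its restriction to $\mathfrak{a}_\mathfrak{h}$ is positive, or that restriction vanishes and it is positive on the complementary part. I would use this to argue that if $B \in \mathfrak{a}_\mathfrak{h}$ lies in the closed chamber $\mathfrak{a}_+$ already — or more to the point, that the $W$-orbit of $B$ meets $\mathfrak{a}_+$ only inside $\mathfrak{a}_\mathfrak{h}$. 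The cleanest route: let $\mathfrak{a}_+^\mathfrak{h}$ be the closed chamber of $\mathfrak{a}_\mathfrak{h}$ for the restricted root system $\Sigma(\mathfrak{h},\mathfrak{a}_\mathfrak{h})$ determined by the chosen ordering on $\mathfrak{a}_\mathfrak{h}$; replacing $B$ by a $W(\mathfrak{h},\mathfrak{a}_\mathfrak{h})$-conjugate (which keeps it in $\mathfrak{h}$, hence does not change the orbit) we may take $B \in \mathfrak{a}_+^\mathfrak{h}$. Then I claim $B \in \mathfrak{a}_+$ itself: for any $\xi \in \Sigma^+(\mathfrak{g},\mathfrak{a})$, either $\xi|_{\mathfrak{a}_\mathfrak{h}} > 0$ in the ordering, whence $\xi(B) \geq 0$ because $B$ is dominant for $\Sigma(\mathfrak{h},\mathfrak{a}_\mathfrak{h})$ and the restricted roots of $\mathfrak{h}$ are exactly the nonzero restrictions $\xi|_{\mathfrak{a}_\mathfrak{h}}$; or $\xi|_{\mathfrak{a}_\mathfrak{h}} = 0$, whence $\xi(B) = 0$ trivially. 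So $B \in \mathfrak{a}_+$, and since $A \in \mathfrak{a}_+$ is in the same $W(\mathfrak{g},\mathfrak{a})$-orbit as $B$ with both in the fundamental domain, $A = B \in \mathfrak{a}_\mathfrak{h}$.

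The main obstacle I anticipate is verifying carefully that the restricted roots of $(\mathfrak{h},\mathfrak{a}_\mathfrak{h})$ are precisely the nonzero restrictions to $\mathfrak{a}_\mathfrak{h}$ of the roots in $\Sigma(\mathfrak{g},\mathfrak{a})$, and that "dominant for $\Sigma(\mathfrak{h},\mathfrak{a}_\mathfrak{h})$ with the induced ordering'' combined with "the extended ordering on $\mathfrak{a}$'' really forces $\xi(B)\ge 0$ for all $\xi \in \Sigma^+(\mathfrak{g},\mathfrak{a})$ — i.e.\ that no sign can flip on the transverse part. This is essentially the statement that $\mathfrak{a}_+^\mathfrak{h} = \mathfrak{a}_\mathfrak{h} \cap \mathfrak{a}_+$ for a lexicographically extended ordering, which is standard but should be spelled out; alternatively one can bypass the root-restriction bookkeeping by invoking that for the induced ordering the longest element structure is compatible, but the direct restriction argument above is more transparent. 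Once that compatibility is in hand, the rest is formal.
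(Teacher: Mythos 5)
Your first paragraph is sound and uses only machinery already available in the paper (Fact \ref{fact:hyp_inter_a} and Lemma \ref{lem:hyperbolic_in_reductive}): the statement does reduce to showing that if $B\in\mathfrak{a}_\mathfrak{h}$ and $w\cdot B\in\mathfrak{a}_+$ for some $w\in W(\mathfrak{g},\mathfrak{a})$, then $w\cdot B\in\mathfrak{a}_\mathfrak{h}$. The gap is in the second step, at exactly the point you flag as ``standard''. It is not true in general that the restricted roots of $(\mathfrak{h},\mathfrak{a}_\mathfrak{h})$ are the nonzero restrictions to $\mathfrak{a}_\mathfrak{h}$ of $\Sigma(\mathfrak{g},\mathfrak{a})$: since $\sigma$ fixes $\mathfrak{a}_\mathfrak{h}$ pointwise, each weight space $\mathfrak{g}_\lambda$ of $\ad(\mathfrak{a}_\mathfrak{h})$ splits as $(\mathfrak{g}_\lambda\cap\mathfrak{h})\oplus(\mathfrak{g}_\lambda\cap\mathfrak{q})$, and the $\mathfrak{h}$-part can vanish for $\lambda\neq 0$. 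Already for $(\mathfrak{g},\mathfrak{h})=(\mathfrak{sl}(2,\mathbb{R}),\mathfrak{so}(1,1))$ one has $\Sigma(\mathfrak{h},\mathfrak{a}_\mathfrak{h})=\emptyset$ while the nonzero restrictions are $\pm\alpha$; consequently $\mathfrak{a}^\mathfrak{h}_+=\mathfrak{a}_\mathfrak{h}\not\subset\mathfrak{a}_+$, and your claim that $B\in\mathfrak{a}^\mathfrak{h}_+$ forces $B\in\mathfrak{a}_+$ fails. (The lemma is trivially true there because $\mathfrak{a}=\mathfrak{a}_\mathfrak{h}$, but the example shows the asserted compatibility is not a general fact about symmetric pairs, so your final identification $A=B$ is unsupported.) What your argument actually needs is that every $\xi\in\Sigma^+(\mathfrak{g},\mathfrak{a})$ with $\xi|_{\mathfrak{a}_\mathfrak{h}}\neq 0$ restricts into the cone $\mathbb{R}_{\geq 0}\text{-}\mathrm{span}\,\Sigma^+(\mathfrak{h},\mathfrak{a}_\mathfrak{h})$; this is a nontrivial structural statement about the pair and is essentially of the same depth as the lemma itself, so it cannot be dismissed as bookkeeping.

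The paper closes exactly this gap by a different route: it complexifies the orbit and brings in the $c$-dual real form $\mathfrak{g}^c$. Using Proposition \ref{prop:OS-Cartan} (which rests on Oshima--Sekiguchi) one chooses a Cartan subalgebra $\mathfrak{j}_\mathbb{C}$ of $\mathfrak{g}_\mathbb{C}$ containing compatible split Cartan subalgebras of $\mathfrak{g}$, $\mathfrak{g}^c$ and $\mathfrak{h}$, with $\mathfrak{a}\cap\mathfrak{a}^c=\mathfrak{a}_\mathfrak{h}$ and a positive system restricting compatibly to both $\mathfrak{a}$ and $\mathfrak{a}^c$. Since the complexified orbit through $A$ meets $\mathfrak{h}=\mathfrak{g}\cap\mathfrak{g}^c$, Lemma \ref{lem:hyp_element_in_Cartan_for_symm} forces its unique dominant representative into $\mathfrak{a}_+\cap\mathfrak{a}^c_+\subset\mathfrak{a}_\mathfrak{h}$, and uniqueness identifies that representative with $A$. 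The second real form $\mathfrak{g}^c$ supplies precisely the positivity constraints in the directions of $\mathfrak{a}_\mathfrak{h}$ that the roots of $\mathfrak{h}$ alone do not see; that is the ingredient missing from your argument.
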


Postponing the proof of Proposition \ref{prop:neutral_control} and Lemma \ref{lem:a_h} in later sections, 
we complete the proof of 
the implication \eqref{item:main_hyperbolic} $\Rightarrow$ \eqref{item:main_sl2-triple} in Theorem \ref{thm:main}.

\begin{proof}[Proof of $\eqref{item:main_hyperbolic}$ $\Rightarrow$ $\eqref{item:main_sl2-triple}$ in Theorem $\ref{thm:main}$]
We shall prove that $\mathcal{H}^a(\mathfrak{g}) \subset \mathcal{H}_\mathfrak{h}(\mathfrak{g})$ under the assumption $\mathcal{H}^n(\mathfrak{g}) \subset \mathcal{H}_\mathfrak{h}(\mathfrak{g})$.
By combining Proposition \ref{prop:neutral_control} \eqref{item:neut_key}, Lemma \ref{lem:a_h} with the assumption, we have
\[
\mathfrak{b} \subset \mathfrak{a}_\mathfrak{h} (\subset \mathfrak{h}).
\]
Therefore, by Proposition \ref{prop:neutral_control} \eqref{item:neut_b_anti}, we obtain that $\mathcal{H}^a(\mathfrak{g}) \subset \mathcal{H}_\mathfrak{h}(\mathfrak{g})$.
\end{proof}

We shall give a proof of 
Proposition \ref{prop:neutral_control} \eqref{item:neut_key} 
in Section \ref{subsection:proof_of_keyprop} 
by comparing Dynkin's classification of $\mathfrak{sl}_2$-triples in $\mathfrak{g}_\mathbb{C}$ \cite{Dynkin52eng} with the Satake diagram of the real form $\mathfrak{g}$ of $\mathfrak{g}_\mathbb{C}$. 
The proof of Proposition \ref{prop:neutral_control} \eqref{item:neut_b_anti} will be given in Section \ref{subsection:proof_of_cor_to_KB},
and that of Lemma \ref{lem:a_h} in Section \ref{section:Symmetric_pair}.

\subsection{Proofs of \eqref{item:main_sl2-proper} $\Rightarrow$ \eqref{item:main_any_surface_group}, \eqref{item:main_some_surface_group} $\Rightarrow$ \eqref{item:main_hyperbolic} and \eqref{item:main_sl2-proper} $\Leftrightarrow$ \eqref{item:main_unipotent_discontinuous} in Theorem \ref{thm:main}}

The implication \eqref{item:main_sl2-proper} $\Rightarrow$ \eqref{item:main_any_surface_group} in Theorem \ref{thm:main} is deduced from the lifting theorem of surface groups (cf. \cite{Kra85}).
The implication \eqref{item:main_some_surface_group} $\Rightarrow$ \eqref{item:main_hyperbolic} follows by the fact that the surface group of genus $g$ is not virtually abelian for any $g \geq 2$.

The equivalence \eqref{item:main_sl2-proper} $\Leftrightarrow$ \eqref{item:main_unipotent_discontinuous} can be proved by the observation below: 
Let $\Gamma_0$ be the free group generated by $\begin{pmatrix}1&1\\0&1\end{pmatrix}$ in $SL(2,\mathbb{R})$;
Then, for any free group $\Gamma$ generated by a unipotent element in a linear semisimple Lie group $G$, there exists a Lie group homomorphism $\Phi : SL(2,\mathbb{R}) \rightarrow G$ such that $\Phi(\Gamma_0) = \Gamma$ (by the Jacobson--Morozov theorem);
Furthermore, by \cite[Lemma 3.2]{Kobayashi93}, for any closed subgroup $H$ of $G$, the $SL(2,\mathbb{R})$-action on $G/H$ via $\Phi$ is proper if and only if the $\Gamma$-action on $G/H$ is properly discontinuous.

\section{Real hyperbolic orbits and proper actions of reductive subgroups}
\label{section:hyp-orbit_and_proper_actions}

In this section,
we prove Theorem \ref{thm:cor_to_Kobayashi}, Proposition \ref{prop:Sl_2_proper_and_sl2_triple}, Theorem \ref{thm:cor_to_Benoist} and Proposition \ref{prop:neutral_control} \eqref{item:neut_b_anti}.

\subsection{Kobayashi's properness criterion and Benoist's criterion rephrased by real hyperbolic orbits}\label{subsection:proof_of_cor_to_KB}

In this subsection, Theorem \ref{thm:cor_to_Kobayashi} and Theorem \ref{thm:cor_to_Benoist} are proved as corollaries to Fact \ref{fact:Kobayashi's_criterion} and Fact \ref{fact:Benoist's_result}, respectively.
We also prove Proposition \ref{prop:neutral_control} \eqref{item:neut_b_anti} in this subsection.

Let $\mathfrak{g}$ be a semisimple Lie algebra. 
The next fact for real hyperbolic orbits in $\mathfrak{g}$ (see Definition \ref{defn:hyp_anti}) is 
well known:

\begin{fact}\label{fact:hyp_inter_a}
Fix a Cartan decomposition $\mathfrak{g} = \mathfrak{k} + \mathfrak{p}$ of $\mathfrak{g}$ and a maximally split abelian subspace $\mathfrak{a}$ of $\mathfrak{g}$ $($i.e. $\mathfrak{a}$ is a maximal abelian subspace of $\mathfrak{p}$$)$.
Then any real hyperbolic orbit $\mathcal{O}^{G}_\hyp$ in $\mathfrak{g}$ meets $\mathfrak{a}$, 
and the intersection $\mathcal{O}^{G}_\hyp \cap \mathfrak{a}$ is a single $W(\mathfrak{g},\mathfrak{a})$-orbit, where $W(\mathfrak{g},\mathfrak{a}):= N_K(\mathfrak{a})/Z_K(\mathfrak{a})$.
In particular, we have a bijection 
\[
\mathcal{H}(\mathfrak{g})/G \rightarrow \mathfrak{a}/{W(\mathfrak{g},\mathfrak{a})}, \quad \mathcal{O}^{G}_\hyp \mapsto \mathcal{O}^{G}_\hyp \cap \mathfrak{a},
\]
where $\mathcal{H}(\mathfrak{g})/{G}$ is the set of real hyperbolic orbits in $\mathfrak{g}$ and $\mathfrak{a}/{W(\mathfrak{g},\mathfrak{a})}$ the set of $W(\mathfrak{g},\mathfrak{a})$-orbits in $\mathfrak{a}$.
\end{fact}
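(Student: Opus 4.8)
The plan is to reduce the statement to two classical facts about the adjoint action and then assemble the bijection formally. Write (I) for the assertion that every hyperbolic element of $\mathfrak{g}$ is $\Ad(G)$-conjugate into $\mathfrak{a}$, and (II) for the assertion that two elements of $\mathfrak{a}$ lying in one $\Ad(G)$-orbit already lie in one $W(\mathfrak{g},\mathfrak{a})$-orbit. Granting (I) and (II), I would check that $\mathcal{O}^{G}_\hyp \mapsto \mathcal{O}^{G}_\hyp \cap \mathfrak{a}$ is a well-defined map into $\mathfrak{a}/W(\mathfrak{g},\mathfrak{a})$, injective by (II); that it is surjective because every $v \in \mathfrak{a}\subset\mathfrak{p}$ is hyperbolic (relative to a $\theta$-adapted inner product $\ad(v)$ is symmetric, hence $\mathbb{R}$-diagonalizable), so $\Ad(G)\cdot v$ is a hyperbolic orbit whose intersection with $\mathfrak{a}$ is $W(\mathfrak{g},\mathfrak{a})\cdot v$ by (II) again; and that the inverse is $W(\mathfrak{g},\mathfrak{a})\cdot v \mapsto \Ad(G)\cdot v$. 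Thus everything reduces to (I) and (II).

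For (I), the point I would isolate first is that $\mathfrak{a}$ is in fact a \emph{maximal} abelian subalgebra of $\mathfrak{g}$ all of whose elements are hyperbolic. Indeed, an abelian subalgebra strictly containing $\mathfrak{a}$ would be contained in $\mathfrak{z}_\mathfrak{g}(\mathfrak{a}) = \mathfrak{a}\oplus\mathfrak{m}$, where $\mathfrak{m}=\mathfrak{z}_\mathfrak{k}(\mathfrak{a})$, and for $v\in\mathfrak{a}$, $Y\in\mathfrak{m}$ the element $v+Y$ can be hyperbolic only if $\ad(Y)$ is simultaneously $\sqrt{-1}\mathbb{R}$-diagonalizable and nilpotent, i.e.\ zero, forcing $Y=0$. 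Then, given a hyperbolic $X$, I would enlarge $\mathbb{R}X$ to a maximal abelian subalgebra $\mathfrak{a}'$ of $\mathfrak{g}$ consisting of hyperbolic elements, and invoke the classical conjugacy of such subalgebras (equivalently, conjugacy of maximal abelian subspaces of $\mathfrak{p}$ over all Cartan involutions; Mostow, Borel--Tits, Helgason) to obtain $g\in G$ with $\Ad(g)\mathfrak{a}'=\mathfrak{a}$, hence $\Ad(g)X\in\mathfrak{a}$.

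For (II), given $v,v'\in\mathfrak{a}$ with $\Ad(g)v=v'$, I would observe that $\mathfrak{a}$ and $\Ad(g)\mathfrak{a}$ are both maximal abelian subalgebras of hyperbolic elements inside the reductive subalgebra $\mathfrak{z}_\mathfrak{g}(v')$, apply the same conjugacy theorem inside $\mathfrak{z}_\mathfrak{g}(v')$ to find $z\in Z_G(v')$ with $\Ad(zg)\mathfrak{a}=\mathfrak{a}$, and note $\Ad(zg)v=\Ad(z)v'=v'$; since $N_G(\mathfrak{a})$ induces on $\mathfrak{a}$ precisely $W(\mathfrak{g},\mathfrak{a})=N_K(\mathfrak{a})/Z_K(\mathfrak{a})$ (a standard consequence of the Iwasawa decomposition), $w:=\Ad(zg)|_\mathfrak{a}\in W(\mathfrak{g},\mathfrak{a})$ and $v'=w\cdot v$. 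I expect no serious obstacle: the whole content is the classical conjugacy theorem for maximal split abelian subalgebras, used twice, together with the elementary identification of $\mathfrak{a}$ with a maximal abelian subalgebra of hyperbolic elements via the computation with $\mathfrak{z}_\mathfrak{g}(\mathfrak{a})$ above; the remaining steps are bookkeeping.
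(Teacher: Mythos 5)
The paper offers no proof of this statement: it is labelled a Fact and introduced with ``is well known,'' so there is nothing internal to compare your argument against. Your reconstruction is correct and is essentially the standard textbook argument. The two load-bearing classical inputs you isolate are the right ones: (I) reduces to the conjugacy, under $\Ad(G)$, of maximal abelian subalgebras consisting of hyperbolic elements (equivalently, of maximal split abelian subspaces over all Cartan involutions), and (II) reduces to the same conjugacy theorem applied inside a centralizer together with $N_G(\mathfrak{a})=Z_G(\mathfrak{a})\,N_K(\mathfrak{a})$. Your computation identifying $\mathfrak{a}$ as maximal among abelian subalgebras of hyperbolic elements (via $\mathfrak{z}_\mathfrak{g}(\mathfrak{a})=\mathfrak{a}\oplus\mathfrak{m}$ and the fact that a commuting sum of an $\mathbb{R}$-diagonalizable and an $\sqrt{-1}\mathbb{R}$-diagonalizable operator has real spectrum only if the latter vanishes) is sound. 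Two small points deserve care in a full write-up. First, in step (II) the conjugacy theorem is invoked inside $\mathfrak{z}_\mathfrak{g}(v')$, which is reductive but not semisimple; you need the version for reductive algebras with the conjugating element taken in $Z_G(v')$, and you should note that maximality of $\mathfrak{a}$ and $\Ad(g)\mathfrak{a}$ as split abelian subalgebras of $\mathfrak{g}$ lying in $\mathfrak{z}_\mathfrak{g}(v')$ is what gets used (the common textbook variant instead first replaces $g$ by an element of $K$, using $\theta(\Ad(g)v)=\Ad(\theta(g))\theta(v)$ and $v,v'\in\mathfrak{p}$, after which one only needs conjugacy of maximal abelian subspaces of the $\mathfrak{p}$-part of the $\theta$-stable centralizer under $Z_K(v')$). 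Second, what (II) actually guarantees is that the map is well defined into $\mathfrak{a}/W(\mathfrak{g},\mathfrak{a})$ (the intersection is a single Weyl orbit); injectivity is the trivial observation that two adjoint orbits sharing a point coincide. Neither point is a gap, and for a statement the paper itself treats as classical your level of citation (Mostow, Borel--Tits, Helgason) is appropriate.
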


Let $\mathfrak{h}$ be a reductive subalgebra of $\mathfrak{g}$ (see Definition \ref{defn:reductive_in}).
Take a maximally split abelian subspace $\mathfrak{a}_\mathfrak{h}$ of $\mathfrak{h}$ and extend it to a maximally split abelian subspace $\mathfrak{a}$ of $\mathfrak{g}$ in a similar way as in Section \ref{subsection:Kobayashi's_criterion}.
Then the following lemma holds:

\begin{lem}\label{lem:hyperbolic_in_reductive}
A real hyperbolic orbit $\mathcal{O}^G_\hyp$ in $\mathfrak{g}$ meets $\mathfrak{h}$ if and only if it meets $\mathfrak{a}_\mathfrak{h}$.
In particular, we have a bijection 
\[
\mathcal{H}_\mathfrak{h}(\mathfrak{g})/G \rightarrow \{\, \mathcal{O}^{W(\mathfrak{g},\mathfrak{a})} \in \mathfrak{a}/{W(\mathfrak{g},\mathfrak{a})} \mid  \mathcal{O}^{W(\mathfrak{g},\mathfrak{a})} \cap  \mathfrak{a}_\mathfrak{h}  \neq \emptyset  \,\}, \quad \mathcal{O}^{G}_\hyp \mapsto \mathcal{O}^{G}_\hyp \cap \mathfrak{a},
\]
where $\mathcal{H}_\mathfrak{h}(\mathfrak{g})/G$ is the set of real hyperbolic orbits in $\mathfrak{g}$ meeting $\mathfrak{h}$.
\end{lem}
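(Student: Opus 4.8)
The plan is to prove the two implications separately. The ``if'' direction is immediate: by construction $\mathfrak{a}_\mathfrak{h} \subset \mathfrak{p}\cap\mathfrak{h} \subset \mathfrak{h}$, so any orbit meeting $\mathfrak{a}_\mathfrak{h}$ meets $\mathfrak{h}$. For the ``only if'', I fix $X \in \mathcal{O}^G_\hyp \cap \mathfrak{h}$ and aim to conjugate $X$ into $\mathfrak{a}_\mathfrak{h}$ by an element of $\Ad(G)$---specifically, by an element of the analytic subgroup of $G$ with Lie algebra $[\mathfrak{h},\mathfrak{h}]$, which acts trivially on the center $\mathfrak{z}(\mathfrak{h})$.

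The argument runs as follows. Since $\mathfrak{h}$ is $\theta$-stable, decompose $\mathfrak{h} = \mathfrak{z}(\mathfrak{h}) \oplus [\mathfrak{h},\mathfrak{h}]$ into $\theta$-stable ideals, write $X = Z + S$ accordingly, and split $Z = Z_\mathfrak{k} + Z_\mathfrak{p}$ along $\mathfrak{g} = \mathfrak{k} + \mathfrak{p}$ (legitimate as $\mathfrak{z}(\mathfrak{h})$ is $\theta$-stable). The key structural point is that hyperbolicity of $X$ \emph{in $\mathfrak{g}$}---not merely in $\mathfrak{h}$---forces $Z_\mathfrak{k} = 0$ and forces $S$ to be a hyperbolic element of $[\mathfrak{h},\mathfrak{h}]$. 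Indeed, $\ad_\mathfrak{g}(X)$ is real-diagonalizable and preserves the subalgebra $[\mathfrak{h},\mathfrak{h}]$, where it restricts to $\ad_{[\mathfrak{h},\mathfrak{h}]}(S)$ because $Z$ is central; so $S$ is hyperbolic in $[\mathfrak{h},\mathfrak{h}]$ and, by Fact \ref{fact:hyp_inter_a} applied to the semisimple algebra $[\mathfrak{h},\mathfrak{h}]$, is $\Ad$-conjugate into $\mathfrak{p}\cap[\mathfrak{h},\mathfrak{h}]$, hence hyperbolic in $\mathfrak{g}$ as well. Then $Z = X - S$ is a difference of two commuting hyperbolic elements of $\mathfrak{g}$, so $Z$ is hyperbolic in $\mathfrak{g}$; since $Z_\mathfrak{p}$ is hyperbolic, $Z_\mathfrak{k}$ is elliptic, and they commute (both lying in the abelian $\mathfrak{z}(\mathfrak{h})$), uniqueness of the hyperbolic--elliptic decomposition of the semisimple element $Z$ gives $Z_\mathfrak{k} = 0$, i.e. $Z \in \mathfrak{z}(\mathfrak{h}) \cap \mathfrak{p}$. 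Now every maximal abelian subspace $\mathfrak{a}_\mathfrak{h}$ of $\mathfrak{p}\cap\mathfrak{h}$ automatically contains $\mathfrak{z}(\mathfrak{h}) \cap \mathfrak{p}$ and meets $\mathfrak{p}\cap[\mathfrak{h},\mathfrak{h}]$ in a maximal abelian subspace $\mathfrak{a}'$ of the latter; applying Fact \ref{fact:hyp_inter_a} to $[\mathfrak{h},\mathfrak{h}]$ produces $g$ in the analytic subgroup of $G$ attached to $[\mathfrak{h},\mathfrak{h}]$ with $\Ad(g) S \in \mathfrak{a}'$, and since $\Ad(g)$ fixes $Z$ we obtain $\Ad(g) X = Z + \Ad(g) S \in (\mathfrak{z}(\mathfrak{h}) \cap \mathfrak{p}) \oplus \mathfrak{a}' \subset \mathfrak{a}_\mathfrak{h}$. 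Thus $\mathcal{O}^G_\hyp$ meets $\mathfrak{a}_\mathfrak{h}$.

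The displayed bijection then follows formally: Fact \ref{fact:hyp_inter_a} already gives the bijection $\mathcal{H}(\mathfrak{g})/G \to \mathfrak{a}/W(\mathfrak{g},\mathfrak{a})$, $\mathcal{O}^G_\hyp \mapsto \mathcal{O}^G_\hyp \cap \mathfrak{a}$, and by the equivalence just proved $\mathcal{O}^G_\hyp$ lies in $\mathcal{H}_\mathfrak{h}(\mathfrak{g})/G$ exactly when it meets $\mathfrak{a}_\mathfrak{h}$, which (as $\mathfrak{a}_\mathfrak{h} \subset \mathfrak{a}$) happens exactly when the single $W(\mathfrak{g},\mathfrak{a})$-orbit $\mathcal{O}^G_\hyp \cap \mathfrak{a}$ meets $\mathfrak{a}_\mathfrak{h}$; so the bijection restricts as claimed. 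The main obstacle is precisely the structural step $Z \in \mathfrak{p}$: a central element of $\mathfrak{z}(\mathfrak{h})$ can genuinely carry an elliptic part in $\mathfrak{k}$, and merely being hyperbolic inside $\mathfrak{h}$ is too weak (for instance every element of $\mathfrak{so}(2) \subset \mathfrak{sl}(2,\mathbb{R})$ has $\ad_\mathfrak{h} = 0$), so one must exploit hyperbolicity in the ambient $\mathfrak{g}$ via the commuting-semisimple-operators argument.
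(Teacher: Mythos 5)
Your proof is correct and follows essentially the same route as the paper's: decompose $X \in \mathcal{O}^G_\hyp \cap \mathfrak{h}$ along $\mathfrak{h} = (\mathfrak{z}(\mathfrak{h})\cap\mathfrak{k}) \oplus (\mathfrak{z}(\mathfrak{h})\cap\mathfrak{p}) \oplus [\mathfrak{h},\mathfrak{h}]$, show the $\mathfrak{k}$-central part vanishes and the $[\mathfrak{h},\mathfrak{h}]$-part is hyperbolic, then conjugate into $\mathfrak{a}_\mathfrak{h}$ via Fact \ref{fact:hyp_inter_a} applied to $[\mathfrak{h},\mathfrak{h}]$. The paper only sketches the step ``$X_\mathfrak{k}=0$ and $X'$ is hyperbolic in $[\mathfrak{h},\mathfrak{h}]$''; your commuting-semisimple-operators argument supplies exactly the missing justification.
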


\begin{proof}[Sketch of the proof]
Suppose that $\mathcal{O}^G_\hyp$ meets $\mathfrak{h}$; we shall prove that $\mathcal{O}^G_\hyp$ meets $\mathfrak{a}_\mathfrak{h}$.
If $\mathfrak{h}$ is semisimple, 
then $\mathcal{O}^G_\hyp \cap \mathfrak{h}$ contains some hyperbolic orbits in $\mathfrak{h}$.
Hence, our claim follows by Fact \ref{fact:hyp_inter_a}. 
For the cases where $\mathfrak{h}$ is reductive in $\mathfrak{g}$ with non-trivial center $Z(\mathfrak{h})$,
we put 
\begin{align*}
Z_{\mathfrak{k}(\mathfrak{h})}(\mathfrak{h}) := Z(\mathfrak{h}) \cap \mathfrak{k}, \quad
Z_{\mathfrak{p}(\mathfrak{h})}(\mathfrak{h}) := Z(\mathfrak{h}) \cap \mathfrak{p},
\end{align*}
where $\mathfrak{g} = \mathfrak{k} + \mathfrak{p}$, $\mathfrak{h} = \mathfrak{k}(\mathfrak{h}) + \mathfrak{p}(\mathfrak{h})$ are Cartan decompositions of $\mathfrak{g}$, $\mathfrak{h}$ in Section \ref{subsection:Kobayashi's_criterion}.
Then we have
\begin{align*}
\mathfrak{h} = Z_{\mathfrak{k}(\mathfrak{h})}(\mathfrak{h}) \oplus Z_{\mathfrak{p}(\mathfrak{h})}(\mathfrak{h}) \oplus [\mathfrak{h},\mathfrak{h}].
\end{align*}
Here, we fix any $X \in \mathcal{O}^{G}_\hyp \cap \mathfrak{h}$ and put 
\[
X = X_\mathfrak{k} + X_\mathfrak{p} + X'
\]
for $X_\mathfrak{k} \in Z_{\mathfrak{k}(\mathfrak{h})}(\mathfrak{h})$, $X_\mathfrak{p} \in Z_{\mathfrak{p}(\mathfrak{h})}(\mathfrak{h})$ and $X' \in [\mathfrak{h},\mathfrak{h}]$.
Then one can prove that $X_\mathfrak{k} = 0$ and $X'$ is hyperbolic in the semisimple subalgebra $[\mathfrak{h},\mathfrak{h}]$ of $\mathfrak{g}$.
Hence our claim follows from Fact \ref{fact:hyp_inter_a}.
\end{proof}

We now prove Theorem \ref{thm:cor_to_Kobayashi} as a corollary to Fact \ref{fact:Kobayashi's_criterion}.

\begin{proof}[Proof of Theorem $\ref{thm:cor_to_Kobayashi}$]
In Setting \ref{setting:reductive}, 
by Fact \ref{fact:hyp_inter_a} and Lemma \ref{lem:hyperbolic_in_reductive}, 
we have a bijection between the following two sets:
\begin{itemize}
\item The set of $W(\mathfrak{g},\mathfrak{a})$-orbits in $\mathfrak{a}$ meeting both $\mathfrak{a}_\mathfrak{h}$ and $\mathfrak{a}_\mathfrak{l}$,
\item The set of real hyperbolic orbits in $\mathfrak{g}$ meeting both $\mathfrak{h}$ and $\mathfrak{l}$.
\end{itemize}
Hence, 
our claim follows from Fact \ref{fact:Kobayashi's_criterion}.
\end{proof}

To prove Theorem \ref{thm:cor_to_Benoist}, 
we shall show the next lemma:

\begin{lem}\label{lem:b_+_and_B-orbit}
Let $\mathfrak{g}$ be a semisimple Lie algebra.
Then a real hyperbolic orbit in $\mathfrak{g}$ is antipodal if and only if it meets $\mathfrak{b}_+$ $($see Section $\ref{subsection:Benoist_result}$ for the notation$)$.
In particular, we have a bijection
\[
\mathcal{H}^a(\mathfrak{g})/G \rightarrow \{\, \mathcal{O}^{W(\mathfrak{g},\mathfrak{a})} \in \mathfrak{a}/{W(\mathfrak{g},\mathfrak{a})} \mid  \mathcal{O}^{W(\mathfrak{g},\mathfrak{a})} \cap  \mathfrak{b}_+  \neq \emptyset  \,\}, \ \mathcal{O}^{G}_\hyp \mapsto \mathcal{O}^{G}_\hyp \cap \mathfrak{a},
\]
where $\mathcal{H}^a(\mathfrak{g})/G$ is the set of real antipodal hyperbolic orbits in $\mathfrak{g}$.
\end{lem}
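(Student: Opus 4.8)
\textbf{Plan for Lemma \ref{lem:b_+_and_B-orbit}.} The statement asserts that a real hyperbolic orbit in $\mathfrak{g}$ is antipodal if and only if it meets $\mathfrak{b}_+$, and that the induced map on orbit sets is a bijection. The plan is to reduce everything to the parametrization $\mathcal{H}(\mathfrak{g})/G \simeq \mathfrak{a}/W(\mathfrak{g},\mathfrak{a})$ of Fact \ref{fact:hyp_inter_a} and then analyze the antipodal condition purely inside the Weyl chamber $\mathfrak{a}_+$.

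First I would fix, for a real hyperbolic orbit $\mathcal{O}$, its unique representative $A \in \mathcal{O} \cap \mathfrak{a}_+$, which exists and is unique by Fact \ref{fact:hyp_inter_a} together with the fact that $\mathfrak{a}_+$ is a fundamental domain for $W(\mathfrak{g},\mathfrak{a})$. The orbit $\mathcal{O}$ is antipodal precisely when $-A \in \mathcal{O}$, i.e.\ when $-A$ and $A$ lie in the same $W(\mathfrak{g},\mathfrak{a})$-orbit. Since $w_0$ sends $\mathfrak{a}_+$ to $-\mathfrak{a}_+$, the element $-w_0 \cdot A$ again lies in $\mathfrak{a}_+$ and represents the same $W$-orbit as $-A$; hence $\mathcal{O}$ is antipodal iff $-w_0 \cdot A = A$ (uniqueness of the representative in $\mathfrak{a}_+$), which is exactly the condition $A \in \mathfrak{b}$, and since $A \in \mathfrak{a}_+$ by construction, $A \in \mathfrak{b} \cap \mathfrak{a}_+ = \mathfrak{b}_+$. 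This gives one direction: an antipodal hyperbolic orbit meets $\mathfrak{b}_+$. Conversely, if $\mathcal{O}$ meets $\mathfrak{b}_+$, pick $A \in \mathcal{O} \cap \mathfrak{b}_+$; then $-A = -w_0^{-1} w_0 \cdot A$; using $-w_0 \cdot A = A$ we get $-A = w_0 \cdot A \in \mathcal{O}$, so $\mathcal{O}$ is antipodal. (One should note that because $-A \in \mathcal{O}$ we also have $\mathcal{O} \cap \mathfrak{a}_+$ could a priori be reached from a $\mathfrak{b}_+$-point not equal to the canonical representative, but since $\mathfrak{b}_+ \subset \mathfrak{a}_+$ and the representative in $\mathfrak{a}_+$ is unique, any point of $\mathcal{O} \cap \mathfrak{b}_+$ is in fact the canonical representative; this tightens the correspondence.)

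For the ``in particular'' clause, I would then observe that the bijection $\mathcal{H}(\mathfrak{g})/G \to \mathfrak{a}/W(\mathfrak{g},\mathfrak{a})$ of Fact \ref{fact:hyp_inter_a} restricts: the subset $\mathcal{H}^a(\mathfrak{g})/G$ on the left corresponds, by the equivalence just proved, to the set of $W(\mathfrak{g},\mathfrak{a})$-orbits in $\mathfrak{a}$ that meet $\mathfrak{b}_+$. Surjectivity and injectivity are inherited from Fact \ref{fact:hyp_inter_a}; one only needs that a $W$-orbit meeting $\mathfrak{b}_+$ comes from an antipodal real hyperbolic orbit, which is the converse direction above applied to the orbit through any lift of a point of that $W$-orbit into $\mathfrak{g}$. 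This yields the displayed bijection with the stated formula $\mathcal{O}^G_\hyp \mapsto \mathcal{O}^G_\hyp \cap \mathfrak{a}$ (note $\mathcal{O}^G_\hyp \cap \mathfrak{a}$ is a single $W$-orbit, so the target is well-defined).

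\textbf{Expected main obstacle.} The genuinely delicate point is the interplay between ``meets $\mathfrak{b}_+$'' and ``the canonical $\mathfrak{a}_+$-representative lies in $\mathfrak{b}$'': one must be careful that $\mathfrak{b}_+ = \mathfrak{b} \cap \mathfrak{a}_+$ is cut out inside the chamber and that $-w_0$ preserves $\mathfrak{a}_+$, so that the antipode of a chamber representative is computed by applying $-w_0$ rather than by an arbitrary Weyl element. Once one pins down that $-w_0$ is the unique Weyl-group element carrying $\mathfrak{a}_+$ to $-\mathfrak{a}_+$ (equivalently $w_0$ is the longest element), the equivalence $\mathcal{O}$ antipodal $\iff$ $A = -w_0 \cdot A$ for the canonical $A$ is immediate, and the rest is bookkeeping on top of Fact \ref{fact:hyp_inter_a}. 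No new structural input beyond Fact \ref{fact:hyp_inter_a} and the definitions of $\mathfrak{b}, \mathfrak{b}_+, w_0$ from Section \ref{subsection:Benoist_result} should be needed.
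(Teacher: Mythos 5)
Your proposal is correct and follows essentially the same route as the paper's proof: both reduce to the unique chamber representative $A \in \mathcal{O} \cap \mathfrak{a}_+$ via Fact \ref{fact:hyp_inter_a} and characterize antipodality by $-w_0 \cdot A = A$, using that $w_0 \cdot \mathfrak{a}_+ = -\mathfrak{a}_+$ and the uniqueness of the representative in the fundamental domain. The bookkeeping for the induced bijection on orbit sets matches the paper as well.
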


\begin{proof}[Proof of Lemma $\ref{lem:b_+_and_B-orbit}$]
By Fact \ref{fact:hyp_inter_a}, any real hyperbolic orbit $\mathcal{O}^G_\hyp$ in $\mathfrak{g}$ meets $\mathfrak{a}_+$ with a unique element $A_0$ in $\mathcal{O}^G_\hyp \cap \mathfrak{a}_+$. 
It remains to prove that $-A_0$ is in $\mathcal{O}^{G}_\hyp$ if and only if $-w_0 \cdot A_0 =  A_0$.
First, we suppose that $-A_0 \in \mathcal{O}^G_\hyp$.
Then the element $-A_0$ of $-\mathfrak{a}_+$ is conjugate to $A_0$ under the action of $W(\mathfrak{g},\mathfrak{a})$ by Fact \ref{fact:hyp_inter_a}. 
Recall that both $\mathfrak{a}_+$ and $-\mathfrak{a}_+$ are fundamental domains of $\mathfrak{a}$ for the action of $W(\mathfrak{g},\mathfrak{a})$,
and $w_0 \cdot \mathfrak{a}_+ = -\mathfrak{a}_+$.
Hence, we obtain that $-w_0 \cdot A_0 = A_0$.
Conversely, we assume that $-A_0 = w_0 \cdot A_0$.
In particular, $-A_0$ is in $W(\mathfrak{g},\mathfrak{a}) \cdot A_0$.
This implies that $-A_0$ is also in $\mathcal{O}^{G}_\hyp$. 
\end{proof}

We are ready to prove Theorem \ref{thm:cor_to_Benoist}.

\begin{proof}[Proof of Theorem $\ref{thm:cor_to_Benoist}$]
In the setting of Fact \ref{fact:Benoist's_result}, by Fact \ref{fact:hyp_inter_a}, Lemma \ref{lem:hyperbolic_in_reductive} and Lemma \ref{lem:b_+_and_B-orbit},
we have a bijection between the following two sets:
\begin{itemize}
\item The set of $W(\mathfrak{g},\mathfrak{a})$-orbits in $\mathfrak{a}$ which meet $\mathfrak{b}_+$ but not $\mathfrak{a}_{\mathfrak{h}}$.
\item The set of real antipodal hyperbolic orbits in $\mathfrak{g}$ that do not meet $\mathfrak{h}$.
\end{itemize}
Hence, our claim follows from Fact \ref{fact:Benoist's_result}.
\end{proof}

Proposition \ref{prop:neutral_control} \eqref{item:neut_b_anti} is also obtained by Lemma \ref{lem:b_+_and_B-orbit} as follows:

\begin{proof}[Proof of Proposition $\ref{prop:neutral_control}$ $\eqref{item:neut_b_anti}$]
The first claim of Lemma \ref{lem:b_+_and_B-orbit}
means that 
an adjoint orbit $\mathcal{O}$ in $\mathfrak{g}$
is real antipodal hyperbolic 
if and only if
$\mathcal{O}$ is in $\Ad(G) \cdot \mathfrak{b}_+$.
Thus we have $\mathcal{H}^a(\mathfrak{g}) = \Ad(G) \cdot \mathfrak{b}_+$.
\end{proof}

\subsection{Lie group homomorphisms from $SL(2,\mathbb{R})$}\label{subsection:proof_of_SL_set}

In this subsection, we prove Proposition \ref{prop:Sl_2_proper_and_sl2_triple} by using Theorem \ref{thm:cor_to_Kobayashi}.

Let $G$ be a connected linear semisimple Lie group and write $\mathfrak{g}$ for its Lie algebra.
Then the next lemma holds:

\begin{lem}\label{lem:alg_hom_to_group_hom}
Any Lie algebra homomorphism $\phi: \mathfrak{sl}(2,\mathbb{R}) \rightarrow \mathfrak{g}$ can be uniquely lifted to $\Phi: SL(2,\mathbb{R}) \rightarrow G$ $($i.e. $\Phi$ is the Lie group homomorphism with its differential $\phi$$)$.
In particular, we have a bijection between the following two sets$:$ 
\begin{itemize}
\item The set of Lie group homomorphism from $SL(2,\mathbb{R})$ to $G$,
\item The set of $\mathfrak{sl}_2$-triples in $\mathfrak{g}$.
\end{itemize}
\end{lem}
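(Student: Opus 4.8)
The plan is to use the fact that $SL(2,\mathbb{R})$ is connected and, although not simply connected, its fundamental group is generated by a loop whose image under any representation is controlled. More precisely, let $\widetilde{SL(2,\mathbb{R})}$ be the universal cover, with covering map $p : \widetilde{SL(2,\mathbb{R})} \to SL(2,\mathbb{R})$ and kernel $\pi_1(SL(2,\mathbb{R})) \cong \mathbb{Z}$. Given a Lie algebra homomorphism $\phi : \mathfrak{sl}(2,\mathbb{R}) \to \mathfrak{g}$, by the standard correspondence between Lie algebra homomorphisms and Lie group homomorphisms from simply connected groups, $\phi$ lifts uniquely to a Lie group homomorphism $\widetilde{\Phi} : \widetilde{SL(2,\mathbb{R})} \to G$. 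The task is then to show $\widetilde{\Phi}$ descends through $p$, i.e. that $\widetilde{\Phi}$ is trivial on $\ker p$.

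First I would reduce to the case $\phi \neq 0$, the zero case being trivial. For $\phi \neq 0$, the image $\phi(\mathfrak{sl}(2,\mathbb{R}))$ is a subalgebra of $\mathfrak{g}$ isomorphic to $\mathfrak{sl}(2,\mathbb{R})$ (since $\mathfrak{sl}(2,\mathbb{R})$ is simple, $\phi$ is injective). By Example \ref{ex:ss_is_red}, the analytic subgroup $L \subseteq G$ with Lie algebra $\phi(\mathfrak{sl}(2,\mathbb{R}))$ is closed in $G$ and reductive in $G$; in particular $L$ is a linear semisimple Lie group with Lie algebra $\cong \mathfrak{sl}(2,\mathbb{R})$. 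The key point is that a linear (equivalently, matrix) Lie group with Lie algebra $\mathfrak{sl}(2,\mathbb{R})$ has finite fundamental group dividing $2$ — its universal cover $\widetilde{SL(2,\mathbb{R})}$ is well known to admit no faithful finite-dimensional representation, so $L$ is a quotient of $\widetilde{SL(2,\mathbb{R})}$ by a subgroup of $\mathbb{Z}$ of finite index, hence $L$ is either $SL(2,\mathbb{R})$ or $PSL(2,\mathbb{R})$.

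The crucial step, and the main obstacle, is to rule out $L \cong PSL(2,\mathbb{R})$, i.e. to show the composite $\widetilde{SL(2,\mathbb{R})} \xrightarrow{\widetilde{\Phi}} L$ factors through $SL(2,\mathbb{R})$ rather than only through $PSL(2,\mathbb{R})$. Here I would use that $G$ is \emph{linear}: fix a faithful finite-dimensional representation $G \hookrightarrow GL(N,\mathbb{R})$. Then $\widetilde{\Phi}$ composed with this embedding is a finite-dimensional representation of $\widetilde{SL(2,\mathbb{R})}$; every such representation factors through $SL(2,\mathbb{R})$ because the finite-dimensional representation theory of $\mathfrak{sl}(2,\mathbb{R})$ integrates to $SL(2,\mathbb{R})$ (all weights of $\mathfrak{sl}_2$-representations are integers, and the central element $-I \in SL(2,\mathbb{R})$ acts as $\pm 1$, so the generator of $\ker p$ acts trivially on each irreducible summand after passing through $SL(2,\mathbb{R})$). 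Concretely, the generator $z$ of $\ker(p) = \pi_1(SL(2,\mathbb{R}))$ maps under $\widetilde{\Phi}$ into the center of $L$, and since $L$ acts faithfully and the representation integrates to $SL(2,\mathbb{R})$, the image of $z$ is trivial in $GL(N,\mathbb{R})$, hence trivial in $G$. Therefore $\widetilde{\Phi}$ descends to $\Phi : SL(2,\mathbb{R}) \to G$ with differential $\phi$. Uniqueness of $\Phi$ is immediate since $SL(2,\mathbb{R})$ is connected and $d\Phi = \phi$ determines $\Phi$ on a neighborhood of the identity, hence everywhere.

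Finally, the bijection between Lie group homomorphisms $SL(2,\mathbb{R}) \to G$ and $\mathfrak{sl}_2$-triples in $\mathfrak{g}$ follows formally: a Lie group homomorphism $\Phi$ is determined by its differential $\phi = d\Phi$, a Lie algebra homomorphism $\mathfrak{sl}(2,\mathbb{R}) \to \mathfrak{g}$ (again since $SL(2,\mathbb{R})$ is connected), and conversely each such $\phi$ arises by the lifting just established; Lie algebra homomorphisms $\mathfrak{sl}(2,\mathbb{R}) \to \mathfrak{g}$ are in bijection with $\mathfrak{sl}_2$-triples $(A,X,Y)$ in $\mathfrak{g}$ via $\phi \mapsto \bigl(\phi(h), \phi(e), \phi(f)\bigr)$, where $(h,e,f)$ is the standard basis of $\mathfrak{sl}(2,\mathbb{R})$ with $[h,e]=2e$, $[h,f]=-2f$, $[e,f]=h$ — including the degenerate case $\phi = 0 \leftrightarrow (0,0,0)$, or, if one prefers to exclude it, restricting to nonzero triples on both sides.
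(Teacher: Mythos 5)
Your proof is correct, but it takes a different route from the paper's. The paper avoids the universal cover entirely: it complexifies $\phi$ to $\phi_\mathbb{C}:\mathfrak{sl}(2,\mathbb{C})\to\mathfrak{g}_\mathbb{C}$, integrates it on the \emph{simply connected} group $SL(2,\mathbb{C})$ to get $\Phi_\mathbb{C}:SL(2,\mathbb{C})\to G_\mathbb{C}$ (this is where linearity of $G$ enters, via the existence of a complexification $G_\mathbb{C}$), and then simply restricts $\Phi_\mathbb{C}$ to the real points $SL(2,\mathbb{R})$, checking that the image lands in $G$. You instead integrate on $\widetilde{SL(2,\mathbb{R})}$ and then kill $\ker p$ by composing with a faithful representation $G\hookrightarrow GL(N,\mathbb{R})$ and invoking the fact that every finite-dimensional representation of $\mathfrak{sl}(2,\mathbb{R})$ integrates to $SL(2,\mathbb{R})$ — a fact whose standard proof is itself the complexification trick, so the two arguments rest on the same mechanism. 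The paper's version is shorter and also hands it the complexified homomorphism $\Phi_\mathbb{C}$, which it reuses elsewhere; yours makes the role of linearity more explicit and is closer to a ``bare hands'' argument. Two small points to tidy up: your detour through the closed analytic subgroup $L$ and the dichotomy $L\cong SL(2,\mathbb{R})$ or $PSL(2,\mathbb{R})$ is not needed for the final argument (and the claim that a linear group with Lie algebra $\mathfrak{sl}(2,\mathbb{R})$ has ``finite fundamental group dividing $2$'' is misstated — both $SL(2,\mathbb{R})$ and $PSL(2,\mathbb{R})$ have fundamental group $\mathbb{Z}$; what is finite of order dividing $2$ is the center, equivalently the index of the subgroup of $Z(\widetilde{SL(2,\mathbb{R})})$ being quotiented). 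Since that paragraph is not load-bearing, the slip is harmless, but you should either fix it or delete the detour.
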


\begin{proof}[Proof of Lemma $\ref{lem:alg_hom_to_group_hom}$]
The uniqueness follows from the connectedness of $SL(2,\mathbb{R})$.
We shall lift $\phi$.
Let us denote by 
\[
\phi_\mathbb{C} : \mathfrak{sl}(2,\mathbb{C}) \rightarrow \mathfrak{g}_\mathbb{C}
\]
the complexification of $\phi$.
Recall that $G$ is linear.
Then we can take a complexification $G_\mathbb{C}$ of $G$.
Since $SL(2,\mathbb{C})$ is simply-connected, 
the Lie algebra homomorphism $\phi_\mathbb{C}$ can be lifted to 
\[
\Phi_\mathbb{C} : SL(2,\mathbb{C}) \rightarrow G_\mathbb{C}.
\]
Then $\Phi_\mathbb{C}(SL(2,\mathbb{R}))$ is an analytic subgroup of $G_\mathbb{C}$ corresponding to the semisimple subalgebra $\phi(\mathfrak{sl}(2,\mathbb{R}))$ of $\mathfrak{g}$.
In particular, $\Phi_\mathbb{C}(SL(2,\mathbb{R}))$ is a closed subgroup of $G$.
Therefore, we can lift $\phi$ to $\Phi_\mathbb{C}|_{SL(2,\mathbb{R})}$.
\end{proof}

Let $H$ be a reductive subgroup of $G$ (see Definition \ref{defn:reductive_in}) and denote by $\mathfrak{h}$ the Lie algebra of $H$.
To prove Proposition \ref{prop:Sl_2_proper_and_sl2_triple}, 
it remains to show the following corollary to Theorem \ref{thm:cor_to_Kobayashi}:

\begin{cor}\label{cor:proper_condition_for_alg}
Let $\Phi : SL(2,\mathbb{R}) \rightarrow G$ be a Lie group homomorphism, 
and denote its differential by $\phi : \mathfrak{sl}(2,\mathbb{R}) \rightarrow \mathfrak{g}$. We put
\[
A_\phi := \phi \begin{pmatrix}1&0\\0&-1\end{pmatrix} \in \mathfrak{g}.
\]
Then $SL(2,\mathbb{R})$ acts on $G/H$ properly via $\Phi$ if and only if 
the real adjoint orbit through $A_\phi$ in $\mathfrak{g}$ does not meet $\mathfrak{h}$.
\end{cor}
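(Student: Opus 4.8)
The plan is to deduce Corollary \ref{cor:proper_condition_for_alg} from Theorem \ref{thm:cor_to_Kobayashi} by taking $L := \Phi(SL(2,\mathbb{R}))$. First I would check that $L$ is a legitimate choice in Setting \ref{setting:reductive}: since $SL(2,\mathbb{R})$ is simple, $\phi$ is either trivial or injective; in the trivial case the action is obviously proper and $A_\phi = 0$, whose orbit is $\{0\} \subset \mathfrak{h}$, so both sides hold and we may assume $\phi$ injective. Then $\mathfrak{l} := \phi(\mathfrak{sl}(2,\mathbb{R}))$ is a semisimple subalgebra of $\mathfrak{g}$, so by Example \ref{ex:ss_is_red} the analytic subgroup $L$ is closed and reductive in $G$, and $\Phi$ is (up to covering) an isomorphism onto $L$ by Lemma \ref{lem:alg_hom_to_group_hom}. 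Consequently the $SL(2,\mathbb{R})$-action on $G/H$ via $\Phi$ is proper if and only if $L$ acts properly on $G/H$, which by Theorem \ref{thm:cor_to_Kobayashi} happens if and only if no nonzero real hyperbolic orbit in $\mathfrak{g}$ meets both $\mathfrak{l}$ and $\mathfrak{h}$.

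The remaining task is to translate the condition ``no nonzero real hyperbolic orbit meets both $\mathfrak{l}$ and $\mathfrak{h}$'' into ``the real adjoint orbit through $A_\phi$ does not meet $\mathfrak{h}$''. The key observation is that the hyperbolic elements of $\mathfrak{l} \cong \mathfrak{sl}(2,\mathbb{R})$ are exactly the $\Ad(L)$-conjugates of the real multiples of $H_0 := \begin{pmatrix}1&0\\0&-1\end{pmatrix}$, since in $\mathfrak{sl}(2,\mathbb{R})$ every nonzero semisimple element with real eigenvalues is $SL(2,\mathbb{R})$-conjugate to $t H_0$ for some $t \neq 0$. Moreover $\ad_{\mathfrak{g}}(\phi(X))$ is diagonalizable with real eigenvalues if and only if $\ad_{\mathfrak{l}}(\phi(X))$ is (the eigenvalues on $\mathfrak{g}$ are integer combinations governed by the $\mathfrak{sl}_2$-representation theory applied to the decomposition of $\mathfrak{g}$ under $\ad \circ \phi$, so hyperbolicity in $\mathfrak{l}$ forces hyperbolicity in $\mathfrak{g}$, and the converse is immediate). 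Hence the nonzero real hyperbolic orbits in $\mathfrak{g}$ meeting $\mathfrak{l}$ are precisely the orbits $\mathcal{O}^G_{t A_\phi}$ for $t \in \mathbb{R} \setminus \{0\}$; since $\mathcal{O}^G_{t A_\phi} = t\, \mathcal{O}^G_{A_\phi}$ and $\mathfrak{h}$ is a linear subspace, all of these meet $\mathfrak{h}$ if and only if $\mathcal{O}^G_{A_\phi}$ does. Therefore properness fails exactly when $\mathcal{O}^G_{A_\phi} \cap \mathfrak{h} \neq \emptyset$, which is the claim.

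I would write this up by first disposing of the trivial $\phi$, then invoking Example \ref{ex:ss_is_red} and Lemma \ref{lem:alg_hom_to_group_hom} to set $L = \Phi(SL(2,\mathbb{R}))$ and reduce to Theorem \ref{thm:cor_to_Kobayashi}, and finally proving the normal-form lemma for hyperbolic elements of $\mathfrak{sl}(2,\mathbb{R})$ together with the equivalence of hyperbolicity in $\mathfrak{l}$ and in $\mathfrak{g}$. The main obstacle I anticipate is the careful verification that an element $\phi(X)$ that is hyperbolic in $\mathfrak{g}$ (rather than merely in $\mathfrak{l}$) still lies on the orbit of a real multiple of $A_\phi$ — in principle $\ad_{\mathfrak{g}}(\phi(X))$ could be hyperbolic even when $\ad_{\mathfrak{l}}(\phi(X))$ has complex eigenvalues. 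This is ruled out because the adjoint action of $\phi(\mathfrak{sl}(2,\mathbb{R}))$ makes $\mathfrak{g}$ a direct sum of irreducible $\mathfrak{sl}_2$-modules, on each of which a nilpotent or elliptic element of $\mathfrak{l}$ acts non-semisimply or with non-real eigenvalues respectively; so $\phi(X)$ hyperbolic in $\mathfrak{g}$ does force $\phi(X)$ hyperbolic in $\mathfrak{l}$, but this deserves to be stated cleanly rather than waved through.
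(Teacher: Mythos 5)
Your proposal follows essentially the same route as the paper: reduce to Theorem \ref{thm:cor_to_Kobayashi} with $L=\Phi(SL(2,\mathbb{R}))$, note that the finite kernel lets you pass between the $SL(2,\mathbb{R})$- and $L$-actions, and then identify the nonzero real hyperbolic orbits of $\mathfrak{g}$ meeting $\mathfrak{l}$ with the orbits through $tA_\phi$, $t\neq 0$ (the paper extracts this from Lemma \ref{lem:hyperbolic_in_reductive} applied to $\mathfrak{a}_\mathfrak{l}=\mathbb{R}A_\phi$, whereas you verify it by hand via $\mathfrak{sl}_2$-representation theory; both work). One slip in your degenerate case: if $\phi$ is trivial, the action of the non-compact group $SL(2,\mathbb{R})$ on $G/H$ is \emph{not} proper, and the orbit $\{0\}$ \emph{does} meet $\mathfrak{h}$, so both sides of the equivalence are false rather than both true --- the corollary still holds there, but for the opposite reason to the one you give.
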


\begin{proof}[Proof of Corollary $\ref{cor:proper_condition_for_alg}$]
Since $\mathfrak{sl}(2,\mathbb{R})$ is simple, 
we can assume that $\phi: \mathfrak{sl}(2,\mathbb{R}) \rightarrow \mathfrak{g}$ is injective.
We put \[
L := \Phi(SL(2,\mathbb{R})),\quad \mathfrak{l} := \phi(\mathfrak{sl}(2,\mathbb{R})).
\]
Then $L$ is a reductive subgroup of $G$ (see Example \ref{ex:ss_is_red}).
Since $\phi$ is injective and the center of $SL(2,\mathbb{R})$ is finite, 
the kernel $\Ker \Phi$ is also finite.
Therefore, 
the action of $SL(2,\mathbb{R})$ on $G/H$ via $\Phi$ is proper if and only if 
the action of $L$ on $G/H$ is proper.
By Theorem \ref{thm:cor_to_Kobayashi}, the action of $L$ on $G/H$ is proper if and only if there does not exist a real hyperbolic orbit in $\mathfrak{g}$ meeting both $\mathfrak{h}$ and $\mathfrak{l}$ apart from the zero-orbit.
Here, we take $\mathfrak{a}_\mathfrak{l} := \mathbb{R} A_\phi$ as a maximally split abelian subspace of $\mathfrak{l}$.
Then, by Lemma \ref{lem:hyperbolic_in_reductive}, for any real hyperbolic orbits in $\mathfrak{g}$, if it meets $\mathfrak{l}$ then also meets $\mathfrak{a}_\mathfrak{l}$.
Therefore, the action of $SL(2,\mathbb{R})$ on $G/H$ via $\Phi$ is proper if and only if the real adjoint orbit through $A_\phi$ in $\mathfrak{g}$ does not meet $\mathfrak{h}$.
\end{proof}

\section{Weighted Dynkin diagrams of complex adjoint orbits}\label{section:pre_result_complex_hyperbolic}\label{section:pre:WDD}

Let $\mathfrak{g}_\mathbb{C}$ be a complex semisimple Lie algebra.
In this section, we recall some well-known facts for weighted Dynkin diagrams of complex hyperbolic orbits and complex nilpotent orbits in $\mathfrak{g}_\mathbb{C}$. 
We also prove Lemma \ref{lem:sl2-triple_to_Bhyp}, and determine weighted Dynkin diagrams of complex antipodal hyperbolic orbits in $\mathfrak{g}_\mathbb{C}$.

\subsection{Weighted Dynkin diagrams of complex hyperbolic orbits}\label{subsection:WDD_of_comp_hyp}\label{subsection:WDD_hyp}

In this subsection, we recall a parameterization of complex hyperbolic orbits in $\mathfrak{g}_\mathbb{C}$ by weighted Dynkin diagrams.

Fix a Cartan subalgebra $\mathfrak{j}_\mathbb{C}$ of $\mathfrak{g}_\mathbb{C}$.
Let us denote by $\Delta(\mathfrak{g}_\mathbb{C},\mathfrak{j}_\mathbb{C})$ the root system of $(\mathfrak{g}_\mathbb{C},\mathfrak{j}_\mathbb{C})$, 
and define the real form $\mathfrak{j}$ of $\mathfrak{j}_\mathbb{C}$ by 
\[
\mathfrak{j} := \{\, A \in \mathfrak{j}_\mathbb{C} \mid \alpha (A) \in \mathbb{R} \ \text{for any } \alpha \in \Delta(\mathfrak{g}_\mathbb{C},\mathfrak{j}_\mathbb{C}) \,\}.
\]
Then $\Delta(\mathfrak{g}_\mathbb{C},\mathfrak{j}_\mathbb{C})$ can be regarded as a subset of $\mathfrak{j}^*$.
We fix a positive system $\Delta^+(\mathfrak{g}_\mathbb{C},\mathfrak{j}_\mathbb{C})$ of the root system $\Delta(\mathfrak{g}_\mathbb{C},\mathfrak{j}_\mathbb{C})$.
Then a closed Weyl chamber 
\[
\mathfrak{j}_+ := \{\, A \in \mathfrak{j} \mid \alpha (A) \geq 0 \ \text{for any } \alpha \in \Delta^+(\mathfrak{g}_\mathbb{C},\mathfrak{j}_\mathbb{C}) \,\}
\]
is a fundamental domain of $\mathfrak{j}$ for the action of the Weyl group $W(\mathfrak{g}_\mathbb{C},\mathfrak{j}_\mathbb{C})$ of $\Delta(\mathfrak{g}_\mathbb{C},\mathfrak{j}_\mathbb{C})$.

In this setting, the next fact for complex hyperbolic orbits in $\mathfrak{g}_\mathbb{C}$ is 
well known.
\begin{fact}\label{fact:cpx_hyp}
Any complex hyperbolic orbit $\mathcal{O}^{G_\mathbb{C}}_\hyp$ in $\mathfrak{g}_\mathbb{C}$ meets $\mathfrak{j}$, 
and the intersection $\mathcal{O}^{G_\mathbb{C}}_\hyp \cap \mathfrak{j}$ is a single $W(\mathfrak{g}_\mathbb{C},\mathfrak{j}_\mathbb{C})$-orbit in $\mathfrak{j}$.
In particular, we have one-to-one correspondences below$:$
\[
\mathcal{H}/{G_\mathbb{C}} \bijarrow \mathfrak{j}/{W(\mathfrak{g}_\mathbb{C},\mathfrak{j}_\mathbb{C})}  \bijarrow \mathfrak{j}_+,
\]
where $\mathcal{H}/{G_\mathbb{C}}$ is the set of complex hyperbolic orbits in $\mathfrak{g}_\mathbb{C}$ and $\mathfrak{j}/{W(\mathfrak{g}_\mathbb{C},\mathfrak{j}_\mathbb{C})}$ the set of $W(\mathfrak{g}_\mathbb{C},\mathfrak{j}_\mathbb{C})$-orbits in $\mathfrak{j}$.
\end{fact}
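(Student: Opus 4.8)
The plan is to reduce everything to the classical conjugacy theory of Cartan subalgebras. First I would note that a hyperbolic element $A \in \mathfrak{g}_\mathbb{C}$ is in particular semisimple (its adjoint action is diagonalizable), hence lies in some Cartan subalgebra $\mathfrak{j}_\mathbb{C}'$ of $\mathfrak{g}_\mathbb{C}$. Since $G_\mathbb{C}$ is connected, all Cartan subalgebras are $\Ad(G_\mathbb{C})$-conjugate, so there is $g \in G_\mathbb{C}$ with $\Ad(g)\mathfrak{j}_\mathbb{C}' = \mathfrak{j}_\mathbb{C}$; then $A' := \Ad(g)A$ lies in $\mathfrak{j}_\mathbb{C}$ and is again hyperbolic. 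The eigenvalues of $\ad_{\mathfrak{g}_\mathbb{C}}(A')$ are precisely the numbers $\alpha(A')$ for $\alpha \in \Delta(\mathfrak{g}_\mathbb{C},\mathfrak{j}_\mathbb{C})$, together with $0$; since all of them are real by hyperbolicity, $A'$ satisfies the defining condition of $\mathfrak{j}$, i.e.\ $A' \in \mathfrak{j}$. This shows $\mathcal{O}^{G_\mathbb{C}}_\hyp \cap \mathfrak{j} \neq \emptyset$. Conversely, every element of $\mathfrak{j}$ is $\ad$-semisimple with real root values, hence hyperbolic, so the intersection indeed lands in the world of hyperbolic elements.

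For the second assertion I would invoke the standard fact that two elements of a Cartan subalgebra of a semisimple Lie algebra which are conjugate under the adjoint group are already conjugate under the Weyl group (this follows, e.g., from the conjugacy of Cartan subalgebras of the centralizer of a semisimple element). Applied to $\mathfrak{j}_\mathbb{C}$, this gives: if $A_1, A_2 \in \mathcal{O}^{G_\mathbb{C}}_\hyp \cap \mathfrak{j}$ then $A_2 = w \cdot A_1$ for some $w \in W(\mathfrak{g}_\mathbb{C},\mathfrak{j}_\mathbb{C})$; note that $W(\mathfrak{g}_\mathbb{C},\mathfrak{j}_\mathbb{C})$ preserves $\mathfrak{j}$ because it permutes $\Delta(\mathfrak{g}_\mathbb{C},\mathfrak{j}_\mathbb{C})$. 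Conversely, any two $W(\mathfrak{g}_\mathbb{C},\mathfrak{j}_\mathbb{C})$-conjugate elements are $\Ad(G_\mathbb{C})$-conjugate, so a full Weyl orbit lies inside a single adjoint orbit. Hence $\mathcal{O}^{G_\mathbb{C}}_\hyp \cap \mathfrak{j}$ is exactly one $W(\mathfrak{g}_\mathbb{C},\mathfrak{j}_\mathbb{C})$-orbit, and the assignment $\mathcal{O}^{G_\mathbb{C}}_\hyp \mapsto \mathcal{O}^{G_\mathbb{C}}_\hyp \cap \mathfrak{j}$ is a well-defined bijection $\mathcal{H}/G_\mathbb{C} \bijarrow \mathfrak{j}/W(\mathfrak{g}_\mathbb{C},\mathfrak{j}_\mathbb{C})$. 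The further identification $\mathfrak{j}/W(\mathfrak{g}_\mathbb{C},\mathfrak{j}_\mathbb{C}) \bijarrow \mathfrak{j}_+$ is nothing but the already-recalled statement that the closed Weyl chamber $\mathfrak{j}_+$ is a fundamental domain for the $W(\mathfrak{g}_\mathbb{C},\mathfrak{j}_\mathbb{C})$-action on $\mathfrak{j}$.

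The inputs here are all classical, so I do not expect a genuine obstacle; the one point to state with care is the descent from $\mathfrak{j}_\mathbb{C}$ to its real form $\mathfrak{j}$, which is exactly where hyperbolicity is used (realness of the eigenvalues $\alpha(A')$), and the citation of the coincidence of adjoint-conjugacy and Weyl-conjugacy on a Cartan subalgebra, which underpins the injectivity of the first correspondence.
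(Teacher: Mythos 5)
Your argument is correct and is the standard one: hyperbolic elements are semisimple, so conjugacy of Cartan subalgebras puts a representative in $\mathfrak{j}_\mathbb{C}$, realness of the root values forces it into $\mathfrak{j}$, and the coincidence of $\Ad(G_\mathbb{C})$-conjugacy with $W(\mathfrak{g}_\mathbb{C},\mathfrak{j}_\mathbb{C})$-conjugacy on a Cartan subalgebra gives uniqueness up to the Weyl group. The paper records this statement as a well-known Fact without proof, so there is nothing to contrast; your write-up supplies exactly the expected argument.
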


Let $\Pi$ denote the fundamental system of $\Delta^+(\mathfrak{g}_\mathbb{C},\mathfrak{j}_\mathbb{C})$.
Then, for any $A \in \mathfrak{j}$, we can define a map 
\[
\Psi_A : \Pi \rightarrow \mathbb{R},\ \alpha \mapsto \alpha(A).
\]
We call $\Psi_A$ the weighted Dynkin diagram corresponding to $A \in \mathfrak{j}$,
and $\alpha(A)$ the weight on a node $\alpha \in \Pi$ of the weighted Dynkin diagram.
Since $\Pi$ is a basis of $\mathfrak{j}^*$, the correspondence 
\begin{align}
\Psi: \mathfrak{j} \rightarrow \Map(\Pi,\mathbb{R}),\ A \mapsto \Psi_A \label{eq:psi}
\end{align}
is a linear isomorphism between real vector spaces. 
In particular, $\Psi$ is bijective.
Furthermore, 
\[
\Psi|_{\mathfrak{j}_+} : \mathfrak{j}_+ \rightarrow \Map(\Pi,\mathbb{R}_{\geq 0}),\ A \mapsto \Psi_A
\]
is also bijective.
We say that a weighted Dynkin diagram is trivial if all weights are zero.
Namely, the trivial diagram corresponds to the zero of $\mathfrak{j}$ by $\Psi$.

The weighted Dynkin diagram of a complex hyperbolic orbit $\mathcal{O}^{G_\mathbb{C}}_\hyp$ in $\mathfrak{g}_\mathbb{C}$ is defined as the weighted Dynkin diagram corresponding to the unique element $A_{\mathcal{O}}$ in $\mathcal{O}^{G_\mathbb{C}}_{\hyp} \cap \mathfrak{j}_+$ (see Fact \ref{fact:cpx_hyp}).
Combining Fact \ref{fact:cpx_hyp} with the bijection $\Psi|_{\mathfrak{j}_+}$, 
the map \[
\mathcal{H}/{G_\mathbb{C}} \rightarrow \Map(\Pi,\mathbb{R}_{\geq 0}),\quad \mathcal{O}^{G_\mathbb{C}}_\hyp \mapsto \Psi_{A_\mathcal{O}}
\]
is also bijective.

\subsection{Weighted Dynkin diagrams of complex antipodal hyperbolic orbits}\label{subsection:complex_Bhyp}

In this subsection, we determine complex antipodal hyperbolic orbits in $\mathfrak{g}_\mathbb{C}$ (see Definition \ref{defn:hyp_anti}) by describing the weighted Dynkin diagrams. 

We consider the same setting as in Section \ref{subsection:WDD_of_comp_hyp}.
Let us denote by $w_0^\mathbb{C}$ the longest element of $W(\mathfrak{g}_\mathbb{C},\mathfrak{j}_\mathbb{C})$ corresponding to the positive system $\Delta^+(\mathfrak{g}_\mathbb{C},\mathfrak{j}_\mathbb{C})$.
Then, by the action of $w_0^\mathbb{C}$, every element in $\mathfrak{j}_+$ moves to $-\mathfrak{j}_+ := \{ -A \mid A \in \mathfrak{j}_+ \}$.
In particular,
\[
-w_0^\mathbb{C} : \mathfrak{j} \rightarrow \mathfrak{j},\quad A \mapsto -(w_0^\mathbb{C} \cdot A)
\] 
is an involutive automorphism on $\mathfrak{j}$ preserving $\mathfrak{j}_+$.
We put 
\[
\mathfrak{j}^{-w_0^\mathbb{C}} := \{\, A \in \mathfrak{j} \mid -w_0^\mathbb{C} \cdot A = A \,\},\quad  \mathfrak{j}_+^{-w_0^\mathbb{C}} := \mathfrak{j}_+ \cap \mathfrak{j}^{-w_0^\mathbb{C}}.
\]

We recall that any complex hyperbolic orbit $\mathcal{O}^{G_\mathbb{C}}_\hyp$ in $\mathfrak{g}_\mathbb{C}$ meets $\mathfrak{j}_+$ with a unique element $A_{\mathcal{O}}$ in $\mathcal{O}^{G_\mathbb{C}}_\hyp \cap \mathfrak{j}_+$ (see Fact \ref{fact:cpx_hyp}).
Then the lemma below holds:

\begin{lem}\label{lem:minus_and_w_0C}
A complex hyperbolic orbit $\mathcal{O}^{G_\mathbb{C}}_\hyp$ in $\mathfrak{g}_\mathbb{C}$ is antipodal if and only if the corresponding element $A_{\mathcal{O}}$ is in $\mathfrak{j}_+^{-w_0^\mathbb{C}}$.
In particular, we have a one-to-one correspondence
\[
\mathcal{H}^a/{G_\mathbb{C}} \bijarrow \mathfrak{j}^{-w_0^\mathbb{C}}_+,
\]
where $\mathcal{H}^a/{G_\mathbb{C}}$ is the set of complex antipodal hyperbolic orbits in $\mathfrak{g}_\mathbb{C}$.
\end{lem}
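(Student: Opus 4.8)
The plan is to work entirely inside the Cartan subalgebra $\mathfrak{j}$, using Fact \ref{fact:cpx_hyp} to translate the antipodal condition on the whole orbit into a statement about the single representative $A_{\mathcal{O}} \in \mathcal{O}^{G_\mathbb{C}}_\hyp \cap \mathfrak{j}_+$. The argument is formally parallel to the proof of Lemma \ref{lem:b_+_and_B-orbit} in the real case, with $(\mathfrak{a},W(\mathfrak{g},\mathfrak{a}),w_0)$ replaced by $(\mathfrak{j},W(\mathfrak{g}_\mathbb{C},\mathfrak{j}_\mathbb{C}),w_0^\mathbb{C})$; indeed, since $\mathcal{O}^{G_\mathbb{C}}_\hyp$ is antipodal iff $-A_{\mathcal{O}} \in \mathcal{O}^{G_\mathbb{C}}_\hyp$, and $\mathcal{O}^{G_\mathbb{C}}_\hyp \cap \mathfrak{j}$ is a single $W(\mathfrak{g}_\mathbb{C},\mathfrak{j}_\mathbb{C})$-orbit, the whole question reduces to whether $-A_{\mathcal{O}}$ lies in $W(\mathfrak{g}_\mathbb{C},\mathfrak{j}_\mathbb{C}) \cdot A_{\mathcal{O}}$.

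First I would recall the standard facts about $w_0^\mathbb{C}$: it sends $\Delta^+(\mathfrak{g}_\mathbb{C},\mathfrak{j}_\mathbb{C})$ to its negative, hence $w_0^\mathbb{C} \cdot \mathfrak{j}_+ = -\mathfrak{j}_+$, so that $-w_0^\mathbb{C}$ is a well-defined involutive linear automorphism of $\mathfrak{j}$ preserving $\mathfrak{j}_+$ (this is already asserted in the text just before the lemma). Then for the forward direction: assume $\mathcal{O}^{G_\mathbb{C}}_\hyp$ is antipodal, so $-A_{\mathcal{O}} \in \mathcal{O}^{G_\mathbb{C}}_\hyp \cap (-\mathfrak{j}_+)$. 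Since $-\mathfrak{j}_+ = w_0^\mathbb{C} \cdot \mathfrak{j}_+$ and $\mathfrak{j}_+$ is a fundamental domain for $W(\mathfrak{g}_\mathbb{C},\mathfrak{j}_\mathbb{C})$, uniqueness of the $\mathfrak{j}_+$-representative forces $w_0^\mathbb{C} \cdot (-A_{\mathcal{O}}) = A_{\mathcal{O}}$, i.e. $-w_0^\mathbb{C} \cdot A_{\mathcal{O}} = A_{\mathcal{O}}$, so $A_{\mathcal{O}} \in \mathfrak{j}_+^{-w_0^\mathbb{C}}$. For the converse: if $-w_0^\mathbb{C} \cdot A_{\mathcal{O}} = A_{\mathcal{O}}$ then $-A_{\mathcal{O}} = w_0^\mathbb{C} \cdot A_{\mathcal{O}} \in W(\mathfrak{g}_\mathbb{C},\mathfrak{j}_\mathbb{C}) \cdot A_{\mathcal{O}} \subset \mathcal{O}^{G_\mathbb{C}}_\hyp$, so the orbit is antipodal by Definition \ref{defn:hyp_anti}.

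Finally, for the displayed bijection $\mathcal{H}^a/G_\mathbb{C} \bijarrow \mathfrak{j}_+^{-w_0^\mathbb{C}}$, I would simply restrict the bijection $\mathcal{H}/G_\mathbb{C} \bijarrow \mathfrak{j}_+$ of Fact \ref{fact:cpx_hyp} (sending $\mathcal{O}^{G_\mathbb{C}}_\hyp$ to $A_{\mathcal{O}}$) to antipodal orbits: the first part of the lemma identifies the image of $\mathcal{H}^a/G_\mathbb{C}$ under this map with exactly $\mathfrak{j}_+^{-w_0^\mathbb{C}} = \mathfrak{j}_+ \cap \mathfrak{j}^{-w_0^\mathbb{C}}$, and injectivity is inherited. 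There is no real obstacle here; the only point requiring a little care is invoking the fundamental-domain property to pin down the $\mathfrak{j}_+$-representative of $-A_{\mathcal{O}}$, which is routine given Fact \ref{fact:cpx_hyp}.
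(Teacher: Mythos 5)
Your argument is correct and is exactly what the paper does: the paper's proof of this lemma simply states that it parallels the proof of Lemma \ref{lem:b_+_and_B-orbit}, and your write-up is precisely that parallel argument carried out with $(\mathfrak{j}, W(\mathfrak{g}_\mathbb{C},\mathfrak{j}_\mathbb{C}), w_0^\mathbb{C})$ in place of $(\mathfrak{a}, W(\mathfrak{g},\mathfrak{a}), w_0)$, using Fact \ref{fact:cpx_hyp} in place of Fact \ref{fact:hyp_inter_a}. No issues.
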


\begin{proof}[Proof of Lemma $\ref{lem:minus_and_w_0C}$]
The proof parallels to that of Lemma \ref{lem:b_+_and_B-orbit}.
\end{proof}

Recall that the map 
\[
\Psi : \mathfrak{j} \rightarrow \Map(\Pi,\mathbb{R}),\quad A \mapsto \Phi_A
\]
is a linear isomorphism (see Section \ref{subsection:WDD_of_comp_hyp}).
Thus $-w_0^\mathbb{C}$ induces an involutive endomorphism on $\Map(\Pi,\mathbb{R})$.
By using this endomorphism, the following theorem gives a classification of complex antipodal hyperbolic orbits in $\mathfrak{g}_\mathbb{C}$.

\begin{thm}\label{thm:Bhyp_to_iota-inv}
Let $\iota$ denote the involutive endomorphism on $\Map(\Pi,\mathbb{R})$ induced by $-w_0^\mathbb{C}$.
Then the following 
holds$:$
\begin{enumerate}
\item A complex hyperbolic orbit $\mathcal{O}^{G_\mathbb{C}}_\hyp$ in $\mathfrak{g}_\mathbb{C}$ is antipodal if and only if the weighted 
Dynkin diagram of $\mathcal{O}^{G_\mathbb{C}}_\hyp$ $($see Section $\ref{subsection:WDD_of_comp_hyp}$ for the notation$)$ is held invariant by $\iota$.
In particular, we have a one-to-one correspondence 
\[
\mathcal{H}^a/{G_\mathbb{C}} \bijarrow \{\, \Psi_A \in \Map(\Pi,\mathbb{R}_{\geq 0}) \mid \text{$\Psi_A$ is held invariant by $\iota$} \,\}.
\]
\item Suppose $\mathfrak{g}_\mathbb{C}$ is simple.
Then the endomorphism $\iota$ is non-trivial if and only if $\mathfrak{g}_\mathbb{C}$ is of type $A_n$, $D_{2k+1}$ or $E_6$ $($$n \geq 2,\ k \geq 2$$)$.
In such cases, the forms of $\iota$ are$:$
\begin{description}
\item[For type $A_n$ $($$n \geq 2$, $\mathfrak{g}_\mathbb{C} \simeq \mathfrak{sl}(n+1,\mathbb{C})$$)$]
\[
\begin{xy}
	*++!D{a_1} *\cir<2pt>{}        ="A",
	(10,0) *++!D{a_2} *\cir<2pt>{} ="B",
	(30,0) *++!D{a_{n-1}} *\cir<2pt>{} ="D",
	(40,0) *++!D{a_n} *\cir<2pt>{} ="E",
	\ar@{-} "A";"B"
	\ar@{-} "B";(15,0)
	\ar@{.} (15,0);(25,0)
	\ar@{-} (25,0);"D"
	\ar@{-} "D";"E"
\end{xy}
\mapsto 
\begin{xy}
	*++!D{a_n} *\cir<2pt>{}        ="A",
	(10,0) *++!D{a_{n-1}} *\cir<2pt>{} ="B",
	(30,0) *++!D{a_2} *\cir<2pt>{} ="D",
	(40,0) *++!D{a_1} *\cir<2pt>{} ="E",
	\ar@{-} "A";"B"
	\ar@{-} "B";(15,0)
	\ar@{.} (15,0);(25,0)
	\ar@{-} (25,0);"D"
	\ar@{-} "D";"E"
\end{xy}
\]
\item [For type $D_{2k+1}$ $($$k \geq 2$, $\mathfrak{g}_\mathbb{C} \simeq \mathfrak{so}(4k+2,\mathbb{C})$$)$] 
\[
\begin{xy}
	*++!D{a_1} *\cir<2pt>{}        ="A",
	(10,0) *++!D{a_2} *\cir<2pt>{} ="B",
	(25,0) *++!D{a_{2k-1}} *\cir<2pt>{} ="C",
	(35,-5) *++!D{a_{2k}} *++!U{} *\cir<2pt>{} ="D",
	(35,5) *++!D{a_{2k+1}} *\cir<2pt>{} ="E",	
	\ar@{-} "A";"B"
	\ar@{-} "B"; (15,0)
	\ar@{.} (15,0) ; (20,0)^*!U{\cdots}
	\ar@{-} (20,0) ; "C"
	\ar@{-} "C";"D"
	\ar@{-} "C";"E"
\end{xy}
\mapsto
\begin{xy}
	*++!D{a_1} *\cir<2pt>{}        ="A",
	(10,0) *++!D{a_2} *\cir<2pt>{} ="B",
	(25,0) *++!D{a_{2k-1}} *\cir<2pt>{} ="C",
	(35,-5) *++!D{a_{2k+1}} *++!U{} *\cir<2pt>{} ="D",
	(35,5) *++!D{a_{2k}} *\cir<2pt>{} ="E",	
	\ar@{-} "A";"B"
	\ar@{-} "B"; (15,0)
	\ar@{.} (15,0) ; (20,0)^*!U{\cdots}
	\ar@{-} (20,0) ; "C"
	\ar@{-} "C";"D"
	\ar@{-} "C";"E"
\end{xy}
\]
\item[For type $E_6$ $($$\mathfrak{g}_\mathbb{C} \simeq \mathfrak{e}_{6,\mathbb{C}}$$)$]
\[
\begin{xy}
	*++!D{a_1} *\cir<2pt>{}        ="A",
	(10,0) *++!D{a_2} *\cir<2pt>{} ="B",
	(20,0) *++!D{a_3} *\cir<2pt>{} ="C",
	(30,0) *++!D{a_4} *\cir<2pt>{} ="D",
	(40,0) *++!D{a_5} *\cir<2pt>{} ="E",
	(20,-10) *++!R{a_6} *\cir<2pt>{} ="F",
	\ar@{-} "A";"B"
	\ar@{-} "B";"C"
	\ar@{-} "C";"D"
	\ar@{-} "D";"E"
	\ar@{-} "C";"F"
\end{xy}
\mapsto
\begin{xy}
	*++!D{a_5} *\cir<2pt>{}        ="A",
	(10,0) *++!D{a_4} *\cir<2pt>{} ="B",
	(20,0) *++!D{a_3} *\cir<2pt>{} ="C",
	(30,0) *++!D{a_2} *\cir<2pt>{} ="D",
	(40,0) *++!D{a_1} *\cir<2pt>{} ="E",
	(20,-10) *++!R{a_6} *\cir<2pt>{} ="F",
	\ar@{-} "A";"B"
	\ar@{-} "B";"C"
	\ar@{-} "C";"D"
	\ar@{-} "D";"E"
	\ar@{-} "C";"F"
\end{xy}
\]
\end{description}
\end{enumerate}
\end{thm}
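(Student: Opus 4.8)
The plan is to prove part (i) directly from Lemma~\ref{lem:minus_and_w_0C} together with the fact that $\Psi\colon \mathfrak{j}\to\Map(\Pi,\mathbb{R})$ is a linear isomorphism intertwining the involution $-w_0^\mathbb{C}$ on $\mathfrak{j}$ with the involution $\iota$ on $\Map(\Pi,\mathbb{R})$ by construction. Indeed, by Lemma~\ref{lem:minus_and_w_0C} a complex hyperbolic orbit $\mathcal{O}^{G_\mathbb{C}}_\hyp$ is antipodal precisely when its representative $A_\mathcal{O}\in\mathfrak{j}_+$ satisfies $-w_0^\mathbb{C}\cdot A_\mathcal{O}=A_\mathcal{O}$; applying $\Psi$ and using $\Psi\circ(-w_0^\mathbb{C})=\iota\circ\Psi$, this is equivalent to $\iota(\Psi_{A_\mathcal{O}})=\Psi_{A_\mathcal{O}}$, i.e.\ the weighted Dynkin diagram is held invariant by $\iota$. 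The one-to-one correspondence then follows by restricting the bijection $\Psi|_{\mathfrak{j}_+}\colon\mathfrak{j}_+\to\Map(\Pi,\mathbb{R}_{\geq0})$ (recalled in Section~\ref{subsection:WDD_of_comp_hyp}) to the $\iota$-fixed subsets on both sides, combined with the bijection $\mathcal{H}/{G_\mathbb{C}}\bijarrow\mathfrak{j}_+$ of Fact~\ref{fact:cpx_hyp}. This part is essentially bookkeeping.

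For part (ii), the main work is the explicit computation of $-w_0^\mathbb{C}$, equivalently $\iota$, for each simple type. The standard fact is that $-w_0^\mathbb{C}$ acts on the fundamental system $\Pi$ as the unique diagram automorphism that is trivial except for types $A_n$ ($n\geq2$), $D_{2k+1}$, and $E_6$ (this is classical; see e.g.\ Bourbaki, or it can be read off from the tables of $w_0$). First I would recall why $-w_0^\mathbb{C}$ permutes $\Pi$: $w_0^\mathbb{C}$ sends $\Delta^+$ to $\Delta^-$, so $-w_0^\mathbb{C}$ sends $\Delta^+$ to $\Delta^+$ and hence permutes the simple roots; being involutive, the induced permutation of $\Pi$ has order $1$ or $2$. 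Then one identifies this permutation type by type. For $A_n$ it is the flip $\alpha_i\mapsto\alpha_{n+1-i}$; for $D_n$ it swaps the two spin nodes when $n$ is odd and is trivial when $n$ is even; for $E_6$ it is the order-two diagram automorphism; and for all other simple types ($B_n,C_n,D_{2k},E_7,E_8,F_4,G_2$) one has $w_0^\mathbb{C}=-\mathrm{id}$ so $\iota$ is trivial. Transporting the permutation of $\Pi$ through $\Psi$ gives exactly the relabelling of weights on nodes depicted in the three diagrams in the statement.

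The expected main obstacle is organizing the case-by-case verification cleanly rather than any conceptual difficulty: one must either cite the classification of $w_0$ (via the longest-element tables) or give a short uniform argument. A clean uniform argument: $-w_0^\mathbb{C}=\mathrm{id}$ on $\mathfrak{j}$ iff $w_0^\mathbb{C}=-\mathrm{id}$ iff every irreducible constituent of the root system has $-1$ in its Weyl group; the list of such irreducible types is standard ($A_1,B_n,C_n,D_{2k},E_7,E_8,F_4,G_2$), and the complement among simple types is exactly $A_n$ ($n\geq2$), $D_{2k+1}$, $E_6$, matching the claim. To pin down the precise form of $\iota$ in those three cases, I would note that a nontrivial involutive permutation of $\Pi$ preserving the Dynkin diagram is unique for $A_n$, $D_{2k+1}$, and $E_6$ (each such diagram has a unique nontrivial symmetry), so $-w_0^\mathbb{C}$ must be that symmetry; explicitly writing it out on the labelled nodes $\alpha_1,\dots$ yields the pictures. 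I would close by remarking that the displayed diagrams are simply the effect on weighted Dynkin diagrams, i.e.\ on the values $a_i=\alpha_i(A)$, of permuting the nodes accordingly.
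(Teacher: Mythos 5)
Your proposal is correct and follows essentially the same route as the paper: part (i) is deduced from Lemma \ref{lem:minus_and_w_0C} via the intertwining of $-w_0^\mathbb{C}$ with $\iota$ under $\Psi$, and part (ii) reduces to the classical fact that the opposition involution $\alpha \mapsto -(w_0^\mathbb{C})^*\cdot\alpha$ is the unique nontrivial diagram symmetry precisely in types $A_n$, $D_{2k+1}$, $E_6$ (the paper cites Tits for this, while you sketch the equivalent criterion $-1 \in W$). The extra detail you supply — why $-w_0^\mathbb{C}$ permutes $\Pi$ and preserves the diagram — is a welcome elaboration of what the paper leaves as ``one can easily show,'' but it is not a different argument.
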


It should be noted that for the cases where $\mathfrak{g}_\mathbb{C}$ is of type $D_{2k}$ ($k \geq 2$), the involution $\iota$ on weighted Dynkin diagrams is trivial although the Dynkin diagram of type $D_{2k}$ admits some involutive automorphisms.

\begin{proof}[Proof of Theorem $\ref{thm:Bhyp_to_iota-inv}$]
The first claim of the theorem follows from Lemma \ref{lem:minus_and_w_0C}.
One can easily show that the involutive endomorphism $\iota$ on $\Map(\Pi,\mathbb{R})$ is induced by the opposition involution on the Dynkin diagram with nodes $\Pi$, which is defined by 
\[
\Pi \rightarrow \Pi,\quad \alpha \mapsto - (w_0^\mathbb{C})^* \cdot \alpha.
\]
Suppose that $\mathfrak{g}_\mathbb{C}$ is simple.
Then the root system $\Delta(\mathfrak{g}_\mathbb{C},\mathfrak{j}_\mathbb{C})$ is irreducible.
It is known that the opposition involution is non-trivial if and only if $\mathfrak{g}_\mathbb{C}$ is of type $A_n$, $D_{2k+1}$ or $E_6$ $($$n \geq 2,\ k \geq 2$$)$ (see J.~Tits \cite[Section 1.5.1]{Tits66}), and the proof is complete.
\end{proof}

As a corollary to Theorem \ref{thm:Bhyp_to_iota-inv}, we have the following:

\begin{cor}\label{cor:Bhyp_in_without_ADE}
If the complex semisimple Lie algebra $\mathfrak{g}_\mathbb{C}$ has no simple factor of type $A_n$, $D_{2k+1}$ or $E_6$ $($$n \geq 2,\ k \geq 2$$)$,
then any complex hyperbolic orbit in $\mathfrak{g}_\mathbb{C}$ is antipodal.
Namely, 
$\mathcal{H}/{G_\mathbb{C}} = \mathcal{H}^a/{G_\mathbb{C}}$.
\end{cor}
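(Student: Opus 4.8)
The plan is to derive this immediately from Theorem \ref{thm:Bhyp_to_iota-inv}, the only subtlety being the passage from the simple case treated there to a general complex semisimple Lie algebra. So the first step is to record how all the relevant data decompose along the decomposition $\mathfrak{g}_\mathbb{C} = \bigoplus_{i} \mathfrak{g}_{i,\mathbb{C}}$ into simple ideals: the Cartan subalgebra splits as $\mathfrak{j}_\mathbb{C} = \bigoplus_i \mathfrak{j}_{i,\mathbb{C}}$, the root system as a disjoint union $\Delta(\mathfrak{g}_\mathbb{C},\mathfrak{j}_\mathbb{C}) = \bigsqcup_i \Delta(\mathfrak{g}_{i,\mathbb{C}},\mathfrak{j}_{i,\mathbb{C}})$, and, compatibly with the chosen positive system, the fundamental system as $\Pi = \bigsqcup_i \Pi_i$ and the longest element as $w_0^\mathbb{C} = \prod_i w_{0,i}^\mathbb{C}$. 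Consequently the involutive endomorphism $\iota$ of $\Map(\Pi,\mathbb{R})$ induced by $-w_0^\mathbb{C}$ is the ``product'' of the involutions $\iota_i$ of $\Map(\Pi_i,\mathbb{R})$ induced by $-w_{0,i}^\mathbb{C}$; in particular $\iota$ is the identity on $\Map(\Pi,\mathbb{R})$ as soon as every $\iota_i$ is the identity.

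Next I would invoke Theorem \ref{thm:Bhyp_to_iota-inv}\,(2): for a simple complex Lie algebra, $\iota_i$ is non-trivial precisely when $\mathfrak{g}_{i,\mathbb{C}}$ is of type $A_n$ $(n \geq 2)$, $D_{2k+1}$ $(k \geq 2)$, or $E_6$. Under the hypothesis of the corollary no simple factor $\mathfrak{g}_{i,\mathbb{C}}$ is of one of these types (note that the excluded list does not contain $A_1$, $B_n$, $C_n$, $D_{2k}$, $E_7$, $E_8$, $F_4$, $G_2$, for all of which $\iota_i$ is trivial), so each $\iota_i$ is the identity and hence $\iota = \mathrm{id}_{\Map(\Pi,\mathbb{R})}$ by the previous step.

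Finally, since every weighted Dynkin diagram of a complex hyperbolic orbit lies in $\Map(\Pi,\mathbb{R}_{\geq 0})$ and is therefore held invariant by the identity map $\iota$, Theorem \ref{thm:Bhyp_to_iota-inv}\,(1) tells us that every complex hyperbolic orbit in $\mathfrak{g}_\mathbb{C}$ is antipodal. Together with the trivial inclusion $\mathcal{H}^a/G_\mathbb{C} \subseteq \mathcal{H}/G_\mathbb{C}$ this yields $\mathcal{H}/G_\mathbb{C} = \mathcal{H}^a/G_\mathbb{C}$. There is no genuine obstacle here; the only point requiring a little care is the bookkeeping in the first step, namely ensuring that the positive system (hence $w_0^\mathbb{C}$ and the opposition involution) can be chosen compatibly with the decomposition into simple ideals, which is automatic since any positive system of $\Delta(\mathfrak{g}_\mathbb{C},\mathfrak{j}_\mathbb{C})$ is the union of its intersections with the irreducible components.
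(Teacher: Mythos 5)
Your proposal is correct and follows exactly the route the paper intends: the paper states this as an immediate consequence of Theorem \ref{thm:Bhyp_to_iota-inv}, and your argument simply makes explicit the (routine) reduction from the semisimple case to the simple factors, where $\iota$ decomposes as the product of the involutions $\iota_i$ and each $\iota_i$ is trivial by part (2) of that theorem. Nothing is missing.
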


By Corollary \ref{cor:Bhyp_in_without_ADE}, in Setting \ref{setting:main}, if $\mathfrak{g}_\mathbb{C}$ has no simple factor of type $A_n$, $D_{2k+1}$ or $E_6$ $($$n \geq 2,\ k \geq 2$$)$,
then the condition \eqref{item:main_hyperbolic} in Theorem \ref{thm:main} and the condition \eqref{item:C-M_cpx_hyp_orbits} in Fact \ref{fact:C-M} are equivalent.

\subsection{Weighted Dynkin diagrams of complex nilpotent orbits}\label{subsection:WDD_of_comp_nilp}

We consider the setting in Section \ref{subsection:WDD_of_comp_hyp}, and use the notation $\mathcal{H}^n$ and $\mathcal{H}^n/{G_\mathbb{C}}$ as in Section \ref{subsection:proof_main_complexification}.
In this subsection, we prove Lemma \ref{lem:sl2-triple_to_Bhyp}, 
and recall weighted Dynkin diagrams of complex nilpotent orbits in $\mathfrak{g}_\mathbb{C}$.

First, we prove Lemma \ref{lem:sl2-triple_to_Bhyp}, which claims that $\mathcal{H}^n \subset \mathcal{H}^a$, as follows:

\begin{proof}[Proof of Lemma $\ref{lem:sl2-triple_to_Bhyp}$]
For any $\mathfrak{sl}_2$-triple $(A,X,Y)$ in $\mathfrak{g}_\mathbb{C}$, 
it is 
well known that $\ad_{\mathfrak{g}_\mathbb{C}}(A) \in \End(\mathfrak{g}_\mathbb{C})$ is diagonalizable with only real integral numbers. 
Hence, $A$ is hyperbolic in $\mathfrak{g}_\mathbb{C}$.
We shall prove that the orbit $\mathcal{O}^{G_\mathbb{C}}_A := \Ad(G_\mathbb{C}) \cdot A$ is antipodal.
We can easily check that the elements 
\[
\begin{pmatrix}1&0\\0&-1\end{pmatrix} \text{ and } \begin{pmatrix}-1&0\\0&1\end{pmatrix} \text{ in } \mathfrak{sl}(2,\mathbb{C})
\]
are conjugate under the adjoint action of $SL(2,\mathbb{C})$.
Then, for a Lie algebra homomorphism $\phi_\mathbb{C} : \mathfrak{sl}(2,\mathbb{C}) \rightarrow \mathfrak{g}_\mathbb{C}$ with 
\[
\phi_\mathbb{C} \begin{pmatrix}1&0\\0&-1\end{pmatrix} = A,
\]
the elements $A$ and $-A$ are conjugate under the adjoint action of the analytic subgroup of $G_\mathbb{C}$ corresponding to $\phi_\mathbb{C}(\mathfrak{sl}(2,\mathbb{C}))$.
Hence, the orbit $\mathcal{O}^{G_\mathbb{C}}_A$ in $\mathfrak{g}_\mathbb{C}$ is antipodal.
\end{proof}Let $\mathcal{N}$ be the set of nilpotent elements in $\mathfrak{g}_\mathbb{C}$ and $\mathcal{N}/{G_\mathbb{C}}$ the set of nilpotent orbits in $\mathfrak{g}_\mathbb{C}$.
For any $\mathfrak{sl}_2$-triple $(A,X,Y)$ in $\mathfrak{g}_\mathbb{C}$, 
the element $A$ is in $\mathcal{H}^n (\subset \mathcal{H}^a)$ and the elements $X,Y$ are both in $\mathcal{N}$.
Let us consider the map from the conjugacy classes of $\mathfrak{sl}_2$-triples in $\mathfrak{g}_\mathbb{C}$ by inner automorphisms of $\mathfrak{g}_\mathbb{C}$ to $\mathcal{N}/G_\mathbb{C}$ defined by 
\[
[(A,X,Y)] \mapsto \mathcal{O}^{G_\mathbb{C}}_X
\]
where $[(A,X,Y)]$ is the conjugacy class of an $\mathfrak{sl}_2$-triple $(A,X,Y)$ in $\mathfrak{g}_\mathbb{C}$ and $\mathcal{O}^{G_\mathbb{C}}_X$ the complex adjoint orbit through $X$ in $\mathfrak{g}_\mathbb{C}$.
Then, by the Jacobson--Morozov theorem, with a result in B.~Kostant \cite{Kostant59}, the map is bijective.
On the other hand, by A.~I.~Malcev \cite{Malcev50}, the map from the conjugacy classes of $\mathfrak{sl}_2$-triples in $\mathfrak{g}_\mathbb{C}$ by inner automorphisms of $\mathfrak{g}_\mathbb{C}$ to $\mathcal{H}^n/{G_\mathbb{C}}$ defined by 
\[
[(A,X,Y)] \mapsto \mathcal{O}^{G_\mathbb{C}}_A
\]
is also bijective, where $\mathcal{O}^{G_\mathbb{C}}_A$ is the complex adjoint orbit through $A$ in $\mathfrak{g}_\mathbb{C}$.
Therefore, we have a one-to-one correspondence 
\[
\mathcal{N}/{G_\mathbb{C}} \bijarrow \mathcal{H}^n/{G_\mathbb{C}}.
\]
In particular, by combining the argument above with Fact \ref{fact:cpx_hyp}, 
we also obtain a bijection: 
\[
\mathcal{N}/{G_\mathbb{C}} \rightarrow \mathfrak{j}_+ \cap \mathcal{H}^n, \quad \mathcal{O}^{G_\mathbb{C}}_\nilp \mapsto A_\mathcal{O},
\]
where $A_\mathcal{O}$ is the unique element of $\mathfrak{j}_+$ with the property:
there exist $X,Y \in \mathcal{O}^{G_\mathbb{C}}_\nilp$ such that $(A_\mathcal{O},X,Y)$ is an $\mathfrak{sl}_2$-triple in $\mathfrak{g}_\mathbb{C}$. 

\begin{rem}
It is known that the Jacobson--Morozov theorem and the result of Kostant in \cite{Kostant59} also hold for any real semisimple Lie algebra $\mathfrak{g}$.
Therefore, we have a surjective map from the set of real nilpotent orbits in $\mathfrak{g}$ to $\mathcal{H}^n(\mathfrak{g})/{G}$, where $\mathcal{H}^n(\mathfrak{g})/{G}$ is the notation in Section $\ref{subsection:proof_main_complexification}$.
However, in general, the map is not injective.
\end{rem}

The weighted Dynkin diagram of a complex nilpotent orbit $\mathcal{O}^{G_\mathbb{C}}_\nilp$ in $\mathfrak{g}_\mathbb{C}$ is defined as the weighted Dynkin diagram corresponding to $A_{\mathcal{O}} \in \mathfrak{j}_+ \cap \mathcal{H}^n$.
Obviously, the weighted Dynkin diagram of $\mathcal{O}^{G_\mathbb{C}}_\nilp$ is the same as the weighted Dynkin diagram of the corresponding orbit in $\mathcal{H}^n/G_\mathbb{C}$.

E.~B.~Dynkin \cite{Dynkin52eng} proved that any weight of a weighted Dynkin diagram of any complex adjoint orbit in $\mathcal{H}^n/{G_\mathbb{C}}$ is $0$, $1$ or $2$.
Hence, $\mathcal{H}^n/{G_\mathbb{C}}$ is (and therefore $\mathcal{N}/{G_\mathbb{C}}$ is) finite.
Dynkin \cite{Dynkin52eng} gave a list of the weighted Dynkin diagrams of $\mathcal{H}^n/{G_\mathbb{C}}$ as the classification of $\mathfrak{sl}_2$-triples in $\mathfrak{g}_\mathbb{C}$.
This also gives a classification of complex nilpotent orbits in $\mathfrak{g}_\mathbb{C}$ (see Bala--Cater \cite{Bala-Carter76} or Collingwood--McGovern \cite[Section 3]{Collingwood-McGovern93} for more details).

We remark that by combining Theorem \ref{thm:Bhyp_to_iota-inv} with Lemma \ref{lem:sl2-triple_to_Bhyp},
if $\mathfrak{g}_\mathbb{C}$ is isomorphic to $\mathfrak{sl}(n+1,\mathbb{C})$, $\mathfrak{so}(4k+2,\mathbb{C})$ or $\mathfrak{e}_{6,\mathbb{C}}$ $($$n \geq 2,\ k \geq 2$$)$, 
then the weighted Dynkin diagram of any complex adjoint orbit in $\mathcal{H}^n/{G_\mathbb{C}}$ (and therefore the weighted Dynkin diagram of any complex nilpotent orbit) is invariant under the non-trivial involution $\iota$.

\begin{example}\label{ex:list_of_nilp_in_sl6}
It is known that there exists a bijection between complex nilpotent orbits in $\mathfrak{sl}(n,\mathbb{C})$ and partitions of $n$ $($see \cite[Section 3.1 and 3.6]{Collingwood-McGovern93}$)$.
Here is the list of weighted Dynkin diagrams of complex nilpotent orbits in $\mathfrak{sl}(6,\mathbb{C})$ $($i.e. the list of weighted Dynkin diagrams corresponding to $\mathfrak{j}_+ \cap \mathcal{H}^n$ for the case where $\mathfrak{g}_\mathbb{C} = \mathfrak{sl}(6,\mathbb{C})$$)$$:$
\begin{table}[htbp]
\begin{center}
	\begin{tabular}{ll} \toprule
	Partition & Weighted Dynkin diagram \\ \midrule
	$[6]$ & \begin{xy}
	*++!D{2} *\cir<2pt>{}        ="A",
	(10,0) *++!D{2} *\cir<2pt>{} ="B",
	(20,0) *++!D{2} *\cir<2pt>{} ="C",
	(30,0) *++!D{2} *\cir<2pt>{} ="D",
	(40,0) *++!D{2} *\cir<2pt>{} ="E",
	\ar@{-} "A";"B"
	\ar@{-} "B";"C"
	\ar@{-} "C";"D"
	\ar@{-} "D";"E"
\end{xy} \\ \hline
	$[5,1]$ & \begin{xy}
	*++!D{2} *\cir<2pt>{}        ="A",
	(10,0) *++!D{2} *\cir<2pt>{} ="B",
	(20,0) *++!D{0} *\cir<2pt>{} ="C",
	(30,0) *++!D{2} *\cir<2pt>{} ="D",
	(40,0) *++!D{2} *\cir<2pt>{} ="E",
	\ar@{-} "A";"B"
	\ar@{-} "B";"C"
	\ar@{-} "C";"D"
	\ar@{-} "D";"E"
\end{xy} \\ \hline
	$[4,2]$ & \begin{xy}
	*++!D{2} *\cir<2pt>{}        ="A",
	(10,0) *++!D{0} *\cir<2pt>{} ="B",
	(20,0) *++!D{2} *\cir<2pt>{} ="C",
	(30,0) *++!D{0} *\cir<2pt>{} ="D",
	(40,0) *++!D{2} *\cir<2pt>{} ="E",
	\ar@{-} "A";"B"
	\ar@{-} "B";"C"
	\ar@{-} "C";"D"
	\ar@{-} "D";"E"
\end{xy} \\ \hline
	$[4,1^2]$ & \begin{xy}
	*++!D{2} *\cir<2pt>{}        ="A",
	(10,0) *++!D{1} *\cir<2pt>{} ="B",
	(20,0) *++!D{0} *\cir<2pt>{} ="C",
	(30,0) *++!D{1} *\cir<2pt>{} ="D",
	(40,0) *++!D{2} *\cir<2pt>{} ="E",
	\ar@{-} "A";"B"
	\ar@{-} "B";"C"
	\ar@{-} "C";"D"
	\ar@{-} "D";"E"
\end{xy} \\ \hline
	$[3^2]$ & \begin{xy}
	*++!D{0} *\cir<2pt>{}        ="A",
	(10,0) *++!D{2} *\cir<2pt>{} ="B",
	(20,0) *++!D{0} *\cir<2pt>{} ="C",
	(30,0) *++!D{2} *\cir<2pt>{} ="D",
	(40,0) *++!D{0} *\cir<2pt>{} ="E",
	\ar@{-} "A";"B"
	\ar@{-} "B";"C"
	\ar@{-} "C";"D"
	\ar@{-} "D";"E"
\end{xy} \\ \hline
	$[3,2,1]$ & \begin{xy}
	*++!D{1} *\cir<2pt>{}        ="A",
	(10,0) *++!D{1} *\cir<2pt>{} ="B",
	(20,0) *++!D{0} *\cir<2pt>{} ="C",
	(30,0) *++!D{1} *\cir<2pt>{} ="D",
	(40,0) *++!D{1} *\cir<2pt>{} ="E",
	\ar@{-} "A";"B"
	\ar@{-} "B";"C"
	\ar@{-} "C";"D"
	\ar@{-} "D";"E"
\end{xy} \\ \hline
	$[3,1^3]$ & \begin{xy}
	*++!D{2} *\cir<2pt>{}        ="A",
	(10,0) *++!D{0} *\cir<2pt>{} ="B",
	(20,0) *++!D{0} *\cir<2pt>{} ="C",
	(30,0) *++!D{0} *\cir<2pt>{} ="D",
	(40,0) *++!D{2} *\cir<2pt>{} ="E",
	\ar@{-} "A";"B"
	\ar@{-} "B";"C"
	\ar@{-} "C";"D"
	\ar@{-} "D";"E"
\end{xy} \\ \hline
	$[2^3]$ & \begin{xy}
	*++!D{0} *\cir<2pt>{}        ="A",
	(10,0) *++!D{0} *\cir<2pt>{} ="B",
	(20,0) *++!D{2} *\cir<2pt>{} ="C",
	(30,0) *++!D{0} *\cir<2pt>{} ="D",
	(40,0) *++!D{0} *\cir<2pt>{} ="E",
	\ar@{-} "A";"B"
	\ar@{-} "B";"C"
	\ar@{-} "C";"D"
	\ar@{-} "D";"E"
\end{xy} \\ \hline
	$[2^2,1^2]$ & \begin{xy}
	*++!D{0} *\cir<2pt>{}        ="A",
	(10,0) *++!D{1} *\cir<2pt>{} ="B",
	(20,0) *++!D{0} *\cir<2pt>{} ="C",
	(30,0) *++!D{1} *\cir<2pt>{} ="D",
	(40,0) *++!D{0} *\cir<2pt>{} ="E",
	\ar@{-} "A";"B"
	\ar@{-} "B";"C"
	\ar@{-} "C";"D"
	\ar@{-} "D";"E"
\end{xy} \\ \hline
	$[2,1^4]$ & \begin{xy}
	*++!D{1} *\cir<2pt>{}        ="A",
	(10,0) *++!D{0} *\cir<2pt>{} ="B",
	(20,0) *++!D{0} *\cir<2pt>{} ="C",
	(30,0) *++!D{0} *\cir<2pt>{} ="D",
	(40,0) *++!D{1} *\cir<2pt>{} ="E",
	\ar@{-} "A";"B"
	\ar@{-} "B";"C"
	\ar@{-} "C";"D"
	\ar@{-} "D";"E"
\end{xy} \\ \hline
	\end{tabular}
	\end{center}
	\end{table}
	\begin{table}[htbp]
	\begin{center}
	\begin{tabular}{ll} \hline
	$[1^6]$ & \begin{xy}
	*++!D{0} *\cir<2pt>{}        ="A",
	(10,0) *++!D{0} *\cir<2pt>{} ="B",
	(20,0) *++!D{0} *\cir<2pt>{} ="C",
	(30,0) *++!D{0} *\cir<2pt>{} ="D",
	(40,0) *++!D{0} *\cir<2pt>{}  ="E",
	\ar@{-} "A";"B"
	\ar@{-} "B";"C"
	\ar@{-} "C";"D"
	\ar@{-} "D";"E"
\end{xy} \\
	\bottomrule 
	\end{tabular}
\legend{Classification of complex nilpotent orbits in $\mathfrak{sl}(6,\mathbb{C})$}
\end{center}
\end{table}
\end{example}

\newpage

\section{Complex adjoint orbits and real forms}\label{section:Real_forms}

Let $\mathfrak{g}_\mathbb{C}$ be a complex simple Lie algebra, and $\mathfrak{g}$ a real form of $\mathfrak{g}_\mathbb{C}$. 
Recall that, in Section \ref{section:pre:WDD}, we have a parameterization of complex hyperbolic [resp. antipodal hyperbolic, nilpotent] orbits in $\mathfrak{g}_\mathbb{C}$ by weighted Dynkin diagrams.
In this section, we also determine complex hyperbolic [resp. antipodal hyperbolic, nilpotent] orbits in $\mathfrak{g}_\mathbb{C}$ meeting $\mathfrak{g}$.
For this, we give an algorithm to check whether or not a given complex hyperbolic [resp. nilpotent] orbit in $\mathfrak{g}_\mathbb{C}$ meets $\mathfrak{g}$.
We also prove Proposition \ref{prop:Hyp_in_gC_in_g} and Proposition \ref{prop:neutral_control} \eqref{item:neut_key} in this section.

\subsection{Complex hyperbolic orbits and real forms}\label{subsection:hyp_real}
We give a proof of Proposition \ref{prop:Hyp_in_gC_in_g} \eqref{item:Hyp_c_r:hyp} in this subsection.

We fix a Cartan decomposition $\mathfrak{g} = \mathfrak{k} + \mathfrak{p}$,
and use the following convention:

\begin{defn}\label{defn:split_Cartan}
We say that a Cartan subalgebra $\mathfrak{j}_\mathfrak{g}$ of $\mathfrak{g}$ is split if $\mathfrak{a} := \mathfrak{j}_\mathfrak{g} \cap \mathfrak{p}$ is a maximal abelian subspace of $\mathfrak{p}$ $($i.e. $\mathfrak{a}$ is a maximally split abelian subspace of $\mathfrak{g}$$)$.
\end{defn}

Note that such $\mathfrak{j}_\mathfrak{g}$ is unique up to the adjoint action of $K$, where $K$ is the analytic subgroup of $G$ corresponding to $\mathfrak{k}$.

Take a split Cartan subalgebra $\mathfrak{j}_\mathfrak{g}$ of $\mathfrak{g}$ in Definition \ref{defn:split_Cartan}.
Then $\mathfrak{j}_\mathfrak{g}$ can be written as $\mathfrak{j}_\mathfrak{g} = \mathfrak{t} + \mathfrak{a}$ for a maximal abelian subspace $\mathfrak{t}$ of the centralizer of $\mathfrak{a}$ in $\mathfrak{k}$.
Let us denote by $\mathfrak{j}_\mathbb{C} := \mathfrak{j}_\mathfrak{g} + \sqrt{-1}\mathfrak{j}_\mathfrak{g}$ and $\mathfrak{j} := \sqrt{-1} \mathfrak{t} + \mathfrak{a}$.
Then $\mathfrak{j}_\mathbb{C}$ is a Cartan subalgebra of $\mathfrak{g}_\mathbb{C}$ and $\mathfrak{j}$ is a real form of it, with 
\[
\mathfrak{j} = \{ A \in \mathfrak{j}_\mathbb{C} \mid \alpha(A) \in \mathbb{R} \ \text{for any } \alpha \in \Delta(\mathfrak{g}_\mathbb{C},\mathfrak{j}_\mathbb{C}) \},
\]
where $\Delta(\mathfrak{g}_\mathbb{C},\mathfrak{j}_\mathbb{C})$ is the root system of $(\mathfrak{g}_\mathbb{C},\mathfrak{j}_\mathbb{C})$. 
We put \[
\Sigma(\mathfrak{g},\mathfrak{a}) := \{ \alpha|_\mathfrak{a} \mid \alpha \in \Delta(\mathfrak{g}_\mathbb{C},\mathfrak{j}_\mathbb{C}) \} \setminus \{0\} \subset \mathfrak{a}^*
\] to the restricted root system of $(\mathfrak{g},\mathfrak{a})$.
Then we can take a positive system $\Delta^+(\mathfrak{g}_\mathbb{C},\mathfrak{j}_\mathbb{C})$ of $\Delta(\mathfrak{g}_\mathbb{C},\mathfrak{j}_\mathbb{C})$ such that the subset 
\[
\Sigma^+(\mathfrak{g},\mathfrak{a}) := 
\{ \alpha|_\mathfrak{a} \mid \alpha \in \Delta^+(\mathfrak{g}_\mathbb{C},\mathfrak{j}_\mathbb{C}) \} \setminus \{0\}.
\]
of $\Sigma(\mathfrak{g},\mathfrak{a})$ becomes a positive system.
In fact, if we take an ordering on $\mathfrak{a}$ and extend it to $\mathfrak{j}$, then the corresponding positive system $\Delta^+(\mathfrak{g}_\mathbb{C},\mathfrak{j}_\mathbb{C})$ satisfies the condition above.
Let us denote by $W(\mathfrak{g}_\mathbb{C},\mathfrak{j}_\mathbb{C})$, $W(\mathfrak{g},\mathfrak{a})$ the Weyl groups of $\Delta(\mathfrak{g}_\mathbb{C}, \mathfrak{j}_\mathbb{C})$, $\Sigma(\mathfrak{g},\mathfrak{a})$, respectively.
We put the closed Weyl chambers 
\begin{align*}
\mathfrak{j}_+ &:= \{\, A \in \mathfrak{j} \mid \alpha(A) \geq 0 \ \text{for any } \alpha \in \Delta^+(\mathfrak{g}_\mathbb{C},\mathfrak{j}_\mathbb{C}) \,\}, \\
\mathfrak{a}_+ &:= \{\, A \in \mathfrak{a} \mid \xi(A) \geq 0 \ \text{for any } \xi \in \Sigma^+(\mathfrak{g},\mathfrak{a}) \,\}.
\end{align*}
Then $\mathfrak{j}_+$ and $\mathfrak{a}_+$ are fundamental domains of $\mathfrak{j}$, $\mathfrak{a}$ for the actions of $W(\mathfrak{g}_\mathbb{C},\mathfrak{j}_\mathbb{C})$ and $W(\mathfrak{g},\mathfrak{a})$, respectively.
By the definition of $\Delta^+(\mathfrak{g}_\mathbb{C},\mathfrak{j}_\mathbb{C})$ and $\Sigma^+(\mathfrak{g},\mathfrak{a})$, we have $\mathfrak{a}_+ = \mathfrak{j}_+ \cap \mathfrak{a}$.

We recall that any complex hyperbolic orbit $\mathcal{O}^{G_\mathbb{C}}_\hyp$ in $\mathfrak{g}_\mathbb{C}$ meets $\mathfrak{j}_+$ with a unique element $A_{\mathcal{O}}$ in $\mathcal{O}^{G_\mathbb{C}}_\hyp \cap \mathfrak{j}_+$ (see Fact \ref{fact:cpx_hyp}).
Then the lemma below holds:

\begin{lem}\label{lem:comp_hyp_meets_g_and_A_in_a}
A complex hyperbolic orbit $\mathcal{O}^{G_\mathbb{C}}_\hyp$ in $\mathfrak{g}_\mathbb{C}$ meets $\mathfrak{g}$ if and only if the corresponding element $A_{\mathcal{O}}$ is in $\mathfrak{a}_+$.
In particular, we have a one-to-one correspondence 
\[
\mathcal{H}_\mathfrak{g}/{G_\mathbb{C}} \bijarrow \mathfrak{a}_+,
\]
where $\mathcal{H}_\mathfrak{g}/{G_\mathbb{C}}$ is the set of complex hyperbolic orbits in $\mathfrak{g}_\mathbb{C}$ meeting $\mathfrak{g}$.
\end{lem}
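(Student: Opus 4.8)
The plan is to establish the two directions of the equivalence separately, and then derive the bijection with $\mathfrak{a}_+$ as a formal consequence. The ``only if'' direction is the substantive one; the ``if'' direction is essentially immediate, since $\mathfrak{a}_+ \subset \mathfrak{a} \subset \mathfrak{g}$, so if $A_\mathcal{O} \in \mathfrak{a}_+$ then $A_\mathcal{O} \in \mathcal{O}^{G_\mathbb{C}}_\hyp \cap \mathfrak{g}$ and in particular the orbit meets $\mathfrak{g}$.

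For the ``only if'' direction, suppose $\mathcal{O}^{G_\mathbb{C}}_\hyp \cap \mathfrak{g} \neq \emptyset$, and pick $X \in \mathcal{O}^{G_\mathbb{C}}_\hyp \cap \mathfrak{g}$. Since $X$ lies on a complex hyperbolic orbit, $\ad_{\mathfrak{g}_\mathbb{C}}(X)$ is diagonalizable with real eigenvalues; as $X \in \mathfrak{g}$ and $\ad_\mathfrak{g}(X)$ is the restriction of $\ad_{\mathfrak{g}_\mathbb{C}}(X)$ to the real form, $X$ is a hyperbolic element of $\mathfrak{g}$ in the sense of Definition \ref{defn:hyp_anti}. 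By Fact \ref{fact:hyp_inter_a}, the real hyperbolic orbit $\Ad(G)\cdot X$ meets $\mathfrak{a}$, and since $\mathfrak{a}_+$ is a fundamental domain for $W(\mathfrak{g},\mathfrak{a})$ acting on $\mathfrak{a}$, we may choose the representative so that in fact $\Ad(G)\cdot X$ meets $\mathfrak{a}_+$; replace $X$ by such a representative, so now $X \in \mathfrak{a}_+ \subset \mathfrak{j}$. Then $X \in \mathcal{O}^{G_\mathbb{C}}_\hyp \cap \mathfrak{j}$, and by Fact \ref{fact:cpx_hyp} this intersection is a single $W(\mathfrak{g}_\mathbb{C},\mathfrak{j}_\mathbb{C})$-orbit whose unique point in $\mathfrak{j}_+$ is $A_\mathcal{O}$. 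So $X$ and $A_\mathcal{O}$ are $W(\mathfrak{g}_\mathbb{C},\mathfrak{j}_\mathbb{C})$-conjugate, both lying in $\mathfrak{j}$. The remaining point is to upgrade this to $X = A_\mathcal{O}$, i.e.\ to show $X$ is \emph{already} in $\mathfrak{j}_+$: here I would use the compatibility of positive systems built into the setup, namely that $\Delta^+(\mathfrak{g}_\mathbb{C},\mathfrak{j}_\mathbb{C})$ was chosen (via an ordering on $\mathfrak{a}$ extended to $\mathfrak{j}$) so that its restrictions give $\Sigma^+(\mathfrak{g},\mathfrak{a})$, hence $\mathfrak{a}_+ = \mathfrak{j}_+ \cap \mathfrak{a}$. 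Since $X \in \mathfrak{a}_+ = \mathfrak{j}_+ \cap \mathfrak{a} \subset \mathfrak{j}_+$, uniqueness of the $\mathfrak{j}_+$-representative forces $X = A_\mathcal{O}$, and therefore $A_\mathcal{O} \in \mathfrak{a}_+$.

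For the final sentence of the statement, combine what we have just proved with Fact \ref{fact:cpx_hyp}: the map $\mathcal{H}/G_\mathbb{C} \to \mathfrak{j}_+$, $\mathcal{O}^{G_\mathbb{C}}_\hyp \mapsto A_\mathcal{O}$, is a bijection, and the equivalence just established says that $\mathcal{O}^{G_\mathbb{C}}_\hyp$ meets $\mathfrak{g}$ precisely when $A_\mathcal{O} \in \mathfrak{a}_+$. Restricting the bijection to the preimage of $\mathfrak{a}_+$ yields the claimed one-to-one correspondence $\mathcal{H}_\mathfrak{g}/G_\mathbb{C} \bijarrow \mathfrak{a}_+$.

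The main obstacle I anticipate is precisely the last step of the ``only if'' argument: ensuring that the representative of the real orbit chosen in $\mathfrak{a}_+$ is honestly the same element as the distinguished point $A_\mathcal{O} \in \mathfrak{j}_+$ of the complex orbit, rather than merely $W(\mathfrak{g}_\mathbb{C},\mathfrak{j}_\mathbb{C})$-conjugate to it. This is where the careful choice of compatible positive systems (the relation $\mathfrak{a}_+ = \mathfrak{j}_+ \cap \mathfrak{a}$) does the real work, and it should be flagged explicitly; everything else is bookkeeping with Facts \ref{fact:hyp_inter_a} and \ref{fact:cpx_hyp}.
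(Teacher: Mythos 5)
Your proposal is correct and follows essentially the same route as the paper: pick a point of $\mathcal{O}^{G_\mathbb{C}}_\hyp \cap \mathfrak{g}$, observe it is hyperbolic in $\mathfrak{g}$, move it into $\mathfrak{a}_+$ via Fact \ref{fact:hyp_inter_a}, and then use the compatibility $\mathfrak{a}_+ = \mathfrak{j}_+ \cap \mathfrak{a}$ together with the uniqueness of the $\mathfrak{j}_+$-representative from Fact \ref{fact:cpx_hyp} to force that point to equal $A_\mathcal{O}$. The step you flag as the potential obstacle is exactly the one the paper's proof relies on, and your handling of it matches the paper's.
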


Lemma \ref{lem:comp_hyp_meets_g_and_A_in_a} will be used in Section \ref{subsection:Satake_diagrams} to prove Theorem \ref{thm:hyp_match}.
We now prove Proposition \ref{prop:Hyp_in_gC_in_g} \eqref{item:Hyp_c_r:hyp} and Lemma \ref{lem:comp_hyp_meets_g_and_A_in_a} simultaneously.

\begin{proof}[Proof of Proposition $\ref{prop:Hyp_in_gC_in_g}$ $\eqref{item:Hyp_c_r:hyp}$ and Lemma $\ref{lem:comp_hyp_meets_g_and_A_in_a}$]
We show that for a complex hyperbolic orbit $\mathcal{O}^{G_\mathbb{C}}_\hyp$ in $\mathfrak{g}_\mathbb{C}$, the element $A_{\mathcal{O}}$ is in $\mathfrak{a}_+$ if $\mathcal{O}^{G_\mathbb{C}}_\hyp$ meets $\mathfrak{g}$.
Note that an element of $\mathfrak{g}$ is hyperbolic in $\mathfrak{g}$ (see Definition \ref{defn:hyp_anti}) if and only if hyperbolic in $\mathfrak{g}_\mathbb{C}$.
Thus any real adjoint orbit $\mathcal{O}'$ contained in $\mathcal{O}^{G_\mathbb{C}}_\hyp \cap \mathfrak{g}$ is hyperbolic, and hence $\mathcal{O}'$ meets $\mathfrak{a}_+$ with a unique element $A_0 \in \mathcal{O}' \cap \mathfrak{a}_+$ by Fact \ref{fact:hyp_inter_a}. 
Since $\mathfrak{a}_+$ is contained in $\mathfrak{j}_+$,
the element $A_0$ is in $\mathcal{O}^{G_\mathbb{C}}_\hyp \cap \mathfrak{j}_+$.
Thus, $A_0 = A_{\mathcal{O}}$.
Therefore, we obtain that $A_{\mathcal{O}}$ is in $\mathfrak{a}_+$ for any $\mathcal{O}^{G_\mathbb{C}}_\hyp \in \mathcal{H}_\mathfrak{g}/{G_\mathbb{C}}$, 
which completes the proof of Lemma \ref{lem:comp_hyp_meets_g_and_A_in_a}.

To prove Proposition \ref{prop:Hyp_in_gC_in_g} \eqref{item:Hyp_c_r:hyp}, it suffices to show that the intersection $\mathcal{O}^{G_\mathbb{C}}_\hyp \cap \mathfrak{g}$ becomes a single adjoint orbit.
By the argument above, we have 
\[
\Ad(G) \cdot A_{\mathcal{O}} = \mathcal{O}^{G_\mathbb{C}}_\hyp \cap \mathfrak{g},
\]
and hence Proposition \ref{prop:Hyp_in_gC_in_g} \eqref{item:Hyp_c_r:hyp} follows.
\end{proof}

\subsection{Weighted Dynkin diagrams and Satake diagrams}\label{subsection:Satake_diagrams}\label{subsection:WDD_and_Satake}

Let us consider the setting in Section \ref{subsection:hyp_real}.
In this subsection, 
we determine complex hyperbolic orbits in $\mathfrak{g}_\mathbb{C}$ meeting $\mathfrak{g}$ by using the Satake diagram of $\mathfrak{g}$.

First, we recall briefly the definition of the Satake diagram of the real form $\mathfrak{g}$ of $\mathfrak{g}_\mathbb{C}$ (see \cite{Araki62,Satake60} for more details).
Let us denote by $\Pi$ the fundamental system of $\Delta^+(\mathfrak{g}_\mathbb{C},\mathfrak{j}_\mathbb{C})$.
Then 
\[
\overline{\Pi} := \{\, \alpha|_\mathfrak{a} \mid \alpha \in \Pi \,\} \setminus \{0\}
\] is the fundamental system of $\Sigma^+(\mathfrak{g},\mathfrak{a})$.
We write $\Pi_0$ for the set of all simple roots in $\Pi$ whose restriction to $\mathfrak{a}$ is zero.
The Satake diagram $S_\mathfrak{g}$ of $\mathfrak{g}$ consists of the following data: the Dynkin diagram of $\mathfrak{g}_\mathbb{C}$ with nodes $\Pi$; black nodes $\Pi_0$ in $S$; and arrows joining $\alpha \in \Pi \setminus \Pi_0$ and $\beta \in \Pi \setminus \Pi_0$ in $S$ whose restrictions to $\mathfrak{a}$ are the same.

Second, we give the definition of weighted Dynkin diagrams \textit{matching} the Satake diagram $S_\mathfrak{g}$ of $\mathfrak{g}$ as follows:

\begin{defn}\label{defn:match}
Let $\Psi_A \in \Map(\Pi,\mathbb{R})$ be a weighted Dynkin diagram of $\mathfrak{g}_\mathbb{C}$ $($see Section $\ref{subsection:WDD_of_comp_hyp}$ for the notation$)$ and $S_\mathfrak{g}$ the Satake diagram of $\mathfrak{g}$ with nodes $\Pi$.
We say that $\Psi_A$ \textit{matches} $S_\mathfrak{g}$ if all the weights on black nodes in $\Pi_0$ are zero and any pair of nodes joined by an arrow have the same weights.
\end{defn}

Then the following theorem holds:

\begin{thm}\label{thm:hyp_match}
The weighted Dynkin diagram of a complex hyperbolic orbit $\mathcal{O}^{G_\mathbb{C}}_\hyp$ in $\mathfrak{g}_\mathbb{C}$ matches the Satake diagram of $\mathfrak{g}$ if and only if $\mathcal{O}^{G_\mathbb{C}}_\hyp$ meets $\mathfrak{g}$.
In particular, we have a one-to-one correspondence 
\[
\mathcal{H}_\mathfrak{g}/{G_\mathbb{C}} \bijarrow \{\, \Psi_A \in \Map(\Pi,\mathbb{R}_{\geq 0}) \mid \text{$\Psi_A$ matches $S_\mathfrak{g}$} \,\}.
\]
\end{thm}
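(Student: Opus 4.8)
The plan is to reduce Theorem~\ref{thm:hyp_match} to Lemma~\ref{lem:comp_hyp_meets_g_and_A_in_a}, which already identifies $\mathcal{H}_\mathfrak{g}/G_\mathbb{C}$ with the closed Weyl chamber $\mathfrak{a}_+ = \mathfrak{j}_+ \cap \mathfrak{a}$ via $\mathcal{O}^{G_\mathbb{C}}_\hyp \mapsto A_\mathcal{O}$. In view of that lemma, it suffices to show that for $A \in \mathfrak{j}_+$, the weighted Dynkin diagram $\Psi_A$ matches the Satake diagram $S_\mathfrak{g}$ if and only if $A \in \mathfrak{a}$, i.e.\ $A \in \mathfrak{a}_+$. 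Since $\Psi : \mathfrak{j} \to \Map(\Pi,\mathbb{R})$ is a linear isomorphism (Section~\ref{subsection:WDD_of_comp_hyp}), the whole statement becomes a linear-algebra identity about the subspace $\mathfrak{a} \subset \mathfrak{j}$ and its image under $\Psi$: I want to prove that $\Psi(\mathfrak{a})$ is exactly the set of $f \in \Map(\Pi,\mathbb{R})$ that vanish on $\Pi_0$ and are constant on each arrow-orbit (the ``matching'' conditions of Definition~\ref{defn:match}).

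The key input is the description of $\mathfrak{a}$ inside $\mathfrak{j} = \sqrt{-1}\mathfrak{t} + \mathfrak{a}$ in terms of the restriction map. First I would recall the standard Satake-diagram facts (from \cite{Araki62,Satake60}): writing $\theta$ also for the involution of $\Delta(\mathfrak{g}_\mathbb{C},\mathfrak{j}_\mathbb{C})$ attached to the real form, one has $\alpha|_\mathfrak{a} = 0$ exactly for $\alpha$ in the subsystem spanned by $\Pi_0$, and $\alpha|_\mathfrak{a} = \beta|_\mathfrak{a}$ for $\alpha,\beta \in \Pi\setminus\Pi_0$ precisely when $\alpha$ and $\beta$ are joined by an arrow; moreover $\mathfrak{a} = \{A \in \mathfrak{j} \mid \alpha(A) = 0 \text{ for all } \alpha \in \Pi_0\}$, because $\mathfrak{a}$ is the common zero locus inside $\mathfrak{j}$ of the roots vanishing on it (this uses that $\sqrt{-1}\mathfrak{t}$ is spanned, over $\mathbb{R}$, by the coroots attached to $\Pi_0$). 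Granting this, $A \in \mathfrak{a}$ iff $\Psi_A$ vanishes on $\Pi_0$; and once $A \in \mathfrak{a}$, for $\alpha,\beta$ joined by an arrow we automatically get $\alpha(A) = \alpha|_\mathfrak{a}(A) = \beta|_\mathfrak{a}(A) = \beta(A)$, so $\Psi_A$ also satisfies the arrow condition. Conversely, if $\Psi_A$ matches $S_\mathfrak{g}$ then in particular it vanishes on $\Pi_0$, hence $A \in \mathfrak{a}$; combined with $A \in \mathfrak{j}_+$ this gives $A \in \mathfrak{a}_+$. This establishes the equivalence, and the displayed bijection then follows by composing the bijection of Lemma~\ref{lem:comp_hyp_meets_g_and_A_in_a} with $\Psi|_{\mathfrak{a}_+}$, whose image is exactly the matching diagrams in $\Map(\Pi,\mathbb{R}_{\geq 0})$.

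The main obstacle is the second assertion of the Satake-diagram recollection, namely that $\mathfrak{a}$ is \emph{cut out} inside $\mathfrak{j}$ by the vanishing of the roots in $\Pi_0$ (equivalently, $\dim \sqrt{-1}\mathfrak{t}$ equals the rank of the root subsystem $\langle \Pi_0\rangle$, and $\sqrt{-1}\mathfrak{t}$ is its span of coroots). This is where one genuinely uses the structure of the Satake diagram rather than just formal restriction; I would either cite \cite[Section~2]{Araki62} directly or spend a short paragraph deriving it from the fact that $\mathfrak{m} := Z_\mathfrak{k}(\mathfrak{a})$ has $\mathfrak{t}$ as a maximal torus and $\Pi_0$ is a simple system for the root system of $\mathfrak{m}_\mathbb{C}$ with respect to $\mathfrak{j}_\mathbb{C}$. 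Everything else is bookkeeping: the ``arrow'' direction is immediate once $A \in \mathfrak{a}$, and the $\mathfrak{a}_+$ versus $\mathfrak{a}$ point is handled by the already-noted identity $\mathfrak{a}_+ = \mathfrak{j}_+ \cap \mathfrak{a}$.
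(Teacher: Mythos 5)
Your reduction to Lemma \ref{lem:comp_hyp_meets_g_and_A_in_a} and the forward direction ($A \in \mathfrak{a}$ implies $\Psi_A$ matches $S_\mathfrak{g}$) are fine and agree with the paper. The gap is in the converse, which you base on the claim that $\mathfrak{a} = \{\, A \in \mathfrak{j} \mid \alpha(A) = 0 \text{ for all } \alpha \in \Pi_0 \,\}$, justified by asserting that $\sqrt{-1}\mathfrak{t}$ is spanned by the coroots attached to $\Pi_0$. That assertion is false in general: $\mathfrak{t}$ is a maximal abelian subspace of $\mathfrak{m} = Z_\mathfrak{k}(\mathfrak{a})$, and the coroots of $\Pi_0$ span only a maximal torus of the semisimple part $[\mathfrak{m},\mathfrak{m}]$; the central part $Z(\mathfrak{m}) \cap \mathfrak{t}$ is invisible to $\Pi_0$. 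Concretely, for $\mathfrak{g} = \mathfrak{su}(2,1)$ one has $\Pi_0 = \emptyset$ while $\dim \mathfrak{t} = 1$, so your description would give $\mathfrak{a} = \mathfrak{j}$, which is wrong; similarly for $\mathfrak{su}(4,2)$ (the paper's running example) the locus cut out by the single black node is $4$-dimensional while $\dim \mathfrak{a} = 2$. The discrepancy is exactly $\sharp(\Pi\setminus\Pi_0) - \sharp\overline{\Pi}$, the number of independent arrow conditions. So in the converse the arrow conditions are not automatic bookkeeping: they are precisely what cuts the $\Pi_0$-vanishing locus down to $\mathfrak{a}$, and an argument that only invokes the black nodes cannot conclude $A \in \mathfrak{a}$.

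The paper's Lemma \ref{lem:a_to_Satake} avoids this by defining $\mathfrak{a}'$ to be the subspace of $\mathfrak{j}$ cut out by \emph{both} matching conditions, observing $\mathfrak{a} \subset \mathfrak{a}'$, and then checking $\dim_\mathbb{R} \mathfrak{a}' = \sharp\overline{\Pi} = \dim_\mathbb{R} \mathfrak{a}$ by exhibiting an explicit basis of $\mathfrak{a}'$ indexed by $\overline{\Pi}$. If you repair your argument by including the arrow conditions in the linear description of $\mathfrak{a}$ and then either performing this dimension count or citing the corresponding statement from Satake--Araki, the rest of your proof goes through unchanged.
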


Recall that $\Psi$ is a linear isomorphism from $\mathfrak{j}$ to $\Map(\Pi,\mathbb{R})$ (see \eqref{eq:psi} in Section \ref{subsection:WDD_of_comp_hyp} for the notation), 
and there exists a one-to-one correspondence between $\mathcal{H}_\mathfrak{g}/{G_\mathbb{C}}$ and $\mathfrak{a}_+$ (see Lemma \ref{lem:comp_hyp_meets_g_and_A_in_a}).
Therefore, to prove Theorem \ref{thm:hyp_match}, it suffices to show the next lemma:

\begin{lem}\label{lem:a_to_Satake}
The linear isomorphism $\Psi:\mathfrak{j} \rightarrow \Map(\Pi,\mathbb{R})$ induces a linear isomorphism  
\[
\mathfrak{a} \rightarrow \{\, \Psi_A \in \Map(\Pi,\mathbb{R}) \mid \text{$\Psi_A$ matches $S_\mathfrak{g}$} \,\},\quad A \mapsto \Psi_A.
\]
\end{lem}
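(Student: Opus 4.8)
The plan is to prove the two inclusions separately: the inclusion $\Psi(\mathfrak{a}) \subseteq \{\, \Psi_A \mid \Psi_A \text{ matches } S_\mathfrak{g} \,\}$ is immediate, and since the right-hand side is a linear subspace of $\Map(\Pi,\mathbb{R})$ (it is cut out by homogeneous linear equations) and $\Psi$ is injective, it then suffices to check that $\dim_\mathbb{R}\mathfrak{a}$ equals the dimension of that subspace. For the easy inclusion, let $A \in \mathfrak{a}$ and $\alpha \in \Pi$; since $A \in \mathfrak{a}$ we have $\alpha(A) = (\alpha|_\mathfrak{a})(A)$, so if $\alpha \in \Pi_0$ then $\alpha(A) = 0$, and if $\alpha, \beta \in \Pi \setminus \Pi_0$ are joined by an arrow of $S_\mathfrak{g}$ then $\alpha|_\mathfrak{a} = \beta|_\mathfrak{a}$ forces $\alpha(A) = \beta(A)$. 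Hence $\Psi_A$ matches $S_\mathfrak{g}$.

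For the dimension count, let $N$ denote the number of arrows of $S_\mathfrak{g}$. On one side, $\dim_\mathbb{R}\mathfrak{a} = \# \overline{\Pi}$, because $\overline{\Pi}$ is the fundamental system of $\Sigma(\mathfrak{g},\mathfrak{a})$ and spans $\mathfrak{a}^*$. By the structure theory of Satake diagrams (see \cite{Araki62,Satake60}), the restriction map $\Pi \setminus \Pi_0 \to \overline{\Pi}$, $\alpha \mapsto \alpha|_\mathfrak{a}$, is surjective and identifies exactly the pairs of nodes joined by an arrow, these being its only non-singleton fibres; therefore $\# \overline{\Pi} = \# \Pi - \# \Pi_0 - N$. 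On the other side, inside $\Map(\Pi,\mathbb{R})$ the subspace of diagrams matching $S_\mathfrak{g}$ is cut out by the $\# \Pi_0$ equations ``the weight on $\alpha$ vanishes'' for $\alpha \in \Pi_0$, together with the $N$ equations ``the weights on $\alpha$ and $\beta$ agree'' over the arrow-pairs $\{\alpha,\beta\}$; as $\Pi_0$ and the distinct arrow-pairs involve pairwise disjoint nodes, these $\# \Pi_0 + N$ linear equations are independent, so the subspace has dimension $\# \Pi - \# \Pi_0 - N = \dim_\mathbb{R}\mathfrak{a}$.

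Combining the two steps, $\Psi|_\mathfrak{a}$ is injective, its image is contained in the subspace of matching diagrams, and both have the same finite dimension, so $\Psi|_\mathfrak{a}$ is a linear isomorphism onto that subspace; this is Lemma \ref{lem:a_to_Satake}, and Theorem \ref{thm:hyp_match} then follows via Lemma \ref{lem:comp_hyp_meets_g_and_A_in_a} and the bijection $\Psi|_{\mathfrak{j}_+}$ of Section \ref{subsection:WDD_of_comp_hyp}. The only delicate point — really a bookkeeping matter — will be the equality $\# \overline{\Pi} = \# \Pi - \# \Pi_0 - N$, which rests on the fact that the arrows of the Satake diagram record precisely the coincidences among the restrictions $\alpha|_\mathfrak{a}$ of the non-compact simple roots and pair them up, with no fibre of size $\geq 3$. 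An alternative that avoids the dimension count would be to establish the reverse inclusion directly, expressing the conjugation $\sigma$ of $\mathfrak{g}_\mathbb{C}$ relative to $\mathfrak{g}$ through the Satake involution on $\Pi$ and verifying that a matching $\Psi_A$ forces $\sigma A = A$, i.e.\ $A \in \mathfrak{a}$; this is equivalent but requires quoting the explicit action of $\sigma$ on simple roots.
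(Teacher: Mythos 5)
Your proof is correct and follows essentially the same route as the paper: the easy inclusion $\Psi(\mathfrak{a})\subseteq\{\,\Psi_A\mid \Psi_A \text{ matches } S_\mathfrak{g}\,\}$, followed by a dimension count showing both sides have dimension $\sharp\overline{\Pi}$. The only cosmetic difference is that the paper computes the dimension of the matching subspace by exhibiting the explicit basis $\{A'_\xi\}_{\xi\in\overline{\Pi}}$ (with $\alpha(A'_\xi)=1$ iff $\alpha|_\mathfrak{a}=\xi$) rather than by counting the $\sharp\Pi_0+N$ independent defining equations, which lets it sidestep the (true, but extra) fact that the fibres of $\alpha\mapsto\alpha|_\mathfrak{a}$ on $\Pi\setminus\Pi_0$ have size at most two.
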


\begin{proof}[Proof of Lemma $\ref{lem:a_to_Satake}$]
Let $A \in \mathfrak{j}$.
By Definition \ref{defn:match}, the weighted Dynkin diagram $\Psi_A$ matches the Satake diagram of $\mathfrak{g}$ if and only if $A$ satisfies the following condition $(\star)$:
\[
(\star)  \begin{cases}
\alpha(A) = 0 \quad (\text{for any } \alpha \in \Pi_0), \\
\alpha(A) = \beta(A) \quad (\text{for any } \alpha, \beta \in \Pi \setminus \Pi_0 \text{ with } \alpha|_\mathfrak{a} = \beta|_\mathfrak{a}).
\end{cases}
\]
Thus, it suffices to show that the subspace
\[
\mathfrak{a}' := \{\, A \in \mathfrak{j} \mid \text{ $A$ satisfies the condition $(\star)$}\,\}
\]
of $\mathfrak{j}$ coincides with $\mathfrak{a}$.
It is easy to check that $\mathfrak{a} \subset \mathfrak{a}'$. 
We now prove that $\dim_\mathbb{R} \mathfrak{a} = \dim_\mathbb{R} \mathfrak{a}'$.
Recall that $\overline{\Pi}$ is a fundamental system of $\Sigma^+(\mathfrak{g},\mathfrak{a})$. 
In particular, $\overline{\Pi}$ is a basis of $\mathfrak{a}^*$.
Thus, $\dim_\mathbb{R} \mathfrak{a} = \sharp \overline{\Pi}$.
We define the element $A'_\xi$ of $\mathfrak{a}'$ for each $\xi \in \overline{\Pi}$ by
\begin{align*}
\alpha(A'_\xi) = 
\begin{cases} 
1 \quad (\text{if $\alpha|_\mathfrak{a} = \xi$}), \\
0 \quad (\text{if $\alpha|_\mathfrak{a} \neq \xi$}),
\end{cases}
\end{align*}
for any $\alpha \in \Pi$.
Then $\{\, A'_\xi \mid \xi \in \overline{\Pi} \,\}$ is a basis of $\mathfrak{a}'$ since 
\[
\overline{\Pi} = \{\, \alpha|_\mathfrak{a} \mid \alpha \in \Pi \,\} \setminus \{0\}.
\]
Thus, $\dim_\mathbb{R} \mathfrak{a}' = \sharp \overline{\Pi}$, 
and hence $\mathfrak{a} = \mathfrak{a}'$.
\end{proof}

\subsection{Complex antipodal hyperbolic orbits and real forms}\label{subsection:antipodal_real}

We consider the setting in Section \ref{subsection:hyp_real} and \ref{subsection:Satake_diagrams}.
In this subsection, the proof of Proposition \ref{prop:Hyp_in_gC_in_g} \eqref{item:Hyp_c_r:anti} is given. 
Concerning to the proof of Proposition \ref{prop:keyprop} \eqref{item:neut_key}, which will be given in Section \ref{subsection:proof_of_keyprop}, 
we also 
determine the subset $\mathfrak{b}$ of $\mathfrak{a}$ (see Section \ref{subsection:Benoist_result} for the notation) by describing the weighted Dynkin diagrams in this subsection.

First, we prove Proposition \ref{prop:Hyp_in_gC_in_g} \eqref{item:Hyp_c_r:anti}, which gives a bijection between complex antipodal hyperbolic orbits in $\mathfrak{g}_\mathbb{C}$ meeting $\mathfrak{g}$ and real antipodal hyperbolic orbits in $\mathfrak{g}$, as follows:

\begin{proof}[Proof of Proposition $\ref{prop:Hyp_in_gC_in_g}$ $\eqref{item:Hyp_c_r:anti}$]
Note that Proposition $\ref{prop:Hyp_in_gC_in_g}$ $\eqref{item:Hyp_c_r:hyp}$ has been already proved in Section \ref{subsection:hyp_real}.
Therefore, to prove Proposition $\ref{prop:Hyp_in_gC_in_g}$ $\eqref{item:Hyp_c_r:anti}$, it remains to show that for any $\mathcal{O}^{G_\mathbb{C}} \in \mathcal{H}^a_\mathfrak{g}/{G_\mathbb{C}}$ and any element $A$ of $\mathcal{O}^{G_\mathbb{C}} \cap \mathfrak{g}$, the element $-A$ is also in $\mathcal{O}^{G_\mathbb{C}} \cap \mathfrak{g}$.
Since $\mathcal{O}^{G_\mathbb{C}}$ is antipodal, the element $-A$ is also in $\mathcal{O}^{G_\mathbb{C}}$.
Hence, we have $-A \in \mathcal{O}^{G_\mathbb{C}} \cap \mathfrak{g}$.
\end{proof}

Recall that we have bijections between $\mathcal{H}^a/{G_\mathbb{C}}$ and $\mathfrak{j}^{-w_0^\mathbb{C}}_+$ (see Lemma \ref{lem:minus_and_w_0C}) and between $\mathcal{H}^a(\mathfrak{g})/{G}$ and $\mathfrak{b}_+$ (see Lemma \ref{lem:b_+_and_B-orbit}).
By Proposition \ref{prop:Hyp_in_gC_in_g} \eqref{item:Hyp_c_r:anti}, which has been proved above, we have one-to-one correspondences \[
\mathfrak{b}_+ \bijarrow \mathcal{H}^a(\mathfrak{g})/{G} \bijarrow \mathcal{H}^a_\mathfrak{g}/{G_\mathbb{C}},
\]
where $\mathcal{H}^a_\mathfrak{g}/{G_\mathbb{C}}$ is the set of complex antipodal hyperbolic orbits in $\mathfrak{g}_\mathbb{C}$ meeting $\mathfrak{g}$.

To explain the relation between $\mathfrak{j}^{-w_0^\mathbb{C}}_+$ and $\mathfrak{b}_+$, we show the following lemma:

\begin{lem}\label{lem:b=jwcapa}
Let $w_0^\mathbb{C}$, $w_0$ be the longest elements of $W(\mathfrak{g}_\mathbb{C},\mathfrak{j}_\mathbb{C})$, $W(\mathfrak{g},\mathfrak{a})$ with respect to the positive systems $\Delta^+(\mathfrak{g}_\mathbb{C},\mathfrak{j}_\mathbb{C})$, $\Sigma^+(\mathfrak{g},\mathfrak{a})$, respectively.
Then: 
\begin{align*}
\mathfrak{b} = \mathfrak{j}^{-w_0^\mathbb{C}} \cap \mathfrak{a}, \quad 
\mathfrak{b}_+ = \mathfrak{j}^{-w_0^\mathbb{C}}_+ \cap \mathfrak{a}, 
\end{align*}
where $\mathfrak{b} = \{\, A \in \mathfrak{a} \mid -w_0 \cdot A = A \,\}$ and $\mathfrak{j}^{-w_0^\mathbb{C}} = \{\, A \in \mathfrak{j} \mid -w_0^\mathbb{C} \cdot A = A \,\}$.
\end{lem}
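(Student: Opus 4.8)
The plan is to reduce both asserted equalities to a single structural fact about $\mathfrak{a}$ sitting inside $\mathfrak{j}$: namely, that the involution $-w_0^\mathbb{C}$ of $\mathfrak{j}$ preserves the subspace $\mathfrak{a}$ and restricts there to $-w_0$. Granting this, the first equality is immediate, since then
\[
\mathfrak{b} = \{\, A \in \mathfrak{a} \mid -w_0 \cdot A = A \,\} = \{\, A \in \mathfrak{a} \mid -w_0^\mathbb{C} \cdot A = A \,\} = \mathfrak{j}^{-w_0^\mathbb{C}} \cap \mathfrak{a},
\]
and the second follows by intersecting with $\mathfrak{a}_+$ and invoking the compatibility $\mathfrak{a}_+ = \mathfrak{j}_+ \cap \mathfrak{a}$ recorded in Section \ref{subsection:hyp_real}:
\[
\mathfrak{b}_+ = \mathfrak{b} \cap \mathfrak{a}_+ = (\mathfrak{j}^{-w_0^\mathbb{C}} \cap \mathfrak{a}) \cap (\mathfrak{j}_+ \cap \mathfrak{a}) = \mathfrak{j}^{-w_0^\mathbb{C}} \cap \mathfrak{j}_+ \cap \mathfrak{a} = \mathfrak{j}^{-w_0^\mathbb{C}}_+ \cap \mathfrak{a}.
\]

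To establish that $-w_0^\mathbb{C}$ preserves $\mathfrak{a}$ and agrees with $-w_0$ there, I would fix $A \in \mathfrak{a}_+$ and compare the two elements $-w_0 \cdot A = w_0 \cdot (-A)$ and $-w_0^\mathbb{C} \cdot A = w_0^\mathbb{C} \cdot (-A)$ of $\mathfrak{j}$. Since $A$ lies in $\mathfrak{p} \cap \mathfrak{a}$ it is hyperbolic in $\mathfrak{g}$, hence in $\mathfrak{g}_\mathbb{C}$, so $\mathcal{O} := \Ad(G_\mathbb{C}) \cdot (-A)$ is a complex hyperbolic orbit; the point is that both candidate elements lie in $\mathcal{O} \cap \mathfrak{j}_+$. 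Indeed, $W(\mathfrak{g},\mathfrak{a}) = N_K(\mathfrak{a})/Z_K(\mathfrak{a})$ is realized by elements $\Ad(k)$ with $k \in N_K(\mathfrak{a}) \subseteq K \subseteq G \subseteq G_\mathbb{C}$, so $w_0 \cdot (-A) \in \Ad(G_\mathbb{C}) \cdot (-A) = \mathcal{O}$, while $-w_0 \cdot A \in \mathfrak{a}_+ \subseteq \mathfrak{j}_+$ because $-w_0$ preserves $\mathfrak{a}_+$ (Section \ref{subsection:Benoist_result}); similarly $W(\mathfrak{g}_\mathbb{C},\mathfrak{j}_\mathbb{C})$ is realized inside $\Ad(G_\mathbb{C})$ (as $G_\mathbb{C}$ is connected), so $w_0^\mathbb{C} \cdot (-A) \in \mathcal{O}$, while $-w_0^\mathbb{C} \cdot A \in \mathfrak{j}_+$ because $-w_0^\mathbb{C}$ preserves $\mathfrak{j}_+$ (Section \ref{subsection:complex_Bhyp}). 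By Fact \ref{fact:cpx_hyp}, $\mathcal{O} \cap \mathfrak{j}$ is a single $W(\mathfrak{g}_\mathbb{C},\mathfrak{j}_\mathbb{C})$-orbit, hence meets the fundamental domain $\mathfrak{j}_+$ in exactly one point; therefore $-w_0 \cdot A = -w_0^\mathbb{C} \cdot A$ for every $A \in \mathfrak{a}_+$.

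Consequently the linear map $-w_0 \colon \mathfrak{a} \to \mathfrak{a} \subseteq \mathfrak{j}$ and the restriction $(-w_0^\mathbb{C})|_{\mathfrak{a}} \colon \mathfrak{a} \to \mathfrak{j}$ agree on the closed Weyl chamber $\mathfrak{a}_+$, which spans $\mathfrak{a}$; being linear they agree on all of $\mathfrak{a}$. Since $-w_0$ is a bijection of $\mathfrak{a}$, this forces $-w_0^\mathbb{C}(\mathfrak{a}) = \mathfrak{a}$, which is exactly the structural fact used above.

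I do not expect a genuine obstacle. The one delicate point is the bookkeeping between the little Weyl group $W(\mathfrak{g},\mathfrak{a})$ and $W(\mathfrak{g}_\mathbb{C},\mathfrak{j}_\mathbb{C})$: one must use that the positive systems $\Delta^+(\mathfrak{g}_\mathbb{C},\mathfrak{j}_\mathbb{C})$ and $\Sigma^+(\mathfrak{g},\mathfrak{a})$ were chosen compatibly (so that $\mathfrak{a}_+ = \mathfrak{j}_+ \cap \mathfrak{a}$, which is what makes $-w_0 \cdot A$ actually $\mathfrak{j}_+$-dominant), and that both candidate images of $A$ land in the one complex hyperbolic orbit $\Ad(G_\mathbb{C}) \cdot (-A)$. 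An alternative route would thread $\mathfrak{b}_+$ through the bijections of Lemma \ref{lem:b_+_and_B-orbit}, Proposition \ref{prop:Hyp_in_gC_in_g}\,\eqref{item:Hyp_c_r:anti} and Lemma \ref{lem:minus_and_w_0C} (which composes to the identity on dominant representatives), giving $\mathfrak{b}_+ = \mathfrak{j}^{-w_0^\mathbb{C}}_+ \cap \mathfrak{a}$ first and then deducing the statement for $\mathfrak{b}$ by taking $\mathbb{R}$-spans; but the direct argument above is shorter and more self-contained.
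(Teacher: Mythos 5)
Your proof is correct, but it establishes the key intermediate claim --- that $-w_0^\mathbb{C}$ preserves $\mathfrak{a}$ and restricts there to $-w_0$ --- by a genuinely different argument from the paper's. The paper works purely with root-system combinatorics: it observes that both $\Pi$ and $-\Pi$ are $\tau$-fundamental systems (where $\tau$ is the conjugation of $\mathfrak{g}_\mathbb{C}$ with respect to $\mathfrak{g}$), so $(w_0^\mathbb{C})^*\cdot\Pi=-\Pi$ forces $w_0^\mathbb{C}$ to commute with $\tau$ on $\mathfrak{j}$; Satake's Proposition A then says $w_0^\mathbb{C}$ induces on $\mathfrak{a}$ an element $w_0'$ of $W(\mathfrak{g},\mathfrak{a})$, and $(w_0')^*\cdot\overline{\Pi}=-\overline{\Pi}$ identifies $w_0'$ with $w_0$. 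You instead exploit the orbit-theoretic uniqueness already proved in the paper (Fact \ref{fact:cpx_hyp}): for $A\in\mathfrak{a}_+$ both $-w_0\cdot A$ and $-w_0^\mathbb{C}\cdot A$ are dominant representatives in $\mathfrak{j}_+$ of the single complex hyperbolic orbit $\Ad(G_\mathbb{C})\cdot(-A)$, hence coincide, and linearity plus the spanning of $\mathfrak{a}$ by $\mathfrak{a}_+$ finishes the job. Your route is correctly set up --- in particular you rightly pass through the adjoint orbit rather than trying to view $w_0$ as an element of $W(\mathfrak{g}_\mathbb{C},\mathfrak{j}_\mathbb{C})$, and you correctly flag the needed compatibility $\mathfrak{a}_+=\mathfrak{j}_+\cap\mathfrak{a}$ of the positive systems. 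What each approach buys: the paper's argument is in the spirit of the Satake-diagram machinery that drives the rest of Section \ref{section:Real_forms} and yields directly that $w_0^\mathbb{C}|_\mathfrak{a}$ is an element of the little Weyl group; yours avoids citing Satake's Proposition A and stays within facts already established in the paper, at the mild cost of invoking the realization of both Weyl groups by inner automorphisms of $G_\mathbb{C}$ and the hyperbolicity of elements of $\mathfrak{a}$.
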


\begin{proof}[Proof of Lemma $\ref{lem:b=jwcapa}$]
We only need to show that $w_0^\mathbb{C}$ preserves $\mathfrak{a}$ and the action on $\mathfrak{a}$ is same as $w_0$.
Let us put $\tau$ to the complex conjugation on $\mathfrak{g}_\mathbb{C}$ with respect to the real form $\mathfrak{g}$.
Then we can easily check that both $\Pi$ and $-\Pi$ are $\tau$-fundamental systems of $\Delta(\mathfrak{g}_\mathbb{C},\mathfrak{j}_\mathbb{C})$ in the sense of \cite[Section 1.1]{Satake60}.
Since $(w_0^\mathbb{C})^* \cdot \Pi = -\Pi$, the endomorphism $w_0^\mathbb{C}$ is commutative with $\tau$ on $\mathfrak{j}$, and $w_0^\mathbb{C}$ induces on $\mathfrak{a}$ an element $w'_0$ of $W(\mathfrak{g},\mathfrak{a})$ by \cite[Proposition A]{Satake60}.
Recall that $\overline{\Pi} = \{\, \alpha|_\mathfrak{a} \mid \alpha \in \Pi \,\}$.
Then we have $(w'_0)^* \cdot \overline{\Pi} = -\overline{\Pi}$,
and hence $w'_0 = w_0$.
\end{proof}

Recall that we have a bijection between $\mathfrak{a}$ and the set of weighted Dynkin diagrams matching the Satake diagram of $\mathfrak{g}$ (see Lemma \ref{lem:a_to_Satake}).
Combining with Lemma \ref{lem:b=jwcapa}, 
we have a linear isomorphism
\begin{align*}
\mathfrak{b} &\rightarrow \{ \Psi_A \in \Map(\Pi,\mathbb{R}) \mid \text{$\Psi_A$ is held invariant by $\iota$ and matches $S_\mathfrak{g}$}\},\\ A &\mapsto \Psi_A,
\end{align*}
where $\iota$ is the involutive endomorphism on $\Map(\Pi,\mathbb{R})$ defined in Section \ref{subsection:complex_Bhyp}.
Therefore, we can determine the subsets $\mathfrak{b}$ and $\mathfrak{b}_+$ of $\mathfrak{a}$.
Here is an example of the isomorphism for the case where $\mathfrak{g} = \mathfrak{su}(4,2)$. 
\begin{example}\label{ex:b_for_su42}
Let $\mathfrak{g} = \mathfrak{su}(4,2)$.
Then the complexification of $\mathfrak{su}(4,2)$ is $\mathfrak{g}_\mathbb{C} = \mathfrak{sl}(6,\mathbb{C})$, and the involutive endomorphism $\iota$ on weighted Dynkin diagrams is described by 
\[
\begin{xy}
	*++!D{a} *\cir<2pt>{}        ="A",
	(10,0) *++!D{b} *\cir<2pt>{} ="B",
	(20,0) *++!D{c} *\cir<2pt>{} ="C",
	(30,0) *++!D{d} *\cir<2pt>{} ="D",
	(40,0) *++!D{e} *\cir<2pt>{} ="E",
	\ar@{-} "A";"B"
	\ar@{-} "B";"C"
	\ar@{-} "C";"D"
	\ar@{-} "D";"E"
\end{xy}
\mapsto 
\begin{xy}
	*++!D{e} *\cir<2pt>{}        ="A",
	(10,0) *++!D{d} *\cir<2pt>{} ="B",
	(20,0) *++!D{c} *\cir<2pt>{} ="C",
	(30,0) *++!D{b} *\cir<2pt>{} ="D",
	(40,0) *++!D{a} *\cir<2pt>{} ="E",
	\ar@{-} "A";"B"
	\ar@{-} "B";"C"
	\ar@{-} "C";"D"
	\ar@{-} "D";"E"
\end{xy}.
\]
The Satake diagram of $\mathfrak{g} = \mathfrak{su}(4,2)$ is here$:$
\[
S_{\mathfrak{su}(4,2)} : \begin{xy}
	*\cir<2pt>{}        ="A",
	(10,0) *\cir<2pt>{} ="B",
	(20,0) *{\bullet} ="C",
	(30,0) *\cir<2pt>{} ="D",
	(40,0) *\cir<2pt>{} ="E",
	\ar@{-} "A";"B"
	\ar@{-} "B";"C"
	\ar@{-} "C";"D"
	\ar@{-} "D";"E"
	
	\ar@(ru,lu) @<1mm> @{<->} "A" ; "E"
	\ar@/^4mm/ @<1mm> @{<->} "B" ; "D"	
\end{xy}.
\]
Therefore, we have a linear isomorphism
\[
\mathfrak{b} \simrightarrow \left\{
\begin{xy}
	*++!D{a} *\cir<2pt>{}        ="A",
	(10,0) *++!D{b} *\cir<2pt>{} ="B",
	(20,0) *++!D{0} *\cir<2pt>{} ="C",
	(30,0) *++!D{b} *\cir<2pt>{} ="D",
	(40,0) *++!D{a} *\cir<2pt>{} ="E",
	\ar@{-} "A";"B"
	\ar@{-} "B";"C"
	\ar@{-} "C";"D"
	\ar@{-} "D";"E"
\end{xy}
\mid 
a,b \in \mathbb{R}
\right\}.
\]
In particular, we have one-to-one correspondences below$:$
\[
\mathcal{H}^a_\mathfrak{g}/{G_\mathbb{C}} \bijarrow \mathfrak{b}_+ \bijarrow \left\{
\begin{xy}
	*++!D{a} *\cir<2pt>{}        ="A",
	(10,0) *++!D{b} *\cir<2pt>{} ="B",
	(20,0) *++!D{0} *\cir<2pt>{} ="C",
	(30,0) *++!D{b} *\cir<2pt>{} ="D",
	(40,0) *++!D{a} *\cir<2pt>{} ="E",
	\ar@{-} "A";"B"
	\ar@{-} "B";"C"
	\ar@{-} "C";"D"
	\ar@{-} "D";"E"
\end{xy}
\mid 
a,b \in \mathbb{R}_{\geq 0}
\right\}.
\]
\end{example}

\subsection{Complex nilpotent orbits and real forms}\label{subsection:nilpotent_matching}

Let us consider the setting in Section \ref{subsection:hyp_real} and \ref{subsection:WDD_and_Satake}.
In this subsection, we introduce an algorithm to check whether or not a given complex nilpotent orbit in $\mathfrak{g}_\mathbb{C}$ meets the real form $\mathfrak{g}$.
In this subsection, we also prove Proposition \ref{prop:Hyp_in_gC_in_g} \eqref{item:Hyp_c_r:neut}.

First, we show the next proposition:

\begin{prop}[Corollary to J.~Sekiguchi {\cite[Proposition 1.11]{Sekiguchi87}}]\label{prop:cor_to_Sekiguchi}
Let $(A,X,Y)$ be an $\mathfrak{sl}_2$-triple in $\mathfrak{g}_\mathbb{C}$.
Then the following conditions on $(A,X,Y)$ are equivalent$:$
\begin{enumerate}
\item \label{item:Sekiguchi_nilp} The complex adjoint orbit through $X$ meets $\mathfrak{g}$.
\item \label{item:Sekiguchi_hyp} The complex adjoint orbit through $A$ meets $\mathfrak{g}$.
\item \label{item:Sekiguchi_nilp_pc} The complex adjoint orbit through $X$ meets $\mathfrak{p}_\mathbb{C}$, where $\mathfrak{p}_\mathbb{C}$ is the complexification of $\mathfrak{p}$.
\item \label{item:Sekiguchi_hyp_pc} The complex adjoint orbit through $A$ meets $\mathfrak{p}_\mathbb{C}$.
\item \label{item:Sekiguchi_sl2} There exists an $\mathfrak{sl}_2$-triple $(A',X',Y')$ in $\mathfrak{g}$ such that $A'$ is in the complex adjoint orbit through $A$.
\item \label{item:Sekiguchi_match} The weighted Dynkin diagram of the complex adjoint orbit through $X$ matches the Satake diagram of $\mathfrak{g}$.
\end{enumerate}
\end{prop}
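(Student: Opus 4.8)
The plan is to prove Proposition~\ref{prop:cor_to_Sekiguchi} by first extracting what is needed from Sekiguchi's \cite[Proposition 1.11]{Sekiguchi87}, then closing the cycle of implications among the six conditions. Sekiguchi's result, in the Kostant--Sekiguchi framework, relates nilpotent $G$-orbits in $\mathfrak{g}$, nilpotent $K_\mathbb{C}$-orbits in $\mathfrak{p}_\mathbb{C}$, and the weighted Dynkin diagram attached to an $\mathfrak{sl}_2$-triple; concretely it tells us that a complex nilpotent orbit $\mathcal{O}^{G_\mathbb{C}}_X$ meets $\mathfrak{g}$ if and only if it meets $\mathfrak{p}_\mathbb{C}$, if and only if its weighted Dynkin diagram matches the Satake diagram $S_\mathfrak{g}$. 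So I would first record the equivalences \eqref{item:Sekiguchi_nilp} $\Leftrightarrow$ \eqref{item:Sekiguchi_nilp_pc} $\Leftrightarrow$ \eqref{item:Sekiguchi_match} as a direct citation/translation of Sekiguchi, being careful to note that the convention for ``matches'' in Definition~\ref{defn:match} coincides with the diagram condition in \cite{Sekiguchi87} (this is a compatibility check on normalizations of $\mathfrak{a}$, $\mathfrak{j}_+$, and the fundamental system, using the setup of Section~\ref{subsection:hyp_real}).

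Next I would handle the hyperbolic side. Given an $\mathfrak{sl}_2$-triple $(A,X,Y)$ in $\mathfrak{g}_\mathbb{C}$, Lemma~\ref{lem:A_meets_X_meets} (stated earlier) already gives \eqref{item:Sekiguchi_nilp} $\Leftrightarrow$ \eqref{item:Sekiguchi_hyp}: the complex orbit through $X$ meets $\mathfrak{g}$ iff the complex orbit through $A$ does. Actually, Lemma~\ref{lem:A_meets_X_meets} is itself billed as a corollary to Sekiguchi, so I would either invoke it or reprove it here in the same breath; the cleanest route is to note that $\mathcal{O}^{G_\mathbb{C}}_A$ is hyperbolic (Lemma~\ref{lem:sl2-triple_to_Bhyp}) with weighted Dynkin diagram equal to that of $\mathcal{O}^{G_\mathbb{C}}_X$ (this is exactly the content of Section~\ref{subsection:WDD_of_comp_nilp}: both diagrams are read off the same $A_\mathcal{O} \in \mathfrak{j}_+$), and then apply Theorem~\ref{thm:hyp_match}, which says a complex hyperbolic orbit meets $\mathfrak{g}$ iff its weighted Dynkin diagram matches $S_\mathfrak{g}$. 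This simultaneously gives \eqref{item:Sekiguchi_hyp} $\Leftrightarrow$ \eqref{item:Sekiguchi_match} and, combined with the refined version of Lemma~\ref{lem:comp_hyp_meets_g_and_A_in_a}, the equivalence \eqref{item:Sekiguchi_hyp} $\Leftrightarrow$ \eqref{item:Sekiguchi_hyp_pc} — though for the $\mathfrak{p}_\mathbb{C}$ statement on the hyperbolic side I would again lean on Sekiguchi's description, since $\mathfrak{p}_\mathbb{C}$-membership of a hyperbolic element is not something Theorem~\ref{thm:hyp_match} sees directly.

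Finally, for \eqref{item:Sekiguchi_sl2}: if $(A',X',Y')$ is an $\mathfrak{sl}_2$-triple in $\mathfrak{g}$ with $A'$ in the complex orbit of $A$, then trivially that orbit meets $\mathfrak{g}$, giving \eqref{item:Sekiguchi_sl2} $\Rightarrow$ \eqref{item:Sekiguchi_hyp}. For the converse, assume $\mathcal{O}^{G_\mathbb{C}}_A \cap \mathfrak{g} \neq \emptyset$. By Lemma~\ref{lem:comp_hyp_meets_g_and_A_in_a} the canonical representative $A_\mathcal{O}$ lies in $\mathfrak{a}_+ \subset \mathfrak{g}$, so in particular $A_\mathcal{O}$ is a real hyperbolic element lying in $\mathcal{H}^n(\mathfrak{g})$ — wait, that is what I must produce. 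The point is: $A_\mathcal{O} \in \mathcal{H}^n_\mathfrak{g}/G_\mathbb{C}$ in the sense of Section~\ref{subsection:proof_main_complexification}, and Proposition~\ref{prop:Hyp_in_gC_in_g}\eqref{item:Hyp_c_r:neut} asserts the bijection $\mathcal{H}^n(\mathfrak{g})/G \leftrightarrow \mathcal{H}^n_\mathfrak{g}/G_\mathbb{C}$, which yields a real $\mathfrak{sl}_2$-triple $(A',X',Y')$ in $\mathfrak{g}$ with $A'$ conjugate (over $\mathbb{C}$) to $A$. However, this is slightly circular if Proposition~\ref{prop:Hyp_in_gC_in_g}\eqref{item:Hyp_c_r:neut} is proved using the present proposition; so I would instead argue directly: Sekiguchi's proposition gives that $\mathcal{O}^{G_\mathbb{C}}_X$ meets $\mathfrak{g}$ in a (nonempty) union of real nilpotent orbits, pick $X' \in \mathcal{O}^{G_\mathbb{C}}_X \cap \mathfrak{g}$, apply the real Jacobson--Morozov theorem (valid over $\mathbb{R}$, as noted in the Remark after Section~\ref{subsection:WDD_of_comp_nilp}) to get a real $\mathfrak{sl}_2$-triple $(A',X',Y')$, and check that $A'$ is $G_\mathbb{C}$-conjugate to $A$ because both are the neutral elements of $\mathfrak{sl}_2$-triples through conjugate nilpotents, hence conjugate by Malcev's theorem. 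That closes the cycle. The main obstacle I anticipate is purely bookkeeping: making sure the normalization conventions for the Satake diagram, the positive system, and the ``matching'' condition line up exactly with Sekiguchi's, so that the cited \cite[Proposition 1.11]{Sekiguchi87} really delivers statements \eqref{item:Sekiguchi_nilp_pc}, \eqref{item:Sekiguchi_hyp_pc} and \eqref{item:Sekiguchi_match} without a gap — everything else is a short chain of already-established lemmas.
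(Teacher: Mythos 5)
Your proposal is correct and, for the core of the argument, follows the paper's route: the equivalences \eqref{item:Sekiguchi_nilp} $\Leftrightarrow$ \eqref{item:Sekiguchi_nilp_pc} $\Leftrightarrow$ \eqref{item:Sekiguchi_hyp_pc} are taken from \cite[Proposition 1.11]{Sekiguchi87}, the passage to \eqref{item:Sekiguchi_hyp} uses $\mathcal{H}_\mathfrak{g}=\mathcal{H}_{\mathfrak{p}_\mathbb{C}}$ (essentially Lemma \ref{lem:comp_hyp_meets_g_and_A_in_a}), and \eqref{item:Sekiguchi_hyp} $\Leftrightarrow$ \eqref{item:Sekiguchi_match} comes from Theorem \ref{thm:hyp_match} together with the fact that $\mathcal{O}^{G_\mathbb{C}}_X$ and $\mathcal{O}^{G_\mathbb{C}}_A$ carry the same weighted Dynkin diagram. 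You were also right to flag, and then avoid, the circularities lurking in Lemma \ref{lem:A_meets_X_meets} and Proposition \ref{prop:Hyp_in_gC_in_g} \eqref{item:Hyp_c_r:neut}, both of which the paper derives \emph{from} this proposition. One caution on attribution: the full equivalence \eqref{item:Sekiguchi_nilp} $\Leftrightarrow$ \eqref{item:Sekiguchi_match} is not simply a citation of Sekiguchi --- the remark after Theorem \ref{thm:nilp_match} points out that Sekiguchi only proved ``matches $\Rightarrow$ meets $\mathfrak{g}$'', and the converse is claimed as new here; your fallback argument via Theorem \ref{thm:hyp_match} is the one that actually closes this, so keep that as the primary argument rather than the citation.

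Where you genuinely diverge is condition \eqref{item:Sekiguchi_sl2}. The paper proves \eqref{item:Sekiguchi_hyp} $\Rightarrow$ \eqref{item:Sekiguchi_sl2} via Lemma \ref{lem:sekiguchi_sub}: after arranging $A\in\mathfrak{g}$, one finds $g\in G_\mathbb{C}$ centralizing $A$ with $\Ad(g)\cdot X\in\mathfrak{g}$, and then $Y$ lands in $\mathfrak{g}$ automatically. You instead start from a real point $X'\in\mathcal{O}^{G_\mathbb{C}}_X\cap\mathfrak{g}$, apply the real Jacobson--Morozov theorem to build $(A',X',Y')$ inside $\mathfrak{g}$, and use the Kostant conjugacy theorem (the nilpositive element determines the $\mathfrak{sl}_2$-triple up to $G_\mathbb{C}$-conjugacy) to conclude $A'\in\mathcal{O}^{G_\mathbb{C}}_A$. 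Both arguments are valid; yours trades the paper's appeal to the internals of Sekiguchi's proof for the standard real Jacobson--Morozov plus Kostant machinery already quoted in Section \ref{subsection:WDD_of_comp_nilp}, at the cost of routing through the nilpotent orbit rather than working directly with the semisimple element $A$. (Minor point: the conjugacy you need is Kostant's, not Malcev's --- Malcev's statement is the one keyed to $A$ rather than to $X$ --- but the two are cited side by side and the substance is unaffected.)
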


\begin{proof}[Proof of Proposition $\ref{prop:cor_to_Sekiguchi}$]
The equivalences between \eqref{item:Sekiguchi_nilp}, \eqref{item:Sekiguchi_nilp_pc} and \eqref{item:Sekiguchi_hyp_pc} were proved by \cite[Proposition~ 1.11]{Sekiguchi87}.
The equivalence \eqref{item:Sekiguchi_hyp_pc} $\Leftrightarrow$ \eqref{item:Sekiguchi_hyp} is obtained by the fact that $\mathcal{H}_\mathfrak{g} = \mathcal{H}_\mathfrak{a} = \mathcal{H}_{\mathfrak{p}_\mathbb{C}}$ (cf. Lemma \ref{lem:comp_hyp_meets_g_and_A_in_a} and the proof of \cite[Proposition~ 1.11]{Sekiguchi87}).
The equivalence \eqref{item:Sekiguchi_hyp} $\Leftrightarrow$ \eqref{item:Sekiguchi_match} is obtained by combining Theorem \ref{thm:hyp_match} with the observation that the weighted Dynkin diagrams of the complex adjoint orbit through $X$ is same as the weighted Dynkin diagram of the complex adjoint orbit through $A$ (see Section \ref{subsection:WDD_of_comp_nilp}).
The implication \eqref{item:Sekiguchi_hyp} $\Rightarrow$ \eqref{item:Sekiguchi_sl2} can be obtained by the lemma below.
\end{proof}

\begin{lem}\label{lem:sekiguchi_sub}
Let $(A,X,Y)$ be an $\mathfrak{sl}_2$-triple in $\mathfrak{g}_\mathbb{C}$.
Then the following 
holds$:$
\begin{enumerate}
\item \label{item:Sekiguchi_sub1} If $A$ is in $\mathfrak{g}$, then there exists $g \in G_\mathbb{C}$ such that $\Ad(g) \cdot A = A$ and $\Ad(g) \cdot X$ is in $\mathfrak{g}$.
\item \label{item:Sekiguchi_sub2} If both $A$ and $X$ are in $\mathfrak{g}$, then $Y$ is automatically in $\mathfrak{g}$.
\end{enumerate}
\end{lem}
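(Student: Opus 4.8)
The plan is to prove part $(ii)$ first as a uniqueness statement, and then reuse the same mechanism inside part $(i)$. Throughout, let $\tau$ denote the complex conjugation of $\mathfrak{g}_\mathbb{C}$ relative to the real form $\mathfrak{g}$: it is an anti-$\mathbb{C}$-linear Lie algebra automorphism with $\mathfrak{g}=\{Z\in\mathfrak{g}_\mathbb{C}\mid\tau Z=Z\}$ and $\tau[U,V]=[\tau U,\tau V]$.

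\textbf{Part $(ii)$.} The first step is the observation that in any $\mathfrak{sl}_2$-triple $(A,X,Y)$ the element $Y$ is determined by the pair $(A,X)$. Indeed, if $Y'$ also satisfies $[A,Y']=-2Y'$ and $[X,Y']=A$, then $Z:=Y-Y'$ has $\ad(A)$-weight $-2$ and is annihilated by $\ad(X)$; decomposing $\mathfrak{g}_\mathbb{C}$ into irreducibles for $\mathfrak{sl}_2=\langle A,X,Y\rangle$, an element killed by $\ad(X)$ is a sum of highest weight vectors, each of nonnegative $\ad(A)$-weight, so $Z=0$. Now if $A,X\in\mathfrak{g}$, applying $\tau$ to $[A,Y]=-2Y$ and $[X,Y]=A$ shows that $\tau Y$ satisfies the same two equations, hence $\tau Y=Y$, i.e. $Y\in\mathfrak{g}$.

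\textbf{Part $(i)$.} Since $A$ is the neutral element of an $\mathfrak{sl}_2$-triple, $\ad(A)$ is semisimple with integer eigenvalues; let $\mathfrak{g}_\mathbb{C}=\bigoplus_{j\in\mathbb{Z}}\mathfrak{g}_\mathbb{C}(j)$ be the associated grading, so $X\in\mathfrak{g}_\mathbb{C}(2)$ and $\mathfrak{g}_\mathbb{C}(0)=Z_{\mathfrak{g}_\mathbb{C}}(A)$. Because $A\in\mathfrak{g}$, the map $\tau$ commutes with $\ad(A)$ and hence preserves each $\mathfrak{g}_\mathbb{C}(j)$; in particular $\mathfrak{g}(2):=\mathfrak{g}\cap\mathfrak{g}_\mathbb{C}(2)$ is a real form of $\mathfrak{g}_\mathbb{C}(2)$, and therefore Zariski dense in $\mathfrak{g}_\mathbb{C}(2)$. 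Put $L_\mathbb{C}:=Z_{G_\mathbb{C}}(A)^\circ$, a closed connected (reductive, algebraic) subgroup of $G_\mathbb{C}$ with Lie algebra $\mathfrak{g}_\mathbb{C}(0)$. By $\mathfrak{sl}_2$-representation theory (the linear core of Kostant's theorem), $\ad(X)$ maps $\mathfrak{g}_\mathbb{C}(0)$ onto $\mathfrak{g}_\mathbb{C}(2)$, so the tangent space to the orbit $\Ad(L_\mathbb{C})\cdot X$ at $X$ is $[\mathfrak{g}_\mathbb{C}(0),X]=\mathfrak{g}_\mathbb{C}(2)$; this orbit is thus open, and being an orbit of an algebraic group it is Zariski open and dense in $\mathfrak{g}_\mathbb{C}(2)$. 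A Zariski open dense subset must meet the Zariski dense subset $\mathfrak{g}(2)$; choosing $X''\in(\Ad(L_\mathbb{C})\cdot X)\cap\mathfrak{g}(2)$ and $g\in L_\mathbb{C}$ with $\Ad(g)X=X''$ gives $\Ad(g)A=A\in\mathfrak{g}$ and $\Ad(g)X\in\mathfrak{g}$, which is $(i)$. Applying $(ii)$ to the triple $(\Ad(g)A,\Ad(g)X,\Ad(g)Y)$ then also gives $\Ad(g)Y\in\mathfrak{g}$, so the transported triple lies entirely in $\mathfrak{g}$ — which is exactly what Proposition \ref{prop:cor_to_Sekiguchi} needs.

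\textbf{Expected main obstacle.} The routine inputs are the uniqueness of $Y$ and the $\tau$-invariance of the $\ad(A)$-grading. The step requiring care is the claim that $\Ad(L_\mathbb{C})\cdot X$ is Zariski open in $\mathfrak{g}_\mathbb{C}(2)$, hence meets the real form $\mathfrak{g}(2)$: this combines the surjectivity of $\ad(X)$ on $Z_{\mathfrak{g}_\mathbb{C}}(A)$ with the fact that orbits of algebraic group actions are locally closed, and one must be attentive to the algebraicity of $G_\mathbb{C}$ (or pass to the adjoint group $\Ad(G_\mathbb{C})$, which is harmless since only $\Ad(g)$ matters). As an alternative to the orbit argument one may invoke Chevalley's theorem: the dominant morphism $L_\mathbb{C}\to\mathfrak{g}_\mathbb{C}(2)$, $g\mapsto\Ad(g)X$, has image containing a Zariski open dense subset of $\mathfrak{g}_\mathbb{C}(2)$.
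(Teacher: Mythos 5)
Your proof is correct, and it supplies details the paper does not: the paper's entire proof of this lemma consists of the pointer ``see the proof of \cite[Proposition 1.11]{Sekiguchi87}'' for part (i) and the word ``Easy'' for part (ii). Your argument for (ii) --- uniqueness of the third member $Y$ of a triple with prescribed $(A,X)$ via the $\mathfrak{sl}_2$-decomposition (an $\ad(X)$-killed vector is a sum of highest weight vectors, all of nonnegative weight, so nothing lives in weight $-2$), followed by applying the conjugation $\tau$ and using that the scalars $-2$ and $1$ are real --- is exactly the intended ``easy'' argument. Your argument for (i) is the standard mechanism underlying Sekiguchi's proposition: $\tau$ commutes with $\ad(A)$ because $A\in\mathfrak{g}$ and the eigenvalues are real, so each graded piece $\mathfrak{g}_\mathbb{C}(j)$ is $\tau$-stable and $\mathfrak{g}(2)$ is a real form, hence Zariski dense, in $\mathfrak{g}_\mathbb{C}(2)$; meanwhile $\ad(X)\colon\mathfrak{g}_\mathbb{C}(0)\to\mathfrak{g}_\mathbb{C}(2)$ is surjective (the raising operator is onto between consecutive nonnegative weight spaces in every irreducible summand), so the $Z_{G_\mathbb{C}}(A)$-orbit of $X$ has full-dimensional tangent space, is locally closed as an orbit of an algebraic group, and is therefore Zariski open dense in the irreducible variety $\mathfrak{g}_\mathbb{C}(2)$; a nonempty Zariski open set meets any Zariski dense set. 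The one point you flag --- algebraicity --- is handled correctly by passing to $\Ad(G_\mathbb{C})$ or by your Chevalley-image variant, and your closing remark that (ii) then transports the whole triple $(A,\Ad(g)X,\Ad(g)Y)$ into $\mathfrak{g}$ is precisely how the lemma is used in Proposition \ref{prop:cor_to_Sekiguchi}. No gaps.
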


\begin{proof}[Proof of Lemma $\ref{lem:sekiguchi_sub}$]
\eqref{item:Sekiguchi_sub1}: See the proof of \cite[Proposition 1.11]{Sekiguchi87}.
\eqref{item:Sekiguchi_sub2}: Easy.
\end{proof}

Here is a proof of Proposition \ref{prop:Hyp_in_gC_in_g} \eqref{item:Hyp_c_r:neut}, which gives a bijection between $\mathcal{H}^n_\mathfrak{g}/{G_\mathbb{C}}$ and $\mathcal{H}^n(\mathfrak{g})/{G}$ (see Section \ref{subsection:proof_main_complexification} for the notation):

\begin{proof}[Proof of Proposition $\ref{prop:Hyp_in_gC_in_g}$ $\eqref{item:Hyp_c_r:neut}$]
We recall that Proposition $\ref{prop:Hyp_in_gC_in_g}$ $\eqref{item:Hyp_c_r:hyp}$ has been proved already in Section \ref{subsection:hyp_real}.
Then Proposition $\ref{prop:Hyp_in_gC_in_g}$ $\eqref{item:Hyp_c_r:neut}$ follows from the implication \eqref{item:Sekiguchi_hyp} $\Rightarrow$ \eqref{item:Sekiguchi_sl2} in Proposition \ref{prop:cor_to_Sekiguchi}.
\end{proof}

Recall that we have the one-to-one correspondence
\[
\mathfrak{j}_+ \cap \mathcal{H}^n \bijarrow \mathcal{N}/{G_\mathbb{C}},
\]
where $\mathcal{N}/{G_\mathbb{C}}$ is the set of complex nilpotent orbits in $\mathfrak{g}_\mathbb{C}$ (see Section \ref{subsection:WDD_of_comp_nilp}).
Combining Lemma \ref{lem:comp_hyp_meets_g_and_A_in_a} with Proposition \ref{prop:cor_to_Sekiguchi}, we also obtain
\[
\mathfrak{a}_+ \cap \mathcal{H}^n(\mathfrak{g}) = (\mathfrak{j}_+ \cap \mathcal{H}^n) \cap \mathfrak{a} \bijarrow \mathcal{N}_\mathfrak{g}/{G_\mathbb{C}},
\]
where $\mathcal{N}_\mathfrak{g}/{G_\mathbb{C}}$ is the set of complex nilpotent orbits in $\mathfrak{g}_\mathbb{C}$ meeting $\mathfrak{g}$.
Therefore, by Lemma \ref{lem:a_to_Satake}, we obtain the theorem below:

\begin{thm}\label{thm:nilp_match}
Let $\mathfrak{g}_\mathbb{C}$ be a complex semisimple Lie algebra, and $\mathfrak{g}$ a real form of $\mathfrak{g}_\mathbb{C}$.
Then for a complex nilpotent orbit $\mathcal{O}^{G_\mathbb{C}}_\nilp$ in $\mathfrak{g}_\mathbb{C}$, the following two conditions are equivalent$:$
\begin{enumerate}
\item \label{item:nilp_match_O} 
$\mathcal{O}^{G_\mathbb{C}}_\nilp \cap \mathfrak{g} \neq \emptyset$ 
$($i.e. $\mathcal{O}^{G_\mathbb{C}}_\nilp \in \mathcal{N}_\mathfrak{g}/{G_\mathbb{C}}$$)$. 
\item \label{item:nilp_match_match} The weighted Dynkin diagram of $\mathcal{O}^{G_\mathbb{C}}_\nilp$ matches the Satake diagram $S_\mathfrak{g}$ of $\mathfrak{g}$ $($see Section $\ref{subsection:Satake_diagrams}$ for the notation$)$.
\end{enumerate}
\end{thm}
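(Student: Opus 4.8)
The plan is to reduce this statement to facts already assembled in Sections \ref{subsection:WDD_of_comp_nilp}, \ref{subsection:hyp_real}, \ref{subsection:WDD_and_Satake} and \ref{subsection:nilpotent_matching}, so that almost no new work is required. The three ingredients I would chain together are: (a) the bijection $\mathfrak{j}_+ \cap \mathcal{H}^n \bijarrow \mathcal{N}/G_\mathbb{C}$ of Section \ref{subsection:WDD_of_comp_nilp}, which identifies a complex nilpotent orbit $\mathcal{O}^{G_\mathbb{C}}_\nilp$ with the unique neutral element $A_\mathcal{O} \in \mathfrak{j}_+$ of an associated $\mathfrak{sl}_2$-triple, and which says that the weighted Dynkin diagram of $\mathcal{O}^{G_\mathbb{C}}_\nilp$ is by definition $\Psi_{A_\mathcal{O}}$; (b) Proposition \ref{prop:cor_to_Sekiguchi}, in particular the equivalence \eqref{item:Sekiguchi_nilp} $\Leftrightarrow$ \eqref{item:Sekiguchi_hyp}, which lets me pass between ``$X$ meets $\mathfrak{g}$'' and ``$A$ meets $\mathfrak{g}$''; and (c) Lemma \ref{lem:comp_hyp_meets_g_and_A_in_a} together with Lemma \ref{lem:a_to_Satake}, which say that a complex hyperbolic orbit meets $\mathfrak{g}$ exactly when its representative in $\mathfrak{j}_+$ lies in $\mathfrak{a}_+$, and that the linear isomorphism $\Psi$ carries $\mathfrak{a}$ onto the set of weighted Dynkin diagrams matching the Satake diagram $S_\mathfrak{g}$.

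Concretely, first I would fix an $\mathfrak{sl}_2$-triple $(A,X,Y)$ in $\mathfrak{g}_\mathbb{C}$ with $X \in \mathcal{O}^{G_\mathbb{C}}_\nilp$ and $A = A_\mathcal{O} \in \mathfrak{j}_+$, so that the weighted Dynkin diagram of $\mathcal{O}^{G_\mathbb{C}}_\nilp$ equals $\Psi_{A}$ and also coincides with the weighted Dynkin diagram of the complex hyperbolic orbit $\mathcal{O}^{G_\mathbb{C}}_A$ through $A$. For the implication \eqref{item:nilp_match_O} $\Rightarrow$ \eqref{item:nilp_match_match}: if $\mathcal{O}^{G_\mathbb{C}}_\nilp$ meets $\mathfrak{g}$, then the complex adjoint orbit through $X$ meets $\mathfrak{g}$, so by Proposition \ref{prop:cor_to_Sekiguchi} the complex adjoint orbit through $A$ meets $\mathfrak{g}$; by Lemma \ref{lem:comp_hyp_meets_g_and_A_in_a} this forces $A \in \mathfrak{a}_+ \subset \mathfrak{a}$, and then Lemma \ref{lem:a_to_Satake} gives that $\Psi_A$ matches $S_\mathfrak{g}$. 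For the converse \eqref{item:nilp_match_match} $\Rightarrow$ \eqref{item:nilp_match_O}: if $\Psi_A$ matches $S_\mathfrak{g}$, Lemma \ref{lem:a_to_Satake} puts $A$ in $\mathfrak{a}$, hence in $\mathfrak{a}_+ = \mathfrak{j}_+ \cap \mathfrak{a}$ since $A \in \mathfrak{j}_+$; Lemma \ref{lem:comp_hyp_meets_g_and_A_in_a} then says the hyperbolic orbit through $A$ meets $\mathfrak{g}$, and Proposition \ref{prop:cor_to_Sekiguchi} (via \eqref{item:Sekiguchi_hyp} $\Rightarrow$ \eqref{item:Sekiguchi_nilp}) returns that the orbit through $X$, i.e.\ $\mathcal{O}^{G_\mathbb{C}}_\nilp$, meets $\mathfrak{g}$. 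The final sentence of Section \ref{subsection:nilpotent_matching} already packages this chain as the identification $\mathfrak{a}_+ \cap \mathcal{H}^n(\mathfrak{g}) = (\mathfrak{j}_+ \cap \mathcal{H}^n) \cap \mathfrak{a} \bijarrow \mathcal{N}_\mathfrak{g}/G_\mathbb{C}$, so I would phrase the proof as simply extracting the equivalence from that displayed bijection plus Lemma \ref{lem:a_to_Satake}.

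I do not expect a genuine obstacle here, since all the substantive content is in the cited results; the only care needed is bookkeeping, namely making sure that the weighted Dynkin diagram of the nilpotent orbit is literally the same object as the weighted Dynkin diagram of the hyperbolic orbit through the neutral element $A$ (this is recorded in Section \ref{subsection:WDD_of_comp_nilp}), so that ``matches $S_\mathfrak{g}$'' means the same thing on both sides. If anything is delicate it is the direction \eqref{item:nilp_match_match} $\Rightarrow$ \eqref{item:nilp_match_O}, because one must invoke Lemma \ref{lem:sekiguchi_sub}\eqref{item:Sekiguchi_sub1} (inside Proposition \ref{prop:cor_to_Sekiguchi}) to actually produce a real point of the nilpotent orbit from a real point of the hyperbolic orbit rather than merely a real $\mathfrak{sl}_2$-triple; but since that implication is already built into Proposition \ref{prop:cor_to_Sekiguchi}, the present proof only needs to cite it. The whole argument should therefore be two or three lines.
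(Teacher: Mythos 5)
Your proposal is correct and follows essentially the same route as the paper: the paper also obtains Theorem \ref{thm:nilp_match} by combining Proposition \ref{prop:cor_to_Sekiguchi} (to pass between the nilpotent orbit through $X$ and the hyperbolic orbit through the neutral element $A$) with Lemma \ref{lem:comp_hyp_meets_g_and_A_in_a} and Lemma \ref{lem:a_to_Satake} (equivalently, Theorem \ref{thm:hyp_match}), exactly as in your chain. Your bookkeeping remark about the two weighted Dynkin diagrams coinciding is the same observation the paper makes when proving the equivalence \eqref{item:Sekiguchi_hyp} $\Leftrightarrow$ \eqref{item:Sekiguchi_match}.
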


\begin{rem}
\begin{description}
\item[(1)] The same concept as Definition $\ref{defn:match}$ appeared earlier as ``weighted Satake diagrams'' in D.~Z.~Djokovic \cite{Djokovic82} and as the condition described in J.~Sekiguchi \cite[\text{Proposition 1.16}]{Sekiguchi84}. We call it ``match''.
\item[(2)] J.~Sekiguchi \cite[\text{Proposition 1.13}]{Sekiguchi87} showed the implication \eqref{item:nilp_match_match} $\Rightarrow$ \eqref{item:nilp_match_O} in Theorem $\ref{thm:nilp_match}$.
Our theorem claims that \eqref{item:nilp_match_O} $\Rightarrow$ \eqref{item:nilp_match_match} is also true.
\end{description}
\end{rem}

We give three examples of Theorem \ref{thm:nilp_match}:

\begin{example}
Let $\mathfrak{g}$ be a split real form of $\mathfrak{g}_\mathbb{C}$.
Then all nodes of the Satake diagram $S_\mathfrak{g}$ are white with no arrow.
Thus, all weighted Dynkin diagrams match the Satake diagram of $\mathfrak{g}$. 
Therefore, all complex nilpotent orbits in $\mathfrak{g}_\mathbb{C}$ meet $\mathfrak{g}$. 
\end{example}

\begin{example}
Let $\mathfrak{u}$ be a compact real form of $\mathfrak{g}_\mathbb{C}$.
Then all nodes of the Satake diagram $S_\mathfrak{u}$ are black.
Thus, no weighted Dynkin diagram matches the Satake diagram of $\mathfrak{u}$ except for the trivial one.
Therefore, no complex nilpotent orbit in $\mathfrak{g}_\mathbb{C}$ meets $\mathfrak{u}$ except for the zero-orbit.
\end{example}

\begin{example}\label{ex:nilp_in_sl6_meets_su42}
Let $(\mathfrak{g}_\mathbb{C},\mathfrak{g}) = (\mathfrak{sl}(6,\mathbb{C}),\mathfrak{su}(4,2))$.
The Satake diagram of $\mathfrak{su}(4,2)$ was given in Example $\ref{ex:b_for_su42}$.
Then, by combining with Example $\ref{ex:list_of_nilp_in_sl6}$, we obtain the list of complex nilpotent orbits in $\mathfrak{g}_\mathbb{C}$ meeting $\mathfrak{g}$ $($i.e. the list of $(\mathfrak{j}_+ \cap \mathcal{H}^n) \cap \mathfrak{a} $$)$ as follows$:$
\begin{multline*}
\mathcal{N}_\mathfrak{g}/{G_\mathbb{C}} 
\bijarrow \{\, [5,1],\ [4,1^2],\ [3^2],\ [3,2,1],\ [3,1^3],\ [2^2,1^2],\ [2,1^4],\ [1^6] \,\}.
\end{multline*}
\end{example}

\subsection{Proof of Proposition $\ref{prop:neutral_control}$ $\eqref{item:neut_key}$} \label{subsection:proof_of_keyprop}

In this subsection,
we first explain the strategy of the proof of Proposition $\ref{prop:neutral_control}$ $\eqref{item:neut_key}$,
and then 
illustrate actual computations by an example.

By Lemma \ref{lem:b_+_and_B-orbit}, we have 
\[
\mathfrak{b}_+ \supset \mathfrak{a}_+ \cap \mathcal{H}^n(\mathfrak{g}).
\]
Furthermore, in Section \ref{subsection:nilpotent_matching}, we also obtained
\[
\mathfrak{a}_+ \cap \mathcal{H}^n(\mathfrak{g}) = (\mathfrak{j}_+ \cap \mathcal{H}^n) \cap \mathfrak{a}.
\]
Therefore, the proof of Proposition $\ref{prop:neutral_control}$ $\eqref{item:neut_key}$ is reduced to the showing 
\begin{equation}
\mathfrak{b} \subset \mathbb{R}\text{-}\mathrm{span} ((\mathfrak{j}_+ \cap \mathcal{H}^n) \cap \mathfrak{a}) \label{eq:check_to_keyprop}
\end{equation}
for 
all simple Lie algebras $\mathfrak{g}$.

In order to show \eqref{eq:check_to_keyprop},
we recall that 
the Dynkin--Kostant classification of weighted Dynkin diagrams corresponding to elements of $\mathfrak{j}_+ \cap \mathcal{H}^n$ (which gives a classification of complex nilpotent orbits in $\mathfrak{g}_\mathbb{C}$; see Section \ref{subsection:WDD_of_comp_nilp})
As its subset,
we can classify the weighted Dynkin diagrams corresponding to elements in $(\mathfrak{j}_+ \cap \mathcal{H}^n) \cap \mathfrak{a}$ by using the Satake diagram of $\mathfrak{g}$ (cf. Example \ref{ex:nilp_in_sl6_meets_su42}).
What we need to prove for \eqref{eq:check_to_keyprop} is that 
this subset contains sufficiently many 
in the sense that 
the $\mathbb{R}$-span of the weighted Dynkin diagrams corresponding to this subset 
is coincide with 
the space of weighted Dynkin diagrams 
corresponding to elements in $\mathfrak{b}$.
Recall that  we can also determine such space corresponding to $\mathfrak{b}$ 
by the involution $\iota$ on weighted Dynkin diagrams (see Section \ref{subsection:complex_Bhyp} for the notation) with the Satake diagram of $\mathfrak{g}$ 
(cf. Example \ref{ex:b_for_su42}).

We illustrate this strategy by the following example:

\begin{example}
We give a proof of Proposition $\ref{prop:neutral_control}$ $\eqref{item:neut_key}$ for the case where $\mathfrak{g} = \mathfrak{su}(4,2)$, 
with its complexification $\mathfrak{g}_\mathbb{C} = \mathfrak{sl}(6,\mathbb{C})$. 

By Example $\ref{ex:nilp_in_sl6_meets_su42}$, we have the list of weighted Dynkin diagrams corresponding to elements of $(\mathfrak{j}_+ \cap \mathcal{H}^n) \cap \mathfrak{a}$ for $\mathfrak{g} = \mathfrak{su}(4,2)$. 
Here is a part of it$:$

\begin{table}[htbp]
\begin{center}
	\begin{tabular}{ll} \toprule
	Partition & Weighted Dynkin diagram \\ \midrule
	$[2^2,1^2]$ & \begin{xy}
	*++!D{0} *\cir<2pt>{}        ="A",
	(10,0) *++!D{1} *\cir<2pt>{} ="B",
	(20,0) *++!D{0} *\cir<2pt>{} ="C",
	(30,0) *++!D{1} *\cir<2pt>{} ="D",
	(40,0) *++!D{0} *\cir<2pt>{} ="E",
	\ar@{-} "A";"B"
	\ar@{-} "B";"C"
	\ar@{-} "C";"D"
	\ar@{-} "D";"E"
\end{xy} \\ \hline
	$[2,1^4]$ & \begin{xy}
	*++!D{1} *\cir<2pt>{}        ="A",
	(10,0) *++!D{0} *\cir<2pt>{} ="B",
	(20,0) *++!D{0} *\cir<2pt>{} ="C",
	(30,0) *++!D{0} *\cir<2pt>{} ="D",
	(40,0) *++!D{1} *\cir<2pt>{} ="E",
	\ar@{-} "A";"B"
	\ar@{-} "B";"C"
	\ar@{-} "C";"D"
	\ar@{-} "D";"E"
\end{xy} \\ 
	\bottomrule 
	\end{tabular}
\legend{A part of $(\mathfrak{j}_+ \cap \mathcal{H}^n) \cap \mathfrak{a}$ for $\mathfrak{g} = \mathfrak{su}(4,2)$}
\end{center}
\end{table}

By Example $\ref{ex:b_for_su42}$, we also have a linear isomorphism 
\[
\mathfrak{b} \simrightarrow \left\{
\begin{xy}
	*++!D{a} *\cir<2pt>{}        ="A",
	(10,0) *++!D{b} *\cir<2pt>{} ="B",
	(20,0) *++!D{0} *\cir<2pt>{} ="C",
	(30,0) *++!D{b} *\cir<2pt>{} ="D",
	(40,0) *++!D{a} *\cir<2pt>{} ="E",
	\ar@{-} "A";"B"
	\ar@{-} "B";"C"
	\ar@{-} "C";"D"
	\ar@{-} "D";"E"
\end{xy}
\mid 
a,b \in \mathbb{R}
\right\}.
\]
Hence, we can observe that 
\[
\mathfrak{b} \subset \mathbb{R}\text{-}\mathrm{span} ((\mathfrak{j}_+ \cap \mathcal{H}^n) \cap \mathfrak{a}).
\]
This completes the proof of Proposition $\ref{prop:neutral_control}$ $\eqref{item:neut_key}$ for the case where $\mathfrak{g} = \mathfrak{su}(4,2)$.
\end{example}

For the other simple Lie algebras $\mathfrak{g}$, 
we can find the Satake diagram of $\mathfrak{g}$ in \cite{Araki62} or \cite[Chapter X, Section 6]{Helgason01book} and the classification of weighted Dynkin diagrams of complex nilpotent orbits in $\mathfrak{g}_\mathbb{C}$ in \cite{Bala-Carter76}.
Then we can verify \eqref{eq:check_to_keyprop} 
in the spirit of case-by-case computations for other real simple Lie algebras.
Detailed computations will be reported elsewhere.

\section{Symmetric pairs}\label{section:Symmetric_pair}

In this section, we prove Proposition \ref{prop:keyprop} and Lemma \ref{lem:a_h}. 

Let $(\mathfrak{g},\mathfrak{h})$ be a semisimple symmetric pair and write $\sigma$ for the involution on $\mathfrak{g}$ corresponding to $\mathfrak{h}$. 
First, we give Cartan decompositions on $\mathfrak{g}$, $\mathfrak{h}$ and $\mathfrak{g}^c$ (see \eqref{eq:c-dual} in Section \ref{section:main_result} for the notation), simultaneously.

Recall that 
we can find a Cartan involution $\theta$ on $\mathfrak{g}$ 
with $\sigma \theta = \theta \sigma$ 
(cf.~\cite{Berger57}).
Let us denote by $\mathfrak{g} = \mathfrak{k} + \mathfrak{p}$ and 
$\mathfrak{h} = \mathfrak{k}(\mathfrak{h}) + \mathfrak{p}(\mathfrak{h})$
the Cartan decompositions of $\mathfrak{g}$ and $\mathfrak{h}$, respectively. 
We set $\mathfrak{u} := \mathfrak{k} + \sqrt{-1}\mathfrak{p}$.
Then $\mathfrak{u}$ becomes a compact real form of $\mathfrak{g}_\mathbb{C}$. 
We write $\tau$, $\tau^c$ for the complex conjugations on $\mathfrak{g}_\mathbb{C}$ with respect to the real forms $\mathfrak{g}$, $\mathfrak{g}^c$, respectively.
Then $\tau^c$ is the anti $\mathbb{C}$-linear extension of $\sigma$ on $\mathfrak{g}$ to $\mathfrak{g}_\mathbb{C}$, 
and hence $\tau$ and $\tau^c$ are commutative.
The compact real form $\mathfrak{u}$ of $\mathfrak{g}_\mathbb{C}$ 
is stable under both $\tau$ and $\tau^c$.
We denote by $\overline{\theta}$ the complex conjugation on $\mathfrak{g}_\mathbb{C}$ corresponding to $\mathfrak{u}$,
i.e.~$\overline{\theta}$ is anti $\mathbb{C}$-linear extension of $\theta$.
Then the restriction $\overline{\theta}|_{\mathfrak{g}^c}$ is a Cartan involution on $\mathfrak{g}^c$.
We write
\begin{equation}
\mathfrak{g}^c = \mathfrak{k}^c + \mathfrak{p}^c \label{eq:CD_of_c-dual}
\end{equation} 
for 
the Cartan decomposition of $\mathfrak{g}^c$
with respect to $\overline{\theta}|_{\mathfrak{g}^c}$.

Let us fix a maximal abelian subspace $\mathfrak{a}_\mathfrak{h}$ of $\mathfrak{p}(\mathfrak{h})$,
and extend it to a maximal abelian subspace $\mathfrak{a}$ of $\mathfrak{p}$ 
[resp. a maximal abelian subspace $\mathfrak{a}^c$ of $\mathfrak{p}^c$].
Obviously, $\mathfrak{a}_\mathfrak{h} = \mathfrak{a} \cap \mathfrak{a}^c$.
We show the next lemma below:
\begin{lem}\label{lem:commutative_extend}
$[\mathfrak{a},\mathfrak{a}^c] = \{0\}$.
\end{lem}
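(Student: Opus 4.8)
\medskip
\noindent\emph{Proof sketch.}
The plan is to first make the Cartan decompositions of $\mathfrak{g}$ and $\mathfrak{g}^c$ explicit in terms of the four subspaces $\mathfrak{k}(\mathfrak{h})=\mathfrak{k}\cap\mathfrak{h}$, $\mathfrak{p}(\mathfrak{h})=\mathfrak{p}\cap\mathfrak{h}$, $\mathfrak{k}(\mathfrak{q}):=\mathfrak{k}\cap\mathfrak{q}$ and $\mathfrak{p}(\mathfrak{q}):=\mathfrak{p}\cap\mathfrak{q}$, and then to exploit the maximality of $\mathfrak{a}_\mathfrak{h}$. Since $\theta$ and $\sigma$ commute, $\mathfrak{p}=\mathfrak{p}(\mathfrak{h})\oplus\mathfrak{p}(\mathfrak{q})$, while applying $\overline{\theta}$ to $\mathfrak{g}^c=\mathfrak{h}+\sqrt{-1}\mathfrak{q}$ and separating $\pm1$-eigenspaces identifies the decomposition \eqref{eq:CD_of_c-dual} as
\[
\mathfrak{k}^c=\mathfrak{k}(\mathfrak{h})\oplus\sqrt{-1}\,\mathfrak{p}(\mathfrak{q}),\qquad \mathfrak{p}^c=\mathfrak{p}(\mathfrak{h})\oplus\sqrt{-1}\,\mathfrak{k}(\mathfrak{q}).
\]

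Next I would locate $\mathfrak{a}$ and $\mathfrak{a}^c$ inside these. Since $\mathfrak{a}_\mathfrak{h}$ is maximal abelian in $\mathfrak{p}(\mathfrak{h})$ we have $Z_{\mathfrak{p}(\mathfrak{h})}(\mathfrak{a}_\mathfrak{h})=\mathfrak{a}_\mathfrak{h}$; moreover $[\mathfrak{a}_\mathfrak{h},\,\cdot\,]$ carries $\mathfrak{p}(\mathfrak{h})$, $\mathfrak{p}(\mathfrak{q})$ and $\sqrt{-1}\mathfrak{k}(\mathfrak{q})$ into the mutually independent subspaces $\mathfrak{k}(\mathfrak{h})$, $\mathfrak{k}(\mathfrak{q})$ and $\sqrt{-1}\mathfrak{p}(\mathfrak{q})$ respectively, so
\[
Z_{\mathfrak{p}}(\mathfrak{a}_\mathfrak{h})=\mathfrak{a}_\mathfrak{h}\oplus Z_{\mathfrak{p}(\mathfrak{q})}(\mathfrak{a}_\mathfrak{h}),\qquad Z_{\mathfrak{p}^c}(\mathfrak{a}_\mathfrak{h})=\mathfrak{a}_\mathfrak{h}\oplus\sqrt{-1}\,Z_{\mathfrak{k}(\mathfrak{q})}(\mathfrak{a}_\mathfrak{h}).
\]
As $\mathfrak{a}$ is abelian and contains $\mathfrak{a}_\mathfrak{h}$, it lies in $Z_{\mathfrak{p}}(\mathfrak{a}_\mathfrak{h})$ and hence splits as $\mathfrak{a}=\mathfrak{a}_\mathfrak{h}\oplus\mathfrak{a}'$ with $\mathfrak{a}'\subseteq Z_{\mathfrak{p}(\mathfrak{q})}(\mathfrak{a}_\mathfrak{h})\subseteq\mathfrak{q}$; likewise $\mathfrak{a}^c=\mathfrak{a}_\mathfrak{h}\oplus\sqrt{-1}\,\mathfrak{c}$ with $\mathfrak{c}\subseteq Z_{\mathfrak{k}(\mathfrak{q})}(\mathfrak{a}_\mathfrak{h})$. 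Since $\mathfrak{a}$ and $\mathfrak{a}^c$ are abelian and both contain $\mathfrak{a}_\mathfrak{h}$, bilinearity reduces the lemma to proving $[\mathfrak{a}',\mathfrak{c}]=\{0\}$.

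The heart of the argument is this last vanishing. From $\mathfrak{a}'\subseteq\mathfrak{p}(\mathfrak{q})$ and $\mathfrak{c}\subseteq\mathfrak{k}(\mathfrak{q})$ one gets $[\mathfrak{a}',\mathfrak{c}]\subseteq[\mathfrak{p}(\mathfrak{q}),\mathfrak{k}(\mathfrak{q})]\subseteq\mathfrak{p}\cap\mathfrak{h}=\mathfrak{p}(\mathfrak{h})$, and the Jacobi identity together with $[\mathfrak{a}_\mathfrak{h},\mathfrak{a}']=\{0\}$ (as $\mathfrak{a}$ is abelian) and $[\mathfrak{a}_\mathfrak{h},\mathfrak{c}]=\{0\}$ shows that $[\mathfrak{a}',\mathfrak{c}]$ centralizes $\mathfrak{a}_\mathfrak{h}$, whence $[\mathfrak{a}',\mathfrak{c}]\subseteq Z_{\mathfrak{p}(\mathfrak{h})}(\mathfrak{a}_\mathfrak{h})=\mathfrak{a}_\mathfrak{h}$. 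To make it vanish I would invoke the positive definite inner product $B_\theta(\,\cdot\,,\,\cdot\,):=-B(\,\cdot\,,\theta\,\cdot\,)$ attached to the Killing form $B$ of $\mathfrak{g}$: for $X\in\mathfrak{a}'$, $c\in\mathfrak{c}$ and $y\in\mathfrak{a}_\mathfrak{h}$, the invariance of $B$ and $\theta y=-y$ give
\[
B_\theta([X,c],y)=-B([X,c],\theta y)=B([X,c],y)=-B(c,[X,y])=0,
\]
since $X$ and $y$ lie in the abelian $\mathfrak{a}$. Thus $[X,c]\in\mathfrak{a}_\mathfrak{h}$ is $B_\theta$-orthogonal to $\mathfrak{a}_\mathfrak{h}$, forcing $[X,c]=0$; hence $[\mathfrak{a}',\mathfrak{c}]=\{0\}$ and therefore $[\mathfrak{a},\mathfrak{a}^c]=\{0\}$.

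I expect the main obstacle to be exactly the identity $[\mathfrak{a}',\mathfrak{c}]=\{0\}$: the preceding steps are bookkeeping with Cartan decompositions, but here the maximality of $\mathfrak{a}_\mathfrak{h}$ in $\mathfrak{p}(\mathfrak{h})$ enters essentially — first to trap $[\mathfrak{a}',\mathfrak{c}]$ inside $\mathfrak{a}_\mathfrak{h}$, and then, through positive definiteness of $B_\theta$, to annihilate it.
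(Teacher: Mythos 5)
Your argument is correct, but it proves the lemma by a genuinely different, self-contained route than the paper. The paper's proof passes to the associated symmetric pair $(\mathfrak{g},\mathfrak{h}^a)$ with $\mathfrak{h}^a=\mathfrak{g}^{\theta\sigma}$, observes that $\mathfrak{a}^c$ sits inside (the complexification of) $\mathfrak{q}^a$, and then simply cites \cite[Lemma 2.4 (i)]{Oshima-Sekiguchi84}; all the structural work is delegated to that reference. You instead carry out the computation directly from the fourfold decomposition $\mathfrak{g}=\mathfrak{k}(\mathfrak{h})\oplus\mathfrak{p}(\mathfrak{h})\oplus\mathfrak{k}(\mathfrak{q})\oplus\mathfrak{p}(\mathfrak{q})$: your identifications $\mathfrak{k}^c=\mathfrak{k}(\mathfrak{h})\oplus\sqrt{-1}\,\mathfrak{p}(\mathfrak{q})$ and $\mathfrak{p}^c=\mathfrak{p}(\mathfrak{h})\oplus\sqrt{-1}\,\mathfrak{k}(\mathfrak{q})$ are right, the splittings $\mathfrak{a}=\mathfrak{a}_\mathfrak{h}\oplus\mathfrak{a}'$ and $\mathfrak{a}^c=\mathfrak{a}_\mathfrak{h}\oplus\sqrt{-1}\,\mathfrak{c}$ follow from $\mathfrak{a}\subseteq Z_{\mathfrak{p}}(\mathfrak{a}_\mathfrak{h})$, $\mathfrak{a}^c\subseteq Z_{\mathfrak{p}^c}(\mathfrak{a}_\mathfrak{h})$ and the modular law, and the chain $[\mathfrak{a}',\mathfrak{c}]\subseteq\mathfrak{p}(\mathfrak{h})$, then $\subseteq Z_{\mathfrak{p}(\mathfrak{h})}(\mathfrak{a}_\mathfrak{h})=\mathfrak{a}_\mathfrak{h}$ by Jacobi and maximality, then $=\{0\}$ by invariance of $B$ (indeed $B([X,c],[X,c])=-B(c,[X,[X,c]])=0$ since $X$ and $[X,c]$ both lie in the abelian $\mathfrak{a}$) is airtight. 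What your approach buys is independence from the Oshima--Sekiguchi machinery and an explicit picture of where $\mathfrak{a}$ and $\mathfrak{a}^c$ live relative to $\mathfrak{a}_\mathfrak{h}$; what the paper's citation buys is brevity and consistency with the surrounding use of \cite{Oshima-Sekiguchi84} (e.g.\ in the proof of Proposition \ref{prop:OS-Cartan}, which reuses the same reference). Either proof is acceptable here.
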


The next proposition gives a Cartan subalgebra of $\mathfrak{g}_\mathbb{C}$ which contains split Cartan subalgebras of $\mathfrak{g}$, $\mathfrak{g}^c$ and $\mathfrak{h}$ with respect to the Cartan decompositions.

\begin{prop}\label{prop:OS-Cartan}
There exists a Cartan subalgebra $\mathfrak{j}_\mathbb{C}$ of $\mathfrak{g}_\mathbb{C}$ with the following properties$:$
\begin{itemize}
\item $\mathfrak{j}_\mathfrak{g} := \mathfrak{j}_\mathbb{C} \cap \mathfrak{g}$ is a split Cartan subalgebra of $\mathfrak{g} = \mathfrak{k} + \mathfrak{p}$ $($see Definition $\ref{defn:split_Cartan}$ for the notation$)$
with $\mathfrak{j}_\mathfrak{g} \cap \mathfrak{p} = \mathfrak{a}$.
\item $\mathfrak{j}_{\mathfrak{g}^c} := \mathfrak{j}_\mathbb{C} \cap \mathfrak{g}^c$ is a split Cartan subalgebra of $\mathfrak{g}^c = \mathfrak{k}^c + \mathfrak{p}^c$
with $\mathfrak{j}_{\mathfrak{g}^c} \cap \mathfrak{p}^c = \mathfrak{a}^c$.
\item $\mathfrak{j}_{\mathfrak{h}} := \mathfrak{j}_\mathbb{C} \cap \mathfrak{h}$ is a split Cartan subalgebra of $\mathfrak{h} = \mathfrak{k}(\mathfrak{h}) + \mathfrak{p}(\mathfrak{h})$
with $\mathfrak{j}_\mathfrak{h} \cap \mathfrak{p}(\mathfrak{h}) = \mathfrak{a}_\mathfrak{h}$.
\end{itemize}
\end{prop}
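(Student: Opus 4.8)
I would construct $\mathfrak{j}_\mathbb{C}$ by passing to the centralizer of the abelian subalgebra $\mathfrak{s}:=\mathfrak{a}+\mathfrak{a}^c$, where the prescribed split parts become central, and then choosing a maximal torus in the remaining compact part that is compatible with all three real forms. First I would record the four–piece decomposition coming from $\theta\sigma=\sigma\theta$,
\[
\mathfrak{g}=(\mathfrak{k}\cap\mathfrak{h})\oplus(\mathfrak{k}\cap\mathfrak{q})\oplus(\mathfrak{p}\cap\mathfrak{h})\oplus(\mathfrak{p}\cap\mathfrak{q}),
\]
together with the induced descriptions $\mathfrak{k}^c=(\mathfrak{k}\cap\mathfrak{h})\oplus\sqrt{-1}(\mathfrak{p}\cap\mathfrak{q})$, $\mathfrak{p}^c=(\mathfrak{p}\cap\mathfrak{h})\oplus\sqrt{-1}(\mathfrak{k}\cap\mathfrak{q})$, and $\mathfrak{u}=\mathfrak{k}\oplus\sqrt{-1}\mathfrak{p}$. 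Using Lemma~\ref{lem:commutative_extend} (and the choices made in its proof) one may assume $\mathfrak{a}=\mathfrak{a}_\mathfrak{h}\oplus\mathfrak{a}_\mathfrak{q}$ with $\mathfrak{a}_\mathfrak{q}\subset\mathfrak{z}_{\mathfrak{p}\cap\mathfrak{q}}(\mathfrak{a}_\mathfrak{h})$ and $\mathfrak{a}^c=\mathfrak{a}_\mathfrak{h}\oplus\sqrt{-1}\mathfrak{b}$ with $\mathfrak{b}\subset\mathfrak{z}_{\mathfrak{k}\cap\mathfrak{q}}(\mathfrak{a}_\mathfrak{h})$, so that $\mathfrak{s}=\mathfrak{a}_\mathfrak{h}\oplus\mathfrak{a}_\mathfrak{q}\oplus\sqrt{-1}\mathfrak{b}$ is stable under $\tau$, $\tau^c$ and $\overline{\theta}$.

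\textbf{Reduction to the centralizer.} By Lemma~\ref{lem:commutative_extend}, $\mathfrak{s}$ is abelian; since $\mathfrak{a}\subset\mathfrak{p}$ and $\sqrt{-1}\mathfrak{b}\subset\sqrt{-1}\mathfrak{k}$ consist of pairwise commuting hyperbolic elements of $\mathfrak{g}_\mathbb{C}$, every element of $\mathfrak{s}$ is hyperbolic, hence $\ad$-semisimple. Set $\mathfrak{l}_\mathbb{C}:=\mathfrak{z}_{\mathfrak{g}_\mathbb{C}}(\mathfrak{s})$: this is a $\tau$-, $\tau^c$- and $\overline{\theta}$-stable complex reductive subalgebra having $\mathfrak{s}$ in its center, and every Cartan subalgebra of $\mathfrak{g}_\mathbb{C}$ containing $\mathfrak{s}$ lies in $\mathfrak{l}_\mathbb{C}$. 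A direct centralizer computation (using $[\mathfrak{a},\mathfrak{a}^c]=0$) gives
\[
\mathfrak{l}_\mathbb{C}\cap\mathfrak{g}=\mathfrak{a}\oplus\mathfrak{z}_{\mathfrak{m}}(\mathfrak{b}),\qquad
\mathfrak{l}_\mathbb{C}\cap\mathfrak{g}^c=\mathfrak{a}^c\oplus\mathfrak{z}_{\mathfrak{m}^c}(\mathfrak{a}_\mathfrak{q}),\qquad
\mathfrak{l}_\mathbb{C}\cap\mathfrak{h}=\mathfrak{a}_\mathfrak{h}\oplus\bigl(\mathfrak{z}_{\mathfrak{m}_\mathfrak{h}}(\mathfrak{a}_\mathfrak{q})\cap\mathfrak{z}_{\mathfrak{m}_\mathfrak{h}}(\mathfrak{b})\bigr),
\]
where $\mathfrak{m}=\mathfrak{z}_\mathfrak{k}(\mathfrak{a})$, $\mathfrak{m}^c=\mathfrak{z}_{\mathfrak{k}^c}(\mathfrak{a}^c)$ and $\mathfrak{m}_\mathfrak{h}=\mathfrak{z}_{\mathfrak{k}(\mathfrak{h})}(\mathfrak{a}_\mathfrak{h})$. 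In each case the $\theta$-split summand $\mathfrak{a}$ (resp.\ $\mathfrak{a}^c$, $\mathfrak{a}_\mathfrak{h}$) is central in the real form and the complementary summand is compact. Consequently any Cartan subalgebra $\mathfrak{j}_\mathbb{C}$ of $\mathfrak{l}_\mathbb{C}$ that is stable under $\tau$, $\tau^c$ and $\overline{\theta}$ automatically restricts to Cartan subalgebras of $\mathfrak{g}$, $\mathfrak{g}^c$ with split parts $\mathfrak{a}$, $\mathfrak{a}^c$, and to a subalgebra of $\mathfrak{h}$ with split part $\mathfrak{a}_\mathfrak{h}$ — provided only that $\mathfrak{j}_\mathbb{C}\cap\mathfrak{h}_\mathbb{C}$ is a Cartan subalgebra of $\mathfrak{h}_\mathbb{C}$. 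So the proposition reduces to producing such a $\mathfrak{j}_\mathbb{C}$.

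\textbf{The compact step and the main obstacle.} Since the three split parts are now central in $\mathfrak{l}_\mathbb{C}$, finding $\mathfrak{j}_\mathbb{C}$ amounts to choosing, inside the compact Lie algebra $\mathfrak{u}_1:=\mathfrak{l}_\mathbb{C}\cap\mathfrak{u}$, a maximal torus $\mathfrak{t}$ stable under the commuting involutions $\tau|_{\mathfrak{u}_1}$ and $\tau^c|_{\mathfrak{u}_1}$ and "maximally split" for $\tau^c$, and then reassembling $\mathfrak{j}_\mathfrak{g}=\mathfrak{a}\oplus\mathfrak{t}_\mathfrak{g}$, $\mathfrak{j}_{\mathfrak{g}^c}=\mathfrak{a}^c\oplus\mathfrak{t}_{\mathfrak{g}^c}$, $\mathfrak{j}_\mathfrak{h}=\mathfrak{a}_\mathfrak{h}\oplus\mathfrak{t}_\mathfrak{h}$ from the matched toral pieces, with $\mathfrak{j}_\mathbb{C}$ the common complexification. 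This is exactly the kind of simultaneous construction of a Cartan subalgebra adapted to a symmetric pair carried out in Oshima--Sekiguchi \cite[Section~1]{Oshima-Sekiguchi84}: one takes a $\tau^c$-stable maximal torus of $\mathfrak{u}_1$ containing a maximal abelian subspace of the $(-1)$-eigenspace of $\tau^c|_{\mathfrak{u}_1}$ (so that its $\tau^c$-fixed points form a Cartan of the compact symmetric pair), and then uses $\tau\tau^c=\tau^c\tau$ to arrange that the same torus is $\tau$-stable. I expect this last point — verifying that one maximal torus can be chosen compatibly with both real structures so that its contraction with $\mathfrak{h}_\mathbb{C}$ is still of full rank — to be the main obstacle; it is the place where the commutativity of the involutions (ultimately $\theta\sigma=\sigma\theta$) is indispensable, and where I would invoke the constructions of \cite{Oshima-Sekiguchi84} rather than reprove them. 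The remaining bookkeeping (that the reassembled $\mathfrak{j}_\mathfrak{g}$, $\mathfrak{j}_{\mathfrak{g}^c}$, $\mathfrak{j}_\mathfrak{h}$ have the asserted split parts) is then immediate from the displayed descriptions of $\mathfrak{l}_\mathbb{C}\cap\mathfrak{g}$, $\mathfrak{l}_\mathbb{C}\cap\mathfrak{g}^c$, $\mathfrak{l}_\mathbb{C}\cap\mathfrak{h}$.
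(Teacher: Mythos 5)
Your proposal is correct and follows essentially the same route as the paper: the paper's own proof likewise rests on Lemma \ref{lem:commutative_extend} (that is, $[\mathfrak{a},\mathfrak{a}^c]=\{0\}$, obtained from \cite[Lemma 2.4 (i)]{Oshima-Sekiguchi84} applied to the associated pair $(\mathfrak{g},\mathfrak{h}^a)$) and then simply extends $\mathfrak{a}+\mathfrak{a}^c$ to a Cartan subalgebra $\mathfrak{j}_\mathbb{C}$ of $\mathfrak{g}_\mathbb{C}$. Your centralizer analysis and the discussion of choosing a maximal torus compatible with $\tau$, $\tau^c$ and $\overline{\theta}$ make explicit the compatibility checks that the paper leaves implicit in that one-line extension step (both arguments ultimately defer the same point to the constructions of \cite{Oshima-Sekiguchi84}), so the underlying argument is the same.
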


\begin{proof}[Proof of Lemma $\ref{lem:commutative_extend}$ and Proposition $\ref{prop:OS-Cartan}$]
We put
\[
\mathfrak{h}^a := \{\, X \in \mathfrak{g} \mid \theta \sigma X = X  \,\}, \quad \mathfrak{q}^a := \{\, X \in \mathfrak{g} \mid \theta \sigma X = -X  \,\}.
\]
Then $(\mathfrak{g},\mathfrak{h}^a)$ is the associated symmetric pair of $(\mathfrak{g},\mathfrak{h})$ (see \cite[Section 1]{Oshima-Sekiguchi84} for the notation).
Note that 
$\mathfrak{q}^a = \mathfrak{p}^c \cap \mathfrak{g} + \sqrt{-1}(\mathfrak{p}^c \cap \sqrt{-1}\mathfrak{g})$ 
and 
$\mathfrak{p} \cap \mathfrak{q}^a = \mathfrak{p}(\mathfrak{h})$.
Let us apply \cite[Lemma 2.4 (i)]{Oshima-Sekiguchi84} 
to the symmetric pair $(\mathfrak{g},\mathfrak{h}^a)$. 
Then we have $[\mathfrak{a},\mathfrak{a}^c] = \{0\}$,
since the complexification of $\mathfrak{a}_c$ 
is a maximal abelian subspace of 
the complexification of $\mathfrak{q}^a$ 
containing $\mathfrak{a}_\mathfrak{h}$.
This completes the proof of Lemma \ref{lem:commutative_extend}.
Furthermore, 
let us extend $\mathfrak{a} + \mathfrak{a}^c$ to 
a Cartan subalgebra $\mathfrak{j}_\mathbb{C}$ of $\mathfrak{g}_\mathbb{C}$.
Then $\mathfrak{j}_\mathbb{C}$ satisfies the properties in Proposition \ref{prop:OS-Cartan}.
\end{proof}

We fix such a Cartan subalgebra $\mathfrak{j}_\mathbb{C}$ of $\mathfrak{g}_\mathbb{C}$, and put
\begin{align*}
\mathfrak{j}   := \mathfrak{j}_\mathbb{C} \cap \sqrt{-1}\mathfrak{u}.
\end{align*}
Throughout this subsection, we denote the root system of $(\mathfrak{g}_\mathbb{C},\mathfrak{j}_\mathbb{C})$ 
briefly by $\Delta$, which is realized in $\mathfrak{j}^*$.
Let us denote by $\Sigma$, $\Sigma^c$ the restricted root systems of $(\mathfrak{g},\mathfrak{a})$, $(\mathfrak{g}^c,\mathfrak{a}^c)$, respectively.
Namely, we put
\begin{align*}
\Sigma &:= \{\, \alpha|_\mathfrak{a} \mid \alpha \in \Delta \,\} \setminus \{0\} \subset \mathfrak{a}^*, \\
\Sigma^c &:= \{\, \alpha|_{\mathfrak{a}^c} \mid \alpha \in \Delta \,\} \setminus \{0\} \subset (\mathfrak{a}^c)^*.
\end{align*}
Then we can choose a positive system $\Delta^+$ of $\Delta$ with the properties below$:$
\begin{itemize}
\item $\Sigma^+ := \{\, \alpha|_\mathfrak{a} \mid \alpha \in \Delta^+ \,\} \setminus \{0\}$ is a positive system of $\Sigma$.
\item $(\Sigma^c)^+ := \{\, \alpha|_{\mathfrak{a}^c} \mid \alpha \in \Delta^+ \,\} \setminus \{0\}$ is a positive system of $\Sigma^c$.
\end{itemize}
In fact, if we take an ordering on $\mathfrak{a}_\mathfrak{h}$ and extend it stepwise to $\mathfrak{a}$, to $\mathfrak{a} + \mathfrak{a}^c$ and to $\mathfrak{j}$, then the corresponding positive system $\Delta^+$ satisfies the properties above (see \cite[Section 3]{Oshima-Sekiguchi84} for more detail).
Let us denote by 
\begin{align*}
\mathfrak{j}_+ &:= \{\, A \in \mathfrak{j} \mid \alpha(A) \geq 0 \ \text{for any } \alpha \in \Delta^+ \,\}, \\
\mathfrak{a}_+ &:=  \{\, A \in \mathfrak{a} \mid \xi(A) \geq 0 \ \text{for any } \xi \in \Sigma^+ \,\}, \\
\mathfrak{a}^c_+ &:=  \{\, A \in \mathfrak{a}^c \mid \xi^c(A) \geq 0 \ \text{for any } \xi^c \in (\Sigma^c)^+ \,\},
\end{align*}
the closed Weyl chambers in $\mathfrak{j}$, $\mathfrak{a}$ and $\mathfrak{a}^c$ with respect to $\Delta^+$, $\Sigma^+$ and $(\Sigma^c)^+$, respectively. 

Combining Fact \ref{fact:cpx_hyp} with Lemma \ref{lem:comp_hyp_meets_g_and_A_in_a}, 
we obtain the lemma below$:$

\begin{lem}\label{lem:hyp_element_in_Cartan_for_symm}
Let $\mathcal{O}^{G_\mathbb{C}}_\hyp$ be a complex hyperbolic orbit in $\mathfrak{g}_\mathbb{C}$.
Then the following 
holds$:$
\begin{enumerate}
\item There exists a unique element $A_{\mathcal{O}}$ in $\mathcal{O}^{G_\mathbb{C}}_\hyp \cap \mathfrak{j}_+$.
\item $\mathcal{O}^{G_\mathbb{C}}_\hyp$ meets $\mathfrak{g}$ if and only if $A_{\mathcal{O}}$ is in $\mathfrak{a}_+$.
\item $\mathcal{O}^{G_\mathbb{C}}_\hyp$ meets $\mathfrak{g}^c$ if and only if $A_{\mathcal{O}}$ is in $\mathfrak{a}^c_+$.
\end{enumerate}
\end{lem}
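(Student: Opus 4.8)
The plan is to deduce all three assertions by combining Fact \ref{fact:cpx_hyp} with Lemma \ref{lem:comp_hyp_meets_g_and_A_in_a}, the latter applied once to the real form $\mathfrak{g}$ and once to $\mathfrak{g}^c$. Part (1) is immediate from Fact \ref{fact:cpx_hyp}: the orbit $\mathcal{O}^{G_\mathbb{C}}_\hyp$ meets $\mathfrak{j}$ in a single $W(\mathfrak{g}_\mathbb{C},\mathfrak{j}_\mathbb{C})$-orbit, and $\mathfrak{j}_+$ is a fundamental domain for that action, so $\mathcal{O}^{G_\mathbb{C}}_\hyp \cap \mathfrak{j}_+$ consists of exactly one element $A_{\mathcal{O}}$.

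For (2) I would first check that the standing data of this section fits the hypotheses of Lemma \ref{lem:comp_hyp_meets_g_and_A_in_a}. Proposition \ref{prop:OS-Cartan} guarantees that $\mathfrak{j}_\mathfrak{g} := \mathfrak{j}_\mathbb{C} \cap \mathfrak{g}$ is a split Cartan subalgebra of $\mathfrak{g}$ with $\mathfrak{j}_\mathfrak{g} \cap \mathfrak{p} = \mathfrak{a}$, so $\mathfrak{j}_\mathbb{C}$ is its complexification and $\mathfrak{j}$ is the real form $\sqrt{-1}\mathfrak{t} + \mathfrak{a}$ appearing in Section \ref{subsection:hyp_real}; moreover the positive system $\Delta^+$ fixed here restricts to a positive system $\Sigma^+ = \Sigma^+(\mathfrak{g},\mathfrak{a})$, which is precisely the compatibility required in that section. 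Hence Lemma \ref{lem:comp_hyp_meets_g_and_A_in_a} applies with this data, its closed Weyl chamber coincides with the $\mathfrak{j}_+$ chosen here, and (using $\mathfrak{a}_+ = \mathfrak{j}_+ \cap \mathfrak{a}$) its distinguished representative is the $A_{\mathcal{O}}$ of part (1); therefore $\mathcal{O}^{G_\mathbb{C}}_\hyp$ meets $\mathfrak{g}$ if and only if $A_{\mathcal{O}} \in \mathfrak{a}_+$. Part (3) is the same argument with $\mathfrak{g}^c$ in place of $\mathfrak{g}$: Proposition \ref{prop:OS-Cartan} provides that $\mathfrak{j}_{\mathfrak{g}^c} := \mathfrak{j}_\mathbb{C} \cap \mathfrak{g}^c$ is a split Cartan subalgebra of $\mathfrak{g}^c = \mathfrak{k}^c + \mathfrak{p}^c$ with $\mathfrak{j}_{\mathfrak{g}^c} \cap \mathfrak{p}^c = \mathfrak{a}^c$, and $\Delta^+$ restricts to the positive system $(\Sigma^c)^+$ of $\Sigma^c$, so Lemma \ref{lem:comp_hyp_meets_g_and_A_in_a} (applied to $\mathfrak{g}^c$) yields that $\mathcal{O}^{G_\mathbb{C}}_\hyp$ meets $\mathfrak{g}^c$ if and only if $A_{\mathcal{O}} \in \mathfrak{a}^c_+$.

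The only point needing care is the bookkeeping that ties the two applications together: one must observe $\mathfrak{a}_+ = \mathfrak{j}_+ \cap \mathfrak{a}$ and $\mathfrak{a}^c_+ = \mathfrak{j}_+ \cap \mathfrak{a}^c$ — the first is recorded in Section \ref{subsection:hyp_real}, and the second follows verbatim because $(\Sigma^c)^+$ is the restriction of $\Delta^+$ — so that the single element $A_{\mathcal{O}}$ from part (1) really is the distinguished representative in both invocations of Lemma \ref{lem:comp_hyp_meets_g_and_A_in_a}. This is genuinely routine; the substantive work has already been done in Proposition \ref{prop:OS-Cartan}, whose purpose is exactly to produce one Cartan subalgebra $\mathfrak{j}_\mathbb{C}$ that is simultaneously split-compatible for $\mathfrak{g}$, $\mathfrak{g}^c$ (and $\mathfrak{h}$), and in Lemma \ref{lem:comp_hyp_meets_g_and_A_in_a} itself. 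Thus no new idea is required and the ``main obstacle'' is merely verifying that the hypotheses of the earlier lemma are met with respect to both real forms at once, which Proposition \ref{prop:OS-Cartan} was designed to supply.
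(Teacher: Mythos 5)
Your proposal is correct and follows essentially the same route as the paper, which derives the lemma by combining Fact \ref{fact:cpx_hyp} with Lemma \ref{lem:comp_hyp_meets_g_and_A_in_g} applied to each of the two real forms, relying on Proposition \ref{prop:OS-Cartan} and the compatible choice of positive systems so that $\mathfrak{a}_+ = \mathfrak{j}_+ \cap \mathfrak{a}$ and $\mathfrak{a}^c_+ = \mathfrak{j}_+ \cap \mathfrak{a}^c$. Your write-up in fact spells out the hypothesis-checking more explicitly than the paper, which records this deduction in a single sentence.
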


We now prove Proposition \ref{prop:keyprop} by using Lemma \ref{lem:hyp_element_in_Cartan_for_symm}.

\begin{proof}[Proof of Proposition $\ref{prop:keyprop}$]
Let $\mathcal{O}^{G_\mathbb{C}}_\hyp$ be a complex hyperbolic orbit in $\mathfrak{g}_\mathbb{C}$ meeting both $\mathfrak{g}$ and $\mathfrak{g}^c$.
We shall prove that $\mathcal{O}^{G_\mathbb{C}}_\hyp$ also meets $\mathfrak{h} = \mathfrak{g} \cap \mathfrak{g}^c$.
By Lemma \ref{lem:hyp_element_in_Cartan_for_symm}, there exists a unique element $A_{\mathcal{O}} \in \mathfrak{a}_+ \cap \mathfrak{a}^c_+$ with $A_\mathcal{O} \in \mathcal{O}^{G_\mathbb{C}}_\hyp$,
and hence our claim follows.
\end{proof}

Lemma \ref{lem:a_h} is proved by using Lemma \ref{lem:hyp_element_in_Cartan_for_symm} as follows.

\begin{proof}[Proof of Lemma $\ref{lem:a_h}$]
Let us take $A \in \mathfrak{a}_+$ such that $\mathcal{O}^G_A$ meets $\mathfrak{h}$,
where $\mathcal{O}^G_A$ is the adjoint orbit in $\mathfrak{g}$ through $A$.
To prove our claim,
we only need to show that 
$A$ is in $\mathfrak{a}_\mathfrak{h}$.
We denote by $\mathcal{O}^{G_\mathbb{C}}_A$ the complexification of $\mathcal{O}^G_A$.
Then $\mathcal{O}^{G_\mathbb{C}}_A$ is a complex hyperbolic orbit in $\mathfrak{g}_\mathbb{C}$ meeting $\mathfrak{h} = \mathfrak{g} \cap \mathfrak{g}^c$.
Let us extend $\mathfrak{a}_\mathfrak{h}$ to a maximal abelian subspace $\mathfrak{a}^c$ of $\mathfrak{p}^c$ 
(see \eqref{eq:CD_of_c-dual} for the notation of $\mathfrak{p}^c$)
and 
take a Cartan subalgebra $\mathfrak{j}_\mathbb{C}$ of $\mathfrak{g}_\mathbb{C}$ in Proposition \ref{prop:OS-Cartan}.
We also extend the ordering on $\mathfrak{a}$ stepwise to $\mathfrak{a} + \mathfrak{a}^c$ and to $\mathfrak{j}$.
Then by Lemma \ref{lem:hyp_element_in_Cartan_for_symm}, 
the orbit $\mathcal{O}^{G_\mathbb{C}}_A$ intersects $\mathfrak{j}_+$ 
with a unique element $A_\mathcal{O}$, 
and $A_\mathcal{O}$ is in $\mathfrak{a}_+ \cap \mathfrak{a}^c_+ \subset \mathfrak{a}_\mathfrak{h}$.
Since $A$ is also in $\mathcal{O}^{G_\mathbb{C}}_A \cap \mathfrak{j}_+$, 
we have $A = A_\mathcal{O}$.
Hence $A$ is in $\mathfrak{a}_\mathfrak{h}$.
\end{proof}

\section{Algorithm for classification}\label{section:classifications}

Let $(\mathfrak{g},\mathfrak{h})$ be a semisimple symmetric pair (see Setting \ref{setting:main}).
In this section, we describe an algorithm to check whether or not $(\mathfrak{g},\mathfrak{h})$ satisfies the condition \eqref{item:main_hyperbolic} in Theorem \ref{thm:main}, which coincides with the condition \eqref{item:intro_anti_hyp} in Theorem \ref{thm:intro}. 
More 
precisely, we give an algorithm to classify complex antipodal hyperbolic orbits $\mathcal{O}^{G_\mathbb{C}}_{\hyp}$ in $\mathfrak{g}_\mathbb{C}$ such that $\mathcal{O}^{G_\mathbb{C}}_{\hyp} \cap \mathfrak{g} \neq \emptyset$ and $\mathcal{O}^{G_\mathbb{C}}_{\hyp} \cap \mathfrak{g}^c = \emptyset$.

Recall that for any complex semisimple Lie algebra $\mathfrak{g}_\mathbb{C}$, 
we can determine the set of complex antipodal hyperbolic orbits in $\mathfrak{g}_\mathbb{C}$, which is denoted by $\mathcal{H}^a/{G_\mathbb{C}}$, as $\iota$-invariant weighted Dynkin diagrams by Theorem \ref{thm:Bhyp_to_iota-inv}. 
Further, for any real form $\mathfrak{g}$ of $\mathfrak{g}_\mathbb{C}$, 
we can classify complex antipodal hyperbolic orbits in $\mathfrak{g}_\mathbb{C}$ meeting $\mathfrak{g}$ by using the Satake diagram of $\mathfrak{g}$ (see Section \ref{subsection:antipodal_real}). 

For a semisimple symmetric pair $(\mathfrak{g},\mathfrak{h})$, 
we can specify another real form $\mathfrak{g}^c$ of $\mathfrak{g}_\mathbb{C}$ (see \eqref{eq:c-dual} in Section \ref{section:main_result} for the notation) by the list of \cite[Section 1]{Oshima-Sekiguchi84},
since the symmetric pair $(\mathfrak{g}^c,\mathfrak{h})$ is same as $(\mathfrak{g},\mathfrak{h})^{\text{ada}}$. 
The Satake diagram of the real form $\mathfrak{g}$ [resp. $\mathfrak{g}^c$] of $\mathfrak{g}_\mathbb{C}$ can be found in \cite{Araki62} or \cite[Chapter X, Section 6]{Helgason01book}.
Therefore, we can classify the set of complex antipodal hyperbolic orbits in $\mathfrak{g}_\mathbb{C}$ meeting $\mathfrak{g}$ [resp. $\mathfrak{g}^c$], which is denoted by $\mathcal{H}^a_\mathfrak{g}/{G_\mathbb{C}}$ [resp. $\mathcal{H}^a_{\mathfrak{g}^c}/{G_\mathbb{C}}$].
This provides an algorithm to check whether the condition \eqref{item:main_hyperbolic} in Theorem \ref{thm:main} holds or not on $(\mathfrak{g},\mathfrak{h})$.

Here, we give examples for the cases where $(\mathfrak{g},\mathfrak{h}) = (\mathfrak{su}(4,2),\mathfrak{sp}(2,1))$ or $(\mathfrak{su}^*(6),\mathfrak{sp}(2,1))$.
\begin{example}\label{ex:su42_sp21}
Let $(\mathfrak{g},\mathfrak{h}) = (\mathfrak{su}(4,2),\mathfrak{sp}(2,1))$.
Then $\mathfrak{g}_\mathbb{C} = \mathfrak{sl}(6,\mathbb{C})$ and $\mathfrak{g}^c = \mathfrak{su}^*(6)$.
We shall determine both $\mathcal{H}^a_\mathfrak{g}/{G_\mathbb{C}}$ and $\mathcal{H}^a_{\mathfrak{g}^c}/{G_\mathbb{C}}$, 
and prove that $(\mathfrak{g},\mathfrak{h})$ satisfies the condition \eqref{item:main_hyperbolic} in Theorem $\ref{thm:main}$.

The involutive endomorphism $\iota$ on weighted Dynkin diagrams of $\mathfrak{sl}(6,\mathbb{C})$ $($see Section $\ref{subsection:complex_Bhyp}$ for the notation$)$ is given by
\[
\begin{xy}
	*++!D{a} *\cir<2pt>{}        ="A",
	(10,0) *++!D{b} *\cir<2pt>{} ="B",
	(20,0) *++!D{c} *\cir<2pt>{} ="C",
	(30,0) *++!D{d} *\cir<2pt>{} ="D",
	(40,0) *++!D{e} *\cir<2pt>{} ="E",
	\ar@{-} "A";"B"
	\ar@{-} "B";"C"
	\ar@{-} "C";"D"
	\ar@{-} "D";"E"
\end{xy}
\mapsto 
\begin{xy}
	*++!D{e} *\cir<2pt>{}        ="A",
	(10,0) *++!D{d} *\cir<2pt>{} ="B",
	(20,0) *++!D{c} *\cir<2pt>{} ="C",
	(30,0) *++!D{b} *\cir<2pt>{} ="D",
	(40,0) *++!D{a} *\cir<2pt>{} ="E",
	\ar@{-} "A";"B"
	\ar@{-} "B";"C"
	\ar@{-} "C";"D"
	\ar@{-} "D";"E"
\end{xy}.
\]
Thus, by Theorem $\ref{thm:Bhyp_to_iota-inv}$, we have the bijection below$:$
\begin{align*}
\mathcal{H}^a/{G_\mathbb{C}} 
\bijarrow
\left\{
\begin{xy}
	*++!D{a} *\cir<2pt>{}        ="A",
	(10,0) *++!D{b} *\cir<2pt>{} ="B",
	(20,0) *++!D{c} *\cir<2pt>{} ="C",
	(30,0) *++!D{b} *\cir<2pt>{} ="D",
	(40,0) *++!D{a} *\cir<2pt>{} ="E",
	\ar@{-} "A";"B"
	\ar@{-} "B";"C"
	\ar@{-} "C";"D"
	\ar@{-} "D";"E"
\end{xy}
\mid 
a,b,c \in \mathbb{R}_{\geq 0}
\right\}.
\end{align*}
Here are the Satake diagrams of $\mathfrak{g} = \mathfrak{su}(4,2)$ and $\mathfrak{g}^c = \mathfrak{su}^*(6)$$:$\[
S_{\mathfrak{su}(4,2)} :
\begin{xy}
	*\cir<2pt>{}        ="A",
	(10,0) *\cir<2pt>{} ="B",
	(20,0) *{\bullet} ="C",
	(30,0) *\cir<2pt>{} ="D",
	(40,0) *\cir<2pt>{} ="E",
	\ar@{-} "A";"B"
	\ar@{-} "B";"C"
	\ar@{-} "C";"D"
	\ar@{-} "D";"E"
	
	\ar@(ru,lu) @<1mm> @{<->} "A" ; "E"
	\ar@/^4mm/ @<1mm> @{<->} "B" ; "D"	
\end{xy}\ ,\ 
S_{\mathfrak{su}^*(6)} :
\begin{xy}
	*{\bullet}        ="A",
	(10,0) *\cir<2pt>{} ="B",
	(20,0) *{\bullet} ="C",
	(30,0) *\cir<2pt>{} ="D",
	(40,0) *{\bullet} ="E",
	\ar@{-} "A";"B"
	\ar@{-} "B";"C"
	\ar@{-} "C";"D"
	\ar@{-} "D";"E"
\end{xy}.
\]
Thus, by Theorem $\ref{thm:hyp_match}$, 
we obtain the following one-to-one correspondences$:$
\begin{align*}
\mathcal{H}^a_{\mathfrak{g}}/{G_\mathbb{C}} &\bijarrow \left\{
\begin{xy}
	*++!D{a} *\cir<2pt>{}        ="A",
	(10,0) *++!D{b} *\cir<2pt>{} ="B",
	(20,0) *++!D{0} *\cir<2pt>{} ="C",
	(30,0) *++!D{b} *\cir<2pt>{} ="D",
	(40,0) *++!D{a} *\cir<2pt>{} ="E",
	\ar@{-} "A";"B"
	\ar@{-} "B";"C"
	\ar@{-} "C";"D"
	\ar@{-} "D";"E"
\end{xy}
\mid 
a,b \in \mathbb{R}_{\geq 0}
\right\}, \\
\mathcal{H}^a_{\mathfrak{g}^c}/{G_\mathbb{C}} &\bijarrow \left\{
\begin{xy}
	*++!D{0} *\cir<2pt>{}        ="A",
	(10,0) *++!D{b} *\cir<2pt>{} ="B",
	(20,0) *++!D{0} *\cir<2pt>{} ="C",
	(30,0) *++!D{b} *\cir<2pt>{} ="D",
	(40,0) *++!D{0} *\cir<2pt>{} ="E",
	\ar@{-} "A";"B"
	\ar@{-} "B";"C"
	\ar@{-} "C";"D"
	\ar@{-} "D";"E"
\end{xy}
\mid 
b \in \mathbb{R}_{\geq 0}
\right\}.
\end{align*}
Therefore, the condition $\eqref{item:main_hyperbolic}$ in Theorem $\ref{thm:main}$ holds on the symmetric pair $(\mathfrak{su}(4,2),\mathfrak{sp}(2,1))$. 
\end{example}

\begin{example}
Let $(\mathfrak{g},\mathfrak{h}) = (\mathfrak{su}^*(6), \mathfrak{sp}(2,1))$.
Then $\mathfrak{g}_\mathbb{C} = \mathfrak{sl}(6,\mathbb{C})$ and $\mathfrak{g}^c = \mathfrak{su}(4,2)$.
Thus, by the argument in Example $\ref{ex:su42_sp21}$, 
we have
\begin{align*}
\mathcal{H}^a_{\mathfrak{g}}/{G_\mathbb{C}} &\bijarrow \left\{
\begin{xy}
	*++!D{0} *\cir<2pt>{}        ="A",
	(10,0) *++!D{b} *\cir<2pt>{} ="B",
	(20,0) *++!D{0} *\cir<2pt>{} ="C",
	(30,0) *++!D{b} *\cir<2pt>{} ="D",
	(40,0) *++!D{0} *\cir<2pt>{} ="E",
	\ar@{-} "A";"B"
	\ar@{-} "B";"C"
	\ar@{-} "C";"D"
	\ar@{-} "D";"E"
\end{xy}
\mid 
b \in \mathbb{R}_{\geq 0}
\right\}, \\
\mathcal{H}^a_{\mathfrak{g}^c}/{G_\mathbb{C}} &\bijarrow \left\{
\begin{xy}
	*++!D{a} *\cir<2pt>{}        ="A",
	(10,0) *++!D{b} *\cir<2pt>{} ="B",
	(20,0) *++!D{0} *\cir<2pt>{} ="C",
	(30,0) *++!D{b} *\cir<2pt>{} ="D",
	(40,0) *++!D{a} *\cir<2pt>{} ="E",
	\ar@{-} "A";"B"
	\ar@{-} "B";"C"
	\ar@{-} "C";"D"
	\ar@{-} "D";"E"
\end{xy}
\mid 
a,b \in \mathbb{R}_{\geq 0}
\right\}.
\end{align*}
Therefore, the condition \eqref{item:main_hyperbolic} in Theorem $\ref{thm:main}$ does not hold on the symmetric pair $(\mathfrak{su}^*(6), \mathfrak{sp}(2,1))$.
However, if we take a complex hyperbolic orbit $\mathcal{O}'$ in $\mathfrak{sl}(6,\mathbb{C})$ corresponding to 
\[
\begin{xy}
	*++!D{0} *\cir<2pt>{}        ="A",
	(10,0) *++!D{b} *\cir<2pt>{} ="B",
	(20,0) *++!D{0} *\cir<2pt>{} ="C",
	(30,0) *++!D{b'} *\cir<2pt>{} ="D",
	(40,0) *++!D{0} *\cir<2pt>{} ="E",
	\ar@{-} "A";"B"
	\ar@{-} "B";"C"
	\ar@{-} "C";"D"
	\ar@{-} "D";"E"
\end{xy} 
\quad (\text{for some }b,b' \in \mathbb{R}_{\geq 0},\ b \neq b'),
\]
then $\mathcal{O}'$ meets $\mathfrak{g}$ but does not meet $\mathfrak{g}^c$.
Note that $\mathcal{O}'$ is not antipodal.
Thus the condition $\eqref{item:C-M_cpx_hyp_orbits}$ in Fact $\ref{fact:C-M}$ holds on the symmetric pair $(\mathfrak{su}^*(6), \mathfrak{sp}(2,1))$.
In particular, $\rank_\mathbb{R} \mathfrak{g} > \rank_\mathbb{R} \mathfrak{h}$.
\end{example}

Combining our algorithm with Berger's classification \cite{Berger57}, 
we obtain Table \ref{table:non_sl2} in Section \ref{section:main_result}.
Concerning this, if $\mathfrak{g}_\mathbb{C}$ has no simple factor of type $A_n$, $D_{2k+1}$ or $E_6$ ($n \geq 2$, $k \geq 2$), 
then the symmetric pair $(\mathfrak{g},\mathfrak{h})$ satisfies the condition \eqref{item:main_hyperbolic} in Theorem \ref{thm:main} if and only if $\rank_\mathbb{R} \mathfrak{g} > \rank_\mathbb{R} \mathfrak{h}$ (see Corollary \ref{cor:Bhyp_in_without_ADE} and Fact \ref{fact:C-M}).
Thus we need only consider the cases where $\mathfrak{g}_\mathbb{C}$ is of type $A_n$, $D_{2k+1}$ or $E_6$.
 
We also remark that for a given semisimple symmetric pair $(\mathfrak{g},\mathfrak{h})$, by using the Dynkin--Kostant classification \cite{Dynkin52eng} and Theorem \ref{thm:nilp_match}, 
we can check whether the condition \eqref{item:main_nilpotent} in Theorem \ref{thm:main} holds or not on $(\mathfrak{g},\mathfrak{h})$, directly (see also Section \ref{section:refine_sl2proper_nilp}).

\section{Proper actions of $SL(2,\mathbb{R})$ and real nilpotent orbits}\label{section:refine_sl2proper_nilp}

In this section, we describe a refinement of the equivalence \eqref{item:main_sl2-proper} $\Leftrightarrow$ \eqref{item:main_nilpotent} in Theorem \ref{thm:main}, which provides an algorithm to classify proper $SL(2,\mathbb{R})$-actions on a given semisimple symmetric space $G/H$.

Let $G$ be a connected linear semisimple Lie group and write $\mathfrak{g}$ for its Lie algebra.
By the Jacobson-Morozov theorem and Lemma \ref{lem:alg_hom_to_group_hom}, 
we have a one-to-one correspondence between 
Lie group homomorphisms $\Phi:SL(2,\mathbb{R}) \rightarrow G$ up to inner automorphisms of $G$ and real nilpotent orbits in $\mathfrak{g}$.
We denote by $\mathcal{O}^{G}_\Phi$ the real nilpotent orbit corresponding to $\Phi: SL(2,\mathbb{R}) \rightarrow G$.
Then, by combining Proposition \ref{prop:Sl_2_proper_and_sl2_triple}, Proposition \ref{prop:keyprop} with Lemma \ref{lem:A_meets_X_meets},
we obtain the next theorem:

\begin{thm}\label{thm:sl2_and_real_nilpotent}
In Setting $\ref{setting:main}$, the following conditions on a Lie group homomorphism $\Phi : SL(2,\mathbb{R}) \rightarrow G$ are equivalent$:$
\begin{enumerate}
\item $SL(2,\mathbb{R})$ acts on $G/H$ properly via $\Phi$.
\item The complex nilpotent orbit $\Ad(G_\mathbb{C}) \cdot  \mathcal{O}^{G}_\Phi$ in $\mathfrak{g}_\mathbb{C}$ does not meet $\mathfrak{g}^c$, 
where $\mathfrak{g}^c$ is the $c$-dual of the symmetric pair $(\mathfrak{g},\mathfrak{h})$ $($see $\eqref{eq:c-dual}$ after Setting $\ref{setting:main}$$)$.
\end{enumerate}
In particular, we have the one-to-one correspondence
\begin{multline*}
\{\, \Phi:SL(2,\mathbb{R}) \rightarrow G \mid \text{$SL(2,\mathbb{R})$ acts on $G/H$ properly via $\Phi$} \,\}/{G}
\\ \bijarrow
\{\, \text{Real nilpotent orbits $\mathcal{O}^{G}$ in $\mathfrak{g}$} \mid \text{$(\Ad(G_\mathbb{C}) \cdot \mathcal{O}^{G}) \cap \mathfrak{g}^c =\emptyset$} \,\}.
\end{multline*}
\end{thm}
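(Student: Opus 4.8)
\textbf{Proof plan for Theorem \ref{thm:sl2_and_real_nilpotent}.}
The plan is to assemble the statement from ingredients already prepared in the preceding sections, so that the proof is essentially a matter of stringing together the right equivalences and checking that the bookkeeping is compatible. First I would recall the Jacobson--Morozov correspondence together with Lemma \ref{lem:alg_hom_to_group_hom}: a Lie group homomorphism $\Phi:SL(2,\mathbb{R})\rightarrow G$ is determined, up to inner automorphisms of $G$, by its differential $\phi$, which is an $\mathfrak{sl}_2$-triple $(A,X,Y)$ in $\mathfrak{g}$ (with $A=\phi\begin{pmatrix}1&0\\0&-1\end{pmatrix}$), and the $G$-conjugacy class of $\Phi$ is recorded by the real nilpotent orbit $\mathcal{O}^G_\Phi=\Ad(G)\cdot X$. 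So the two sides of the claimed bijection are parameterized by $G$-conjugacy classes of $\mathfrak{sl}_2$-triples, and I only need to show the two subsets singled out correspond under this parameterization.

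Next I would translate the properness condition. By Corollary \ref{cor:proper_condition_for_alg} (a consequence of Theorem \ref{thm:cor_to_Kobayashi}), $SL(2,\mathbb{R})$ acts on $G/H$ properly via $\Phi$ if and only if the real adjoint orbit $\mathcal{O}^G_A$ through $A=A_\phi$ does not meet $\mathfrak{h}$. Now I pass to the complexification. The orbit $\mathcal{O}^G_A$ is real hyperbolic (it is the neutral element of an $\mathfrak{sl}_2$-triple, hence in $\mathcal{H}^n(\mathfrak{g})\subset\mathcal{H}(\mathfrak{g})$), so by Proposition \ref{prop:Hyp_in_gC_in_g}\eqref{item:Hyp_c_r:hyp} its complexification $\mathcal{O}^{G_\mathbb{C}}_A:=\Ad(G_\mathbb{C})\cdot A$ intersects $\mathfrak{g}$ exactly in $\mathcal{O}^G_A$. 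Combining this with Proposition \ref{prop:keyprop}, which says $\mathcal{H}_\mathfrak{g}\cap\mathcal{H}_{\mathfrak{g}^c}=\mathcal{H}_\mathfrak{h}$, I get: $\mathcal{O}^G_A\cap\mathfrak{h}=\emptyset$ if and only if $\mathcal{O}^{G_\mathbb{C}}_A$ does not meet both $\mathfrak{g}$ and $\mathfrak{g}^c$; since it automatically meets $\mathfrak{g}$ (it contains $A\in\mathfrak{g}$), this is equivalent to $\mathcal{O}^{G_\mathbb{C}}_A\cap\mathfrak{g}^c=\emptyset$. That establishes: properness of the $\Phi$-action $\Leftrightarrow$ $\mathcal{O}^{G_\mathbb{C}}_A\cap\mathfrak{g}^c=\emptyset$.

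It then remains to move from the hyperbolic orbit $\mathcal{O}^{G_\mathbb{C}}_A$ to the nilpotent orbit $\mathcal{O}^{G_\mathbb{C}}_X:=\Ad(G_\mathbb{C})\cdot X=\Ad(G_\mathbb{C})\cdot\mathcal{O}^G_\Phi$. This is precisely Lemma \ref{lem:A_meets_X_meets} applied to the real form $\mathfrak{g}^c$ of $\mathfrak{g}_\mathbb{C}$: for the $\mathfrak{sl}_2$-triple $(A,X,Y)$ in $\mathfrak{g}_\mathbb{C}$, the complex orbit through $A$ meets $\mathfrak{g}^c$ if and only if the complex orbit through $X$ meets $\mathfrak{g}^c$. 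Hence $\mathcal{O}^{G_\mathbb{C}}_A\cap\mathfrak{g}^c=\emptyset \Leftrightarrow \mathcal{O}^{G_\mathbb{C}}_X\cap\mathfrak{g}^c=\emptyset$, which is condition (ii). Stringing the three equivalences together yields (i)$\Leftrightarrow$(ii). Finally, for the displayed one-to-one correspondence, I would note that Proposition \ref{prop:Sl_2_proper_and_sl2_triple} already gives a bijection between proper-$\Phi$'s (up to $G$-conjugacy, via Lemma \ref{lem:alg_hom_to_group_hom}) and $\mathfrak{sl}_2$-triples $(A,X,Y)$ with $\mathcal{O}^G_A\cap\mathfrak{h}=\emptyset$; composing with the Jacobson--Morozov bijection $(A,X,Y)\mapsto\mathcal{O}^G_X=\mathcal{O}^G_\Phi$ between $G$-conjugacy classes of $\mathfrak{sl}_2$-triples and real nilpotent orbits, and rewriting the condition $\mathcal{O}^G_A\cap\mathfrak{h}=\emptyset$ as $(\Ad(G_\mathbb{C})\cdot\mathcal{O}^G_\Phi)\cap\mathfrak{g}^c=\emptyset$ using the equivalence just proved, gives the stated bijection. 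The only mild subtlety to check carefully is that each step respects $G$-conjugacy (as opposed to $G_\mathbb{C}$-conjugacy), so that the final correspondence is genuinely a bijection of $G$-orbits and not a many-to-one map; I expect this to be the main point requiring attention, but it follows from the cited results since Proposition \ref{prop:Sl_2_proper_and_sl2_triple} and the Jacobson--Morozov/Kostant theorems are all stated at the level of $G$-conjugacy (equivalently, inner automorphisms).
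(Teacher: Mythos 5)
Your proposal is correct and follows essentially the same route as the paper: the paper obtains this theorem by combining Proposition \ref{prop:Sl_2_proper_and_sl2_triple} (whose content is your Corollary \ref{cor:proper_condition_for_alg} step), Proposition \ref{prop:keyprop}, and Lemma \ref{lem:A_meets_X_meets}, together with the Jacobson--Morozov/Kostant correspondence and Lemma \ref{lem:alg_hom_to_group_hom} for the parameterization by real nilpotent orbits. Your extra care about passing between $\mathcal{O}^G_A\cap\mathfrak{h}=\emptyset$ and $\mathcal{O}^{G_\mathbb{C}}_A\cap\mathfrak{h}=\emptyset$ via Proposition \ref{prop:Hyp_in_gC_in_g}\eqref{item:Hyp_c_r:hyp}, and about $G$- versus $G_\mathbb{C}$-conjugacy, only makes explicit what the paper leaves implicit.
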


Here is an example concerning Theorem \ref{thm:sl2_and_real_nilpotent}:

\begin{example}
Let $(G,H) = (SU(4,2),Sp(2,1))$.
Then we have 
$(\mathfrak{g}_\mathbb{C},\mathfrak{g},\mathfrak{g}^c) = (\mathfrak{sl}(6,\mathbb{C}),\mathfrak{su}(4,2),\mathfrak{su}^*(6))$.
Let us classify the following set$:$
\begin{multline}
\{\, \text{Real nilpotent orbits $\mathcal{O}^G$ in $\mathfrak{su}(4,2)$} \\ 
\mid \text{ the complexifications of $\mathcal{O}^G$ do not meet $\mathfrak{su}^*(6)$}  \,\}
\label{eq:proper_orbit}
\end{multline}

Recall that complex nilpotent orbits in $\mathfrak{sl}(6,\mathbb{C})$ are parameterized by 
partitions of $6$ and these weighted Dynkin diagrams are listed in Example $\ref{ex:list_of_nilp_in_sl6}$. 
By Theorem $\ref{thm:nilp_match}$, 
we can classify the complex nilpotent orbits in $\mathfrak{sl}(6,\mathbb{C})$ that meet $\mathfrak{su}(4,2)$ but not $\mathfrak{su}^*(6)$, by using these Satake diagrams $($see Example $\ref{ex:su42_sp21}$ for Satake diagrams of $\mathfrak{su}(4,2)$ and $\mathfrak{su}^*(6)$$)$, as follows$:$
\begin{multline*}
\{\, \text{Complex nilpotent orbits } \mathcal{O}^{G_\mathbb{C}} \text{ in } \mathfrak{sl}(6,\mathbb{C}) \\ \mid \mathcal{O}^{G_\mathbb{C}} \cap \mathfrak{su}(4,2) \neq \emptyset \text{ and } \mathcal{O}^{G_\mathbb{C}} \cap \mathfrak{su}^*(6) = \emptyset \,\} \\
\bijarrow
\{\, [5,1],\ [4,1^2],\ [3,2,1],\ [3,1^3],\ [2,1^4] \,\}.
\end{multline*}

It is known that real nilpotent orbits in $\mathfrak{su}(4,2)$ are parameterized by signed Young diagrams of signature $(4,2)$,
and 
the shape of the signed Young diagram corresponding to a real nilpotent orbit $\mathcal{O}^G$ in $\mathfrak{su}(4,2)$ 
is 
the partition corresponding to the complexification of $\mathcal{O}^G$ 
$($see \cite[Theorem 9.3.3 and a remark after Theorem 9.3.5]{Collingwood-McGovern93} for more details$)$.
Therefore, we have a classification of \eqref{eq:proper_orbit} as follows$:$

\begin{table}[htbp]
\begin{center}
\begin{tabular}{ll} \toprule
Partition & Signed Young diagram of signeture $(4,2)$ \\ \midrule
$[5,1]$ & $\young(+-+-+,+)$ \\[10pt] \hline 
$[4,1^2]$ & $\young(+-+-,+,+)$, $\young(-+-+,+,+)$ \\[15pt] \hline 
$[3,2,1]$ & $\young(+-+,+-,+)$, $\young(+-+,-+,+)$ \\[15pt] \hline 
$[3,1^3]$ & $\young(+-+,+,+,-)$, $\young(-+-,+,+,+)$ \\[21pt] \hline 
$[2,1^4]$ & $\young(+-,+,+,+,-)$, $\young(-+,+,+,+,-)$ \\ \bottomrule
\end{tabular}
\legend{Classification of \eqref{eq:proper_orbit}}
\end{center}
\end{table}

In particular, by Theorem $\ref{thm:sl2_and_real_nilpotent}$, there are nine kinds of Lie group homomorphisms $\Phi : SL(2,\mathbb{R}) \rightarrow SU(4,2)$ $($up to inner automorphisms of $SU(4,2)$$)$ for which the $SL(2,\mathbb{R})$-actions on $SU(4,2)/Sp(2,1)$ via $\Phi$ are proper.
\end{example}

\appendix

\section{Classification}\label{section:appendix}

Here is a complete list of symmetric pairs $(\mathfrak{g},\mathfrak{h})$ with the following property$:$
\begin{multline}
\text{$\mathfrak{g}$ is simple, $(\mathfrak{g},\mathfrak{h})$ is a symmetric pair} \\ 
\text{satisfying one of (therefore, all of) the conditions in Theorem \ref{thm:main}.} \label{condition:classification_sl2}
\end{multline}

\setcounter{table}{2}

\begin{center}
\begin{longtable}{ll} \toprule
$\mathfrak{g}$ & $\mathfrak{h}$  \\ \midrule
$\mathfrak{sl}(2k,\mathbb{R})$ & $\mathfrak{sl}(k,\mathbb{C}) \oplus \mathfrak{so}(2)$ \\ \hline
$\mathfrak{sl}(n,\mathbb{R})$ & $\mathfrak{so}(n-i,i)$  \\ 
& $(2i < n)$ \\ \hline
$\mathfrak{su}^*(2k)$ & $\mathfrak{sp}(k-i,i)$ \\
& $(2i < k-1)$  \\ \hline
$\mathfrak{su}(2p,2q)$ & $\mathfrak{sp}(p,q)$ \\ \hline
$\mathfrak{su}(2m-1,2m-1)$ & $\mathfrak{so}^*(4m-2)$ \\ \hline 
$\mathfrak{su}(p,q)$ & $\mathfrak{su}(i,j) \oplus \mathfrak{su}(p-i,q-j) \oplus \mathfrak{so}(2)$ \\ 
&$(\min \{ p,q \} > \min \{ i,j \} + \min \{ p-i,q-j \} )$  \\ \hline
$\mathfrak{so}(p,q)$ & $\mathfrak{so}(i,j) \oplus \mathfrak{so}(p-i,q-j)$ \\ 
$(p+q \text{ is odd})$ & ($\min \{ p,q \} > \min \{ i,j \} + \min \{ p-i,q-j \}$) \\ \hline 
$\mathfrak{sp}(n,\mathbb{R})$ & $\mathfrak{su}(n-i,i) \oplus \mathfrak{so}(2)$ \\ \hline
$\mathfrak{sp}(2k,\mathbb{R})$ & $\mathfrak{sp}(k,\mathbb{C})$ \\ \hline 
$\mathfrak{sp}(p,q)$ & $\mathfrak{sp}(i,j) \oplus \mathfrak{sp}(p-i,q-j)$ \\ 
& $(\min \{ p,q \} > \min \{ i,j \} + \min \{ p-i,q-j \} )$  \\ \hline
$\mathfrak{so}(p,q)$ & $\mathfrak{so}(i,j) \oplus \mathfrak{so}(p-i,q-j)$ \\ 
$(p+q \text{ is even})$ & ($\min \{ p,q \} > \min \{ i,j \} + \min \{ p-i,q-j \}$, \\
 & unless $p=q=2m+1$ and $|i-j| =1$)  \\ \hline
$\mathfrak{so}(2p,2q)$ & $\mathfrak{su}(p,q) \oplus \mathfrak{so}(2)$ \\ \hline
$\mathfrak{so}^*(2k)$ & $\mathfrak{su}(k-i,i) \oplus \mathfrak{so}(2)$ \\ 
& $(2i < k-1)$  \\ \hline
$\mathfrak{so}(k,k)$ & $\mathfrak{so}(2k,\mathbb{C}) \oplus \mathfrak{so}(2)$ \\ \hline
$\mathfrak{so}^*(4m)$ & $\mathfrak{so}^*(4m-4i+2) \oplus \mathfrak{so}^*(4i-2)$ \\ \hline
$\mathfrak{e}_{6(6)}$ & $\mathfrak{sp}(2,2)$ \\ \hline
$\mathfrak{e}_{6(6)}$ & $\mathfrak{su}^*(6) \oplus \mathfrak{su}(2)$ \\ \hline
$\mathfrak{e}_{6(2)}$ & $\mathfrak{so}^*(10) \oplus \mathfrak{so}(2)$ \\ \hline
$\mathfrak{e}_{6(2)}$ & $\mathfrak{su}(4,2) \oplus \mathfrak{su}(2)$ \\ \hline
$\mathfrak{e}_{6(2)}$ & $\mathfrak{sp}(3,1)$ \\ \hline
$\mathfrak{e}_{6(-14)}$ & $\mathfrak{f}_{4(-20)}$ \\ \hline
$\mathfrak{e}_{7(7)}$ & $\mathfrak{e}_{6(2)} \oplus \mathfrak{so}(2)$ \\ \hline
$\mathfrak{e}_{7(7)}$ & $\mathfrak{su}(4,4)$ \\ \hline
$\mathfrak{e}_{7(7)}$ & $\mathfrak{so}^*(12) \oplus \mathfrak{su}(2)$ \\ \hline
$\mathfrak{e}_{7(7)}$ & $\mathfrak{su}^*(8)$ \\ \hline
$\mathfrak{e}_{7(-5)}$ & $\mathfrak{e}_{6(-14)} \oplus \mathfrak{so}(2)$ \\ \hline
$\mathfrak{e}_{7(-5)}$ & $\mathfrak{su}(6,2)$ \\ \hline
$\mathfrak{e}_{7(-25)}$ & $\mathfrak{e}_{6(-14)} \oplus \mathfrak{so}(2)$ \\ \hline
$\mathfrak{e}_{7(-25)}$ & $\mathfrak{su}(6,2)$ \\ \hline
$\mathfrak{e}_{8(8)}$ & $\mathfrak{e}_{7(-5)} \oplus \mathfrak{su}(2)$ \\ \hline
$\mathfrak{e}_{8(8)}$ & $\mathfrak{so}^*(16)$ \\ \hline
$\mathfrak{f}_{4(4)}$ & $\mathfrak{sp}(2,1) \oplus \mathfrak{su}(2)$ \\ \hline 
$\mathfrak{sl}(2k,\mathbb{C})$ & $\mathfrak{su}^*(2k)$ \\ \hline 
$\mathfrak{sl}(n,\mathbb{C})$ & $\mathfrak{su}(n-i,i)$ \\ 
& ($2i < n$) \\ \hline 
$\mathfrak{so}(2k+1,\mathbb{C})$ & $\mathfrak{so}(2k+1-i,i)$ \\
& ($i < k$) \\ \hline
$\mathfrak{sp}(n,\mathbb{C})$ & $\mathfrak{sp}(n-i,i)$ \\ \hline
$\mathfrak{so}(2k,\mathbb{C})$ & $\mathfrak{so}(2k-i,i)$ \\
& ($i < k$ unless $k=i+1=2m+1$) \\ \hline
$\mathfrak{so}(4m,\mathbb{C})$ & $\mathfrak{so}(4m-2i+1,\mathbb{C}) \oplus \mathfrak{so}(2i-1,\mathbb{C})$ \\ \hline
$\mathfrak{so}(2k,\mathbb{C})$ & $\mathfrak{so}^*(2k)$ \\ \hline
$\mathfrak{e}_{6,\mathbb{C}}$ & $\mathfrak{e}_{6(-14)}$ \\ \hline
$\mathfrak{e}_{6,\mathbb{C}}$ & $\mathfrak{e}_{6(-26)}$ \\ \hline
$\mathfrak{e}_{7,\mathbb{C}}$ & $\mathfrak{e}_{7(-5)}$ \\ \hline
$\mathfrak{e}_{7,\mathbb{C}}$ & $\mathfrak{e}_{7(-25)}$ \\ \hline
$\mathfrak{e}_{8,\mathbb{C}}$ & $\mathfrak{e}_{8(-24)}$ \\ \hline
$\mathfrak{f}_{4,\mathbb{C}}$ & $\mathfrak{f}_{4(-20)}$ \\ 
\bottomrule
\caption{Classification of $(\mathfrak{g},\mathfrak{h})$ satisfying \eqref{condition:classification_sl2}}
\label{table:sl2-proper}
\end{longtable}
\end{center}

Here, $k \geq 1$, $m \geq 1$, $n \geq 2$, $p, q \geq 1$ and $i, j \geq 0$.
Note that $\mathfrak{so}(p,q)$ is simple if and only if $p+q \geq 3$ with $(p,q) \neq (2,2)$, and $\mathfrak{so}(2k,\mathbb{C})$ is simple if and only if $k \geq 3$.

\section{The Calabi--Markus phenomenon and hyperbolic orbits}\label{subsection:proof_of_fact}

Here is a proof of the equivalence among \eqref{item:C-M_split_rank}, \eqref{item:C-M_real_hyp_orbits} and \eqref{item:C-M_cpx_hyp_orbits} in Fact \ref{fact:C-M}:

\begin{proof}[Proof of $\eqref{item:C-M_split_rank}$ $\Leftrightarrow$ $\eqref{item:C-M_real_hyp_orbits}$ $\Leftrightarrow$ $\eqref{item:C-M_cpx_hyp_orbits}$ in Fact $\ref{fact:C-M}$]
We take $\mathfrak{a}$ and $\mathfrak{a}_\mathfrak{h}$ in Section $\ref{subsection:Kobayashi's_criterion}$. 
The condition \eqref{item:C-M_split_rank} means that $\mathfrak{a} \neq W(\mathfrak{g},\mathfrak{a}) \cdot \mathfrak{a}_\mathfrak{h}$.
By Fact \ref{fact:hyp_inter_a} and Lemma \ref{lem:hyperbolic_in_reductive}, 
we have a bijection between the following two sets$:$
\begin{itemize}
\item The set of $W(\mathfrak{g},\mathfrak{a})$-orbits in $\mathfrak{a}$ that do not meet $\mathfrak{a}_\mathfrak{h}$.
\item The set of real hyperbolic orbits in $\mathfrak{g}$ that do not meet $\mathfrak{h}$.
\end{itemize}
Then the equivalence \eqref{item:C-M_split_rank} $\Leftrightarrow$ \eqref{item:C-M_real_hyp_orbits} holds.
Further, \eqref{item:C-M_real_hyp_orbits} $\Leftrightarrow$ \eqref{item:C-M_cpx_hyp_orbits} follows from Proposition \ref{prop:Hyp_in_gC_in_g} \eqref{item:Hyp_c_r:hyp} and Propositon \ref{prop:keyprop}.
\end{proof}

\def\cprime{$'$}
\providecommand{\bysame}{\leavevmode\hbox to3em{\hrulefill}\thinspace}
\providecommand{\MR}{\relax\ifhmode\unskip\space\fi MR }
\providecommand{\MRhref}[2]{%
  \href{http://www.ams.org/mathscinet-getitem?mr=#1}{#2}
}
\providecommand{\href}[2]{#2}

\end{document}